\numberwithin{equation}{section}
\newtheorem{theorem}{Theorem}[section]
\newtheorem{lemma}[theorem]{Lemma}
\theoremstyle{definition}
\newtheorem{example}[theorem]{Example}
\newtheorem{prop}[theorem]{Proposition}
\theoremstyle{remark}
\newtheorem{remark}[theorem]{Remark}
\newtheorem{assumption}{Assumption}
\newtheorem{corollary}[theorem]{Corollary}
\begin{document}

\title[Stochastic Asymptotical Regularization]{Stochastic asymptotical regularization for nonlinear ill-posed problems}


\author[1]{\fnm{Haie} \sur{Long}}\email{haie\_long@smbu.edu.cn}

\author*[1, 2]{\fnm{Ye} \sur{Zhang}}\email{ye.zhang@smbu.edu.cn}

\affil[1]{\orgdiv{MSU-BIT-SMBU Joint Research Center of Applied Mathematics}, \orgname{Shenzhen MSU-BIT University}, \orgaddress{\city{Shenzhen}, \postcode{518172}, \country{P.R. China}}}

\affil[2]{\orgdiv{School of Mathematics and Statistics}, \orgname{Beijing Institute of Technology}, \orgaddress{\city{Beijing}, \postcode{100081}, \country{P.R. China}}}


\abstract{Recently, the stochastic asymptotical regularization (SAR) has been developed in (\emph{Inverse Problems}, 39: 015007, 2023) for the uncertainty quantification of the stable approximate solution of linear ill-posed inverse problems. In this paper, we extend the regularization theory of SAR for nonlinear inverse problems. By combining techniques from classical regularization theory and stochastic analysis, we prove the regularizing properties of SAR with regard to mean-square convergence. The convergence rate results under the canonical sourcewise condition are also studied. Several numerical examples are used to show the accuracy and advantages of SAR: compared with the conventional deterministic regularization approaches for deterministic inverse problems, SAR can quantify the uncertainty in error estimates for ill-posed problems, improve accuracy by selecting the optimal path, escape local minima for nonlinear problems, and identify multiple solutions by clustering samples of obtained approximate solutions.}

\keywords{Stochastic inverse problems, nonlinear ill-posed problems, regularizing property, asymptotical regularization, convergence rates.}



\maketitle

\section{Introduction}\label{section-1-introduction}
This work is concerned with the stable solution and uncertainty quantification of the following deterministic model of general nonlinear inverse problems:
\begin{equation}\label{main}
F(x) = y
\end{equation}
where the mapping $F:\mathcal{D}(F)\subset \mathcal{X}\rightarrow \mathcal{Y}$ is a nonlinear operator between separable Hilbert spaces $\mathcal{X}$ and $\mathcal{Y}$. For simplicity, we use $\|\cdot\|$ to denote the norms for all $\mathcal{X}$, $\mathcal{Y}$, and $\mathcal{L}(\mathcal{X}\to\mathcal{Y})$. In general, equation (\ref{main}) is ill-posed in the sense that a solution may not exist, and even if it does exist may be nonunique and/or unstable with respect to the perturbation of the right-hand side in \eqref{main}. For a more rigorous definition and a detailed discussion of ill-posedness for nonlinear inverse problems, we refer to \cite[Definition 1]{Hofmann2018}. Hence, if only noisy data $y^\delta$, satisfying a deterministic noise model ($\delta_0>0$ is a fixed number)
\begin{equation}
\label{noiseLevel}
\left\|y^\delta-y\right\|\leq\delta, \quad \delta\in(0,\delta_0],
\end{equation}
is provided, regularization methods should be designed to obtain a meaningful approximate solution of \eqref{main}. Many effective regularization techniques have been proposed over the past few decades (see, e.g., \cite{TikhonovYagola1995,Hanke1996,Cheng2011Regularization,BK2008}). Of all the existing methods, the Landweber iteration is the most prominent iterative regularization technique, and is based on the fixed-point iteration
\begin{equation}\label{Landweber}
x^\delta_{k+1} = x^\delta_k + \Delta t F'(x_k^\delta)^*(y^\delta-F(x^\delta_k)).
\end{equation}
The continuous analog to \eqref{Landweber} as $\Delta t$ tends to zero is known as asymptotical regularization or Showalter's method (see, e.g., \cite{UT1994}). It is written as a first-order evolution equation of the form
\begin{equation}\label{CLand}
\dot{x}^\delta(t)=F'{\left( {{x^\delta }\left( t \right)} \right)^ * }\left[ {{y^\delta } - F\left( {{x^\delta }\left( t \right)} \right)} \right],
\end{equation}
where an artificial scalar time $t$ is introduced. As shown in \cite{Rieder-2005,ZhaoMatheLu2020}, by using Runge-Kutta integrators, the regularization properties of \eqref{CLand} are carried over to its numerical realization.

This continuous framework has also received much attention in the development of modern regularization theory of inverse problems. For instance, the authors in \cite{ZhangH2020,GonHofZhan2020,ZhangHof2019,Botetal18} extended (\ref{CLand}) to second-order and fractional-order gradient flows, which exhibit acceleration effect. The authors in \cite{Zhongwang2022} studied an asymptotical regularization with convex constraints for nonlinear inverse problems with special features of solutions, such as sparsity and discontinuity. Furthermore, by connecting to some classical methods (i.e. the Kalman-Bucy filter and 3DVAR) from data assimilation, the authors in \cite{LuNiuWerner2021} studied stationary and non-stationary asymptotical regularization methods for linear inverse problems (\ref{main}) in the presence of white noise. 

Moving in a different direction, for about half a century, an additional random term has been proposed for (\ref{Landweber}) in order to achieve global convergence for non-convex optimization problem $\min \left\|F(x)-y\right\|^2$. It results in a stochastic gradient descent algorithm
\begin{equation}
\label{LandweberS}
x^\delta_{k+1} = x^\delta_k + \Delta t F'(x_k^\delta)^*(y^\delta-F(x^\delta_k)) + f_k \Delta B_k, \quad x^\delta_{0} = {\bar x},
\end{equation}
where the initial point ${\bar x}\in \mathcal{X}$ is non-random, $\{ \Delta B_k \}$ are random variables following a certain distribution, and the standard deviation $f_k$ controls the strength of the noise. In the literature of numerical optimization, see e.g. \cite{Chiang1987, Kushner1987, Gelfand1991, Engquist2022}, the sequence $\{ f_k \}$ should be vanishing even for convergence applied to convex optimization. Furthermore, if $\{ f_k \}$ decreases too fast, the convergence will be to a local minimum and not the global one. Note that the choice of $\{ f_k \}$ is also crucial for our introduced new method (\ref{initial value problem}) later. In recent years, the continuous counterpart of \eqref{Landweber}, namely
\begin{equation}\label{initial value problem}
\textmd{d}{x^\delta }\left( t \right) = F'{\left( {{x^\delta }\left( t \right)} \right)^ *} \left[ {{y^\delta } - F\left( {{x^\delta }\left( t \right)} \right)} \right]\textmd{d}t + f\left( t \right)\textmd{d}{B_t}, \quad {x^\delta }\left( 0 \right) = {\bar x},
\end{equation}
where the auxiliary function $f(t)\in L^\infty({\mathbb R}_+)$ and $B_t$ is an ${\mathcal X}$-valued $Q$-Wiener process, has also gained much attention in the field of optimization and machine learning, see \cite{LiTaiE2019,Francis2023} and references therein. 

Recently, motivated by the uncertainty quantification for infinite dimensional inverse problems, the Stochastic Asymptotical Regularization (SAR) method, i.e. the linear counterpart of \eqref{initial value problem}, had been introduced in \cite{ZhangChen23,ZhangChen24} for linear inverse problems. It has been shown that compared with conventional deterministic regularization approaches for deterministic inverse problems such as \eqref{main}-\eqref{noiseLevel}, SAR exhibits four new capabilities: (i) it can quantify the uncertainty in error estimates for inverse problems (e.g. the \textit{a posteriori} (pointwise) error estimation, etc.), see \cite[Section 2.2]{ZhangChen23} for a rigorous mathematical description. (ii) It can reveal and explicate hidden information for many real-world problems that are usually obscured by incomplete mathematical modeling and the presence of large-scale noise, see \cite[Section 5.2]{ZhangChen23} for an example in biosensor tomography. (iii) It can improve accuracy by selecting the optimal path \cite[Section 4.1]{ZhangChen24}, and (iv) it can identify multi-solutions by clustering samples of obtained approximate solutions \cite[Section 4.3]{ZhangChen24}. Moreover, in comparison with the Bayesian methods, SAR requires neither noise structure nor \textit{a priori} probability distribution of the exact solution. Building on this idea, in this paper we aim to extend SAR for non-linear inverse problems \eqref{main}-\eqref{CLand}. In this paper, we will prove that the stochastic differential equation \eqref{initial value problem}, associated with an appropriate \textit{a posteriori} stopping rule, yields a regularization method, which we refer to \emph{Stochastic Asymptotical Regularization} (SAR).

The key idea for proving the regularizing properties of SAR is to carefully design auxiliary function $f(t)$ (see \eqref{ft} below) through the lens of regularization theory. To be more precise, our analysis relies on the bias-variance decomposition; i.e. the regularization error of SAR can be decomposed into two components: the bias term and the variance term. The former is deterministic and thus can be analyzed in a manner similar to the Landweber method \cite{Hanke1995}, while the latter is derived from the $\mathcal{X}$-valued $\mathcal{Q}$-Wiener process, which consists of an infinite-dimensional stochastic integral and constitutes the main technical challenge in the analysis.

Note that SAR is different from the \emph{Stochastic Gradient Descent} (SGD) method, which is another popular numerical method in today's literature of statistics, machine learning, and computational mathematics. SGD was pioneered by Robbins and Monro in statistical inference \cite{Rob1951} (see monograph \cite{Kus2003} for the asymptotic-convergence results). Algorithmically, SGD is a randomized version of the classical Landweber method \cite{Land1951}, which employs an unbiased estimator of the full gradient computed from one single randomly selected data point at each iteration. The regularizing properties of SGD for linear inverse problems with an \emph{a priori} and an \emph{a posteriori} stopping rule can be found in \cite{BJin2019} and \cite{BJin2020IP}, respectively. An extended error estimation of SGD, taking into account the discretization levels, the decay of the step-size, and the general source conditions, can be found in \cite{LuMathe2021}. SGD for nonlinear ill-posed problems was investigated in \cite{BJin2020SIAM}. The major difference between SGD and SAR is that the randomness of SGD arises from the random row index $i_k$ and changes at each iteration, while the randomness of SAR comes from the second term $f\left( t \right)\textmd{d}{B_t}$ on the right side of \eqref{initial value problem}, which is more flexible, and can be proposed differently according to different purposes in the application. Besides above mentioned four advantages of SAR, numerical experiments in this work indicate a new capability of SAR: for some nonlinear inverse problems, it can avoid the local minimum efficiently. 

The remainder of this paper is organized as follows: Section \ref{AN} presents our standing assumptions and the relevant discussion, along with a brief discussion of the necessary techniques from stochastic calculus. In Sections \ref{sec-convergence} and \ref{sec-convergencerate}, we perform the convergence analysis of SAR for the regularization property and convergence rates, respectively.
Section \ref{simulation} is devoted to the numerical implementation of SAR, with some numerical results displayed to demonstrate its performance. Finally, Section \ref{conclusion} concludes the paper, and detailed proofs for specific assertions, along with additional numerical illustrations and explanations, are provided in the appendices.

\section{Assumptions and necessary concepts}\label{AN}

\subsection{Assumptions}
For the analysis of the SAR method of the form \eqref{initial value problem}, we require a few suitable conditions, which are quite similar to the assumptions needed for the analysis of asymptotical regularization method \cite{UT1994}.
\begin{assumption} \label{assumption-1}
	Let $B_r(\bar x)$ denote the closed ball around $\bar x$ with radius $r$; then, the following conditions hold true:
	\begin{itemize}
		\item[(i)] The operator $F:\mathcal{X}\rightarrow \mathcal{Y}$ is continuous, with a continuous and uniformly bounded Fr\'{e}chet derivative, i.e.
		\begin{equation}\label{Fbounded}
		\left\|F'(x)\right\| \le 1,~~\forall x\in B_r(\bar x).
		\end{equation}
		\item[(ii)] There exists an $\eta \in (0,1)$ such that
		\begin{equation}\label{TCC}
		\left\| {F\left( {\tilde x} \right) - F\left( x \right) - F'\left( x \right)\left( {\tilde x - x} \right)} \right\| \le \eta \left\| {F\left( x \right) - F\left( {\tilde x} \right)} \right\|
		\end{equation}
		holds for all $x,\tilde x \in {B_r}\left( {\bar x} \right) \subset D\left( F \right)$.
	\end{itemize}
\end{assumption}
Assumption \ref{assumption-1}(i) imposes boundedness and continuity on the derivative $F'(x)$.
The inequality \eqref{TCC} is often known as the \emph{tangential cone condition}, and it controls the degree of nonlinearity of the operator $F$. Roughly speaking, it requires that the forward map $F$ be not too far from a linear map.
Moreover, since the solution to problem \eqref{initial value problem} may be nonunique, in this paper the reference solution $x^\dagger$ is taken to be the minimum norm solution (with
respect to the initial guess $\bar x$), which is known to be unique under Assumption \ref{assumption-1}(ii) (see, e.g., \cite{Hanke1995}).

\begin{prop}\label{prop2}
	Under Assumption \ref{assumption-1}, the following statements hold true:
	\begin{enumerate}
		\item For all $x, \tilde x \in {B_r}\left( {\bar x} \right)$,
		\begin{equation}\label{F2}
		\frac{1}{1+\eta}\left\| {F'\left( x \right)\left( {x-\tilde x} \right)} \right\| \le \left\| {F\left( x \right) - F\left( {\tilde x} \right)} \right\|\le\frac{1}{1-\eta}\left\| {F'\left( x \right)\left( {x-\tilde x} \right)} \right\|.
		\end{equation}
		\item If $x^*$ is a solution to problem \eqref{main}, any other solution $\tilde x^*$ satisfies $x^*-\tilde x^* \in \mathcal{N}(F'(x^*))$, and \textit{vice versa}.
		Here, $\mathcal{N}(\cdot)$ denotes the null space of an operator.
	\end{enumerate}
\end{prop}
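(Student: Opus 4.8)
The plan is to prove both statements directly from the tangential cone condition \eqref{TCC}, since each is essentially an algebraic rearrangement of that inequality.

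For the first statement, I would start by writing the triangle inequality in two directions. From the reverse triangle inequality applied to \eqref{TCC}, I have
\begin{equation*}
\bigl\| F'(x)(x-\tilde x) \bigr\| - \bigl\| F(x)-F(\tilde x) \bigr\| \le \bigl\| F(\tilde x) - F(x) - F'(x)(\tilde x - x) \bigr\| \le \eta \bigl\| F(x)-F(\tilde x)\bigr\|,
\end{equation*}
which upon rearranging gives $\bigl\| F'(x)(x-\tilde x)\bigr\| \le (1+\eta)\bigl\| F(x)-F(\tilde x)\bigr\|$, i.e. the left inequality in \eqref{F2}. For the right inequality, I would use the other direction of the triangle inequality, namely $\bigl\| F(x)-F(\tilde x)\bigr\| - \bigl\| F'(x)(x-\tilde x)\bigr\| \le \eta\bigl\| F(x)-F(\tilde x)\bigr\|$, so that $(1-\eta)\bigl\| F(x)-F(\tilde x)\bigr\| \le \bigl\| F'(x)(x-\tilde x)\bigr\|$; since $\eta \in (0,1)$, dividing by $1-\eta>0$ yields the claimed upper bound. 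The only subtlety is keeping track of the fact that \eqref{TCC} controls $F(\tilde x)-F(x)-F'(x)(\tilde x-x)$ while the conclusion is phrased with $F'(x)(x-\tilde x)$, but these differ only by an overall sign inside the norms, so the estimates transfer verbatim.

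For the second statement, suppose $x^*$ and $\tilde x^*$ are both solutions of \eqref{main}, so that $F(x^*)=F(\tilde x^*)=y$. Applying the left inequality of \eqref{F2} with $x=x^*$ and $\tilde x = \tilde x^*$ gives $\tfrac{1}{1+\eta}\bigl\| F'(x^*)(x^*-\tilde x^*)\bigr\| \le \bigl\| F(x^*)-F(\tilde x^*)\bigr\| = 0$, whence $F'(x^*)(x^*-\tilde x^*)=0$, i.e. $x^*-\tilde x^* \in \mathcal{N}(F'(x^*))$. For the converse, I would assume $x^*$ solves \eqref{main} and $x^*-\tilde x^* \in \mathcal{N}(F'(x^*))$, and then invoke the right inequality of \eqref{F2} to obtain $\bigl\| F(x^*)-F(\tilde x^*)\bigr\| \le \tfrac{1}{1-\eta}\bigl\| F'(x^*)(x^*-\tilde x^*)\bigr\| = 0$, so $F(\tilde x^*)=F(x^*)=y$ and $\tilde x^*$ is also a solution.

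I do not anticipate a serious obstacle here, as the whole argument is a two-line rearrangement of \eqref{TCC} plus a direct substitution. The one point requiring mild care is ensuring that both points stay within the ball $B_r(\bar x)$ so that \eqref{TCC} is applicable — this is implicitly assumed in the statement, since \eqref{F2} is quantified over $x,\tilde x \in B_r(\bar x)$ — and noting that the second statement's converse genuinely requires the \emph{upper} bound in \eqref{F2}, which is why it is natural to prove part (1) first and use it to deduce part (2).
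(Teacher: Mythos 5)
Your proof is correct: both inequalities in \eqref{F2} follow exactly as you write from the two directions of the triangle inequality applied to \eqref{TCC} (with the sign change between $F'(x)(\tilde x - x)$ and $F'(x)(x-\tilde x)$ being harmless, as you note), and part (2) is then an immediate substitution using $F(x^*)=F(\tilde x^*)=y$ for the forward direction and the upper bound in \eqref{F2} for the converse. The paper states this proposition without proof — it is the standard result from the Landweber literature, cf.\ \cite{Hanke1995} — and your argument is precisely that standard one, so there is nothing to fault and no genuinely different route to compare.
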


As with \cite{Hanke1996,BK2008,Jin2015}, both nonlinearity and source conditions are often needed to derive convergence rates, and are presented below.

\begin{assumption}\label{assumption-2}
	Assume that there exist an element $\nu  \in \mathcal{X}$ and constants $\gamma  \in \left( {0,1/2} \right]$ and $E\geq0$ such that
	\begin{equation}\label{A2}
	\bar x - {x^\dag } = {\left( {F'{{\left( {{x^\dag }} \right)}^*}F'\left( {{x^\dag }} \right)} \right)^\gamma }\nu ,~~\left\| \nu  \right\| \le E.
	\end{equation}
	Also assume that, for all $x \in B_r(\bar x)$, there exists a linear bounded operator $R_x:\mathcal{Y} \rightarrow \mathcal{Y}$ and a constant $c_R\geq0$ such that
	\begin{equation}\label{A31}
	F'\left( x \right) = {R_x}F'\left( {{x^\dag }} \right)
	\end{equation}
	and
	\begin{equation}\label{A32}
	\left\| {{R_x} - I} \right\| \le c_R\left\| {x - {x^\dag }} \right\|.
	\end{equation}
\end{assumption}

The fractional power ${({F'(x^\dagger )}^* F'(x^\dagger ))}^\gamma$  in \eqref{A2} is defined by spectral decomposition, which usually represents a certain smoothness condition on the exact solution $x^\dagger$. The restriction $\gamma \in (0,1/2]$ is there for technical reasons.
Note that in the linear case $R_x \equiv I$; therefore, \eqref{A31} may be interpreted as a further restriction of the ``nonlinearity'' of $F$. In particular, \eqref{A31} implies that
\begin{equation*}
\mathcal{N}\left( {F'\left( {{x^\dag }} \right)} \right) \subset \mathcal{N}\left( {F'\left( x \right)} \right),~~x \in {B_r}\left( {\bar x} \right).
\end{equation*}
Note again that \eqref{A31} and \eqref{A32} imply \eqref{TCC} with $\tilde x =x^\dag$ for $r$ sufficiently small, since, for  $x \in B_r(\bar x)$, the following holds:
\begin{equation}\label{rem1}
\begin{split}
\left\| {F\left( x \right) - F\left( {{x^\dag }} \right) - F'\left( x^\dag \right)\left( {x - {x^\dag }} \right)} \right\| &\\
&\!\!\!\!\!\!\!\!\!\!\!\!\!\!\!\!\!\!\!\!\!\!\!\!\!\!\!\!\!\!\!\!\!\!\!\!\!\!\!\!\!\!\!\!\!\!\!\!\!\!\!\!\!= \left\| {\int_0^1 {\left( {F'\left( {{z_\lambda }} \right) - F'\left( x^\dag \right)} \right)} \left( {x - {x^\dag }} \right)\textmd{d}\lambda } \right\|\\
&\!\!\!\!\!\!\!\!\!\!\!\!\!\!\!\!\!\!\!\!\!\!\!\!\!\!\!\!\!\!\!\!\!\!\!\!\!\!\!\!\!\!\!\!\!\!\!\!\!\!\!\!\!\le \int_0^1 {\left\| {\left( {{R_{{z_\lambda }}} - I} \right)F'\left( {{x^\dag }} \right)\left( {x - {x^\dag }} \right)} \right\|} \textmd{d}\lambda \\
&\!\!\!\!\!\!\!\!\!\!\!\!\!\!\!\!\!\!\!\!\!\!\!\!\!\!\!\!\!\!\!\!\!\!\!\!\!\!\!\!\!\!\!\!\!\!\!\!\!\!\!\!\!\le \frac{c_R}{2}\left\| {F'\left( {{x^\dag }} \right)\left( {x - {x^\dag }} \right)} \right\|\left\| {x - {x^\dag }} \right\|
\end{split}
\end{equation}
with ${z_\lambda }: = \lambda x + \left( {1 - \lambda } \right){x^\dag }$. Moreover, if $x$ is a random variable, from the fact that $(E[x])^2\leq E[x^2]$, we obtain the stochastic version of \eqref{rem1} as follows:
\begin{equation*}
\begin{split}
\mathbb{E}[\| {F(x) - F( {{x^\dag }}) - F'( x^\dag )( {x - {x^\dag }} )} \|^2]^{\frac{1}{2}}&\\
&\!\!\!\!\!\!\!\!\!\!\!\!\!\!\!\!\!\!\!\!\!\!\!\!\!\!\!\!\!\!\!\!\!\!\!\!\!\!\!\!\!\!\!\!\!\!\!\!\!\!\!\!\!\!\!\!\le \frac{c_R}{2}\mathbb{E}[\| {x - {x^\dag }}\|^2]^{\frac{1}{2}}\mathbb{E}[\| {F'( {{x^\dag }} )( {x - {x^\dag }})} \|^2]^{\frac{1}{2}},
\end{split}
\end{equation*}
which will be frequently used later for deriving error bounds.

\subsection{Necessary concepts of stochastic calculus}
In SAR \eqref{initial value problem}, a stochastic integral with respect to a Wiener process is involved. In this subsection, we state some concepts of stochastic calculus, which will be useful for deriving corresponding error bounds in the following analysis.
Most of these concepts can be found in \cite[Chap. 2]{Gaw2011}, and are compiled here for completeness.

Recall that $\mathcal{X'}$  and $\mathcal{X}$  are separable Hilbert spaces, and let $\mathcal{Q}$ be a symmetric nonnegative definite trace-class operator on $\mathcal{X}$. Assume that all eigenvalues $\lambda_j >0$ ($j=1,2,\cdots$) and the associated eigenvectors $\varphi_j$ ($j=1,2,\cdots$) form an orthonormal basis (ONB) on $\mathcal{X}$. Then, the space $\mathcal{X}_\mathcal{Q}=\mathcal{Q}^{1/2}\mathcal{X}$ equipped with the scalar product
\begin{equation*}
\langle u,v \rangle_{\mathcal{X}_\mathcal{Q}}=\sum_{j=1}^\infty \frac{1}{\lambda_j} \langle u,\varphi_j \rangle_{\mathcal{X}}\langle v,\varphi_j \rangle_{\mathcal{X}}
\end{equation*}
is a separable Hilbert space with an ONB $\{\lambda_j^{1/2}\varphi_j\}_{j=1}^\infty$.

Let the space of Hilbert-Schmidt operators from $\mathcal{X}_\mathcal{Q}$ to $\mathcal{X'}$ be denoted by $\mathcal{L}_2(\mathcal{X}_\mathcal{Q}, \mathcal{X'})$, which is also separable since $\mathcal{X'}$ and $\mathcal{X}_\mathcal{Q}$ are separable.
If $\{e_j\}_{j=1}^\infty$ is an ONB in $\mathcal{X'}$, the Hilbert-Schmidt norm of an operator $L\in \mathcal{L}_2(\mathcal{X}_\mathcal{Q}, \mathcal{X'})$ is given by
\begin{equation*}
\begin{split}
\|L \|^2_{\mathcal{L}_2(\mathcal{X}_\mathcal{Q}, \mathcal{X'})}&=\sum\limits_{j,i = 1}^\infty \langle L(\lambda_j^{1/2}\varphi_j), e_i\rangle_\mathcal{X'}^2=\sum\limits_{j,i = 1}^\infty \langle L\mathcal{Q}^{1/2}\varphi_j, e_i\rangle_\mathcal{X'}^2\\
&=\| L\mathcal{Q}^{1/2} \|^2_{\mathcal{L}_2(\mathcal{X}, \mathcal{X'})}=\textmd{tr}\left((L\mathcal{Q}^{1/2})(L\mathcal{Q}^{1/2})^*\right).
\end{split}
\end{equation*}

Let $\{\beta_j(t)\}_{t\geq0}$, $j=1,2,\ldots$, be a sequence of independent Brownian motions defined on a filtered probability space $(\Omega,\mathcal{F},\{\mathcal{F}\}_{t\geq0},P)$; then, the $\mathcal{X}$-valued $\mathcal{Q}$-Wiener process $B_t$ is defined by
\begin{equation*}
B_t= \sum\limits_{j = 1}^\infty {\lambda_j}^{1/2} {\beta _j} \left( t \right)\varphi_j.
\end{equation*}

In addition, let $\Lambda_2(\mathcal{X}_\mathcal{Q},\mathcal{X'} )$ be a class of $\mathcal{L}_2(\mathcal{X}_\mathcal{Q},\mathcal{X'})$-valued processes satisfying the condition $\mathbb{E}\int_0^T \|\Phi(s)\|^2_{\mathcal{L}_2(\mathcal{X}_\mathcal{Q},\mathcal{X'})}\textmd{ds} < \infty$. It can be verified that $\Lambda_2(\mathcal{X}_\mathcal{Q},\mathcal{X'} )$ is a Hilbert space equipped with the norm $\|\Phi\|^2_{\Lambda_2(\mathcal{X}_\mathcal{Q},\mathcal{X'} )} = \mathbb{E}\int_0^T \|\Phi(s)\|^2_{\mathcal{L}_2(\mathcal{X}_\mathcal{Q},\mathcal{X'})}\textmd{ds}$.

For $\Phi \in  \mathcal{L}_2(\mathcal{X}_\mathcal{Q},\mathcal{X'})$, the stochastic integral
$\int^t_0 \Phi(s)\textmd{d}B_s$, $0\leq  t \leq  T$, can be defined as in the finite-dimensional case, based on elementary processes and continuous extension; see \cite[sec. 2.2]{Gaw2011} for details. The following theorem in \cite{Gaw2011}, which is the It\^o isometry in the infinite-dimensional setting, is important and forms the main tool for managing the stochastic integrals in this paper.

\begin{theorem}\label{Itoiso}\emph{(\cite[Theorem 2.3]{Gaw2011})}.
	The stochastic integral $\Phi \to \int^t_0 \Phi(s)\emph{d}B_s$ with respect to an $\mathcal{X}$-valued $\mathcal{Q}$-Wiener process $B_t$ is a continuous square-integrable martingale, and satisfies
	\begin{equation}\label{QWiener}
	\mathbb{E}\left\|\int_0^t {\Phi\left( s \right)\emph{d}{B_s}}\right\|^2_\mathcal{X'}=\mathbb{E}\int_0^t \left\|\Phi\left( s \right)\right\|^2_{\mathcal{L}_2(\mathcal{X}_\mathcal{Q},\mathcal{X'})} \emph{ds} < \infty, \quad \forall t\in[0,T].
	\end{equation}
\end{theorem}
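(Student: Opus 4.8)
The plan is to follow the classical two-step construction of the stochastic integral: first establish the identity \eqref{QWiener} for elementary (simple) integrands, and then extend to all of $\Lambda_2(\mathcal{X}_\mathcal{Q},\mathcal{X'})$ by a density-and-isometry argument. Accordingly, I would fix $t\in[0,T]$ and consider an elementary process
\[
\Phi(s)=\sum_{k=0}^{n-1}\Phi_k\,\mathbf{1}_{(t_k,t_{k+1}]}(s),\qquad 0=t_0<t_1<\cdots<t_n=t,
\]
where each $\Phi_k$ is an $\mathcal{F}_{t_k}$-measurable $\mathcal{L}_2(\mathcal{X}_\mathcal{Q},\mathcal{X'})$-valued random variable, for which the integral is defined directly by $\int_0^t\Phi(s)\,\mathrm{d}B_s=\sum_{k=0}^{n-1}\Phi_k(B_{t_{k+1}}-B_{t_k})$. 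Since the elementary processes are dense in the Hilbert space $\Lambda_2(\mathcal{X}_\mathcal{Q},\mathcal{X'})$, proving the isometry on this subclass suffices both to define the integral for general $\Phi$ and to transfer \eqref{QWiener}, the martingale property, and continuity to the limit.

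The heart of the argument is the second-moment computation for a single term. Writing the increment in the ONB as $B_{t_{k+1}}-B_{t_k}=\sum_{j}\lambda_j^{1/2}\big(\beta_j(t_{k+1})-\beta_j(t_k)\big)\varphi_j$ and expanding in the ONB $\{e_i\}$ of $\mathcal{X'}$, I would condition on $\mathcal{F}_{t_k}$ (so that $\Phi_k$ acts as a constant) and use that the increments $\beta_j(t_{k+1})-\beta_j(t_k)$ are independent of $\mathcal{F}_{t_k}$, mutually independent across $j$, and satisfy $\mathbb{E}[(\beta_j(t_{k+1})-\beta_j(t_k))(\beta_{j'}(t_{k+1})-\beta_{j'}(t_k))]=\delta_{jj'}(t_{k+1}-t_k)$. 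This collapses the double sum to
\[
\mathbb{E}\big[\|\Phi_k(B_{t_{k+1}}-B_{t_k})\|_{\mathcal{X'}}^2\mid\mathcal{F}_{t_k}\big]=(t_{k+1}-t_k)\sum_{j}\lambda_j\|\Phi_k\varphi_j\|_{\mathcal{X'}}^2=(t_{k+1}-t_k)\,\|\Phi_k\|_{\mathcal{L}_2(\mathcal{X}_\mathcal{Q},\mathcal{X'})}^2,
\]
where the last equality is precisely the definition of the Hilbert--Schmidt norm on $\mathcal{L}_2(\mathcal{X}_\mathcal{Q},\mathcal{X'})$ relative to the ONB $\{\lambda_j^{1/2}\varphi_j\}$. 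For the cross terms with $k<\ell$, conditioning on $\mathcal{F}_{t_\ell}$ and using $\mathbb{E}[B_{t_{\ell+1}}-B_{t_\ell}\mid\mathcal{F}_{t_\ell}]=0$ shows that they vanish; summing over $k$ and taking total expectation then yields \eqref{QWiener} for elementary $\Phi$.

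Finally, I would pass to general integrands. The identity just proved states that $\Phi\mapsto\int_0^t\Phi\,\mathrm{d}B_s$ is a linear isometry from the elementary processes (with the $\Lambda_2$-norm) into $L^2(\Omega;\mathcal{X'})$; by density it extends uniquely to an isometry on all of $\Lambda_2(\mathcal{X}_\mathcal{Q},\mathcal{X'})$, and \eqref{QWiener} holds for every $\Phi\in\Lambda_2$ because both sides are continuous in the $\Lambda_2$-norm. The martingale property passes to the limit since $L^2$-convergence preserves conditional expectations, and a continuous modification is obtained by applying Doob's maximal inequality to the approximating martingales, giving uniform $L^2$-convergence on $[0,T]$. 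The main obstacle I anticipate is the infinite-dimensional bookkeeping in the key computation: justifying the interchange of the expectation with the infinite sums over $i$ and $j$ (via Fubini--Tonelli and monotone convergence, which is where the trace-class property of $\mathcal{Q}$ and the finiteness of $\|\Phi_k\|_{\mathcal{L}_2(\mathcal{X}_\mathcal{Q},\mathcal{X'})}$ enter), and verifying that the $\mathcal{X'}$-valued series defining $\Phi_k(B_{t_{k+1}}-B_{t_k})$ converges in $L^2$ so that the term-by-term manipulation is legitimate. Once these convergence issues are settled, the density extension and the martingale and continuity claims are routine.
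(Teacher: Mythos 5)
The paper does not prove this statement at all: it is imported verbatim from \cite[Theorem 2.3]{Gaw2011}, with the construction of the integral deferred to \cite[Sec.\ 2.2]{Gaw2011}. Your proposal correctly reconstructs exactly the argument used in that reference --- the It\^o isometry for elementary processes via the conditional second-moment computation $\mathbb{E}\bigl[\|\Phi_k(B_{t_{k+1}}-B_{t_k})\|^2_{\mathcal{X'}}\mid \mathcal{F}_{t_k}\bigr]=(t_{k+1}-t_k)\|\Phi_k\|^2_{\mathcal{L}_2(\mathcal{X}_\mathcal{Q},\mathcal{X'})}$ with vanishing cross terms, followed by density of elementary processes in $\Lambda_2(\mathcal{X}_\mathcal{Q},\mathcal{X'})$, isometric extension, preservation of the martingale property under $L^2$-limits, and continuity via Doob's maximal inequality --- so it is correct and takes essentially the same approach as the paper's cited source.
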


We now present a theorem that provides conditions under which a stochastic process $\mathcal{S}(t,X(t))$ has a stochastic differential, provided that $X(t)$ also has a stochastic differential. It is also known as the It\^{o} formula for the case of a $\mathcal{Q}$-Wiener process.

\begin{theorem}\label{Itoform} \emph{(\cite[Theorem 2.9]{Gaw2011})}.
	Let $\mathcal{X'}$  and $\mathcal{X}$  be separable Hilbert spaces and $B_t$ be the $\mathcal{X}$-valued $\mathcal{Q}$-Wiener process. Assume that a stochastic process $X(t)$, $0\le t \le T$, is given by
	\begin{equation*}
	X(t)=X(0)+\int_0^t \Psi(s)\emph{ds} +\int_0^t \Phi(s)\emph{d}B_s,
	\end{equation*}
	where $X(0)$ is an $\mathcal{X'}$-valued random variable, $\Psi(s)$ is an $\mathcal{X'}$-valued Bochner-integrable process on $[0,T]$, satisfying $\int_0^T\| \Psi(s)\|_\mathcal{X'} \emph{ds} < \infty$, P-a.s., and $\Phi \in \mathcal{L}_2(\mathcal{X}_\mathcal{Q},\mathcal{X'})$.
	
	Assume that a function $\mathcal{S}: [0,T] \times \mathcal{X'} \to \mathbb{R}$ is continuous and
	its Fr\'{e}chet partial derivatives $\mathcal{S}_t$, $\mathcal{S}_x$, $\mathcal{S}_{xx}$ are continuous and bounded on bounded subsets of $[0,T] \times \mathcal{X'}$. Then, the following It\^{o} formula holds $P$-$a.s.$ for all $t\in[0,T]$:
	\begin{equation}\label{Itoformula}
	\begin{split}
	&\mathcal{S}(t,X(t))=\mathcal{S}(0,X(0)) +\int_0^t \langle \mathcal{S}_x(s,X(s)), \Phi(s)\emph{d}B_s \rangle\\
	&+\!\int_0^t \left[\mathcal{S}_t(s,X(s))\!+\!\langle \mathcal{S}_x(s,X(s)), \Psi(s)\rangle \right]\emph{ds} \!+\! \frac{1}{2} \emph{tr}[\mathcal{S}_{xx}(s,X(s))\Phi(s)(\Phi(s))^*]\emph{ds} .
	\end{split}
	\end{equation}
\end{theorem}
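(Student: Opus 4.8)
The statement is the It\^o formula of \cite[Theorem 2.9]{Gaw2011}, and the plan is to prove it by the classical route: localize so that the derivatives become bounded, reduce to simple integrands where the computation is essentially finite-dimensional, and then remove both reductions by an $L^2$-limit built on the It\^o isometry \eqref{QWiener}. First I would note that $X$ has $P$-a.s. continuous paths, since $t\mapsto\int_0^t\Psi(s)\,ds$ is continuous by Bochner integrability and $t\mapsto\int_0^t\Phi(s)\,dB_s$ is a continuous martingale by Theorem \ref{Itoiso}. Hence the stopping times $\tau_N=\inf\{t:\|X(t)\|_{\mathcal{X'}}\ge N\}$ satisfy $\tau_N\to\infty$ a.s., and on each interval $[0,t\wedge\tau_N]$ the paths of $X$ stay in a bounded set on which $\mathcal{S}_t,\mathcal{S}_x,\mathcal{S}_{xx}$ are bounded and uniformly continuous. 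It therefore suffices to prove \eqref{Itoformula} on $[0,t\wedge\tau_N]$ and let $N\to\infty$.

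Working on the localized interval, I would first establish the formula for a simple (piecewise-constant in time) integrand $\Phi$ and a step process $\Psi$, with respect to a partition $0=t_0<\cdots<t_n=t$. On each subinterval $\Phi\equiv\Phi_k\in\mathcal{L}_2(\mathcal{X}_\mathcal{Q},\mathcal{X'})$ is constant, so the increment $\int_{t_k}^{t_{k+1}}\Phi\,dB_s=\sum_j\lambda_j^{1/2}\,\Delta\beta_j^k\,\Phi_k\varphi_j$ is a sum of independent Gaussian increments driven by the scalar Brownian motions $\beta_j$. Applying the finite-dimensional It\^o formula to the telescoping increments $\mathcal{S}(t_{k+1},X(t_{k+1}))-\mathcal{S}(t_k,X(t_k))$ and collecting terms, the first-order time and drift contributions assemble into $\int_0^t[\mathcal{S}_t+\langle\mathcal{S}_x,\Psi\rangle]\,ds$, the first-order martingale contributions into the It\^o integral $\int_0^t\langle\mathcal{S}_x,\Phi\,dB_s\rangle$, and the second-order correction into $\tfrac12\sum_k\Delta t_k\sum_j\lambda_j\langle\mathcal{S}_{xx}(t_k,X(t_k))\Phi_k\varphi_j,\Phi_k\varphi_j\rangle$. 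By the independence and unit variance of the $\Delta\beta_j^k/\sqrt{\Delta t_k}$, the off-diagonal ($j\ne j'$) second-order terms vanish in the mean, and the diagonal sum is exactly $\tfrac12\sum_k\Delta t_k\,\mathrm{tr}[\mathcal{S}_{xx}(t_k,X(t_k))\Phi_k\Phi_k^*]$ in the $\mathcal{L}_2(\mathcal{X}_\mathcal{Q},\mathcal{X'})$ normalization of the excerpt; as the mesh tends to zero this converges to $\tfrac12\int_0^t\mathrm{tr}[\mathcal{S}_{xx}\Phi\Phi^*]\,ds$, which is the It\^o correction in \eqref{Itoformula}.

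It then remains to pass from simple $(\Phi,\Psi)$ to the general case. I would choose simple processes $\Phi_m\to\Phi$ in $\Lambda_2(\mathcal{X}_\mathcal{Q},\mathcal{X'})$ and step processes $\Psi_m\to\Psi$ in $L^1([0,T];\mathcal{X'})$, and verify that each term in the formula converges: the stochastic integrals $\int_0^t\Phi_m\,dB_s$ and $\int_0^t\langle\mathcal{S}_x,\Phi_m\,dB_s\rangle$ converge in $L^2(\Omega)$ by the It\^o isometry \eqref{QWiener}, the drift and trace integrals converge by dominated convergence using the uniform bounds on $\mathcal{S}_x,\mathcal{S}_{xx}$ and the trace-class bound $\sum_j\lambda_j<\infty$, and $\mathcal{S}(t,X_m(t))\to\mathcal{S}(t,X(t))$, where $X_m$ is built from $(\Phi_m,\Psi_m)$, by continuity of $\mathcal{S}$ together with $X_m\to X$. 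Extracting an a.s.-convergent subsequence identifies the limits and yields \eqref{Itoformula} on $[0,t\wedge\tau_N]$; letting $N\to\infty$ completes the proof.

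The main obstacle is the second-order term in infinite dimensions. Unlike the scalar case, one must show that the random quadratic variation $\sum_{j}\lambda_j\langle\mathcal{S}_{xx}\Phi_k\varphi_j,\Phi_k\varphi_j\rangle$ of the countably many Brownian modes both converges as a series (here the trace-class property of $\mathcal{Q}$, i.e. $\sum_j\lambda_j<\infty$ together with the uniform bound on $\mathcal{S}_{xx}$, is indispensable) and concentrates, after summation over $k$, onto its predictable compensator $\tfrac12\int_0^t\mathrm{tr}[\mathcal{S}_{xx}\Phi\Phi^*]\,ds$ as the mesh shrinks. Controlling these fluctuations, together with the Taylor remainders governed by the modulus of continuity of $\mathcal{S}_{xx}$ on the localized region, is where the Hilbert-space It\^o isometry and the Burkholder--Davis--Gundy inequality for $\mathcal{X'}$-valued martingales do the essential work.
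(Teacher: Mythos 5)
This theorem is not proved in the paper at all: it is imported verbatim from \cite[Theorem 2.9]{Gaw2011}, so there is no in-paper argument to compare against. Your sketch reproduces the standard proof from that monograph --- path continuity and localization by stopping times, reduction to simple integrands with a telescoping second-order Taylor expansion, concentration of the quadratic-variation sums onto the compensator $\tfrac{1}{2}\int_0^t \mathrm{tr}\left[\mathcal{S}_{xx}\Phi(\Phi)^*\right]\mathrm{d}s$ in the $\mathcal{L}_2(\mathcal{X}_\mathcal{Q},\mathcal{X'})$ normalization (where the trace-class property of $\mathcal{Q}$ and the Burkholder--Davis--Gundy inequality are indeed the essential tools), and an $L^2$-limit via the It\^{o} isometry \eqref{QWiener} --- and it is correct as an outline, i.e.\ it takes essentially the same approach as the proof the paper defers to.
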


\section{Convergence analysis}
\label{sec-convergence}

Throughout this paper, the notation $c$, with a subscript and/or superscript, denotes a generic constant, which may differ at each occurrence but is always independent of the noise level $\delta$ and the time variable $t$. An example of these constants can be found in Table \ref{NotationTable} in Appendix D.  The notation $(\mathbb{E}[X^a])^{b}$ will be written as $\mathbb{E}[X^a]^b$, with $a,b>0$ for simplification.

\begin{prop}\label{prop1}
	Let $x^\delta(t)$ be the solution to problem \eqref{initial value problem} for $T>0$; then,
	\begin{equation}\label{a1201}
	\frac{\textmd{d}}{\textmd{dt}}\mathbb{E}[ \|{F( {{x^\delta }( t )} )-{y^\delta }}\|^2]=-2\mathbb{E}[\| F'{( {{x^\delta }( t )} )^ * }[ {F( {{x^\delta }( t )} )-{y^\delta }} ]\|^2].
	\end{equation}
\end{prop}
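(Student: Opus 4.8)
The plan is to apply the infinite-dimensional It\^o formula of Theorem \ref{Itoform} to the scalar functional $\mathcal{S}(x) := \|F(x) - y^\delta\|^2$ evaluated along the solution $x^\delta(t)$ of \eqref{initial value problem}. Reading off the integral form \eqref{cg01}, the drift is $\Psi(s) = F'(x^\delta(s))^*[y^\delta - F(x^\delta(s))]$ and the diffusion coefficient is $\Phi(s) = f(s)$, acting as the scalar multiple of the canonical embedding $\mathcal{X}_\mathcal{Q}\hookrightarrow\mathcal{X}$ (so that $\Phi(s)\Phi(s)^* = f(s)^2\mathcal{Q}$). Since $\mathcal{S}$ has no explicit $t$-dependence we have $\mathcal{S}_t \equiv 0$; its first Fr\'echet derivative is $\mathcal{S}_x(x) = 2F'(x)^*[F(x) - y^\delta]$, and its second derivative is the self-adjoint operator determined by $\mathcal{S}_{xx}(x)[h,h] = 2\|F'(x)h\|^2 + 2\langle F(x)-y^\delta,\, F''(x)[h,h]\rangle$.

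First I would substitute these data into \eqref{Itoformula}. The stochastic-integral term $\int_0^t \langle\mathcal{S}_x(x^\delta(s)),\, \Phi(s)\,\mathrm{d}B_s\rangle$ is a mean-zero, square-integrable martingale by Theorem \ref{Itoiso}, so it vanishes upon taking $\mathbb{E}$. Differentiating the resulting identity in $t$ leaves exactly two contributions: the drift term $\mathbb{E}[\langle\mathcal{S}_x(x^\delta(t)),\Psi(t)\rangle]$ and the It\^o correction $\tfrac12\mathbb{E}[\mathrm{tr}(\mathcal{S}_{xx}(x^\delta(t))\Phi(t)\Phi(t)^*)]$. The drift term is immediate: since $\mathcal{S}_x = 2F'(x)^*(F(x)-y^\delta)$ while $\Psi = -F'(x)^*(F(x)-y^\delta)$, their pairing collapses to $-2\|F'(x^\delta(t))^*[F(x^\delta(t))-y^\delta]\|^2$, which is precisely the right-hand side claimed in \eqref{a1201}. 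In effect, the drift alone reproduces the deterministic identity $\frac{\mathrm{d}}{\mathrm{d}t}\|F(x)-y^\delta\|^2 = -2\|F'(x)^*[F(x)-y^\delta]\|^2$ that one obtains for Showalter's flow \eqref{CLand} by a plain chain rule.

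The step I expect to be the main obstacle is the It\^o correction term. Using $\Phi\Phi^* = f(t)^2\mathcal{Q}$ and the eigenbasis $\{\varphi_j\}$ of $\mathcal{Q}$, it equals $f(t)^2\,\mathrm{tr}(F'(x^\delta(t))\mathcal{Q}F'(x^\delta(t))^*) + f(t)^2\sum_j \lambda_j\langle F(x^\delta(t))-y^\delta,\, F''(x^\delta(t))[\varphi_j,\varphi_j]\rangle$, which is exactly the quadratic-variation contribution that the formal chain-rule computation of the previous paragraph omits. Already in the linear case $F(x)=Ax$ this reduces to $f(t)^2\,\mathrm{tr}(A\mathcal{Q}A^*)\ge 0$, which is genuinely nonzero, so obtaining \eqref{a1201} exactly as written forces this correction to drop out: the heart of the argument must be a demonstration that this term vanishes (or is absorbed) under the design of $f$ and the structure of the $\mathcal{Q}$-Wiener process, presumably in combination with the bound $\|F'\|\le 1$ from Assumption \ref{assumption-1}(i). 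A secondary technical point is that invoking Theorem \ref{Itoform} requires $\mathcal{S}$ to be twice Fr\'echet differentiable with derivatives bounded on bounded sets, i.e. $C^2$-regularity of $F$ beyond what Assumption \ref{assumption-1} guarantees; I would either strengthen the smoothness hypothesis on $F$ or first apply the formula to a mollified functional and pass to the limit.
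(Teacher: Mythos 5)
Your setup is the right one and your drift computation matches the paper's: differentiate $\|F(x^\delta(t))-y^\delta\|^2$ along the flow \eqref{initial value problem}, kill the stochastic integral in expectation via the martingale property (Theorem \ref{Itoiso}), and read off $-2\mathbb{E}[\|F'(x^\delta(t))^*[F(x^\delta(t))-y^\delta]\|^2]$ from the pairing of $\mathcal{S}_x$ with the drift. But as a proof of the proposition your proposal is incomplete: you correctly reduce everything to showing that the It\^{o} correction $\tfrac12\,\mathrm{tr}[\mathcal{S}_{xx}(x^\delta(t))\Phi(t)\Phi(t)^*] = f(t)^2\,\mathrm{tr}\bigl(F'(x^\delta(t))\mathcal{Q}F'(x^\delta(t))^*\bigr) + f(t)^2\sum_j \lambda_j\langle F(x^\delta(t))-y^\delta,\,F''(x^\delta(t))[\varphi_j,\varphi_j]\rangle$ vanishes, and then leave that step open --- and your own linear-case observation ($f(t)^2\,\mathrm{tr}(A\mathcal{Q}A^*)>0$ unless $A\mathcal{Q}^{1/2}=0$) shows it does not vanish in general; the bound $\|F'\|\le 1$ from Assumption \ref{assumption-1}(i) only caps it by $f(t)^2\,\mathrm{tr}\,\mathcal{Q}$ plus the $F''$ term. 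So what you have established is that \eqref{a1201} as an exact equality requires either an additional structural hypothesis or an additional term on the right-hand side, not the equality itself.

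What you could not see blind is that the paper's proof never confronts this term: it writes $\mathrm{d}\|F(x^\delta(t))-y^\delta\|^2 = 2\langle F'(x^\delta(t))\,\mathrm{d}x^\delta(t),\,F(x^\delta(t))-y^\delta\rangle$, i.e. the first-order chain rule, substitutes the SDE, and proceeds --- exactly your ``formal chain-rule computation'' with the quadratic-variation contribution silently dropped. This is not an application of Theorem \ref{Itoform} (whose $C^2$-regularity hypotheses, as you note, Assumption \ref{assumption-1} does not supply either), and it is internally inconsistent with the paper's own proof of Proposition \ref{prop3}, where for the genuinely quadratic functional $\|x-x^*\|^2$ the analogous correction $\int_0^t\mathrm{tr}[f(s)(f(s))^*]\,\mathrm{ds}$ \emph{is} retained in \eqref{a11}. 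The repairs are the ones you gesture at: either augment the right side of \eqref{a1201} by the expectation of the correction term --- after which the monotone non-increase of the discrepancy, the choice \eqref{ft} of $f$, and the well-posedness of the stopping rule \eqref{stopping} must be re-derived by absorbing this nonnegative contribution using the smallness bound \eqref{hg}, at the cost of the exact equality and the unconditional sign --- or impose structure (e.g.\ finite-rank noise annihilated by $F'$) under which it vanishes; your mollification remark handles the regularity issue but not this one. In short: your diagnosis is sharper than the paper's argument, but the step you flag as the heart of the matter is precisely the step the paper omits rather than resolves, and your proposal does not close it either.
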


\begin{proof}
	Note that
	\begin{equation*}
	\textmd{d}\| F(x^\delta(t))-y^\delta\|^2 = 2\left\langle F'(x^\delta(t))\textmd{d}x^\delta(t), F(x^\delta(t))-y^\delta \right\rangle.
	\end{equation*}
	When \eqref{initial value problem} is inserted into the above equality, it easily follows that
	\begin{equation*}
	\begin{split}
	\textmd{d}\| F(x^\delta(t))-y^\delta\|^2
	=-&2\| F'{\left( {{x^\delta }\left( t \right)} \right)^ * }\left[ {F\left( {{x^\delta }\left( t \right)} \right)-{y^\delta }} \right]\|^2\textmd{dt} \\
	+ &2\left\langle F'(x^\delta(t))^*(F(x^\delta(t))-y^\delta), f\left( t \right)\textmd{d}{B_t} \right\rangle.
	\end{split}
	\end{equation*}
	Then, taking the integration of the above equality from $0$ to $t$, we obtain
	\begin{equation}\label{yaa}
	\begin{split}
	\int_0^t\textmd{d}\left\| F(x^\delta(s))-y^\delta\right\|^2 =& \left\|{F\left( {{x^\delta }\left( t \right)} \right)-{y^\delta }}\right\|^2-\left\| F(\bar x)- y^\delta \right\|^2 \\
	=&-2\int_0^t \left\| F'{\left( {{x^\delta }\left( s \right)} \right)^ * }\left[ {F\left( {{x^\delta }\left( s \right)} \right)-{y^\delta }} \right]\right\|^2\textmd{ds} \\
	&+ 2\int_0^t \left\langle F'(x^\delta(s))^*(F(x^\delta(s))-y^\delta), f\left( s \right)\textmd{d}{B_s} \right\rangle.
	\end{split}
	\end{equation}
	Recalling Theorem \ref{Itoiso}, in which $\int_0^t f\left( s \right)\textmd{d}{B_s}$ is a martingale, the following holds:
	
	\begin{equation*}
	\mathbb{E}[ \|{F( {{x^\delta }( t )} )-{y^\delta }}\|^2]
	= \left\| F(\bar x)- y^\delta \right\|^2 -2\mathbb{E} \int_0^t \left\| F'{\left( {{x^\delta }\left( s \right)} \right)^ * }\left[ {F\left( {{x^\delta }\left( s \right)} \right)-{y^\delta }} \right]\right\|^2\textmd{ds}.
	\end{equation*}
	Consequently,
	\begin{equation*}
	\begin{split}
	\frac{\textmd{d}}{\textmd{dt}}\mathbb{E}[ \|{F\left( {{x^\delta }\left( t \right)} \right)-{y^\delta }}\|^2]&=-2\frac{\textmd{d}}{\textmd{dt}}\mathbb{E}\int_0^t \left\| F'{\left( {{x^\delta }\left( s \right)} \right)^ * }\left[ {F\left( {{x^\delta }\left( s \right)} \right)-{y^\delta }} \right]\right\|^2\textmd{ds}\\ \nonumber
	&=-2\mathbb{E}[\| F'{\left( {{x^\delta }\left( t \right)} \right)^ * }\left[ {F\left( {{x^\delta }\left( t \right)} \right)-{y^\delta }} \right]\|^2] \le 0,
	\end{split}
	\end{equation*}
	which completes the proof.
\end{proof}
Property \eqref{a1201} shows that the discrepancy $\mathbb{E}[\|{F\left( {{x^\delta }\left( t \right)} \right)-{y^\delta }}\|^2]$ as a function of $t$ is monotonically non-increasing.
This serves as an indication to choose the auxiliary function $f(t)$, namely
\begin{equation}\label{ft}
f(t):= \delta \mathbb{E}[\|{F\left( {{x^\delta }\left( t \right)} \right)-{y^\delta }}\|^2]^\frac{1}{2} g(t),
\end{equation}
where $g(t)$ can be any monotonically decreasing function such that
\begin{equation}\label{gt}
\lim_{t\to\infty} g(t) =0, \quad g(0)=\frac{\sqrt{\epsilon_0 \eta}}{\delta_0}, \quad \epsilon_0\in\left(0,2\left(\frac{1}{\eta}-1\right) \right).
\end{equation}

It is clear that $f(t)\in L^\infty (\mathbb{R}_+)$, which exactly coincides with the restriction on $f(t)$ given in the initial value problem \eqref{initial value problem}. Moreover, by the definition of $g(t)$ we deduce the following inequality:
\begin{equation}\label{hg}
\left|f(t)\right|^2 \le \delta^2  \mathbb{E}[\|{F\left( {{x^\delta }\left( t \right)} \right)-{y^\delta }}\|^2] [g(0)]^2 \le \epsilon_0 \eta  \mathbb{E}[\|{F\left( {{x^\delta }\left( t \right)} \right)-{y^\delta }}\|^2],
\end{equation}
which will play an important role in further analysis.

The following proposition shows the well-posedness of initial value problem \eqref{initial value problem}.

\begin{prop}\label{prop0}
	For any monotonically decreasing function $g(t)$ satisfying \eqref{gt}, stochastic differential equation \eqref{initial value problem} has a unique solution $x^\delta (t) \in \mathcal{X}$, given by
	\begin{equation}\label{cg01}
	{x^\delta }\left( t \right) = \bar x + \int_0^t {F'{{\left( {{x^\delta }\left( s \right)} \right)}^ * }\left[ {{y^\delta } - F\left( {{x^\delta }\left( s \right)} \right)} \right]\textmd{ds}}  + \int_0^t {f\left( s \right)\textmd{d}{B_s}}.
	\end{equation}
\end{prop}

\begin{proof}
	Equation \eqref{cg01} follows immediately by integrating stochastic differential equation \eqref{initial value problem} from $0$ to $t$. We only need to deal with the stochastic term of \eqref{cg01} here.
	
	Recall that $B_t$ is an $\mathcal{X}$-valued $\mathcal{Q}$-Wiener process and $f(t)\in L^\infty (\mathbb{R}_+)$. Then according to It\^{o}'s isometry, introduced in Theorem \ref{Itoiso}, we have
	\begin{equation*}
	\mathbb{E}\left\|\int_0^t {f\left( s \right)\textmd{d}{B_s}}\right\|^2=\mathbb{E}\int_0^t \left|f\left( s \right)\right|^2 \textmd{ds} < \infty
	\end{equation*}
	for all $0\le t \le T$. The proposition thus holds true.
\end{proof}

We are now able to prove the following proposition:
\begin{prop}\label{prop3}
	Let $x^*$ and $x^\delta(t)$ be the solutions to problems \eqref{main} and \eqref{initial value problem}, respectively. If Assumption \ref{assumption-1} is satisfied, then the following inequality holds:
	\begin{equation}\label{monoton}
	\begin{split}
	&\frac{\textmd{d}}{{\textmd{dt}}}\mathbb{E}[ {{{\| {{x^\delta }\left( t \right) - {x^*}} \|}^2}} ]\\
	&\le\left((2+\epsilon_0)\eta-2\right)\left(\mathbb{E} [{{{\| {F\left( {{x^\delta }\left( t \right)} \right) - {y^\delta }} \|^2}}}]^\frac{1}{2} - \frac{{2 + 2\eta }}{2-(2+\epsilon_0)\eta}\delta \right)\mathbb{E}[ {\| {F\left( {{x^\delta }\left( t \right)} \right) - {y^\delta }} \|^2}]^\frac{1}{2}.
	\end{split}
	\end{equation}
	Furthermore, in the case of $\delta=0$, the following also holds:
	\begin{equation}\label{a1213}
	\lim_{t\to\infty} \mathbb{E} [{{{\| {F\left( {{x}\left( t \right)} \right) - {y }} \|}}}^2]=0.
	\end{equation}
\end{prop}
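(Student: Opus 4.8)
The plan is to treat the two claims separately: I would derive the differential inequality \eqref{monoton} by applying the infinite-dimensional It\^o formula (Theorem \ref{Itoform}) to the squared distance to $x^*$, and then extract the limit \eqref{a1213} from \eqref{monoton} combined with the monotonicity already recorded in Proposition \ref{prop1}.

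First I would apply \eqref{Itoformula} to the functional $\mathcal{S}(t,x)=\|x-x^*\|^2$ along the process $x^\delta(t)$ solving \eqref{initial value problem}. Here $\mathcal{S}_t=0$, $\mathcal{S}_x=2(x-x^*)$ and $\mathcal{S}_{xx}=2I$, so the It\^o formula gives
\[ \textmd{d}\|x^\delta(t)-x^*\|^2 = 2\langle x^\delta(t)-x^*,\, F'(x^\delta(t))^*[y^\delta-F(x^\delta(t))]\rangle\,\textmd{d}t + 2\langle x^\delta(t)-x^*,\, f(t)\,\textmd{d}B_t\rangle + \|f(t)\|^2\,\textmd{d}t, \]
where the last term is the trace contribution $\tfrac12\textmd{tr}[\mathcal{S}_{xx}f(t)f(t)^*]$, i.e.\ the genuinely stochastic piece. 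Taking expectations annihilates the martingale term by Theorem \ref{Itoiso}, leaving
\[ \frac{\textmd{d}}{\textmd{dt}}\mathbb{E}[\|x^\delta(t)-x^*\|^2] = 2\mathbb{E}[\langle F'(x^\delta(t))(x^\delta(t)-x^*),\, y^\delta-F(x^\delta(t))\rangle] + \mathbb{E}[\|f(t)\|^2]. \]

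Next I would bound the inner-product (bias) term pointwise in $\omega$ via the tangential cone condition \eqref{TCC} with $x=x^\delta(t)$, $\tilde x=x^*$. Writing $r:=F(x^\delta(t))-y^\delta$ and using $F(x^*)=y$ together with $\|y^\delta-y\|\le\delta$, one gets $F'(x^\delta(t))(x^\delta(t)-x^*)=(F(x^\delta(t))-F(x^*))+w$ with $\|w\|\le\eta\|F(x^\delta(t))-F(x^*)\|$, which yields
\[ 2\langle F'(x^\delta(t))(x^\delta(t)-x^*),\, y^\delta-F(x^\delta(t))\rangle \le -2(1-\eta)\|r\|^2 + 2(1+\eta)\delta\|r\|. \]
Taking expectations, applying Jensen's inequality ($\mathbb{E}\|r\|\le\mathbb{E}[\|r\|^2]^{1/2}$) to the cross term and the key bound \eqref{hg} to the variance term ($\mathbb{E}[\|f(t)\|^2]\le\epsilon_0\eta\,\mathbb{E}[\|r\|^2]$), I would collect the coefficient of $\mathbb{E}[\|r\|^2]$ into $-(2-(2+\epsilon_0)\eta)$ and that of $\delta\,\mathbb{E}[\|r\|^2]^{1/2}$ into $2+2\eta$; factoring these reproduces \eqref{monoton} exactly.

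For \eqref{a1213} I set $\delta=0$, so $x^\delta=x$, $y^\delta=y$, and \eqref{monoton} collapses to
\[ \frac{\textmd{d}}{\textmd{dt}}\mathbb{E}[\|x(t)-x^*\|^2]\le -(2-(2+\epsilon_0)\eta)\,\mathbb{E}[\|F(x(t))-y\|^2], \]
where the restriction $\epsilon_0\in(0,2(1/\eta-1))$ in \eqref{gt} guarantees $2-(2+\epsilon_0)\eta>0$. Thus $\mathbb{E}[\|x(t)-x^*\|^2]$ is nonnegative and non-increasing, hence has a finite limit, and integrating over $[0,\infty)$ shows $\int_0^\infty\mathbb{E}[\|F(x(t))-y\|^2]\,\textmd{d}t<\infty$. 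Since Proposition \ref{prop1} already gives that $t\mapsto\mathbb{E}[\|F(x(t))-y\|^2]$ is non-increasing, and a nonnegative non-increasing integrable function must tend to $0$, the limit \eqref{a1213} follows.

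The main obstacle I anticipate is the rigorous handling of the stochastic trace term: one must justify applying Theorem \ref{Itoform} along $x^\delta(t)$, identify the drift contribution $\tfrac12\textmd{tr}[\mathcal{S}_{xx}f(t)f(t)^*]$ with $\|f(t)\|^2_{\mathcal{L}_2(\mathcal{X}_\mathcal{Q},\mathcal{X})}$, and dominate it by \eqref{hg} so that its \emph{positive} sign does not overwhelm the dissipative bias term --- this is precisely why $\epsilon_0$ is constrained in \eqref{gt}. A secondary technical point is ensuring $x^\delta(t)\in B_r(\bar x)$ for all relevant $t$, so that Assumption \ref{assumption-1} may legitimately be invoked along the trajectory; I would settle this by a continuity/bootstrap argument exploiting the very monotonicity that \eqref{monoton} provides.
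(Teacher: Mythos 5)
Your proposal is correct and takes essentially the same route as the paper's own proof: It\^{o}'s formula for the $\mathcal{Q}$-Wiener process applied to $\|x^\delta(t)-x^*\|^2$ (the paper works with the integrated identity \eqref{a11} and then differentiates, which is the same computation), cancellation of the martingale term, the tangential-cone decomposition of the drift combined with Jensen's inequality and the bound \eqref{hg} to absorb the trace term, and, for $\delta=0$, integration of \eqref{monoton} together with the monotonicity from Proposition \ref{prop1} to pass from integrability of $\mathbb{E}[\|F(x(t))-y\|^2]$ to its vanishing limit. You even make explicit the monotonicity step in that last implication, which the paper leaves implicit.
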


\begin{proof}
	It is clear that equation \eqref{cg01} is equal to
	\begin{equation}\label{cg02}
	{x^\delta }\left( t \right) - {x^*} = \bar x - {x^*} + \int_0^t {F'{{\left( {{x^\delta }\left( s \right)} \right)}^ * }\left[ {{y^\delta } - F\left( {{x^\delta }\left( s \right)} \right)} \right]\textmd{ds}}  + \int_0^t {f\left( s \right)\textmd{d}{B_s}}.
	\end{equation}
	From It\^{o}'s formula, as shown in Theorem \ref{Itoform}, we have
	\begin{equation}\label{a11}
	\begin{split}
	{\left\| {{x^\delta }\left( t \right)\! - \!{x^*}} \right\|^2} \!=\! {\left\| {\bar x \!-\! {x^*}} \right\|^2} \!&+\! 2\!\int_0^t {\!\!\!\left\langle {{x^\delta }\left( s \right) \!- \! {x^*},F'{{\left( {{x^\delta }\left( s \right)} \right)}^ * }\!\left[ {{y^\delta }\! - \!F\left( {{x^\delta }\left( s \right)} \right)} \right]}\! \right\rangle } \textmd{ds}\\
	& \!\!\!\!\!\! \!\!\!\! \!\!\!\! \!\!\!\!\!\!+ 2 \int_0^t {\left\langle {{x^\delta }\left( s \right) - {x^*},f\left( s \right)\textmd{d}{B_s}} \right\rangle }  + \int_0^t {\textmd{tr}\left[ {f\left( s \right){{\left( {f\left( s \right)} \right)}^*}} \right]\textmd{ds}}.
	\end{split}
	\end{equation}
	Using the fact that $\int_0^t f\left( s \right)\textmd{d}{B_s}$ is a martingale (see Theorem \ref{Itoiso}), we obtain
	\begin{equation}\label{cg04}
	\begin{split}
	\mathbb{E}[ {{{\| {{x^\delta }\left( t \right) - {x^*}} \|}^2}} ]& = {{{\left\| {\bar x - {x^*}} \right\|}^2}} + {\int_0^t {\left|{f\left( s \right)} \right|^2\textmd{ds}} }\\
	&~~+ 2\mathbb{E} {\int_0^t {\left\langle {{x^\delta }\left( s \right) - {x^*},F'{{\left( {{x^\delta }\left( s \right)} \right)}^ * }\left[ {{y^\delta } - F\left( {{x^\delta }\left( s \right)} \right)} \right]} \right\rangle } \textmd{ds}}.
	\end{split}
	\end{equation}
    Differentiating \eqref{cg04} with respect to $t$, we get
	\begin{equation*}
	\frac{\textmd{d}}{{\textmd{dt}}}\mathbb{E}[ {{{\| {{x^\delta }\left( t \right) - {x^*}} \|}^2}} ]=2\mathbb{E}\left[ {\left\langle {F'\left( {{x^\delta }\left( t \right)} \right)\left( {{x^\delta }\left( t \right) - {x^*}} \right), {{y^\delta } - F\left( {{x^\delta }\left( t \right)} \right)} } \right\rangle } \right] + {| {f\left( t \right)} |^2}.
	\end{equation*}
	If
	\[{\Delta} := \left\langle {F'\left( {{x^\delta }\left( t \right)} \right)\left( {{x^\delta }\left( t \right) - {x^*}} \right), {{y^\delta } - F\left( {{x^\delta }\left( t \right)} \right)} } \right\rangle,\]
	we then have
	\begin{equation*}
	\begin{split}
	\left\| {{\Delta}} \right\| \le& \left\|\left\langle {F\left( {{x^*}} \right) - F\left( {{x^\delta }\left( t \right)} \right) - F'\left( {{x^\delta }\left( t \right)} \right)\left( {{x^*} - {x^\delta }\left( t \right)} \right),{y^\delta } - F\left( {{x^\delta }\left( t \right)} \right)} \right\rangle\right\| \\
	&+ \left\|\left\langle {F\left( {{x^\delta }\left( t \right)} \right) \!-\! {y^\delta },{y^\delta } - F\left( {{x^\delta }\left( t \right)} \right)} \right\rangle\right\|  + \left\|\left\langle {{y^\delta }\! - y,{y^\delta } - F\left( {{x^\delta }\left( t \right)} \right)} \right\rangle\right\| \\
	\le &\left( {\eta  - 1} \right){\left\| {F\left( {{x^\delta }\left( t \right)} \right) - {y^\delta }} \right\|^2} + \left( {1 + \eta } \right)\delta \left\| {F\left( {{x^\delta }\left( t \right)} \right) - {y^\delta }} \right\|.
	\end{split}
	\end{equation*}
	Thus, it follows from \eqref{hg} that
	\begin{equation*}
	\begin{split}
	&\frac{\textmd{d}}{{\textmd{dt}}}\mathbb{E}[ {{{\| {{x^\delta }\left( t \right) - {x^*}} \|}^2}} ] \le 2\mathbb{E}[\left\|\Delta\right\|]+{|{f\left( t \right)} |^2} \\
	&\le2\left( {\eta \! - \!1} \right)\mathbb{E}[ {{{\| {F\left( {{x^\delta }\left( t \right)} \right) - {y^\delta }} \|}^2}} ] + 2 \delta \left( {1 \!+ \!\eta } \right)\mathbb{E}[ {\| {F\left( {{x^\delta }\left( t \right)} \right) - {y^\delta }} \|}] \!+\!{| {f\left( t \right)} |^2}\\
	&\le\left((2+\epsilon_0)\eta\!-\!2\right)\mathbb{E}[ {{{\| {F\left( {{x^\delta }\left( t \right)} \right) - {y^\delta }} \|}^2}} ]+2 \delta \left( {1 + \eta } \right)\mathbb{E}[ {{{\| {F\left( {{x^\delta }\left( t \right)} \right) - {y^\delta }} \|}^2}} ]^\frac{1}{2}\\
	&=\left((2+\epsilon_0)\eta\!-\!2\right)\left(\mathbb{E}[ {{{\| {F\left( {{x^\delta }\left( t \right)} \right) - {y^\delta }} \|}^2}} ]^\frac{1}{2} - \frac{{2 + 2\eta }}{2-(2+\epsilon_0)\eta}\delta \right)\mathbb{E}[ {{{\| {F\left( {{x^\delta }\left( t \right)} \right) - {y^\delta }} \|}^2}} ]^\frac{1}{2},
	\end{split}
	\end{equation*}
	which leads to the first assertion.
	
	In addition, in the case of $\delta=0$, by integrating the above inequality we obtain
	\begin{equation*}
	\int_0^\infty\frac{\textmd{d}}{{\textmd{dt}}}\mathbb{E}[ {{{\| {{x }\left( t \right) - {x^*}} \|}^2}} ] \le -\left(2-(2+\epsilon_0)\eta\right)\int_0^\infty \mathbb{E} [{{{\| {F\left( {{x }\left( t \right)} \right) - {y}} \|}}}^2] \textmd{dt},
	\end{equation*}
	which implies the second assertion, since
	\begin{equation}\label{a1214}
	\int_0^\infty \mathbb{E} [{{{\| {F\left( {{x }\left( t \right)} \right) - {y}} \|}}}^2] \textmd{dt} \le \frac{1}{2-(2+\epsilon_0)\eta} {{{\| {{\bar x} - {x^*}} \|}^2}}.
	\end{equation}
\end{proof}

Property \eqref{monoton} shows that the error $\mathbb{E}[ {{{\| {{x^\delta }\left( t \right) - {x^*}} \|}^2}} ]$ as a function of $t$ is strongly monotonically decreasing as long as $\mathbb{E}[ {\| {F\left( {{x^\delta }\left( t \right)} \right) - {y^\delta }} \|}^2]\geq \tau^2 \delta^2$ holds with $\tau>({2 + 2\eta })/({2-(2+\epsilon_0)\eta})$.
This leads us to consider the following modified version of Morozov's discrepancy principle to determine the stopping time; i.e. $t=t^*$ is chosen such that
\begin{equation}\label{stopping}
t^* = \inf\left\{ t>0: H(t) <0 \right\},
\end{equation}
where
\begin{equation}\label{hstopping}
H(t):=\mathbb{E} [{{{\| {F\left( {{x^\delta }\left( t \right)} \right) - {y^\delta }} \|}}}^2] - \tau^2\delta^2, \quad \tau >({2 + 2\eta })/({2-(2+\epsilon_0)\eta}).
\end{equation}

\begin{example}
	If $\eta=3/4$ and $\epsilon_0$ is chosen to be a very small number, $\tau$ can be chosen such that $\tau>7$. If we choose $\epsilon_0\in\left( \max\{\frac{2-\sqrt{2}}{\eta}-(2+\sqrt{2}),0\}, 2\left(\frac{1}{\eta}-1\right) \right)$, we have $\tau>\sqrt{2}$, which will be used in the proof of the convergence-rate results (see, e.g., \eqref{cc} in Proposition \ref{PropositionBiasErr}).
\end{example}

The following proposition indicates that stopping rule \eqref{stopping} above is well defined.

\begin{prop}\label{prop4}
	Let $x^\delta(t)$ be the solution to \eqref{initial value problem} and $x^*$ a solution to \eqref{main}. Assume that ${\|{F\left( {{\bar x }} \right) - {y^\delta }} \|}>\tau \delta$. Then, under Assumption \ref{assumption-1}, there exists a unique $t^*$, defined by \eqref{stopping}.
\end{prop}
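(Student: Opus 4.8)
The plan is to treat $H$ as a deterministic scalar function of $t$ and to show that it is continuous, strictly positive at $t=0$, strictly decreasing as long as it is nonnegative, and negative for some finite time; continuity and the intermediate value theorem then produce a zero, while strict monotonicity on $\{H\ge 0\}$ forces that zero to be unique. First I would record, from the integral identity obtained in the proof of Proposition \ref{prop1},
\[
\mathbb{E}[\|F(x^\delta(t)) - y^\delta\|^2] = \|F(\bar x) - y^\delta\|^2 - 2\,\mathbb{E}\int_0^t \|F'(x^\delta(s))^*[F(x^\delta(s)) - y^\delta]\|^2\,\textmd{ds},
\]
that $H$ is (absolutely) continuous with $H'(t) = -2\,\mathbb{E}[\|F'(x^\delta(t))^*[F(x^\delta(t)) - y^\delta]\|^2]\le 0$, and that $H(0) = \|F(\bar x) - y^\delta\|^2 - \tau^2\delta^2 > 0$, the last because $x^\delta(0)=\bar x$ is non-random and $\|F(\bar x)-y^\delta\|>\tau\delta$ by hypothesis.

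For finiteness I would invoke Proposition \ref{prop3}. Set $\theta := 2-(2+\epsilon_0)\eta>0$ and $D := (2+2\eta)/\theta$, so that $\tau>D$ by \eqref{hstopping}. Whenever $H(t)\ge 0$ we have $\mathbb{E}[\|F(x^\delta(t))-y^\delta\|^2]^{1/2}\ge \tau\delta>D\delta$, and since $g\mapsto(g-D\delta)g$ is increasing for $g\ge \tau\delta$, the right-hand side of \eqref{monoton} is at most $-\theta(\tau-D)\tau\delta^2=:-c<0$. Integrating the resulting differential inequality while $H\ge 0$ on $[0,t]$ gives $0\le \mathbb{E}[\|x^\delta(t)-x^*\|^2]\le \|\bar x-x^*\|^2-ct$, which forces $t\le \|\bar x-x^*\|^2/c$. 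Hence $H$ cannot stay nonnegative beyond that time, so $H(\bar t)<0$ for some finite $\bar t$; combined with $H(0)>0$ and continuity, a zero $t^*\le\|\bar x-x^*\|^2/c<\infty$ exists.

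For uniqueness it suffices to upgrade the monotonicity $H'\le 0$ to a strict inequality on $\{H\ge 0\}$, i.e.\ to show $\mathbb{E}[\|F'(x^\delta(t))^*[F(x^\delta(t))-y^\delta]\|^2]>0$ there. Here I would reuse the pointwise estimate of $\Delta=\langle F'(x^\delta(t))(x^\delta(t)-x^*),y^\delta-F(x^\delta(t))\rangle$ from the proof of Proposition \ref{prop3}: writing $\rho:=\|F(x^\delta(t))-y^\delta\|$, one has, for a.e.\ $\omega$ with $x^\delta(t)\in B_r(\bar x)$, the bound $\Delta\le(\eta-1)\rho^2+(1+\eta)\delta\rho$, which is strictly negative on the event $A:=\{\rho>\tfrac{1+\eta}{1-\eta}\delta\}$; since $\Delta=\langle x^\delta(t)-x^*,\,F'(x^\delta(t))^*(y^\delta-F(x^\delta(t)))\rangle$, this forces $F'(x^\delta(t))^*[F(x^\delta(t))-y^\delta]\ne 0$ on $A$. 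It then remains to verify $P(A)>0$: if $P(A)=0$ then $\rho\le\tfrac{1+\eta}{1-\eta}\delta$ almost surely, whence $\mathbb{E}[\rho^2]\le(\tfrac{1+\eta}{1-\eta})^2\delta^2<\tau^2\delta^2$, using $\tau>D=\tfrac{2+2\eta}{\theta}>\tfrac{1+\eta}{1-\eta}$ (the last step because $\epsilon_0>0$ gives $\theta<2-2\eta$); this contradicts $H(t)\ge 0$. Therefore $\mathbb{E}[\|F'(x^\delta)^*[F-y^\delta]\|^2]\ge\int_A\|F'(x^\delta)^*[F-y^\delta]\|^2\,\textmd{d}P>0$, so $H$ is strictly decreasing while nonnegative, and the crossing $t^*$ is unique.

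The hard part will be this uniqueness step, namely passing from the mean-square discrepancy bound $\mathbb{E}[\rho^2]\ge\tau^2\delta^2$ to strict positivity of the gradient term on a set of positive probability. This relies on the quantitative margin $\tau>\tfrac{1+\eta}{1-\eta}$ (available precisely because $\epsilon_0>0$) and on a pointwise-in-$\omega$ version of the tangential cone condition \eqref{TCC}, which in turn presupposes that the trajectory $x^\delta(t,\omega)$ stays in $B_r(\bar x)$; securing this confinement is what makes the classical deterministic argument carry over to the infinite-dimensional stochastic setting.
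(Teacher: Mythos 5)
Your proof is correct, and it takes a genuinely different route from the paper's. You share the opening (continuity of $H$, $H(0)>0$ from the hypothesis, $H'(t)=-2\,\mathbb{E}[\|F'(x^\delta(t))^*[F(x^\delta(t))-y^\delta]\|^2]\le 0$ via Proposition \ref{prop1}), but the paper then runs a single contradiction argument: it supposes $\mathbb{E}[\|F(x^\delta(t))-y^\delta\|^2]$ sits at a constant level $\tau_0^2\delta^2$, $\tau_0\ge\tau$, on some interval $[t_0,t_0+\varepsilon]$; deduces from $H'=0$ there that $F'(x^\delta(t))^*[F(x^\delta(t))-y^\delta]=0$ a.s., hence via \eqref{cg04} that $(\mathrm{d}/\mathrm{dt})\,\mathbb{E}[\|x^\delta(t)-x^*\|^2]=\|f(t)\|^2$; and feeds this back into the estimate behind \eqref{monoton} to force $\mathbb{E}[\|F(x^\delta(t))-y^\delta\|^2]^{1/2}<\frac{2+2\eta}{2-(2+\epsilon_0)\eta}\delta<\tau\delta$, contradicting $\tau_0\ge\tau$. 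You instead (i) prove finiteness quantitatively, by integrating \eqref{monoton} on $\{H\ge0\}$ with your $\theta=2-(2+\epsilon_0)\eta$ and $D=(2+2\eta)/\theta$ to obtain the explicit a priori bound $t^*\le\|\bar x-x^*\|^2/\bigl(\theta(\tau-D)\tau\delta^2\bigr)$ — the classical Landweber-style stopping-time bound, which the paper's argument does not deliver, and which also handles a scenario the paper's plateau hypothesis does not literally cover (a strictly decreasing $H$ converging to a positive limit produces no constant stretch, yet would defeat existence); and (ii) prove uniqueness by upgrading $H'\le0$ to $H'<0$ on $\{H\ge0\}$ via the positive-probability event $A=\{\rho>\tfrac{1+\eta}{1-\eta}\delta\}$, where your chain $\tau>D>\tfrac{1+\eta}{1-\eta}$ (the last inequality exactly because $\epsilon_0>0$) is verified correctly, and $\Delta<0$ on $A$ indeed forces the gradient term to be nonzero there since $\Delta$ is an inner product against it. Both proofs rest on the same pointwise estimate for $\Delta$ taken from the proof of Proposition \ref{prop3}, and both therefore need the trajectory to remain in $B_r(\bar x)$ pointwise in $\omega$ for the tangential cone condition \eqref{TCC}; the paper itself secures confinement only in expectation (see the remark following the proposition), so the caveat you flag at the end is a shared looseness of the paper's framework rather than a gap specific to your argument.
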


\begin{proof}
	Note that the function $H(t)$ is continuous, with $H(0)={{{\| {F\left( \bar x\right) - {y^\delta }} \|}}}^2-\tau^2 \delta^2>0$, and, according to Proposition \ref{prop1}, $H(t)$ is monotonically non-increasing as long as $t<t^*$.
	
	We now prove the assertion by contradiction. Assuming an outcome contrary to the conclusion, there exists $t_0 <\infty$ such that, for $t \in [t_0 , t_0 +\varepsilon]$ and with some $\varepsilon>0$,
	\begin{equation}\label{pfcontradiction}
	\mathbb{E} [{{{\| {F( {{x^\delta }( t)}) - {y^\delta }} \|}}}^2] = \tau^2_0 \delta^2, \quad \tau_0\geq \tau.
	\end{equation}
	Consequently, $(\textmd{d}/\textmd{dt})\mathbb{E} [{{{\| {F( {{x^\delta }( t )} ) - {y^\delta }} \|}}}^2] = 0$ for all $t \in (t_0 , t_0 +\varepsilon)$. Together with \eqref{a1201}, we have
	\begin{equation*}
	\mathbb{E}[\| F'{\left( {{x^\delta }\left( t \right)} \right)^ * }\left[ {F\left( {{x^\delta }\left( t \right)} \right)-{y^\delta }} \right]\|^2]=0,~~\forall t\in[t_0,t_0+\varepsilon],
	\end{equation*}
	which implies
	\begin{equation*}
	F'{\left( {{x^\delta }\left( t \right)} \right)^ * }\left[ {F\left( {{x^\delta }\left( t \right)} \right)-{y^\delta }} \right]=0,~ a.s. ~\forall t\in[t_0,t_0+\varepsilon].
	\end{equation*}
	Combining this with \eqref{cg04}, we have
	\begin{equation*}
	\mathbb{E}[ {{{\| {{x^\delta }\left( t \right) - {x^*}} \|}^2}}] = {{{\left\| {\bar x - {x^*}} \right\|}^2}} + {\int_0^t {\left|{f\left( s \right)} \right|^2\textmd{ds}} },~~ \forall t\in[t_0,t_0+\varepsilon].
	\end{equation*}
	Taking the derivative of the above equality with respect to $t$ yields
	\begin{equation*}
	\frac{\textmd{d}}{\textmd{dt}}\mathbb{E}[ {{{\| {{x^\delta }\left( t \right) - {x^*}} \|}^2}} ] = \left|f(t)\right|^2,~~\forall t\in[t_0,t_0+\varepsilon].
	\end{equation*}
	Consequently, together with \eqref{monoton}, we have
	\begin{equation*}
	\left( (2+\epsilon_0)\eta  - 2 \right)\mathbb{E}[ {{{\| {F\left( {{x^\delta }\left( t \right)} \right) - {y^\delta }} \|}^2}} ] + 2 \delta \left( {1 + \eta } \right)\mathbb{E}[ {\| {F\left( {{x^\delta }\left( t \right)} \right) - {y^\delta }} \|^2}]^\frac{1}{2} > 0,
	\end{equation*}
	for all $t\in[t_0,t_0+\varepsilon]$; that is,
	\begin{equation*}
	\mathbb{E}[ {{{\| {F\left( {{x^\delta }\left( t \right)} \right) - {y^\delta }} \|}^2}} ] < \left( \frac{2+2\eta}{2-(2+\epsilon_0)\eta}\right)^2\delta^2<\tau^2\delta^2,~~\forall t\in[t_0,t_0+\varepsilon].
	\end{equation*}
	This contradicts our assumption $\tau_0\geq \tau$ in \eqref{pfcontradiction}, and so the proof is completed.
\end{proof}

\begin{remark}
	From the inequalities
	\[\mathbb{E}[\|x^\delta(t)-\bar x\|]^2 \le \mathbb{E}[\|x^\delta(t)-\bar x\|^2] \le \mathbb{E}[\|x^\delta(t)-x^*\|^2] +\|x^*-\bar x\|^2,\]
	it follows that $\mathbb{E}[\|x^\delta(t)-\bar x\|]^2 \le 2\|x^*-\bar x\|^2$ for $t>0$ since $(\textmd{d}/\textmd{dt})\mathbb{E}[\|x^\delta(t)-x^*\|^2]\le 0$ for all $t\le t^*$, and hence $\mathbb{E}[\|x^\delta(t)-\bar x\|] \le 2\|x^*-\bar x\|$. This means that the solution $x^\delta(t)$ remains in $B_r(\bar x)$ with $r=2\|\bar x - x^*\|$ in the sense of expectation.
	Similarly, $\mathbb{E}[\|x(t)-\bar x\|] \le 2\|x^*-\bar x\|$ also holds, since $(\textmd{d}/\textmd{dt})\mathbb{E}[\|x(t)-x^*\|^2]\le 0$ for all $t< \infty$.
\end{remark}

We can now state the convergence of SAR for the exact data $y$, provided Assumption \ref{assumption-1} holds.
\begin{theorem}[convergence for exact data]\label{convergencenoisefree}
	Let $x(t)$ be the solution to \eqref{initial value problem}, with exact data $y$ and $x^*$ a solution to \eqref{main}. If Assumption \ref{assumption-1} is fulfilled, the following holds:
	\begin{equation*}
	\mathop {\lim }\limits_{t  \to \infty} \mathbb{E}[ {{{\| {{x}\left( t \right) - {x^*}}\|}^2}} ] = 0.
	\end{equation*}
	In addition, if $\mathcal{N}(F'(x^\dag))\subset \mathcal{N}(F'(x(t)))$ for all $t>0$,
	\begin{equation*}
	\mathop {\lim }\limits_{t  \to \infty} \mathbb{E}[ {{{\| {{x}\left( t \right) - {x^\dag}}\|}^2}} ] = 0.
	\end{equation*}
\end{theorem}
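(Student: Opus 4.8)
The plan is to exploit the fact that, for exact data, the driving coefficient $f(t)$ in \eqref{ft} vanishes identically, since it is proportional to $\delta$. Thus \eqref{initial value problem} collapses to the deterministic Showalter flow $\dot x(t) = F'(x(t))^*[y - F(x(t))]$ with the non-random initial value $\bar x$. Consequently $x(t)$ is deterministic, the expectation in the statement is vacuous, and the Wiener process plays no role; the problem reduces to the classical convergence analysis of asymptotical regularization. I would record this reduction first and then argue entirely with the deterministic scalars $\|x(t) - x^*\|$ and $\|F(x(t)) - y\|$.

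The backbone is already supplied by Proposition \ref{prop3}: for an arbitrary solution $x^*$ of \eqref{main}, the map $t \mapsto \|x(t) - x^*\|^2$ is non-increasing (so its limit exists and, by the Remark preceding the theorem, the whole trajectory stays in $B_r(\bar x)$ with $r = 2\|\bar x - x^*\|$), while \eqref{a1213} gives $\|F(x(t)) - y\| \to 0$ and \eqref{a1214} gives $\int_0^\infty \|F(x(s)) - y\|^2\,ds < \infty$. The core step is then to show that $x(t)$ is Cauchy as $t\to\infty$. For $t_1 > t_2$ I would start from the elementary identity
\[
\|x(t_1) - x(t_2)\|^2 = \|x(t_2) - x^*\|^2 - \|x(t_1) - x^*\|^2 + 2\langle x(t_1) - x^*, x(t_1) - x(t_2)\rangle,
\]
whose first two terms converge to a common limit. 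For the cross term I would insert $x(t_1) - x(t_2) = \int_{t_2}^{t_1} F'(x(s))^*[y - F(x(s))]\,ds$ and move $F'(x(s))^*$ across the inner product, reducing matters to bounding $\|F'(x(s))(x(t_1) - x^*)\|$. Splitting $x(t_1) - x^* = (x(t_1) - x(s)) + (x(s) - x^*)$ and applying \eqref{F2} together with the tangential cone condition \eqref{TCC} — and crucially using that the residual is monotone (Proposition \ref{prop1}), so that $\|F(x(t_1)) - y\| \le \|F(x(s)) - y\|$ for $s \le t_1$ — bounds $\|F'(x(s))(x(t_1) - x^*)\|$ by a constant multiple of $\|F(x(s)) - y\|$. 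Hence the cross term is controlled by $c\int_{t_2}^{t_1}\|F(x(s)) - y\|^2\,ds$, a tail of a convergent integral, which vanishes as $t_2 \to \infty$. Therefore $x(t) \to \hat x$ for some $\hat x \in B_r(\bar x)$, and by continuity of $F$ (Assumption \ref{assumption-1}(i)) together with $\|F(x(t)) - y\| \to 0$ we obtain $F(\hat x) = y$; this limiting solution is the $x^*$ of the statement, giving the first claim.

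For the second claim I would identify the limit with $x^\dagger$. Writing $x(t) - \bar x = \int_0^t F'(x(s))^*[y - F(x(s))]\,ds$, every integrand lies in $\overline{\operatorname{Range} F'(x(s))^*} = \mathcal{N}(F'(x(s)))^\perp$; under the hypothesis $\mathcal{N}(F'(x^\dagger)) \subset \mathcal{N}(F'(x(s)))$ this is contained in the closed subspace $\mathcal{N}(F'(x^\dagger))^\perp$, so $x(t) - \bar x$, and hence the limit $\hat x - \bar x$, belong to $\mathcal{N}(F'(x^\dagger))^\perp$. On the other hand, $\hat x$ and $x^\dagger$ are both solutions, so Proposition \ref{prop2}(2) gives $\hat x - x^\dagger \in \mathcal{N}(F'(x^\dagger))$; combined with the characterization of the minimum-norm solution, $x^\dagger - \bar x \perp \mathcal{N}(F'(x^\dagger))$, this yields $\hat x - x^\dagger \in \mathcal{N}(F'(x^\dagger)) \cap \mathcal{N}(F'(x^\dagger))^\perp = \{0\}$, whence $\hat x = x^\dagger$.

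I expect the Cauchy estimate to be the main obstacle: the identity itself is trivial, but the bound on $\|F'(x(s))(x(t_1) - x^*)\|$ must be assembled carefully from \eqref{F2}, the tangential cone condition \eqref{TCC}, and the monotonicity of the residual, and one must check that the whole flow genuinely remains in $B_r(\bar x)$ (which the preceding Remark secures) so that these assumptions may legitimately be invoked along the trajectory. Once convergence is established, identifying the limit as a solution and then as $x^\dagger$ is comparatively routine.
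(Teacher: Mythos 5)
Your proposal is correct, and its core is the same Cauchy-sequence scheme as the paper's proof: monotonicity of the error (Proposition \ref{prop3} with $\delta=0$) gives convergence of $\|x(t)-x^*\|^2$; integrability and decay of the residual \eqref{a1213}--\eqref{a1214} control the cross term in the polarization identity, via exactly the bound the paper also assembles from \eqref{F2}, the tangential cone condition \eqref{TCC}, and monotonicity of the residual (your estimate $\|F'(x(s))(x(t_1)-x^*)\|\le 3(1+\eta)\|F(x(s))-y\|$ matches the paper's constant in \eqref{tem1}); and the identification of the limit with $x^\dagger$ via $\mathcal{N}(F'(x^\dagger))\cap\mathcal{N}(F'(x^\dagger))^\perp=\{0\}$ is verbatim the paper's closing argument. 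Where you genuinely diverge is the opening reduction: you observe that \eqref{ft} carries an explicit factor $\delta$, so for exact data $f\equiv 0$, the flow is the deterministic Showalter equation, and every expectation in the statement is trivial. This is sound under the choice \eqref{ft}, which is indeed the choice in force throughout the paper's convergence theory (note \eqref{hg} with $\delta=0$ forces $\|f(t)\|=0$ in any case), and it buys you a strictly more elementary proof -- the entire argument reduces to the classical Hanke--Neubauer--Scherzer/Tautenhahn analysis, with no It\^o isometry or martingale property needed. The paper instead carries the stochastic formalism along (using the martingale property of $\int_t^l f(s)\,\mathrm{d}B_s$ to annihilate the stochastic contribution to the cross term, and Cauchy--Schwarz in expectation), which keeps the notation uniform with the noisy-data case and would remain valid if one considered a nonzero mean-zero forcing, but is vacuous here; your version also makes transparent a point the paper glosses over, namely why $x(t)-\bar x\in\mathcal{N}(F'(x^\dagger))^\perp$: with $f\equiv 0$ the increments are Bochner integrals of elements of $\overline{\operatorname{Range}F'(x(s))^*}=\mathcal{N}(F'(x(s)))^\perp$, with no stochastic term whose membership in that subspace would need justification. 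One small bookkeeping remark: the residual monotonicity you invoke from Proposition \ref{prop1} is stated for $\mathbb{E}[\|F(x^\delta(t))-y^\delta\|^2]$, but after your reduction it is just the deterministic statement $\frac{\mathrm{d}}{\mathrm{dt}}\|F(x(t))-y\|^2\le 0$, obtained by the same computation, so nothing is lost.
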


\begin{proof}
	Let $\hat x$ be any solution to problem \eqref{main}. We introduce the notation
	\begin{equation*}
	e(t):=\hat x- x(t)
	\end{equation*}
	and will show that $\mathbb{E}[\|e(t)\|^2]=0$ as $t\to \infty$.
	
	Let $l$ be any arbitrary real number, with $l>t$; then, we can easily obtain
	\begin{equation*}
	\mathbb{E}[\left\|e(t)-e(l)\right\|^2]=\mathbb{E}[\left\|e(t)\right\|^2]-\mathbb{E}[\left\|e(l)\right\|^2]+2\mathbb{E}[\left\langle e(l)-e(t),e(t)\right\rangle].
	\end{equation*}
	From \eqref{monoton}, with $\delta=0$, it follows that $\mathbb{E}[\left\|e(t)\right\|^2]$ and hence $\mathbb{E}[\left\|e(l)\right\|^2]$ converge to some $\epsilon \geq 0$ as $t\to \infty$. Consequently,
	\[\lim_{t\to\infty}\mathbb{E}[\left\|e(t)\right\|^2-\left\|e(l)\right\|^2]=\epsilon-\epsilon=0.\]
	Next, we will show that $\mathbb{E}[\left\langle e(l)-e(t),e(t)\right\rangle]\to 0$ as $t\to \infty$.
	
	Since
	\begin{equation*}
	e(l)-e(t)=x(t)-x(l)=\int_t^l F'{\left( {{x }\left( s \right)} \right)^ * }\left[ y-{F\left( {{x }\left( s \right)} \right)} \right]\textmd{ds}+\int_t^l f(s){\textmd{d}B_s},
	\end{equation*}
	it immediately follows that
	\begin{equation*}
	\left\langle e(l)-e(t),e(l)\right\rangle=\langle\int_t^l F'{\left( {{x }\left( s \right)} \right)^ * }\left[ y-{F\left( {{x }\left( s \right)} \right)} \right]\textmd{ds},\hat x-x(l)\rangle +\langle \int_t^l f(s){\textmd{d}B_s},\hat x-x(l)\rangle.
	\end{equation*}
	Furthermore,
	\begin{equation*}
	\begin{split}
	& |\langle\int_t^l F'{\left( {{x }\left( s \right)} \right)^ * }\left[ y-{F\left( {{x }\left( s \right)} \right)} \right]\textmd{ds},\hat x-x(l)\rangle |  \\
	&\le \int_t^l \left\|y-{F\left( {{x }\left( s \right)} \right)}\right\|\left\|F'{\left( {{x }\left( s \right)} \right)}\left(\hat x-x(l)\right)\right\|\textmd{ds}\\
	& \le \int_t^l \left\|y-{F\left( {{x }\left( s \right)} \right)}\right\|\left(\left\|F'{\left( {{x }\left( s \right)} \right)}\left(\hat x-x(s)\right)\right\|+\left\|F'{\left( {{x }\left( s \right)} \right)}\left( x(s)-x(l)\right)\right\|\right)\textmd{ds}.
	\end{split}
	\end{equation*}
	Again, using the fact that $\int_t^l f(s){\textmd{d}B_s}$ is a martingale, we obtain
	\begin{equation*}
	\begin{split}
	\left|\mathbb{E}[\left\langle e(l)-e(t),e(l)\right\rangle]\right|
	\le& \int_t^l \mathbb{E} \left[\left\|y-{F\left( {{x }\left( s \right)} \right)}\right\|\left\|F'{\left( {{x }\left( s \right)} \right)}\left(\hat x-x(s)\right)\right\|\right]\textmd{ds}\\
	&+\int_t^l \mathbb{E} \left[\left\|y-{F\left( {{x }\left( s \right)} \right)}\right\|\left\|F'{\left( {{x }\left( s \right)} \right)}\left(x(s)-x(l)\right)\right\|\right]\textmd{ds} \\
	\le& \int_t^l \mathbb{E} [\left\|y-{F\left( {{x }\left( s \right)} \right)}\right\|^2]^{\frac{1}{2}}\mathbb{E}[\|F'{\left( {{x }\left( s \right)} \right)}\left(\hat x-x(s)\right)\|^2]^{\frac{1}{2}}\textmd{ds}\\
	&+ \int_t^l \mathbb{E} [\left\|y-{F\left( {{x }\left( s \right)} \right)}\right\|^2]^{\frac{1}{2}}\mathbb{E}[\|F'{\left( {{x }\left( s \right)} \right)}\left(x(s)-x(l)\right)\|^2]^{\frac{1}{2}}\textmd{ds}.
	\end{split}
	\end{equation*}
	From \eqref{F2}, it follows that
	\begin{equation*}
	\mathbb{E}[\|F'{\left( {{x }\left( s \right)} \right)}\left(\hat x-x(s)\right)\|^2]\le (1+\eta)^2\mathbb{E}[\|y-F(x(s))\|^2]
	\end{equation*}
	and
	\begin{equation*}
	\mathbb{E}[\|F'{\left( {{x }\left( s \right)} \right)}\left(x(s)-x(l)\right)\|^2]\le 4(1+\eta)^2\mathbb{E}[\|y-F(x(s))\|^2].
	\end{equation*}
	Thus,
	\begin{equation}\label{tem1}
	\left|\mathbb{E}[\left\langle e(l)-e(t),e(l)\right\rangle]\right|\le 3(1+\eta) \int_t^l \mathbb{E}[\|y-F(x(s))\|^2]\textmd{ds}.
	\end{equation}
	Combining estimate \eqref{tem1} with \eqref{a1213}, we now have
	\begin{equation*}
	\lim_{t\to\infty}\left|\mathbb{E}[\left\langle e(l)-e(t),e(l)\right\rangle]\right|=0,
	\end{equation*}
	which implies
	\begin{equation*}
	\lim_{t\to\infty}\mathbb{E}[\left\|e(t)-e(l)\right\|^2]=0.
	\end{equation*}
	This means that $\lim_{t\to \infty} e(t)$ and hence $\lim_{t\to \infty} x(t)$ exist. We use $x^*$ to denote the limit of the latter; furthermore, $x^*$ is a solution, since from \eqref{a1213} the mean-square residual $\mathbb{E} [{{{\| {F\left( {{x }\left( t \right)} \right) - {y }} \|}}}^2]$ converges to zero as $t \to \infty$.
	
	Moreover, from \eqref{F2}, problem \eqref{main} has a unique solution of minimal distance to the initial guess $\bar x$, which satisfies
	\begin{equation*}
	x^\dag-\bar x \in \mathcal{N}(F'(x^\dag))^\bot.
	\end{equation*}
	If $\mathcal{N}(F'(x^\dag))\subset \mathcal{N}(F'(x(t)))$ for all $t>0$, it is clear that $x(t)-\bar x \in \mathcal{N}(F'(x^\dag))^\bot$. Hence,
	\begin{equation*}
	x^\dag-x^*= x^\dag-\bar x+ \bar x -x^* \in \mathcal{N}(F'(x^\dag))^\bot,
	\end{equation*}
	which, together with Proposition \ref{prop2}, implies $x^*=x^\dag$.
\end{proof}

We are now able to show the regularization property of $x^\delta(t^*)$ under an \textit{a posteriori} choice of $t^*$ in \eqref{stopping}.

\begin{theorem}[convergence for noisy data]\label{thmconvergence}
	Let Assumption \ref{assumption-1} be satisfied and ${\|{F\left( {{\bar x }} \right) - {y^\delta }} \|}>\tau \delta$. Then, if the terminating time $t^*=t^*(\delta, y^\delta)$ is determined by \eqref{stopping}, there exists a solution $x^*$ to problem \eqref{main} such that
	\[\mathop {\lim }\limits_{\delta  \to 0} \mathbb{E}[ {{{\| {{x^\delta }\left( t^* \right) - {x^*}}\|}^2}} ] = 0.\]
	In addition, if $\mathcal{N}(F'(x^\dag))\subset \mathcal{N}(F'(x(t)))$ for all $t>0$,
	\[\mathop {\lim }\limits_{\delta  \to 0} \mathbb{E}[ {{{\| {{x^\delta }\left( t^* \right) - {x^\dag}}\|}^2}} ] = 0.\]
\end{theorem}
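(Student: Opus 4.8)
The plan is to transfer the classical convergence proof for the Landweber iteration under Morozov's discrepancy principle to the present mean-square, continuous-time stochastic setting. I fix once and for all the target $x^*:=\lim_{t\to\infty}x(t)$, the limit of the \emph{exact}-data flow, which exists and solves \eqref{main} by Theorem \ref{convergencenoisefree} (note that for $\delta=0$ the auxiliary function $f$ in \eqref{ft} vanishes, so $x(t)$ is deterministic). Everything then rests on two ingredients: a mean-square stability estimate comparing $x^\delta(t)$ with $x(t)$ on compact time intervals, and the error monotonicity already recorded in Proposition \ref{prop3} and the Remark following Proposition \ref{prop4}.

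The technical heart is the stability estimate: for every fixed $T>0$,
\begin{equation*}
\sup_{t\in[0,T]}\mathbb{E}\big[\|x^\delta(t)-x(t)\|^2\big]\longrightarrow 0\qquad\text{as }\delta\to0.
\end{equation*}
Subtracting the integral representations \eqref{cg01} for $x^\delta$ and $x$ and applying the It\^{o} formula and isometry (Theorems \ref{Itoform} and \ref{Itoiso}), I would obtain a Gronwall inequality for $u(t):=\mathbb{E}[\|x^\delta(t)-x(t)\|^2]$. The drift difference $F'(x^\delta)^*[y^\delta-F(x^\delta)]-F'(x)^*[y-F(x)]$ splits into a data part, bounded by $\delta$ through $\|F'\|\le1$, and a nonlinearity part, which I would control by pairing with $x^\delta(t)-x(t)$ and invoking the tangential cone condition \eqref{TCC} together with \eqref{F2}; this produces a dissipative term $-\|F(x^\delta)-F(x)\|^2$ that absorbs the cross terms, so that no Lipschitz continuity of $F'$ is needed. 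The stochastic contribution $\int_0^t\|f(s)\|^2\,\mathrm{d}s$ is of order $\delta^2T$ by \eqref{hg} and the uniform bound $\mathbb{E}[\|F(x^\delta(s))-y^\delta\|^2]\le\|F(\bar x)-y^\delta\|^2$ from Proposition \ref{prop1}. Gronwall's lemma then yields $u(t)\le c\,\delta^2\,T\,e^{cT}$ on $[0,T]$, whence the claim.

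With this in hand, I would take an arbitrary sequence $\delta_n\to0$ with stopping times $t_n:=t^*(\delta_n)$ and distinguish two cases. If $(t_n)$ has a bounded subsequence with $t_n\to t_0<\infty$, then, since $H(t_n)=0$ gives $\mathbb{E}[\|F(x^{\delta_n}(t_n))-y^{\delta_n}\|^2]=\tau^2\delta_n^2$, the stability estimate, continuity of $F$ via \eqref{F2}, and $y^{\delta_n}\to y$ force $\mathbb{E}[\|F(x(t_0))-y\|^2]=0$; hence $x(t_0)$ solves \eqref{main}, the deterministic exact flow has vanishing drift there and so rests at $x(t_0)$, giving $x(t_0)=x^*$, and stability yields $\mathbb{E}[\|x^{\delta_n}(t_n)-x^*\|^2]\to0$. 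If instead $t_n\to\infty$, I fix $T$ with $\|x(T)-x^*\|^2<\varepsilon$ (Theorem \ref{convergencenoisefree}); the monotonicity $\tfrac{\mathrm d}{\mathrm{d}t}\mathbb{E}[\|x^\delta(t)-x^*\|^2]\le0$ for $t\le t^*$ (Proposition \ref{prop3} and the Remark after Proposition \ref{prop4}) gives $\mathbb{E}[\|x^{\delta_n}(t_n)-x^*\|^2]\le\mathbb{E}[\|x^{\delta_n}(T)-x^*\|^2]\le 2\mathbb{E}[\|x^{\delta_n}(T)-x(T)\|^2]+2\varepsilon$ once $t_n\ge T$, and the first term vanishes by stability. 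Both cases converge to the same $x^*$, proving the first assertion. For the second, under $\mathcal{N}(F'(x^\dag))\subset\mathcal{N}(F'(x(t)))$ I repeat the null-space argument of Theorem \ref{convergencenoisefree}: $x(t)-\bar x\in\mathcal{N}(F'(x^\dag))^\perp$ together with Proposition \ref{prop2} forces $x^*-x^\dag\in\mathcal{N}(F'(x^\dag))\cap\mathcal{N}(F'(x^\dag))^\perp=\{0\}$, i.e. $x^*=x^\dag$.

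The step I expect to be hardest is the nonlinearity term in the stability estimate, namely $\langle[F'(x^\delta)^*-F'(x)^*](y-F(x^\delta)),\,x^\delta-x\rangle$, which cannot be handled by Lipschitz continuity of $F'$ under Assumption \ref{assumption-1}. The delicate point is to extract enough dissipation from the tangential cone condition to dominate the residual-weighted cross terms, and to keep all Gronwall constants independent of $\delta$, which requires the uniform residual bound from Proposition \ref{prop1} for the noisy flow and the integrability \eqref{a1214} for the exact flow.
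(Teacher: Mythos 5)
Your high-level architecture is the same as the paper's: for the case $t^*\to\infty$ you use the monotonicity of $\mathbb{E}[\|x^\delta(t)-x^*\|^2]$ up to the stopping time (Proposition \ref{prop3} and the Remark), split off the exact flow at a fixed large time, invoke Theorem \ref{convergencenoisefree}, and reduce everything to a fixed-time mean-square stability statement; the paper handles that last step with a one-line assertion that $x^{\delta}(t)$ ``depends continuously on $y^\delta$'' via \eqref{cg01}, and defers the bounded-$t^*$ case to \cite[Theorem 2.4]{Hanke1995}, which you instead spell out (correctly, modulo stability). The genuine gap is in the one step you try to prove rigorously: the claimed Gronwall bound $u(T)\le c\,\delta^2\,T\,e^{cT}$ for $u(t)=\mathbb{E}[\|x^\delta(t)-x(t)\|^2]$ does not follow from the tangential cone condition alone. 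Set $e=x^\delta-x$, $a=\|F(x^\delta)-F(x)\|$, $b=\|y-F(x)\|$, and write $r_1=F'(x^\delta)e-(F(x^\delta)-F(x))$, $r_2=F'(x)e-(F(x^\delta)-F(x))$, so that $\|r_i\|\le\eta a$ by \eqref{TCC} and $r_1-r_2=(F'(x^\delta)-F'(x))e$. The drift pairing then satisfies
\begin{equation*}
\bigl\langle e,\,F'(x^\delta)^*(y^\delta-F(x^\delta))-F'(x)^*(y-F(x))\bigr\rangle \le -(1-\eta)a^2+(1+\eta)\delta a+\bigl\langle (F'(x^\delta)-F'(x))e,\,y-F(x)\bigr\rangle,
\end{equation*}
and the last term --- exactly the one you flag as hardest --- is controlled under Assumption \ref{assumption-1} only by $2\eta a b$. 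Absorbing it into the dissipation via Young's inequality, $2\eta ab\le \tfrac{1-\eta}{2}a^2+\tfrac{2\eta^2}{1-\eta}b^2$, leaves the remainder $\tfrac{2\eta^2}{1-\eta}b^2$, which is \emph{not} $O(\delta^2)$: integrating yields $u(T)\le C\delta^2 T+C'\int_0^T\mathbb{E}[\|y-F(x(s))\|^2]\,\textmd{ds}$, and by \eqref{a1214} the second summand is finite but independent of $\delta$. So the dissipative term does not absorb the residual-weighted cross term; it merely caps it by an $O(1)$ constant, your stability estimate is unproven, and since both of your cases (bounded and unbounded stopping times) funnel through it, the theorem does not follow as written.

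To close the gap you need structure beyond \eqref{TCC}: for instance, if $F'$ is Lipschitz with constant $L$, then $|\langle(F'(x^\delta)-F'(x))e,\,y-F(x)\rangle|\le L\,b(t)\,\|e\|^2$ (with $b(t)$ deterministic, since you correctly observe the exact flow carries no noise by \eqref{ft}), giving $u'\le 2Lb(t)u+C\delta^2$ and a valid Gronwall bound with exponent $\int_0^T b(s)\,\textmd{ds}<\infty$; alternatively one could impose \eqref{A31}--\eqref{A32} or a smallness condition on $\eta$ --- none of which Theorem \ref{thmconvergence} assumes. It is fair to note that the paper's own continuity assertion is subject to the same difficulty (as is the uniqueness claim in Proposition \ref{prop0}, since the drift is not Lipschitz under Assumption \ref{assumption-1}), so your proposal is not weaker than the published argument in substance; but your specific claim that ``no Lipschitz continuity of $F'$ is needed'' because the TCC dissipation dominates the cross terms is incorrect, and this is the load-bearing step. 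The remaining components --- the $H(t^*)=0$ identity from \eqref{stopping}, the observation that vanishing drift freezes the exact flow so $x(t_0)=x^*$, the monotonicity comparison for $t_n\to\infty$, the $O(\delta^2 T)$ bound on $\int_0^t\|f(s)\|^2\textmd{ds}$ via \eqref{hg} and Proposition \ref{prop1}, and the null-space argument for $x^*=x^\dag$ --- are all sound and consistent with the paper.
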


\begin{proof}
	We distinguish between two cases when $t^*<+\infty$ and when $t^*=+\infty$ as $\delta\to0$. The first case can be done in a manner similar to that of \cite[Theorem 2.4]{Hanke1995}. Hence, we show only the second case. First, let $\{\delta_T\}_{T>0}$ be a sequence converging to zero as $T \to 0$, and  $\{ y^{\delta_T}\}$ a corresponding sequence of noisy data. For each pair $(\delta_T,y^{\delta_T})$, let $t^\delta_{T}:=t^*(\delta_T)$ denote the stopping index determined by discrepancy principle \eqref{stopping}. Furthermore, we may assume that $t^\delta_{T}$ increases strictly monotonically with respect to $T$. Then, for any $\overline{T} <T$, we conclude from Proposition \eqref{monoton} that
	\begin{equation*}
	\begin{split}
	\mathbb{E}[ {{{\| {{x^{\delta_T} }\left( t^\delta_T \right) - {x^*}}\|}^2}} ] &\le  \mathbb{E}[ {{{\| {{x^{\delta_{{T}}} }\left( t^\delta_{\overline{T}} \right) - {x^*}}\|}^2}} ] \\
	&\le 2\mathbb{E}[ {{{\| {{x^{\delta_{{T}}} }\left( t^\delta_{\overline{T}} \right) - {x}\left( t^\delta_{\overline{T}} \right)}\|}^2}} ]+2\mathbb{E}[ {{{\| {{x}\left( t^\delta_{\overline{T}} \right) - {x^*}}\|}^2}} ].
	\end{split}
	\end{equation*}
	Using Theorem \ref{convergencenoisefree}, we can fix a large $\overline{T}$ so that the term $\mathbb{E}[ {{{\| {{x}( t^\delta_{\overline{T}}) - {x^*}}\|}^2}}]$ is sufficiently small. As $t^\delta_{\overline{T}}$ is fixed, we can safely say that $ {x^{\delta_{{T}}} }( t^\delta_{\overline{T}} )$ depends continuously on $y^\delta$ because of the representation of $x^\delta$ in \eqref{cg01} and Assumption \ref{assumption-1}(i). We can then conclude that $\mathbb{E}[ {{{\| {{x^{\delta_{{T}}} }( t^\delta_{\overline{T}} ) - {x}( t^\delta_{\overline{T}})}\|}^2}} ]$ must go to zero as $T \to 0$. This completes the proof of the first assertion.
	The case $\mathcal{N}(F'(x^\dag))\subset \mathcal{N}(F'(x(t)))$ follows in a manner similar to that of Theorem \ref{convergencenoisefree}.
\end{proof}

\section{Convergence rates}\label{sec-convergencerate}
We split the mean-square error of total regularization error $\mathbb{E}[{\| {{x^\delta }(t) - {x^\dag }} \|^2}]$ by using the classic bias-variance decomposition, namely
\begin{equation}\label{bias-variance decomposition}
\mathbb{E}{[\| {{x^\delta }\left( t \right) - {x^\dag }} \|^2]}= {\| {\mathbb{E}{x^\delta }\left( t \right) - {x^\dag }}\|^2} + \mathbb{E}{[\| {{x^\delta }\left( t \right) - \mathbb{E}{x^\delta }\left( t \right)}\|^2]}.
\end{equation}
The two summands on the right-hand side of \eqref{bias-variance decomposition} are called the \emph{bias error} and the \emph{variance error} (extreme right). It should be noted that the bias error is deterministic and can be estimated with standard techniques in regularization theory, while the variance error arising from the $\mathcal{X}$-valued $\mathcal{Q}$-Wiener process consists of an infinite-dimensional stochastic integral, and hence has to be treated with some care.

\subsection{Estimation of the bias error}
To bound the bias error $\|\mathbb{E}{x^\delta }( t ) - {x^\dag }\|$, we first provide an explicit formula for the error between ${x^\delta }\left( t \right)$ and ${x^\dag }$. This is the basis for obtaining order-optimal error bounds. To do this, for any real $t>0$ and any linear operator $K:\mathcal{X} \rightarrow \mathcal{Y}$ such that $\|K\|\leq1$, we define
\begin{equation*}
{e^{ - {K^ * }Kt}} = I + \sum\limits_{n = 1}^\infty  {\frac{{{{\left( { - 1} \right)}^n}{t^n}}}{{n!}}{{\left( {{K^*}K} \right)}^n}}.
\end{equation*}
The sum is absolutely convergent, since $K, K^*$ are bounded linear operators.

\begin{prop}\label{po1}
	Let $x^\delta(t)$ be the solution to problem \eqref{initial value problem} and $K:=F'(x^\dag)$. In addition, let $x^\dag$ be the unique solution of minimal distance to $\bar x$ and deterministic and stationary with respect to the time variable $t$. Then,
	\begin{equation}\label{total error}
	\begin{split}
	{x^\delta }\left( t \right) - {x^\dag } = &~{e^{ - {K^ * }Kt}}\left( {\bar x - {x^\dag }} \right) + \int_0^t{{e^{ - {K^ * }K\left( {t - s} \right)}}} {K^*}\left( {{y^\delta } - y} \right)\textmd{ds}\\
	&+ \int_0^t {{e^{ - {K^ * }K\left( {t - s} \right)}}} \theta \left( s \right)\textmd{ds} + \int_0^t {{e^{ - {K^ * }K\left( {t - s} \right)}}} f\left( s \right)\textmd{d}{B_s},
	\end{split}
	\end{equation}
	where
	\begin{equation*}
	\theta \left( s \right): = {K^*}K\left( {{x^\delta }\left( s \right) - {x^\dag }} \right) - {F'(x^\delta(s))^*}{\left[ {F\left( {{x^\delta }\left( s \right)} \right) - {y^\delta }} \right]} - {K^*}\left( {{y^\delta } - y} \right).
	\end{equation*}
\end{prop}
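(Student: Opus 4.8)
The plan is to read \eqref{total error} as the stochastic variation-of-constants (Duhamel) representation of the linear SDE obtained by linearizing the drift of \eqref{initial value problem} about $x^\dag$. First I would set $u(t):=x^\delta(t)-x^\dag$ and rewrite the drift of \eqref{initial value problem}. Using only the definition of $\theta(s)$ and the identity $-F'(x^\delta)^*[F(x^\delta)-y^\delta]=F'(x^\delta)^*[y^\delta-F(x^\delta)]$, one finds
\[
F'\left(x^\delta(s)\right)^*\left[y^\delta-F\left(x^\delta(s)\right)\right]=-K^*K\,u(s)+\theta(s)+K^*\left(y^\delta-y\right),
\]
so that $u$ solves the affine linear SDE
\[
\textmd{d}u(t)=\left[-K^*K\,u(t)+\theta(t)+K^*\left(y^\delta-y\right)\right]\textmd{dt}+f(t)\,\textmd{d}B_t,\qquad u(0)=\bar x-x^\dag.
\]
This is the decisive step: it isolates the self-adjoint linear part $-K^*K\,u$, with $K=F'(x^\dag)$, and absorbs every nonlinear and data-noise contribution into the inhomogeneous terms $\theta$ and $K^*(y^\delta-y)$.

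The second step integrates this linear SDE by variation of constants. I would introduce the transformed process $v(s):=e^{K^*Ks}u(s)$ and compute $\textmd{d}v$. Because $e^{K^*Ks}$ is deterministic, bounded on the finite horizon $[0,t]$ (indeed $\|e^{K^*Ks}\|\le e^{s}$ since $\|K\|\le 1$ by Assumption \ref{assumption-1}(i)), and the map $x\mapsto e^{K^*Ks}x$ is linear in $x$, the Itô formula of Theorem \ref{Itoform}---applied componentwise to the scalar functionals $\mathcal{S}_i(s,x)=\langle e^{K^*Ks}x,e_i\rangle$ against an ONB $\{e_i\}$ of $\mathcal{X}$---gives
\[
\textmd{d}v(s)=K^*K\,e^{K^*Ks}u(s)\,\textmd{ds}+e^{K^*Ks}\,\textmd{d}u(s).
\]
Substituting the SDE for $u$ and using that $e^{K^*Ks}$ commutes with $K^*K$, the terms $K^*K\,e^{K^*Ks}u$ and $-e^{K^*Ks}K^*K\,u$ cancel, leaving $\textmd{d}v(s)=e^{K^*Ks}[\theta(s)+K^*(y^\delta-y)]\textmd{ds}+e^{K^*Ks}f(s)\,\textmd{d}B_s$. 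Integrating over $[0,t]$, using $v(0)=\bar x-x^\dag$ and $v(t)=e^{K^*Kt}u(t)$, and finally multiplying through by the deterministic operator $e^{-K^*Kt}$ (which may be taken inside both the Bochner and the stochastic integral) produces exactly \eqref{total error}.

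Before running this argument I would confirm that the series defining $e^{-K^*Kt}$ converges: by Assumption \ref{assumption-1}(i), $\|K\|=\|F'(x^\dag)\|\le 1$, so $K^*K$ is a bounded nonnegative self-adjoint operator and $e^{-K^*Kt}$ is a well-defined contraction, while $e^{K^*Ks}$ is bounded on the finite interval $[0,t]$. I would also verify that the integrands meet the hypotheses of Theorem \ref{Itoform}: the drift $\Psi=-K^*Ku+\theta+K^*(y^\delta-y)$ is Bochner-integrable and $f\in\mathcal{L}_2(\mathcal{X}_\mathcal{Q},\mathcal{X})$, by the uniform bound on $F'$ together with the well-posedness of \eqref{initial value problem} from Proposition \ref{prop0}. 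The main obstacle is the rigorous justification of the operator-valued product rule for $v=e^{K^*Ks}u$: since Theorem \ref{Itoform} supplies the Itô formula only for scalar $\mathcal{S}$, one must pass through the componentwise functionals $\mathcal{S}_i$ and check that the second-order correction $\tfrac12\,\textmd{tr}[\mathcal{S}_{i,xx}\,ff^*]$ vanishes identically---which it does, because each $x\mapsto\langle e^{K^*Ks}x,e_i\rangle$ is affine, so $\mathcal{S}_{i,xx}=0$---and then reassemble the scalar identities into the $\mathcal{X}$-valued equation. Once this is in place, the rest is routine.
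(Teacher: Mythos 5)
Your proposal is correct and takes essentially the same route as the paper: the paper's proof is precisely this variation-of-constants computation, carried out by integrating by parts against the kernel $e^{-K^*K(t-s)}$ in \eqref{p1}, subtracting the deterministic identity \eqref{p2} for $x^\dag$, and then substituting the SDE \eqref{initial value problem} — i.e.\ your linearize-then-Duhamel argument with the subtraction of $x^\dag$ performed after, rather than before, the product rule. Your componentwise justification of the operator-valued product rule via Theorem \ref{Itoform} (using that $\mathcal{S}_{i,xx}=0$ for the affine functionals $\mathcal{S}_i(s,x)=\langle e^{K^*Ks}x,e_i\rangle$) merely makes explicit what the paper's ``integration by parts'' step leaves implicit, and your well-definedness checks mirror the paper's opening application of the It\^{o} isometry.
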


\begin{proof}
	First, by using It\^{o}'s isometry \eqref{QWiener}, we conclude that
	\begin{eqnarray*}
		\mathbb{E}\left\|\int_0^t {{e^{ - {K^ * }K\left( {t - s} \right)}}}{f\left( s \right)\textmd{d}{B_s}}\right\|^2=\mathbb{E}\int_0^t \left\|{{e^{ - {K^ * }K\left( {t - s} \right)}}}f\left( s \right)\right\|^2_{\mathcal{L}_2(\mathcal{X}_\mathcal{Q},\mathcal{X})}\textmd{ds}< \infty
	\end{eqnarray*}
	for all $0\le t \le T$, which yields the well-definedness of \eqref{total error}.
	
	Using integration by parts, we have
	\begin{equation}\label{p1}
	\int_0^t {{e^{ - {K^*}K\left( {t - s} \right)}}} \textmd{d}{x^\delta }\left( s \right) = {x^\delta }\left( t \right) - {e^{ - {K^*}Kt}}\bar x - \int_0^t {{e^{ - {K^*}K\left( {t - s} \right)}}{K^*}K} {x^\delta }\left( s \right)\textmd{ds}.
	\end{equation}
	Note that
	\begin{equation}\label{p2}
	\int_0^t {{e^{ - {K^*}K\left( {t - s} \right)}}{K^*}K} {x^\dag }\textmd{ds} = {x^\dag } - {e^{ - {K^*}Kt}}{x^\dag }.
	\end{equation}
	Subtracting \eqref{p1} and \eqref{p2}, we obtain, together with the equation \eqref{initial value problem} for $x^\delta(t)$,
	\begin{equation*}
	\begin{split}
	&{x^\delta }\left( t \right) - {x^\dag } = {e^{ - {K^*}Kt}}\left( {\bar x - {x^\dag }} \right) + \int_0^t {{e^{ - {K^*}K\left( {t - s} \right)}}{K^*}K} \left( {{x^\delta }\left( s \right) - {x^\dag }} \right)\textmd{ds} \\
	&+ \int_0^t {{e^{ - {K^*}K\left( {t - s} \right)}}F'} {\left( {{x^\delta }\left( s \right)} \right)^*}\left[ {{y^\delta } - F\left( {{x^\delta }\left( s \right)} \right)} \right]\textmd{ds} + \int_0^t {{e^{ - {K^*}K\left( {t - s} \right)}}f\left( s \right)} \textmd{d}{B_s},
	\end{split}
	\end{equation*}
	which leads to \eqref{total error}.
\end{proof}

The following corollary gives representations of both the bias term $\mathbb{E}{x^\delta }( t ) - {x^\dag }$ and the variance term ${x^\delta }\left( t \right) - \mathbb{E}{x^\delta }\left( t \right)$.
\begin{corollary}\label{coro1}
	Under the assumptions of Proposition \ref{po1}, the following holds:
	\begin{equation}\label{bias term}
	\begin{split}
	\mathbb{E}{x^\delta }\left( t \right) - {x^\dag } ={e^{ - {K^ * }Kt}}\left( {\bar x - {x^\dag }} \right) &+ \int_0^t {{e^{ - {K^ * }K\left( {t - s} \right)}}} {K^*}\left( {{y^\delta } - y} \right)\textmd{ds} \\
	&+\mathbb{E} \int_0^t {{e^{ - {K^ * }K\left( {t - s} \right)}}} \theta \left( s \right)\textmd{ds},
	\end{split}
	\end{equation}
	and
	\begin{equation}\label{variance term}
	{x^\delta }\left( t \right) - \mathbb{E}{x^\delta }\left( t \right)=\int_0^t{{e^{ - {K^ * }K\left( {t - s} \right)}}} f\left( s \right)\textmd{d}{B_s}.
	\end{equation}
\end{corollary}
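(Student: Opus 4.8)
The plan is to read both identities off the closed-form error representation \eqref{total error} proved in Proposition~\ref{po1}, whose right-hand side splits into two deterministic terms, the drift integral $\int_0^t e^{-K^*K(t-s)}\theta(s)\,\textmd{ds}$, and the $\mathcal{Q}$-Wiener integral $\int_0^t e^{-K^*K(t-s)}f(s)\,\textmd{d}{B_s}$. Both assertions then reduce to sorting these four contributions according to whether they survive the expectation operator, so the only genuinely analytic ingredient is the mean-zero property of the It\^o integral.

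For the bias term \eqref{bias term} I would apply $\mathbb{E}$ termwise to \eqref{total error}. The leading term $e^{-K^*Kt}(\bar x-x^\dag)$ and the data term $\int_0^t e^{-K^*K(t-s)}K^*(y^\delta-y)\,\textmd{ds}$ are non-random, since $K=F'(x^\dag)$ is a fixed operator and $\bar x$, $x^\dag$, $y^\delta$, $y$ are all deterministic, so they pass through $\mathbb{E}$ unchanged; the drift integral is simply retained as $\mathbb{E}\int_0^t e^{-K^*K(t-s)}\theta(s)\,\textmd{ds}$. The crucial step is that the stochastic integral has vanishing mean: by Theorem~\ref{Itoiso} the map $\Phi\mapsto\int_0^t\Phi\,\textmd{d}{B_s}$ is a square-integrable martingale started at the origin, and admissibility of the integrand $e^{-K^*K(t-s)}f(s)$ in $\Lambda_2(\mathcal{X}_\mathcal{Q},\mathcal{X})$ was already verified via the It\^o isometry \eqref{QWiener} in the proof of Proposition~\ref{po1}; hence $\mathbb{E}\int_0^t e^{-K^*K(t-s)}f(s)\,\textmd{d}{B_s}=0$. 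Collecting the three surviving terms gives \eqref{bias term}.

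For the variance term \eqref{variance term} I would subtract \eqref{bias term} from \eqref{total error}. The two deterministic contributions cancel identically and the stochastic integral is untouched, so what formally remains alongside it is the centered drift integral $\int_0^t e^{-K^*K(t-s)}\bigl(\theta(s)-\mathbb{E}[\theta(s)]\bigr)\,\textmd{ds}$ (using Fubini to interchange $\mathbb{E}$ and the $\textmd{ds}$-integral). Collapsing the right-hand side to the single It\^o integral in \eqref{variance term} therefore amounts to showing that this centered drift integral drops out, and this is exactly where I expect the main difficulty: $\theta(s)$ depends on the random trajectory $x^\delta(s)$ through $K^*K(x^\delta(s)-x^\dag)$ and through $F'(x^\delta(s))^*[F(x^\delta(s))-y^\delta]$, so it is not deterministic a priori. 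The plan is to treat the entire drift integral as the deterministic (bias) component of the error, so that by construction the fluctuation $x^\delta(t)-\mathbb{E}x^\delta(t)$ is carried solely by the $\mathcal{Q}$-Wiener integral, and then to quantify the neglected centered drift when the variance is estimated in the next subsection, using Assumption~\ref{assumption-1}, the bound \eqref{hg} on $f$, and the It\^o isometry to control the magnitude of the fluctuations of $x^\delta(s)$. Once this bookkeeping is in place, \eqref{variance term} follows from the clean cancellation of all deterministic pieces.
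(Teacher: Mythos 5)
Your treatment of \eqref{bias term} is correct and coincides with the paper's: take full expectation in \eqref{total error}, let the deterministic terms pass through, and kill the stochastic integral via the zero-mean martingale property of Theorem \ref{Itoiso}, the admissibility of the integrand $e^{-K^*K(t-s)}f(s)$ having already been verified by the It\^o isometry in the proof of Proposition \ref{po1}.

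For \eqref{variance term}, however, your argument stops short of a proof, and the obstruction you flag is genuine. Subtracting \eqref{bias term} from \eqref{total error} leaves
\begin{equation*}
x^\delta(t)-\mathbb{E}x^\delta(t)=\int_0^t e^{-K^*K(t-s)}\bigl(\theta(s)-\mathbb{E}[\theta(s)]\bigr)\,\textmd{ds}+\int_0^t e^{-K^*K(t-s)}f(s)\,\textmd{d}{B_s},
\end{equation*}
and since $\theta(s)$ depends on the random trajectory $x^\delta(s)$, the centered drift does not vanish for nonlinear $F$; it is identically zero only when $\theta$ is deterministic, e.g.\ in the linear case $F(x)=Kx$ with $Kx^\dag=y$, where a direct computation gives $\theta\equiv 0$. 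Your proposed repair --- reassigning the whole random drift integral to the ``bias'' --- is not available: the bias $\mathbb{E}x^\delta(t)-x^\dag$ is deterministic by definition and cannot absorb a random integral without contradicting \eqref{bias term}, which you have just proved with $\mathbb{E}\int_0^t e^{-K^*K(t-s)}\theta(s)\,\textmd{ds}$ rather than the integral itself. You should know, though, that the paper's own proof is no stronger: it derives \eqref{bias term} exactly as you do and then asserts \eqref{variance term} from the tautology $x^\delta(t)-\mathbb{E}x^\delta(t)=(x^\delta(t)-x^\dag)-(\mathbb{E}x^\delta(t)-x^\dag)$, silently discarding the same centered-drift term. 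So your analysis correctly exposes a lacuna in the corollary as stated, even though your proposal does not close it. A rigorous version would either carry the centered drift explicitly into the variance estimate of Proposition \ref{po2} --- note that $\theta(s)=K^*\psi(s)$ with $\psi$ as in \eqref{psi}, so the bound \eqref{cc} on $\mathbb{E}[\|\psi(s)\|]$ gives the needed control --- or restrict the exact identity \eqref{variance term} to the case of deterministic $\theta$.
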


\begin{proof}
	By taking the full expectation of \eqref{total error}, we derive \eqref{bias term} by noting the martingale property of $\int_0^t {{e^{ - {K^*}K\left( {t - s} \right)}}f\left( s \right)} \textmd{d}{B_s}$. Furthermore, \eqref{variance term} follows from the relation
	\begin{equation*}
	{x^\delta }\left( t \right) - \mathbb{E}{x^\delta }\left( t \right) =  \left( {x^\delta }\left( t \right) - {x^\dag } \right) - \left( \mathbb{E}{x^\delta }\left( t \right) - {x^\dag } \right).
	\end{equation*}
\end{proof}

The following two lemmas will be useful auxiliary tools for our subsequent analysis. The proofs can be found in Appendix A.
\begin{lemma}\label{lemma9}
	If there is a bounded Fr\'{e}chet derivative $K$ satisfying $\|K\|\le 1$, the following holds:
	\begin{equation}\label{use1}
	\sup_{0\le \lambda \le 1}\left|\lambda^\gamma e^{-\lambda t} \right|  \le \frac{c_\gamma}{(1+t)^\gamma},~~\lambda \in (0, \|K^*K\|],
	\end{equation}
	where $\lambda\geq0$ and $c_\gamma = \max\left\{\gamma^\gamma,1\right\}$.
\end{lemma}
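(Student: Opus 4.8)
The plan is to reduce the claimed inequality to a one-variable calculus estimate via a change of variables. Since $\|K\| \le 1$ implies $\|K^*K\| = \|K\|^2 \le 1$, the spectrum of $K^*K$ lies in $[0,1]$, so it suffices to bound $\lambda^\gamma e^{-\lambda t}$ for $\lambda \in [0,1]$ and $t \ge 0$. Multiplying through by $(1+t)^\gamma$, the statement \eqref{use1} is equivalent to showing $(1+t)^\gamma \lambda^\gamma e^{-\lambda t} \le c_\gamma$ for all such $\lambda$ and $t$.

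First I would rewrite $(1+t)^\gamma \lambda^\gamma = (\lambda(1+t))^\gamma = (\lambda + \lambda t)^\gamma$ and introduce the substitution $x := \lambda t \ge 0$, so that the left-hand side becomes $(\lambda + x)^\gamma e^{-x}$. Using $\lambda \le 1$ and $\gamma > 0$, I bound $(\lambda + x)^\gamma \le (1+x)^\gamma$, which eliminates the coupled dependence on the two variables and reduces the problem to estimating the single-variable function $\psi(x) := (1+x)^\gamma e^{-x}$ over $x \ge 0$.

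Next I would maximize $\psi$ by elementary calculus: its derivative is $\psi'(x) = (1+x)^{\gamma-1}e^{-x}(\gamma - 1 - x)$, whose only zero on $[0,\infty)$ occurs at $x = \gamma - 1$. When $\gamma \le 1$ (the regime of interest here, since $\gamma \in (0,1/2]$ by Assumption \ref{assumption-2}), this critical point is nonpositive, $\psi$ is nonincreasing, and its maximum is $\psi(0) = 1$. When $\gamma > 1$, evaluating at $x = \gamma - 1$ gives $\psi(\gamma-1) = \gamma^\gamma e^{1-\gamma} \le \gamma^\gamma$, since $e^{1-\gamma} \le 1$. In either case $\psi(x) \le \max\{\gamma^\gamma, 1\} = c_\gamma$, which yields the desired bound.

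Because each step is an explicit estimate, I do not expect any substantial obstacle; the only points needing care are checking that the change of variables and the inequality $(\lambda+x)^\gamma \le (1+x)^\gamma$ are valid (both rely on $0 \le \lambda \le 1$ and $\gamma > 0$), and verifying that the two cases $\gamma \le 1$ and $\gamma > 1$ together reproduce exactly the constant $c_\gamma = \max\{\gamma^\gamma,1\}$. The decoupling of $\lambda$ and $t$ through the substitution $x = \lambda t$, combined with the estimate $\lambda \le 1$, is the one slightly non-obvious idea that makes the argument clean and avoids having to track whether the unconstrained maximizer $\lambda = \gamma/t$ falls inside the admissible range.
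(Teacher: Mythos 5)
Your proof is correct. Note, however, that the paper itself never writes out an argument for this lemma: in Appendix A it disposes of Lemma \ref{lemma9} in a single line by citing Formula 4.8 of Tautenhahn \cite{UT1994}, so your elementary derivation is genuinely different in that it makes the estimate self-contained. The key device --- substituting $x=\lambda t$ and then using $0\le\lambda\le 1$ to bound $(\lambda(1+t))^\gamma=(\lambda+x)^\gamma\le(1+x)^\gamma$ --- cleanly decouples the two variables and reduces everything to maximizing $\psi(x)=(1+x)^\gamma e^{-x}$ on $[0,\infty)$, which your case analysis handles correctly: for $\gamma\le 1$ (the regime actually used, since $\gamma\in(0,1/2]$, and also $\gamma+\sigma$ can exceed $1$ elsewhere in the paper, so covering $\gamma>1$ is not idle generality) the maximum is $\psi(0)=1$, while for $\gamma>1$ it is $\psi(\gamma-1)=\gamma^\gamma e^{1-\gamma}\le\gamma^\gamma$, reproducing the constant $c_\gamma=\max\{\gamma^\gamma,1\}$ exactly. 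What your route buys is transparency and verifiability without consulting the reference, plus validity for every $\gamma>0$ in one stroke; what the paper's route buys is brevity. One trivial phrasing slip: for $\gamma<1$ the point $x=\gamma-1$ is negative, so $\psi'$ has no zero on $[0,\infty)$ at all (rather than a zero at a nonpositive location), but your conclusion that $\psi$ is nonincreasing there, hence maximized at $x=0$, is exactly right, and the boundary checks ($\lambda=0$, $t=0$) are immediate.
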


\begin{lemma}\label{lem1}
	Suppose that $k+j >1$. Then, the following holds:
	\begin{equation}\label{A1}
	\int_0^t \frac{ds}{(1+t-s)^k (1+s)^j} < \frac{2^{k+j}-2}{k+j-1} \cdot \frac{1}{(1+t)^{k+j-1}}.
	\end{equation}
\end{lemma}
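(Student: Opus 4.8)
The plan is to prove \eqref{A1} by the standard device of splitting the integral at the midpoint $s=t/2$ and exploiting the near-symmetry of the integrand under $s\mapsto t-s$. Writing
$$\int_0^t \frac{ds}{(1+t-s)^k(1+s)^j} = \int_0^{t/2}\frac{ds}{(1+t-s)^k(1+s)^j} + \int_{t/2}^{t}\frac{ds}{(1+t-s)^k(1+s)^j},$$
the change of variables $s\mapsto t-s$ carries the second integral onto $\int_0^{t/2}(1+s)^{-k}(1+t-s)^{-j}\,ds$. Hence it suffices to estimate a single quantity $J(k,j):=\int_0^{t/2}(1+t-s)^{-k}(1+s)^{-j}\,ds$ and then add $J(k,j)$ and $J(j,k)$. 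I would carry out the estimate in the regime $0<k,j<1$ with $k+j>1$, which is the case relevant to the applications that feed this lemma (products of the bounds from Lemma \ref{lemma9}).

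For the core estimate, on $[0,t/2]$ the factor $1+t-s$ is the \emph{large} one: since $s\le t/2$ we have $1+t-s\ge 1+t/2$, so $(1+t-s)^{-k}\le (1+t/2)^{-k}$. Pulling this constant out leaves the elementary power integral $\int_0^{t/2}(1+s)^{-j}\,ds$, which I evaluate via its antiderivative and bound, using $j<1$, by $\tfrac{(1+t/2)^{1-j}}{1-j}$ (dropping the subtracted $1$, which is what produces the \emph{strict} inequality in \eqref{A1}). Combining the extracted $(1+t/2)^{-k}$ with this factor gives $J(k,j)\le \tfrac{1}{1-j}(1+t/2)^{1-k-j}$. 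The decay in $t$ now follows from $1+t/2\ge (1+t)/2$ together with $1-k-j<0$, guaranteed by $k+j>1$, which lets me replace $(1+t/2)^{1-k-j}$ by $2^{k+j-1}(1+t)^{1-k-j}$; the mirror-image bound for $J(j,k)$ produces the analogous term with $\tfrac{1}{1-k}$. Assembling the two halves yields a bound of exactly the claimed order $(1+t)^{-(k+j-1)}$.

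The step I expect to demand the most care is the bookkeeping of constants, so that the two half-estimates collapse into precisely $\tfrac{2^{k+j}-2}{k+j-1}$. The plain midpoint split produces a constant of the shape $2^{k+j-1}\bigl(\tfrac{1}{1-j}+\tfrac{1}{1-k}\bigr)$, and reconciling this with the cleaner closed form in the statement is where the intermediate inequalities must be chosen sharply. Here it is useful to record the identity $\tfrac{2^{k+j}-2}{k+j-1}=2^{k+j}\int_1^2 v^{-(k+j)}\,dv$, which suggests recasting the whole estimate, after a substitution normalizing $1+t$ to $1$, as the integral of $v^{-(k+j)}$ over the dyadic range $[1,2]$. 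This reformulation isolates exactly where $k+j>1$ is used (to fix the sign of the exponent $1-k-j$ and guarantee convergence) and pinpoints the inequalities that must be kept tight to land on the stated constant.
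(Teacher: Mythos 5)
Your core estimates are correct as far as they go: on $[0,t/2]$ the bound $(1+t-s)^{-k}\le(1+t/2)^{-k}$, the evaluation $\int_0^{t/2}(1+s)^{-j}\,ds<\tfrac{(1+t/2)^{1-j}}{1-j}$ for $j<1$, and $1+t/2\ge(1+t)/2$ with $1-k-j<0$ give, after the symmetric treatment of the other half,
\[
\int_0^t\frac{ds}{(1+t-s)^k(1+s)^j}<2^{k+j-1}\Bigl(\frac{1}{1-j}+\frac{1}{1-k}\Bigr)\frac{1}{(1+t)^{k+j-1}},\qquad 0<k,j<1,\ k+j>1.
\]
But there are two genuine gaps. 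First, your restriction $0<k,j<1$ is not in the statement, and it excludes precisely the exponent pairs at which the lemma is invoked later in the paper: Proposition \ref{po2} uses $(k,j)=(2(\gamma+\sigma),1)$ and Proposition \ref{PropRvar} uses $(k,j)=(1,2\gamma+1)$. At $j=1$ your factor $\tfrac{1}{1-j}$ blows up, and this is not an artifact of your method: for $j=1$, $k\le1$, the first half-integral is bounded \emph{below} by $(1+t)^{-k}\ln(1+t/2)$, so no bound of order $(1+t)^{-(k+j-1)}=(1+t)^{-k}$ can hold at all. Second, the planned ``reconciliation'' with the constant $\tfrac{2^{k+j}-2}{k+j-1}$ cannot succeed even inside your regime, because that constant is unattainable: substituting $u=(1+s)/(2+t)$ gives
\[
\int_0^t\frac{ds}{(1+t-s)^k(1+s)^j}=(2+t)^{1-k-j}\int_{1/(2+t)}^{(1+t)/(2+t)}u^{-j}(1-u)^{-k}\,du,
\]
and for $0<k,j<1$ the last integral tends to $B(1-j,1-k)$ as $t\to\infty$; taking $k=j=3/4$ yields $B(1/4,1/4)=\Gamma(1/4)^2/\Gamma(1/2)\approx7.42$, which exceeds the claimed $\tfrac{2^{3/2}-2}{1/2}\approx1.66$, so \eqref{A1} itself fails for large $t$. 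No sharpening of your intermediate inequalities can land on a false target; your constant, with its blow-up as $\max(k,j)\to1$, is essentially the true state of affairs.

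For comparison, the paper's own proof uses the same midpoint split, but in the dominating half it merges the two factors into a single power, e.g.\ bounding $\int_{t/2}^t\frac{ds}{(1+t-s)^k(1+s)^j}$ by $\int_{t/2}^t\frac{ds}{(1+s)^{k+j}}$; this pointwise comparison requires $1+t-s\ge1+s$, which holds on $[0,t/2]$ and is reversed on $[t/2,t]$, and the analogous step in the paper's second case is reversed as well. That directionally incorrect merging is exactly what produces the clean but too-small constant and the spurious generality in $(k,j)$. Your directionally correct bounds are the right way to proceed; the honest fix is to weaken the statement — keep your constant for $\max(k,j)<1$, accept a logarithmic factor when $\max(k,j)=1$, and the rate $(1+t)^{-\min(k,j)}$ when $\max(k,j)>1$ — and then to adjust the downstream uses in Propositions \ref{po2} and \ref{PropRvar} accordingly, rather than chase the constant in \eqref{A1}.
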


We now bound the bias error $\|\mathbb{E}{x^\delta }( t ) - {x^\dag }\|$ in a weighted norm. First, we let
\begin{equation*}
\begin{split}
& K=F'(x^\dag),~~V=K^*K,~~V^{\frac{1}{2}}=K,\\
& {I_1} = V^\sigma {e^{ - Vt}}\left( {\bar x - {x^\dag }} \right), \quad
{I_2} = \int_0^t {{e^{ - V\left( {t- s} \right)}}} V^{\sigma+\frac{1}{2}}\left( {{y^\delta } - y} \right)\textmd{ds},\\
& {I_3} =\mathbb{E}\int_0^t {{e^{ - V\left( {t- s} \right)}}} V^{\sigma+\frac{1}{2}}\psi \left( s \right)\textmd{ds},
\end{split}
\end{equation*}
where
\begin{equation}\label{psi}
\psi \left( s \right)\!: = \!\left( {I \!- \!R_{{x^\delta }\left( s \right)}^*} \right)\left[ {F\left( {{x^\delta }\left( s \right)} \right) \!- \!{y^\delta }} \right] - \left[ {F\left( {{x^\delta }\left( s \right)} \right) \!-\! y \!-\! V^\frac{1}{2} \left( {{x^\delta }\left( s \right) \!-\! {x^\dag }} \right)} \right].
\end{equation}
In addition, we let
\begin{equation}
\label{pq}
{p}\left( t \right): = \mathbb{E}[\| {{x^\delta }\left( t \right) - {x^\dag }}\|^2]^\frac{1}{2}, \quad
{q}\left( t \right): = \mathbb{E}[\| {V^\frac{1}{2}\left( {{x^\delta }\left( t \right) - {x^\dag }} \right)} \|^2]^\frac{1}{2}.
\end{equation}

\begin{prop}
	\label{PropositionBiasErr}
	Let Assumptions \ref{assumption-1} and \ref{assumption-2} be satisfied. Choose $\epsilon_0\in( \max\{\frac{2-\sqrt{2}}{\eta}-(2+\sqrt{2}),0\}, 2(\frac{1}{\eta}-1) )$.  Then, for any $\sigma \geq0$ , the following holds:
	\begin{equation*}
	\left\| {\mathbb{E}(V^\sigma({x^\delta }\left( t \right) C {x^\dag }))}\right\| \le  \frac{c^{a}_\gamma E}{{{{\left( {1 + t} \right)}^{\gamma+\sigma} }}} + {c_1}q\left( t \right)(1+t)^{\frac{1}{2}-\sigma}  + {c_2}\int_0^t {\frac{{p\left( s \right)q\left( s \right)}}{(1+t-s)^{\sigma+\frac{1}{2}}}} \textmd{ds},
	\end{equation*}
	where $c^{a}_\gamma=\max\{(\gamma+\sigma)^{\gamma+\sigma}, 1\}$, $c^b_\gamma=\max\{(\gamma+1/2)^{\gamma+1/2}, 1\}$, $c_1=\frac{\sqrt{2}c^b_\gamma}{(1-\eta)\sqrt{\tau^2-2}}$ and $c_2=c^b_\gamma c_R \left( {{\frac{\sqrt{2}\tau }{\sqrt{\left( {\tau^2  - 2} \right)} \left( {1 - \eta } \right)} }+\frac{1}{2}} \right)$.
\end{prop}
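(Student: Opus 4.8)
The plan is to start from the bias representation \eqref{bias term} in Corollary \ref{coro1} and apply the operator $V^\sigma$ to it. Writing $K^*=V^{1/2}$ (as in the paper's convention, i.e. up to the partial isometry in the polar decomposition of $K$, so that the spectral estimate of Lemma \ref{lemma9} applies) and using \eqref{A31} to factor $F'(x^\delta(s))^*=K^*R_{x^\delta(s)}^*$, a short computation shows that the drift defect $\theta(s)$ of Proposition \ref{po1} satisfies $\theta(s)=K^*\psi(s)$ with $\psi$ as in \eqref{psi}. Hence
\begin{equation*}
V^\sigma(\mathbb{E}x^\delta(t)-x^\dag)=I_1+I_2+I_3,
\end{equation*}
and by the triangle inequality it suffices to bound $\|I_1\|$, $\|I_2\|$, $\|I_3\|$ separately. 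For $I_1$ I would insert the source condition \eqref{A2}, $\bar x-x^\dag=V^\gamma\nu$, to get $I_1=V^{\sigma+\gamma}e^{-Vt}\nu$, and apply Lemma \ref{lemma9} with exponent $\sigma+\gamma$ to obtain $\|I_1\|\le c^a_\gamma E(1+t)^{-(\gamma+\sigma)}$, which is the first term.

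The conceptual heart of the argument — and the step I expect to be the main obstacle — is converting the fixed noise level $\delta$ into the stochastic quantity $q(s)$, which is what makes the final bound depend on $p,q$ alone. Since $s\le t<t^*$, the discrepancy functional \eqref{hstopping} is nonnegative, i.e. $\mathbb{E}[\|F(x^\delta(s))-y^\delta\|^2]\ge\tau^2\delta^2$. Combining this with $\|F(x^\delta(s))-y^\delta\|^2\le 2\|F(x^\delta(s))-y\|^2+2\delta^2$ and $\|y-y^\delta\|\le\delta$ gives $\mathbb{E}[\|F(x^\delta(s))-y\|^2]\ge\frac{\tau^2-2}{2}\delta^2$, and then \eqref{F2} with $x=x^\dag$ yields the key inequality
\begin{equation*}
\delta\le\frac{\sqrt2}{(1-\eta)\sqrt{\tau^2-2}}\,q(s),
\end{equation*}
which is precisely where the choice of $\epsilon_0$ forcing $\tau>\sqrt2$ is needed. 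Squaring the same two ingredients also delivers the residual control $\mathbb{E}[\|F(x^\delta(s))-y^\delta\|^2]^{1/2}\le\frac{\sqrt2\tau}{(1-\eta)\sqrt{\tau^2-2}}q(s)$.

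With these in hand, $I_2$ is handled by pulling $\|y^\delta-y\|\le\delta$ out and evaluating $\int_0^t e^{-V(t-s)}V^{\sigma+1/2}\,\mathrm{d}s=V^{\sigma-1/2}(I-e^{-Vt})$, whose operator norm I would bound by $c^b_\gamma(1+t)^{1/2-\sigma}$ via a Lemma \ref{lemma9}-type spectral estimate; inserting the $\delta\mapsto q(t)$ bound produces the second term $c_1q(t)(1+t)^{1/2-\sigma}$. For $I_3$ I would estimate $\mathbb{E}\|\psi(s)\|$ by splitting \eqref{psi} into its two natural pieces: the first, $(I-R_{x^\delta(s)}^*)[F(x^\delta(s))-y^\delta]$, is controlled by \eqref{A32} as $c_R\|x^\delta(s)-x^\dag\|\,\|F(x^\delta(s))-y^\delta\|$, while the second is the linearization defect $F(x^\delta(s))-y-V^{1/2}(x^\delta(s)-x^\dag)$, bounded by the tangential-cone estimate \eqref{rem1} as $\frac{c_R}{2}\|V^{1/2}(x^\delta(s)-x^\dag)\|\,\|x^\delta(s)-x^\dag\|$. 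Taking expectations and using the stochastic Cauchy--Schwarz inequality (the stochastic form of \eqref{rem1}) turns these into $p(s)$ times the residual and into $p(s)q(s)$; the residual control then converts the first piece into $p(s)q(s)$ as well, so that $\mathbb{E}\|\psi(s)\|\le c_R(\frac{\sqrt2\tau}{(1-\eta)\sqrt{\tau^2-2}}+\frac12)p(s)q(s)$.

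Finally, bounding $\|V^{\sigma+1/2}e^{-V(t-s)}\|\le c^b_\gamma(1+t-s)^{-(\sigma+1/2)}$ by Lemma \ref{lemma9} and integrating gives the third term $c_2\int_0^t p(s)q(s)(1+t-s)^{-(\sigma+1/2)}\,\mathrm{d}s$, with the stated $c_2$. I note that the infinite-dimensional stochastic integral enters this bias estimate only through the martingale term, which was already annihilated in passing to the mean in Corollary \ref{coro1}; consequently no further It\^o-type estimate is required here, and the only delicate points are the $\tau>\sqrt2$ conversion displayed above and the fact that it, together with the discrepancy principle, is valid uniformly for all $s\le t<t^*$.
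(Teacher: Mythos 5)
Your proposal follows the paper's Appendix~B proof essentially step for step: the same $I_1,I_2,I_3$ decomposition with $\theta(s)=K^*\psi(s)$ (which you verify explicitly, whereas the paper leaves this algebra implicit), the same use of the stopping rule \eqref{stopping} together with \eqref{F2} to convert $\delta$ into $\frac{\sqrt{2}}{(1-\eta)\sqrt{\tau^2-2}}\,q(s)$ and to obtain the residual control $\mathbb{E}[\|F(x^\delta(s))-y^\delta\|^2]^{1/2}\le\frac{\sqrt{2}\tau}{(1-\eta)\sqrt{\tau^2-2}}\,q(s)$, and the same splitting of $\psi$ via \eqref{A32} and \eqref{rem1} plus Cauchy--Schwarz, yielding exactly the stated constants $c_1$ and $c_2$. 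The only cosmetic difference is in $I_2$, where you evaluate $\int_0^t e^{-V(t-s)}V^{\sigma+\frac{1}{2}}\,\textmd{ds}=V^{\sigma-\frac{1}{2}}(I-e^{-Vt})$ exactly for all $\sigma$, while the paper bounds the integrand pointwise via Lemma \ref{lemma9} and uses the exact evaluation only for $\sigma=\frac{1}{2}$; both versions give the claimed bound in the range $\sigma\in[0,\frac{1}{2}]$ that is actually invoked later.
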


The proof is technical, and is provided in Appendix B.

\subsection{Estimation of the variance error}
We now study the upper bound of the variance error $\mathbb{E}[\|V^\sigma({x^\delta }\left( t \right) - \mathbb{E}{x^\delta }\left( t \right))\|^2]$. The cases $\sigma=0$ and $\sigma=\frac{1}{2}$ will be employed in deriving the desired error bound of the SAR method. In addition, from this point we assume that $B_s$ is a $\mathcal{Q}$-Wiener process for $\mathcal{Q}:=I_\mathcal{X}$, where $I_\mathcal{X}$ is the identity operator on $\mathcal{X}$. All assertions below remain valid for a general $\mathcal{Q}$ with appropriate weighted norm; nevertheless, the usage of this assumption can simplify the statement in the proofs.

\begin{prop}\label{po2}
	Let the assumptions of Proposition \ref{PropositionBiasErr} be satisfied. Suppose that $g(t)\leq g(0)/\sqrt{1+t}$. Then, for any $\sigma \geq 0$, the following holds:
	\begin{equation}\label{v2}
	\mathbb{E}[\|V^\sigma({x^\delta }\left( t \right) - \mathbb{E}{x^\delta }\left( t \right))\|^2]^{\frac{1}{2}}\le \sqrt{\frac{4^{\gamma+\sigma} -1}{\gamma+\sigma}} c_3c^b_\gamma \frac{E}{(1+t)^{\gamma+\sigma}},
	\end{equation}
	where $c_3=\sqrt{\frac{2\tau^2\epsilon_0 \eta}{(1-\eta)^2(\tau^2-2)}}$.
\end{prop}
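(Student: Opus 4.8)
The plan is to estimate the variance error via the representation in Corollary \ref{coro1}, equation \eqref{variance term}, which gives
\begin{equation*}
V^\sigma\left({x^\delta }\left( t \right) - \mathbb{E}{x^\delta }\left( t \right)\right) = \int_0^t V^\sigma {e^{ - V\left( {t - s} \right)}} f\left( s \right)\textmd{d}{B_s}.
\end{equation*}
Since $\mathcal{Q}=I_\mathcal{X}$, I would apply It\^o's isometry from Theorem \ref{Itoiso} to pass from the squared expectation of this stochastic integral to a deterministic time integral. The key reduction is that for each fixed $s$ the integrand is the scalar-valued multiplier $f(s)$ times the bounded linear operator $V^\sigma e^{-V(t-s)}$, so the Hilbert--Schmidt norm in \eqref{QWiener} becomes $\|f(s)\|^2$ times the trace of $\left(V^\sigma e^{-V(t-s)}\right)\left(V^\sigma e^{-V(t-s)}\right)^*=V^{2\sigma}e^{-2V(t-s)}$. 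This gives
\begin{equation*}
\mathbb{E}[\|V^\sigma({x^\delta }\left( t \right) - \mathbb{E}{x^\delta }\left( t \right))\|^2] = \int_0^t \|f(s)\|^2\,\textmd{tr}\left[V^{2\sigma}e^{-2V(t-s)}\right]\textmd{ds}.
\end{equation*}

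Next I would control the two factors in the integrand separately. For the trace term, I expand in the eigenbasis of $V=K^*K$ and bound each eigenvalue contribution $\lambda^{2\sigma}e^{-2\lambda(t-s)}$ using Lemma \ref{lemma9} (with $\gamma$ replaced by $2\sigma$ and $t$ by $2(t-s)$), which yields a decay of order $(1+t-s)^{-2\sigma}$; summing over eigenvalues uses trace-class structure and contributes the constant $c^b_\gamma$ together with the combinatorial prefactor. For the weight $\|f(s)\|^2$, I would invoke the design inequality \eqref{hg}, namely $\|f(s)\|^2\le \epsilon_0\eta\,\mathbb{E}[\|F(x^\delta(s))-y^\delta\|^2]$, and then transfer the discrepancy bound into a bound on $q(s)$ and ultimately on $E/(1+s)^{\gamma}$-type decay. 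The hypothesis $g(t)\le g(0)/\sqrt{1+t}$ sharpens \eqref{hg} to an extra $(1+s)^{-1}$ factor, which is exactly what is needed to make the resulting $s$-integral converge with the correct rate.

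Combining these, the integral takes the schematic form $\int_0^t (1+t-s)^{-2\sigma}(1+s)^{-(2\gamma+1)}\,\textmd{ds}$ up to constants, to which I would apply Lemma \ref{lem1} with $k=2\sigma$ and $j=2\gamma+1$ (so $k+j=2\gamma+2\sigma+1>1$), producing the target decay $(1+t)^{-(2\gamma+2\sigma)}$ and the prefactor $(4^{\gamma+\sigma}-1)/(\gamma+\sigma)$ after simplification; taking square roots yields \eqref{v2}. The main obstacle, and the step deserving the most care, is the trace computation in infinite dimensions: I must justify that $\textmd{tr}[V^{2\sigma}e^{-2V(t-s)}]$ is finite and uniformly controlled despite $V^{2\sigma}$ being unbounded when $\sigma$ is large, and verify that the Hilbert--Schmidt norm in It\^o's isometry is correctly identified under the $\mathcal{Q}=I_\mathcal{X}$ convention. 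The second delicate point is tracking how \eqref{hg} and the assumption on $g$ combine to supply the precise power $(1+s)^{-(2\gamma+1)}$ and the constant $c_3$; assembling the exact constants $c_3$ and $c^b_\gamma$ from these pieces is bookkeeping-heavy but the structure of the bound follows directly from the three ingredients above.
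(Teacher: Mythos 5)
Your plan diverges from the paper at the very step you flag as delicate, and there it genuinely breaks. The reduction of the It\^o isometry to $\|f(s)\|^2\,\mathrm{tr}\bigl[V^{2\sigma}e^{-2V(t-s)}\bigr]$ is not usable in the infinite-dimensional setting: with $\mathcal{Q}=I_\mathcal{X}$ and $V=K^*K$ having infinitely many eigenvalues $\lambda_j\to 0$ (the typical ill-posed situation), the case $\sigma=0$ gives $\mathrm{tr}[e^{-2V(t-s)}]=\sum_j e^{-2\lambda_j(t-s)}=+\infty$ since the terms tend to $1$, and even for $\sigma>0$ finiteness of $\sum_j\lambda_j^{2\sigma}e^{-2\lambda_j(t-s)}$ is equivalent to $V^{2\sigma}$ being trace class, which is assumed nowhere --- and note $\sigma=0$ is one of the two cases actually needed downstream (see \eqref{pqGIneq1}). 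The paper never computes a trace: in \eqref{IneqVar1} it bounds the Hilbert--Schmidt norm of the composition by the \emph{operator} norm of $V^\sigma e^{-V(t-s)}$ times $\|f(s)\|^2$, and, crucially, it then lands the semigroup on the fixed element $\bar x-x^\dagger$ and invokes the source condition \eqref{A2}, so that $\|V^\sigma e^{-V(t-s)}(\bar x-x^\dagger)\|\le c^b_\gamma E\,(1+t-s)^{-(\gamma+\sigma)}$ as in \eqref{IneqVar3}. The $\gamma$ part of the $t-s$ decay thus comes from $\nu$, not from the semigroup, which by itself only yields the $(1+t-s)^{-\sigma}$ rate your Lemma \ref{lemma9} argument produces.

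Your attempt to recover the missing $(1+s)^{-2\gamma}$ from the weight $\|f(s)\|^2$ is circular. The decay you invoke --- a bound on $q(s)$ of $E/(1+s)^{\gamma+\frac{1}{2}}$ type --- is exactly \eqref{qest} of Proposition \ref{ProfIneqpq}, whose proof (Appendix C, via Lemma \ref{thm2} and \eqref{pqGIneq1}--\eqref{pqGIneq2}) rests on Proposition \ref{po2} itself; it is not available at this stage. The paper uses only ingredients already established: $g(s)^2\le g(0)^2/(1+s)$ supplies the single factor $(1+s)^{-1}$, inequality \eqref{IneqDataSolu} converts the discrepancy into the $V^{\frac{1}{2}}$-weighted error (producing the constant $c_3$), and the monotone decrease of $\mathbb{E}[\|x^\delta(s)-x^\dagger\|^2]$ before $t^*$ (Proposition \ref{prop3}) together with \eqref{A2} controls that weighted quantity by the source element, giving the integrand $(1+t-s)^{-2(\gamma+\sigma)}(1+s)^{-1}$ and, via Lemma \ref{lem1} with $(k,j)=(2(\gamma+\sigma),1)$, precisely the stated constant. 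Your split $(k,j)=(2\sigma,2\gamma+1)$ would incidentally reproduce the same prefactor $(4^{\gamma+\sigma}-1)/(\gamma+\sigma)$ because $k+j$ coincides, but the input bound it requires is illegitimate here, and even granting it the constant would involve $c^*$ rather than the claimed $c_3c^b_\gamma$. To repair the proof, replace the trace computation by the operator-norm bound \eqref{IneqVar1} and route the decay through $\bar x-x^\dagger$ and \eqref{A2}, as the paper does.
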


\begin{proof}
	According to \eqref{variance term}, we have
	\begin{equation*}
	V^\sigma({x^\delta }\left( t \right) - \mathbb{E}{x^\delta }\left( t \right))=\int_0^tV^\sigma{{e^{ - {K^ * }K\left( {t - s} \right)}}} f\left( s \right)\textmd{d}{B_s}.
	\end{equation*}
	From Theorem \ref{Itoiso}, we conclude from the requirement of $f$ that
	\begin{equation*}
	\begin{split}
	\mathbb{E}[\|V^\sigma({x^\delta }\left( t \right) - \mathbb{E}{x^\delta }\left( t \right))\|^2]&=\mathbb{E}\left\|\int_0^t{V^\sigma{e^{ - V\left( {t - s} \right)}}} f\left( s \right)\textmd{d}{B_s}\right\|^2\\
	&=\mathbb{E}\int_0^t\left\|{V^\sigma{e^{ - V\left( {t - s} \right)}}} f\left( s \right)\right\|^2_{\mathcal{L}_2(\mathcal{X}_\mathcal{Q},\mathcal{X})} \textmd{ds} < \infty.
	\end{split}
	\end{equation*}
	Hence, the square of the Hilbert-Schmidt norm
	\[\left\|{V^\sigma{e^{ - V\left( {t - s} \right)}}} f\left( s\right)\right\|^2_{\mathcal{L}_2(\mathcal{X
		}_\mathcal{Q},\mathcal{X})}= \textmd{tr}\left[{V^\sigma{e^{ - V\left( {t - s} \right)}}} f\left( s \right)\left({V^\sigma{e^{ - V\left( {t - s} \right)}}} f\left( s \right)\right)^*\right]\]
	is bounded.
	Recall that $V^\sigma$ with $\sigma \geq 0$ are bounded linear operators and $f(t)\in L^\infty (\mathbb{R}_+)$; then, we additionally obtain
	\begin{equation}
	\label{IneqVar1}
	\mathbb{E}[\|V^\sigma({x^\delta }\left( t \right) - \mathbb{E}{x^\delta }\left( t \right))\|^2]
	\le \mathbb{E}\int_0^t\left\|{V^\sigma{e^{ - V\left( {t - s} \right)}}}\right\|^2\left| f\left( s \right)\right|^2\textmd{ds}.
	\end{equation}
	
	From inequalities \eqref{hg} and \eqref{IneqDataSolu}, we obtain, together with the assumption of $g(t)$,
	\begin{equation*}
	\left| f\left( s \right)\right|^2 \le \frac{\epsilon_0 \eta}{1+s} {\mathbb{E}[\|{F\left( {{x^\delta }\left( s \right)} \right)-{y^\delta }}\|^2]} \le \frac{c^2_3}{1+s} \mathbb{E}[\|{V^{\frac{1}{2}}\left( {{{x^\delta }\left( s \right)} - {x^\dag }} \right)} \|^2].
	\end{equation*}
	By using the monotonic decreasing of $\mathbb{E}[\|{x^\delta }\left( s \right) - {x^\dag }\|^2]$ before the terminating time, we derive
	\begin{equation}
	\label{IneqVar2}
	\left\|{V^\sigma{e^{ - V\left( {t - s} \right)}}}\right\|^2\left| f\left( s \right)\right|^2 \le \frac{c^2_3}{1+s} {\left\|{V^\sigma{e^{ - V\left( {t - s} \right)}}}\left(\bar x -x^\dag\right)\right\|^2}.
	\end{equation}
	To bound the numerator of the right side in the above inequality, we combine \eqref{A2} with Lemma \ref{lemma9}, obtaining
	\begin{equation}
	\label{IneqVar3}
	\begin{split}
	& {\left\|{V^\sigma{e^{ - V\left( {t - s} \right)}}}\left(\bar x -x^\dag\right)\right\|^2} \le {\left\|{{e^{ - V\left( {t - s} \right)}}}V^{\gamma+\sigma}\nu\right\|^2} \\
	& \qquad \le  E^2 {\left|\sup_{0\le \lambda \le 1}\lambda^{\gamma+\sigma} e^{-\lambda (t-s)}\right|^2} \le  \frac{ (c^b_\gamma)^2 E^2}{(1+t-s)^{2\left({\gamma+\sigma}\right)}}.
	\end{split}
	\end{equation}
	Consequently, applying Lemma \ref{lem1}, we derive, together with inequalities \eqref{IneqVar1}-\eqref{IneqVar3},
	\begin{equation*}
	\begin{split}
	& \mathbb{E}[\|V^\sigma({x^\delta }\left( t \right) - \mathbb{E}{x^\delta }\left( t \right))\|^2] \le {c_3^2 (c^b_\gamma)^2  E^2} \int_0^t \frac{1}{(1+t-s)^{2\left({\gamma+\sigma}\right)}(1+s)}\textmd{ds} \\
	& \qquad \le c_3^2 (c^b_\gamma)^2 \frac{4^{\gamma+\sigma} -1}{\gamma+\sigma} \frac{E^2}{(1+t)^{2\left({\gamma+\sigma}\right)}}.
	\end{split}
	\end{equation*}
	The proof is thus concluded by taking the square root of the above inequality.
\end{proof}

\subsection{Convergence rates}
In this subsection, we summarize both bias and variance bounds to derive our main results. We first estimate the functions $p(t)$ and $q(t)$, defined in \eqref{PropositionBiasErr}.

\begin{prop}
	\label{ProfIneqpq}
	Let the assumptions of Proposition \ref{po2} be satisfied. For a sufficiently small number $E$, there exists a constant $c^*:=c(\gamma, \tau, \eta)$ such that the following two estimates hold:
	\begin{equation}\label{pest}
	p(t)\le \frac{c^* E}{(1+t)^\gamma}\equiv z_1(t),
	\end{equation}
	\begin{equation}\label{qest}
	q(t)\le \frac{c^* E}{(1+t)^{\gamma+\frac{1}{2}}}\equiv z_2(t).
	\end{equation}
\end{prop}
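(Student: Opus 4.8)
The plan is to combine the bias estimate from Proposition~\ref{PropositionBiasErr} with the variance estimate from Proposition~\ref{po2}, and then to close the resulting coupled system of inequalities for $p(t)$ and $q(t)$ via a bootstrap/continuity argument. Recall that by the bias-variance decomposition \eqref{bias-variance decomposition} and the triangle inequality for the $L^2$-norm we have, for any $\sigma\ge 0$,
\begin{equation*}
\mathbb{E}[\|V^\sigma({x^\delta}(t)-{x^\dag})\|^2]^{\frac12} \le \left\|\mathbb{E}(V^\sigma({x^\delta}(t)-{x^\dag}))\right\| + \mathbb{E}[\|V^\sigma({x^\delta}(t)-\mathbb{E}{x^\delta}(t))\|^2]^{\frac12}.
\end{equation*}
Specializing to $\sigma=0$ recovers $p(t)$ on the left, and to $\sigma=\tfrac12$ recovers $q(t)$ on the left (using $V^{1/2}=K$). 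Substituting Proposition~\ref{PropositionBiasErr} for the bias and Proposition~\ref{po2} for the variance, I obtain for $\sigma=0$
\begin{equation*}
p(t)\le \frac{c\,E}{(1+t)^\gamma} + c_1\,q(t)(1+t)^{\frac12} + c_2\int_0^t \frac{p(s)q(s)}{(1+t-s)^{\frac12}}\,\textmd{ds},
\end{equation*}
and an analogous inequality for $q(t)$ with the improved decay $(1+t)^{-(\gamma+1/2)}$ coming from the $\sigma=\tfrac12$ source term and variance bound.

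\textbf{The bootstrap.} The central difficulty is that the bias bound for $p(t)$ contains the term $c_1 q(t)(1+t)^{1/2}$, whose prefactor $(1+t)^{1/2}$ grows, and the nonlinear convolution term couples $p$ and $q$ together. The standard device is to make the \emph{ansatz} \eqref{pest}--\eqref{qest} and verify it is self-consistent. Concretely, I would define $c^*$ as the constant to be determined and introduce the candidate bounds $z_1(t)=c^* E/(1+t)^\gamma$ and $z_2(t)=c^* E/(1+t)^{\gamma+1/2}$. I substitute these candidate bounds into the right-hand sides of the two coupled inequalities and check that the result is again bounded by $z_1(t)$ and $z_2(t)$, respectively, provided $c^*$ is chosen large enough relative to $c, c_1, c_2$ and $E$ is chosen small enough. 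For the $p$-inequality, the key cancellation is that $q(t)(1+t)^{1/2}\le z_2(t)(1+t)^{1/2}=c^* E/(1+t)^\gamma$, so the dangerous growing factor is exactly absorbed by the extra half-power decay built into $z_2$; this is precisely why the two different decay rates in \eqref{pest} and \eqref{qest} are needed in tandem. The convolution term is controlled using Lemma~\ref{lem1}: substituting $p(s)q(s)\le (c^*E)^2/(1+s)^{2\gamma+1/2}$ and applying \eqref{A1} with $k=\tfrac12$, $j=2\gamma+\tfrac12$ (so that $k+j=2\gamma+1>1$) yields a bound of order $(c^*E)^2/(1+t)^{2\gamma}$, i.e. quadratic in $c^*E$.

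\textbf{Closing the constants.} Collecting the estimates, the self-consistency requirement for the $p$-inequality takes the schematic form $c^* E \ge cE + c_1 c^* E + c_2 (c^* E)^2 \cdot(\text{Lemma~\ref{lem1} constant})$, after matching the $(1+t)^{-\gamma}$ decay. The hard part will be arranging that this is solvable: the linear coupling term $c_1 c^* E$ must have coefficient strictly below one so that $(1-c_1) c^* \ge c + c_2 c^* (c^* E)(\ldots)$ can be solved for $c^*$, and the quadratic term is then made subdominant by taking $E$ sufficiently small. This is exactly where the smallness hypothesis on $E$ enters, and where the careful choice of $\epsilon_0$ in Proposition~\ref{PropositionBiasErr} pays off, since it keeps $c_1<1$ (note $\tau>\sqrt2$ guarantees $\tau^2-2>0$ and hence $c_1$ finite). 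To make this rigorous rather than circular I would run a continuity argument: the set of times $t$ on which \eqref{pest}--\eqref{qest} hold is closed by continuity of $p,q$, nonempty since it contains $t=0$ (where $p(0)=\|\bar x-x^\dag\|$ and the source condition \eqref{A2} gives the right bound), and open because the verified inequalities hold with strict slack, so it is all of $[0,\infty)$. I expect the bookkeeping of constants and the verification that the slack is strict — not any conceptual step — to be the main obstacle, so I would relegate the detailed constant-chasing to an appendix and present only the structure here.
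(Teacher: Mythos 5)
Your proposal follows essentially the same route as the paper's Appendix C: you derive the same coupled system of integral inequalities for $p(t)$ and $q(t)$ by combining the bias bound (Proposition \ref{PropositionBiasErr}, with $\sigma=0$ and $\sigma=\tfrac12$) with the variance bound (Proposition \ref{po2}), then verify the ansatz $z_1,z_2$ via Lemma \ref{lem1} with exactly the exponents $(k,j)=(\tfrac12,2\gamma+\tfrac12)$ and $(1,2\gamma+\tfrac12)$, the same absorption of the growing factor $(1+t)^{1/2}$ by the extra half-power decay in $z_2$, and the same self-consistency condition requiring $c_1<1$ and $E$ small (the paper's inequality \eqref{cGamma}). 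Your explicit continuity/bootstrap argument is in fact a more careful justification of the closing step that the paper dispatches with a terse appeal to the monotonicity of $G_1$ and $G_2$ in $(p,q)$, so the proposal is correct.
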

The detailed proof is provided in Appendix C.

\begin{lemma}
	For any $\varsigma \in \mathcal{X}$, the interpolation inequality in a stochastic sense holds, i.e. for any $0 \le a \le b$,
	\begin{equation}\label{Einterpolation}
	\|\mathbb{E}[V^a \varsigma ] \| \le \|\mathbb{E}[V^b \varsigma ] \|^\frac{a}{b} \|\mathbb{E}[\varsigma ] \|^{1-\frac{a}{b}}.
	\end{equation}
\end{lemma}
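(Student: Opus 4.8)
The plan is to reduce the stochastic statement to the classical deterministic moment (Heinz-type) interpolation inequality for the nonnegative self-adjoint operator $V=K^*K$. The key observation is that $V$, and hence each fractional power $V^a$ with $a\ge 0$ (defined through the continuous functional calculus), is a \emph{deterministic} bounded linear operator, since $K=F'(x^\dag)$ is deterministic and stationary in $t$. Because a bounded linear operator commutes with the Bochner expectation of an integrable $\mathcal{X}$-valued random variable, I would first record that $\mathbb{E}[V^a\varsigma]=V^a\,\mathbb{E}[\varsigma]$ and likewise $\mathbb{E}[V^b\varsigma]=V^b\,\mathbb{E}[\varsigma]$. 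Setting $u:=\mathbb{E}[\varsigma]\in\mathcal{X}$, the claimed inequality \eqref{Einterpolation} becomes exactly the deterministic estimate $\|V^a u\|\le\|V^b u\|^{a/b}\,\|u\|^{1-a/b}$, so the randomness disappears entirely after this reduction.

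For the deterministic inequality I would dispose of the degenerate cases first: if $a=0$, $a=b$, or $u=0$ the statement is trivial, so assume $0<a<b$ and $u\ne 0$. Invoking the spectral theorem, write $V=\int_{0}^{\|V\|}\lambda\,\mathrm{d}E_\lambda$ (note $\|V\|\le 1$ since $\|K\|\le 1$) and introduce the finite positive measure $\mathrm{d}\mu(\lambda):=\mathrm{d}\langle E_\lambda u,u\rangle$, whose total mass is $\|u\|^2$. Then $\|V^a u\|^2=\int\lambda^{2a}\,\mathrm{d}\mu$, $\|V^b u\|^2=\int\lambda^{2b}\,\mathrm{d}\mu$, and $\|u\|^2=\int\mathrm{d}\mu$. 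Normalising to the probability measure $\mathrm{d}\tilde\mu=\mathrm{d}\mu/\|u\|^2$ and applying Jensen's inequality to the concave function $x\mapsto x^{a/b}$ (concave because $0<a/b\le 1$), I obtain $\int\lambda^{2a}\,\mathrm{d}\tilde\mu=\int(\lambda^{2b})^{a/b}\,\mathrm{d}\tilde\mu\le\big(\int\lambda^{2b}\,\mathrm{d}\tilde\mu\big)^{a/b}$. Multiplying back by $\|u\|^2$ and rearranging yields $\|V^a u\|^2\le\|u\|^{2(1-a/b)}\|V^b u\|^{2a/b}$, and taking square roots gives the desired bound. Equivalently, one could apply H\"older's inequality with conjugate exponents $b/a$ and $b/(b-a)$ directly to $\int\lambda^{2a}\cdot 1\,\mathrm{d}\mu$.

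Since everything reduces to a one-line spectral computation together with Jensen's inequality, there is no serious technical obstacle here; the only point deserving a careful word is the justification that the bounded operator $V^a$ may be pulled out of the expectation, i.e. $\mathbb{E}[V^a\varsigma]=V^a\mathbb{E}[\varsigma]$. The mild prerequisite is that $\varsigma$ be Bochner integrable so that $\mathbb{E}[\varsigma]$ is well defined; in the intended applications $\varsigma$ is of the form $x^\delta(t)-x^\dag$ (or a related quantity) with finite second moment, so integrability holds and the interchange with the bounded operator $V^a$ follows from the standard linearity property of the Bochner integral. I would conclude by observing that the reduction and the spectral estimate are valid for every such $\varsigma$ and for all $0\le a\le b$, which establishes \eqref{Einterpolation}.
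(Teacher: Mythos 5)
Your proof is correct, and it takes a genuinely different route from the paper's. You commute the deterministic bounded operator $V^a$ with the Bochner expectation, $\mathbb{E}[V^a\varsigma]=V^a\,\mathbb{E}[\varsigma]$, and then apply the classical moment inequality $\|V^a u\|\le\|V^b u\|^{a/b}\,\|u\|^{1-a/b}$ to $u=\mathbb{E}[\varsigma]$, which you verify via the spectral theorem and Jensen's (or H\"older's) inequality for the measure $\mathrm{d}\langle E_\lambda u,u\rangle$; this yields \eqref{Einterpolation} exactly as stated, with norms of expectations on both sides, and makes transparent that no stochastic content survives the reduction. The paper instead argues pathwise: it first uses $\|\mathbb{E}[V^a\varsigma]\|\le\mathbb{E}[\|V^a\varsigma\|]$, applies the deterministic interpolation inequality inside the expectation, and then splits $\mathbb{E}\bigl[\|V^b\varsigma\|^{a/b}\|\varsigma\|^{1-a/b}\bigr]$ by H\"older's inequality with exponents $b/a$ and $b/(b-a)$. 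Note that this produces the bound $\|\mathbb{E}[V^a\varsigma]\|\le\mathbb{E}[\|V^b\varsigma\|]^{a/b}\,\mathbb{E}[\|\varsigma\|]^{1-a/b}$, i.e. with \emph{expectations of norms} on the right; since $\|\mathbb{E}[X]\|\le\mathbb{E}[\|X\|]$, that right-hand side dominates the one in \eqref{Einterpolation}, so the paper's argument in fact establishes a weaker conclusion than the literal statement, whereas your reduction proves the sharper stated form. Each version has its use downstream: the first line of \eqref{rate2} invokes the expectation-of-norm variant (the paper's actual conclusion, with $\mathbb{E}[\|\nu^*\|]$ controlled by \eqref{c1e}), while \eqref{c2e} bounds the norm-of-expectation quantity $\|\mathbb{E}[V^{\gamma+\frac{1}{2}}\nu^*]\|$, which is precisely what your form of the lemma consumes; having your direct argument therefore tidies up this mismatch. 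Your only prerequisite, Bochner integrability of $\varsigma$ so that $V^a$ may be pulled out of the integral, is correctly identified and holds in the intended applications.
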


\begin{proof}
	From the interpolation inequality $\|V^a \varsigma  \| \le \|V^b \varsigma  \|^\frac{a}{b} \|\varsigma  \|^{1-\frac{a}{b}}$, we have
	\begin{equation*}
	\|\mathbb{E}[V^a \varsigma ] \| \le \mathbb{E}[\|V^a \varsigma\| ] \le \mathbb{E}[ \|V^b \varsigma  \|^\frac{a}{b} \|\varsigma  \|^{1-\frac{a}{b}}].
	\end{equation*}
	Then, from the H\"{o}lder inequality, the following holds:
	\begin{equation*}
	\begin{split}
	\mathbb{E}[ \|V^b \varsigma  \|^\frac{a}{b} \|\varsigma  \|^{1-\frac{a}{b}}] &\le
	\left(\mathbb{E}[\left(\|V^b \varsigma \|^\frac{a}{b}\right)^\frac{b}{a}]\right)^\frac{a}{b} \left( \mathbb{E}[(\|\varsigma\|^{\frac{b-a}{b}})^\frac{b}{b-a}]\right)^\frac{b-a}{b}\\
	&= \mathbb{E}[\|V^b \varsigma \|]^\frac{a}{b}  \mathbb{E}[\|\varsigma \|]^{1-\frac{a}{b}}.
	\end{split}
	\end{equation*}
	This ends the proof.
\end{proof}

\begin{prop}
	\label{PropRvar}
	Let the assumptions of Proposition \ref{po2} be satisfied. For a sufficiently small number $E$, there exists a constant $c_e^0$ such that
	\begin{equation}\label{Rvar}
	\mathbb{E}[\|x^\delta(t^*)-\mathbb{E}x^\delta(t^*)\|^2] \le c_e^0 E^{\frac{1}{2\gamma+1}} \delta^{\frac{2\gamma}{2\gamma+1}}.
	\end{equation}
\end{prop}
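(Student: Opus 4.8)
The plan is to begin from the explicit representation of the variance term in Corollary \ref{coro1}, namely $x^\delta(t)-\mathbb{E}x^\delta(t)=\int_0^t e^{-V(t-s)}f(s)\,\mathrm{d}B_s$ with $V=K^*K$, and to apply the It\^o isometry of Theorem \ref{Itoiso} at the a posteriori time $t=t^*$. This turns the target quantity $\mathbb{E}[\|x^\delta(t^*)-\mathbb{E}x^\delta(t^*)\|^2]$ into the deterministic integral $\mathbb{E}\int_0^{t^*}\|e^{-V(t^*-s)}f(s)\|^2_{\mathcal{L}_2(\mathcal{X}_\mathcal{Q},\mathcal{X})}\mathrm{d}s$, which is exactly the object already estimated in the proof of Proposition \ref{po2} in the case $\sigma=0$. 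Hence the whole machinery of that proof --- the monotonicity of $\mathbb{E}[\|x^\delta(s)-x^\dag\|^2]$ for $s\le t^*$, the source condition \eqref{A2}, and Lemmas \ref{lemma9} and \ref{lem1} --- is available as the backbone of the argument.

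The decisive new step is to keep track of the factor $\delta$ that Proposition \ref{po2} quietly discarded. Since $f(s)=\delta\,\mathbb{E}[\|F(x^\delta(s))-y^\delta\|^2]^{1/2}g(s)$ and $g(s)\le g(0)/\sqrt{1+s}$ with $g(0)^2=\epsilon_0\eta/\delta_0^2$, the bound on $\|f(s)\|^2$ used there actually reads $\|f(s)\|^2\le (\delta/\delta_0)^2 c_3^2\,q(s)^2/(1+s)$, with $q$ as in \eqref{pq}, and the harmless factor $(\delta/\delta_0)^2\le 1$ was dropped. Retaining it and running the identical chain of estimates yields $\mathbb{E}[\|x^\delta(t^*)-\mathbb{E}x^\delta(t^*)\|^2]\le c\,(\delta/\delta_0)^2 E^2/(1+t^*)^{2\gamma}$. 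Because $t^*<\infty$ by Proposition \ref{prop4}, the factor $(1+t^*)^{-2\gamma}\le 1$ is harmless, leaving the clean intermediate bound $\mathbb{E}[\|x^\delta(t^*)-\mathbb{E}x^\delta(t^*)\|^2]\le c\,E^2\delta^2$.

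It then remains to rewrite $E^2\delta^2$ in the form claimed in \eqref{Rvar}. Assuming $\delta\le\delta_0\le 1$ and $E$ sufficiently small, both exponents on the right of \eqref{Rvar} are at most $2$, since $1/(2\gamma+1)\le 1$ and $2\gamma/(2\gamma+1)<1$; the elementary monotonicity of $x\mapsto x^p$ on $(0,1]$ then gives $\delta^2\le\delta^{2\gamma/(2\gamma+1)}$ and $E^2\le E^{1/(2\gamma+1)}$, and multiplying and absorbing all constants into a single $c_e^0$ produces exactly \eqref{Rvar}.

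The main obstacle is structural rather than computational: the variance estimate of Proposition \ref{po2} is monotonically decreasing in $t$, whereas the discrepancy principle \eqref{stopping} only controls $t^*$ from above, so one cannot manufacture the required $\delta$-decay by inserting a lower bound on $t^*$. The correct resource is the explicit $\delta$ carried by the diffusion coefficient $f$, and the delicate part is to propagate that factor faithfully through the infinite-dimensional It\^o isometry and the Hilbert--Schmidt-norm estimate without silently reabsorbing it. For orientation, the stopping rule together with $q(t^*)\ge(1-\eta)(\tau-1)\delta$ (from \eqref{F2} and the identity $\mathbb{E}[\|F(x^\delta(t^*))-y^\delta\|^2]=\tau^2\delta^2$) and estimate \eqref{qest} forces $(1+t^*)^{\gamma+1/2}\le c^*E/((1-\eta)(\tau-1)\delta)$; note, however, that this is an \emph{upper} bound on $t^*$ and hence --- the variance estimate being decreasing in $t$ --- cannot by itself be used to sharpen the rate, which is precisely why the explicit $\delta$ in $f$ is indispensable.
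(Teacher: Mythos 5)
Your proof is correct within the paper's framework, but it takes a genuinely different route from the paper's own argument. The paper never retains the factor $\delta$ carried by the diffusion coefficient $f$: via \eqref{hg} it discards $(\delta/\delta_0)^2$, splits the residual as $\mathbb{E}[\|F(x^\delta(s))-y^\delta\|^2]\le \tfrac{2}{(1-\eta)^2}\mathbb{E}[\|V^{1/2}(x^\delta(s)-x^\dag)\|^2]+2\delta^2$, and the retained $2\delta^2$ piece produces, after integration (the $\ln(1+t^*)\le 1+t^*$ step), a term of size $\delta^2\sqrt{1+t^*}$; the whole bound thus degenerates to $C\delta\sqrt{1+t^*}$ in \eqref{PfIneqEx} and must be rescued by the a posteriori inequality \eqref{rate1}, i.e. by the upper bound on $t^*$ obtained from \eqref{IneqDelta} and \eqref{qest}. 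You instead keep the explicit $\delta$ in $\|f(s)\|^2\le(\delta/\delta_0)^2 c_3^2\, q(s)^2/(1+s)$ --- and here \eqref{IneqDataSolu}, which is valid on exactly the integration range $[0,t^*)$, already absorbs the $\delta^2$ part of the residual into $q(s)^2$ --- so that with \eqref{qest} the integrand is $O\bigl(\delta^2E^2(1+s)^{-2\gamma-2}\bigr)$, integrable on $[0,\infty)$, and you land on the strictly stronger bound $\mathbb{E}[\|x^\delta(t^*)-\mathbb{E}x^\delta(t^*)\|^2]\le C E^2\delta^2$ with no dependence on $t^*$ at all (indeed $(1+t^*)^{-2\gamma}\le 1$ needs only $t^*\ge 0$, not the finiteness from Proposition \ref{prop4}). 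Your closing remark about the direction of the $t^*$ bound is exactly right and explains why your route can dispense with \eqref{rate1} entirely. What each approach buys: the paper's argument stays parallel to its bias analysis and recycles \eqref{rate1}, which is needed anyway for the $J$-term estimate \eqref{Jnorm} in Theorem \ref{thmrate}; your argument is shorter and sharper --- it shows the variance is of higher order, $O(E\delta)$ in root mean square, hence asymptotically negligible against the bias, so the total rate in Theorem \ref{thmrate} is purely bias-driven --- and it exposes that \eqref{hg} silently throws away the decisive factor. Two cosmetic points: the proposition is invoked in \eqref{c_e} in the square-rooted form $\mathbb{E}[\|\cdot\|^2]^{1/2}\le c_e^0 E^{\frac{1}{2\gamma+1}}\delta^{\frac{2\gamma}{2\gamma+1}}$ (the display in the statement is evidently missing the root), but your intermediate bound yields that version as well, since $E\delta\le E^{\frac{1}{2\gamma+1}}\delta^{\frac{2\gamma}{2\gamma+1}}$ for $E,\delta\le 1$; and your extra hypothesis $\delta_0\le 1$ is unnecessary, as powers of the fixed $\delta_0$ can simply be absorbed into $c_e^0$.
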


\begin{proof}
	From inequality \eqref{F2}, we derive
	\begin{equation*}
	\begin{split}
	\mathbb{E}[\|F(x^\delta(t))-y^\delta\|^2] &\le 2\mathbb{E}[\|F(x^\delta(t))-F(x^\dag)\|^2] + 2\|y-y^\delta\|^2 \\
	& \le\frac{2}{(1-\eta)^2}\mathbb{E}[\|V^\frac{1}{2}(x^\delta(t)-x^\dag)\|^2] + 2\delta^2.
	\end{split}
	\end{equation*}
	According to the definition of $g(t)$ and It\^o's isometry with identity $\mathcal{Q}$, the following holds:
	\begin{equation*}
	\begin{split}
	& \mathbb{E}[\|\int_0^{t^*} {{e^{ - V\left( {t^* - s} \right)}}} f\left( s \right)\textmd{d}{B_s}\|^2]=\mathbb{E}\int_0^{t^*} \left\|{{e^{ - V\left( {t^* - s} \right)}}} f\left( s \right)\right\|^2\textmd{ds}\\
	& \qquad \le [g(0)]^2 \delta^2 \int_0^{t^*} \frac{\left\|{{e^{ - V\left( {t^* - s} \right)}}}\right\|^2\mathbb{E}[\|F(x^\delta(s))-y^\delta\|^2]}{1+s}\textmd{ds}.
	\end{split}
	\end{equation*}
	By incorporating the two inequalities above, we obtain
	\begin{equation}\label{bb}
	\begin{split}
	\mathbb{E}[\|x^\delta(t^*)-\mathbb{E}x^\delta(t^*)\|^2]&\\
	&\!\!\!\!\!\!\!\!\!\!\!\!\!\!\!\!\!\!\!\!\!\!\!\!\!\!\!\!\!\!\!\!\!\!\!\!\le \frac{2[g(0)]^2 \delta^2}{(1-\eta)^2}\int_0^{t^*} \frac{\left\|{{e^{ - V\left( {t^* - s} \right)}}}\right\|^2\mathbb{E}[\|V^\frac{1}{2}(x^\delta(s)-x^\dag)\|^2] }{1+s}\textmd{ds}\\
	&\!\!\!\!\!\!\!\!\!\!\!\!\!\!\!\!\!\!\!\!\!\!\!\!\!\!\!\!\!\!\!\!+ \frac{4[g(0)]^2 \delta^4}{(1-\eta)^2}\int_0^{t^*} \frac{\left\|{{e^{ - V\left( {t^* - s} \right)}}}\right\|^2 }{1+s}\textmd{ds}.
	\end{split}
	\end{equation}
 
	Considering the first summand of the above inequality \eqref{bb}, it follows from estimate \eqref{pest} and Lemma \ref{lemma9} (with $\gamma=1/2$) that
	\begin{equation*}
	\begin{split}
	& \int_0^{t^*} \frac{\left\|{{e^{ - V\left( {t^* - s} \right)}}}\right\|^2\mathbb{E}[\|V^\frac{1}{2}(x^\delta(s)-x^\dag)\|^2] }{1+s}\textmd{ds} \\
	& \le \int_0^{t^*} \frac{\left\|V^\frac{1}{2}{{e^{ - V\left( {t^* - s} \right)}}}\right\|^2\mathbb{E}[\|x^\delta(s)-x^\dag\|^2] }{1+s}\textmd{ds} \le (c^* E)^2 \int_0^{t^*} \frac{\left\|V^\frac{1}{2}{{e^{ - V\left( {t^* - s} \right)}}}\right\|^2}{(1+s)^{2\gamma+1}}\textmd{ds}\\
	& \le (c^* E)^2 \int_0^{t^*} \frac{\left( \sup_{0\le \lambda \le1} \lambda^{\frac{1}{2}} e^{-\lambda(t^*-s)} \right)^2}{(1+s)^{2\gamma+1}}\textmd{ds}\\
	& \le (c^* E)^2 \int_0^{t^*} \frac{1}{(1+t^*-s)(1+s)^{2\gamma+1}}\textmd{ds}\le \frac{2^{2\gamma+2}-2}{2\gamma+1} \frac{(c^*)^2 E^2}{(1+t^*)^{2\gamma +1}}.
	\end{split}
	\end{equation*}
	Meanwhile, the second summand of \eqref{bb} has the estimate
	\begin{equation*}
	\begin{split}
	\int_0^{t^*} \frac{\left\|{{e^{ - V\left( {t^* - s} \right)}}}\right\|^2 }{1+s}\textmd{ds} \le \int_0^{t^*} \frac{1}{1+s}\textmd{ds} = \ln(1+t^*) \le 1+t^*.
	\end{split}
	\end{equation*}
	Combining the last two inequalities, we derive, together with the fact that $\sqrt{a+b}\le \sqrt{a}+\sqrt{b}$,
	\begin{equation}
	\label{PfIneqEx}
	\begin{split}
	\mathbb{E}[\|x^\delta(t^*)\!-\!\mathbb{E}x^\delta(t^*)\|^2]^\frac{1}{2} & \!\le\! \frac{c^* E g(0)}{1-\eta} \sqrt{\frac{2^{2\gamma+3}\!-\!4}{2\gamma\!+\!1}} \frac{ \delta\sqrt{1\!+\!t^*}}{(1+t^*)^{\gamma +1}} \!+\!  \frac{2g(0)}{1-\eta}  \delta^2 \sqrt{1\!+\!t^*} \\ & \le \left\{ \frac{c^* E g(0)}{1-\eta} \sqrt{\frac{2^{2\gamma+3}-4}{2\gamma+1}} +  \frac{2g(0)\delta_0}{1-\eta} \right\}  \delta \sqrt{1+t^*}.
	\end{split}
	\end{equation}
 
	In addition, it follows from inequalities \eqref{IneqDelta} and \eqref{qest} that
	\begin{equation*}
	\begin{split}
	\delta \le   \sqrt{\frac{2}{(1-\eta)^2(\tau^2-2)}} \mathbb{E}[\|{V^{\frac{1}{2}}\left( {{{x^\delta }\left( t \right)} - {x^\dag }} \right)} \|^2]^\frac{1}{2} \le \sqrt{\frac{2}{(1-\eta)^2(\tau^2-2)}} \frac{c^* E}{(1+t^*)^{\gamma+\frac{1}{2}}},
	\end{split}
	\end{equation*}
	which yields
	\begin{equation}
	\label{rate1}
	\sqrt{1+t^*}\delta \le \left( \frac{2}{(1-\eta)^2(\tau^2-2)} \right)^{\frac{1}{4\gamma+2}} (c^*E)^{\frac{1}{2\gamma+1}} \delta^{\frac{2\gamma}{2\gamma+1}}.
	\end{equation}
	We then combine \eqref{PfIneqEx} and \eqref{rate1} to obtain estimate \eqref{Rvar}, with
	\begin{equation}
	\label{ce0}
	c_e^0 = \left\{ \frac{c^* E g(0)}{1-\eta} \sqrt{\frac{2^{2\gamma+3}-4}{2\gamma+1}} +  \frac{2g(0)\delta_0}{1-\eta} \right\} \left( \frac{2(c^*)^2}{(1-\eta)^2(\tau^2-2)} \right)^{\frac{1}{4\gamma+2}}.
	\end{equation}
\end{proof}

We are now able to show the order-optimal error bounds for the mean-square total regularization error $\mathbb{E}[\|{x^\delta }\left( t^* \right) - {x^\dag }\|^2]$ with $t^*$ chosen from \eqref{stopping}, which is the main result of this section.

\begin{theorem}\label{thmrate}
	Let the assumptions of Proposition \ref{po2} be satisfied. For a sufficiently small number $E$, there exists a constant $c_e$ such that
	\begin{equation}\label{convrate}
	\mathbb{E}[\|{x^\delta }\left( t^* \right) - {x^\dag }\|^2]^\frac{1}{2} \le c_{e} E^{\frac{1}{2\gamma+1}} \delta^{\frac{2\gamma}{2\gamma+1}}.
	\end{equation}
\end{theorem}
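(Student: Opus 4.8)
The plan is to feed the bias--variance decomposition \eqref{bias-variance decomposition}, evaluated at the stopping time $t^*$, into the one-sided estimates already prepared. Writing $\rho:=E^{\frac{1}{2\gamma+1}}\delta^{\frac{2\gamma}{2\gamma+1}}$ for the target rate and using $\sqrt{a+b}\le\sqrt a+\sqrt b$, it suffices to bound the bias $\|\mathbb{E}x^\delta(t^*)-x^\dagger\|$ and the variance $\mathbb{E}[\|x^\delta(t^*)-\mathbb{E}x^\delta(t^*)\|^2]^{1/2}$ each by a constant multiple of $\rho$; then \eqref{convrate} follows with $c_e$ the sum of the two constants. The variance piece is immediate: it is exactly the square-root estimate $\le c_e^0\rho$ established in the proof of Proposition \ref{PropRvar}, so no further work is needed there.

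All the difficulty is in the deterministic bias term, which I would attack through its explicit representation \eqref{bias term}. The data-propagation term $\int_0^{t^*}e^{-V(t^*-s)}K^*(y^\delta-y)\,\mathrm{d}s$ is of order $\delta\sqrt{1+t^*}$, hence $\le C\rho$ by the estimate \eqref{rate1}. The dominant source term $e^{-Vt^*}V^\gamma\nu$ I would \emph{not} bound by its temporal decay (that route is circular, since converting $(1+t^*)^{-\gamma}$ into the $\delta$-rate is the whole point); instead I apply the moment (interpolation) inequality \eqref{Einterpolation}, which gives
\[ \|e^{-Vt^*}V^\gamma\nu\|\le\|V^{1/2}e^{-Vt^*}V^\gamma\nu\|^{\frac{2\gamma}{2\gamma+1}}\,\|e^{-Vt^*}\nu\|^{\frac{1}{2\gamma+1}}, \]
with $\|e^{-Vt^*}\nu\|\le\|\nu\|\le E$ by \eqref{A2}. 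Provided the $V^{1/2}$-weighted factor is $O(\delta)$, this term is $\le C\rho$. The remaining nonlinearity term carries an extra $V^{1/2}$-smoothing, so it admits the same interpolation trade-off once the estimates \eqref{pest}--\eqref{qest} for $p,q$ and Lemma \ref{lem1} are used to control its size, with $E$ small absorbing its coefficient.

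The step I expect to be the main obstacle is the \emph{a posteriori} control of the weighted quantities by $\delta$, i.e. $q(t^*)\le C\delta$. By itself the discrepancy principle \eqref{stopping} only delivers the reverse inequality $\delta\le Cq(t^*)$ (from the residual upper bound, already used in Proposition \ref{PropRvar}); to obtain the matching upper bound one must estimate the residual from below. Here I would chain the tangential cone condition \eqref{F2} with the near-linearity \eqref{A31}--\eqref{A32}, writing $F'(x^\delta(t^*))=R_{x^\delta(t^*)}K$ and using $\|R_{x^\delta(t^*)}-I\|\le c_R\|x^\delta(t^*)-x^\dagger\|$, to get $\|F(x^\delta(t^*))-F(x^\dagger)\|\ge c\,\|V^{1/2}(x^\delta(t^*)-x^\dagger)\|$ for $E$ small; then the discrepancy value $\mathbb{E}[\|F(x^\delta(t^*))-y^\delta\|^2]^{1/2}=\tau\delta$ forces $c\,q(t^*)\le(\tau+1)\delta$, so that $q(t^*)\asymp\delta$.

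The genuinely delicate points inside this last step are that all the objects are now mean-square (stochastic): one must carry the $\mathbb{E}[\cdot]^{1/2}$ through each inequality and discard the stochastic integrals via the It\^{o} isometry and the martingale property of Theorem \ref{Itoiso}, and the nonlinearity remainder $\psi$ of \eqref{psi} must be absorbed into the leading terms, which is where the smallness of $E$ is consumed. Once $q(t^*)\le C\delta$ is secured, the $V^{1/2}$-weighted factors entering the interpolation of the previous paragraph are all $O(\delta)$, that interpolation closes, and summing the bias and variance bounds yields \eqref{convrate}.
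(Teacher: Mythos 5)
You have the paper's architecture exactly right in outline: bias--variance at $t^*$, Proposition \ref{PropRvar} verbatim for the variance, $\|J\|\le\delta\sqrt{1+t^*}\lesssim E^{\frac{1}{2\gamma+1}}\delta^{\frac{2\gamma}{2\gamma+1}}$ via \eqref{rate1} for the data term, and an interpolation between an $O(E)$ unweighted bound and an $O(\delta)$ $V^{1/2}$-weighted bound for the rest. The genuine gap is in how you split that rest. You interpolate the pure source term $e^{-Vt^*}V^\gamma\nu$ alone, ``provided the $V^{1/2}$-weighted factor is $O(\delta)$,'' and promise the nonlinearity term the same trade-off with its size controlled by \eqref{pest}--\eqref{qest} and Lemma \ref{lem1}. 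But the $O(\delta)$ weighted bound is only available for the \emph{aggregate}. The paper collects $\nu^* := e^{-Vt^*}\nu + \int_0^{t^*}e^{-V(t^*-s)}V^{\frac{1}{2}-\gamma}\psi(s)\,\textmd{ds}$ and obtains $\|\mathbb{E}[V^{\gamma+\frac{1}{2}}\nu^*]\|\le c_e^2\delta$ by \emph{rearranging} \eqref{terror}: $\mathbb{E}[V^{\gamma+\frac{1}{2}}\nu^*]=\mathbb{E}[V^{\frac{1}{2}}(x^\delta(t^*)-x^\dagger)]-\int_0^{t^*}e^{-V(t^*-s)}V(y^\delta-y)\,\textmd{ds}$, then applying \eqref{F2} and the stopping rule; the $O(\delta)$ control never touches the two summands of $\nu^*$ individually. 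For your isolated factors no such identity exists, and the alternative --- temporal decay --- does not close: Lemma \ref{lem1} gives the weighted nonlinearity term only $\lesssim E^2(1+t^*)^{-(2\gamma+\frac{1}{2})}$, and the discrepancy principle bounds $t^*$ only from \emph{above} (\eqref{rate1} reads $1+t^*\lesssim (E/\delta)^{\frac{2}{2\gamma+1}}$), never from below. With $t^*=O(1)$ and $\delta\ll E^2$ this term is $\approx E^2$, which is neither $O(\delta)$ nor $O(E^{\frac{1}{2\gamma+1}}\delta^{\frac{2\gamma}{2\gamma+1}})$ as $\delta\to 0$; smallness of $E$ cannot absorb a bound carrying no power of $\delta$. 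Consequently the isolated source factor $\|V^{\gamma+\frac{1}{2}}e^{-Vt^*}\nu\|=O(\delta)$ cannot be certified either (it differs from the controlled aggregate precisely by the uncontrolled piece), and your split interpolation fails as stated. The fix is exactly the paper's aggregation: bound $\mathbb{E}[\|\nu^*\|]\le c_e^1 E$ by Lemma \ref{lemma9}, \eqref{cc}, \eqref{pest}--\eqref{qest} and Lemma \ref{lem1}, then interpolate $\nu^*$ via \eqref{Einterpolation} with $a=\gamma$, $b=\gamma+\frac{1}{2}$.

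On the step you single out as the main obstacle, $q(t^*)\le C\delta$: you are right that it is the crux, but your claim that the discrepancy principle ``only delivers the reverse inequality'' is not accurate, and your repair is heavier than needed. Since $t^*$ in \eqref{stopping} is the first crossing and $H$ is continuous, $\mathbb{E}[\|F(x^\delta(t^*))-y^\delta\|^2]\le\tau^2\delta^2$ holds at $t^*$; the left branch of \eqref{F2} with $x=x^\dagger$, $\tilde x=x^\delta(t^*)$ gives \emph{pathwise} $\|V^{\frac{1}{2}}(x^\delta(t^*)-x^\dagger)\|\le(1+\eta)\|F(x^\delta(t^*))-F(x^\dagger)\|$, whence $q(t^*)\le(1+\eta)(\tau+1)\delta$ after H\"older --- this is precisely the paper's computation \eqref{c2e}, requiring neither \eqref{A31}--\eqref{A32} nor smallness of $E$. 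Your $R_x$-perturbation route (lower-bounding the residual via the stochastic version of \eqref{rem1} and $p(t)\le c^*E$) does work in mean square, but it spends the smallness of $E$ where the tangential cone condition already suffices; in the paper, smallness of $E$ is consumed only in Proposition \ref{ProfIneqpq}. With the aggregation fix above and the direct \eqref{F2} argument, the remainder of your plan goes through and reproduces \eqref{convrate}.
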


\begin{proof}
	From \eqref{total error}, it follows that
	\begin{equation}\label{terror}
	\begin{split}
	{x^\delta }\left( t^* \right) \!- \!{x^\dag } \!=\! V^{\gamma} \nu^* \!+\! \int_0^{t^*}{{e^{ - V\left( {t^* - s} \right)}}} V^\frac{1}{2}\left( {{y^\delta } \!-\! y} \right)\textmd{ds}\!+\! \int_0^{t^*} {{e^{ -\! V\left( {t^* - s} \right)}}} f\left( s \right)\textmd{d}{B_s},
	\end{split}
	\end{equation}
	where
	\begin{equation*}
	\nu^*:= e^{-Vt^*}\nu+ \int_0^{t^*} {{e^{ - V\left( {t^* - s} \right)}}}V^{\frac{1}{2}-\gamma} \psi \left( s \right)\textmd{ds},
	\end{equation*}
	and $\psi (s)$ is given by \eqref{psi}.
	Then, according to Lemma \ref{lemma9}, the following holds:
	\begin{equation*}
	\begin{split}
	\left\|\nu^*\right\| &\le \left\|\nu\right\|+ \int_0^{t^*} \left\|{{e^{ - V\left( {t^* - s} \right)}}}V^{\frac{1}{2}-\gamma} \psi \left( s \right)\right\|\textmd{ds}\le E+ \int_0^{t^*} \frac{\left\|\psi(s)\right\|}{(1+t^*-s)^{\frac{1}{2}-\gamma}}\textmd{ds}.
	\end{split}
	\end{equation*}
	In addition, via inequality \eqref{cc}, Proposition \ref{ProfIneqpq}, and Lemma \ref{lem1}, we deduce that
	\begin{equation*}
	\begin{split}
	&\mathbb{E}\int_0^{t^*} \frac{\left\|\psi(s)\right\|}{(1+t^*-s)^{\frac{1}{2}-\gamma}}\textmd{ds} \le c_0 \int_0^{t^*} \frac{p(s)q(s)}{(1+t^*-s)^{\frac{1}{2}-\gamma}}\textmd{ds}\\
	&\le c_0 (c^*)^2 E^2\int_0^{t^*} \frac{\textmd{ds}}{(1+t^*-s)^{\frac{1}{2}-\gamma}(1+s)^{2\gamma+\frac{1}{2}}}\le  c_0 (c^*)^2 E^2 \frac{2^{\gamma+1}-2}{\gamma}.
	\end{split}
	\end{equation*}
	Consequently, we have
	\begin{equation}
	\label{c1e}
	\mathbb{E}[\|\nu^*\|] \le E+ c_0 (c^*)^2 E^2 \frac{2^{\gamma+1}-2}{\gamma} =: c^1_{e} E.
	\end{equation}
	
	Next, we bound the three terms of the right side of \eqref{terror} separately.
	Let
	\begin{equation*}
	J:= \int_0^{t^*}{{e^{ - V\left( {t^* - s} \right)}}} V^\frac{1}{2}\left( {{y^\delta } - y} \right)\textmd{ds}.
	\end{equation*}
	Using Lemma \ref{lemma9} again ($\gamma=1/2$ implies $c_\gamma=1$), we obtain
	\begin{equation*}
	\begin{split}
	\left\|J\right\| \le \delta \int_0^{t^*} \left\|\sup_{0\le \lambda \le 1} \lambda^\frac{1}{2}{{e^{ - \lambda \left( {t^* - s} \right)}}} \right\|\textmd{ds}\le \delta \int_0^{t^*} \frac{1}{\sqrt{1+t^*-s}}\textmd{ds}< \sqrt{t^*+1} \delta.
	\end{split}
	\end{equation*}
	In addition, it follows from \eqref{rate1} that
	\begin{equation}
	\label{Jnorm}
	\left\|J\right\| \le  \left( \frac{2(c^*)^2}{(1-\eta)^2(\tau^2-2)} \right)^{\frac{1}{4\gamma+2}} E^{\frac{1}{2\gamma+1}} \delta^{\frac{2\gamma}{2\gamma+1}}.
	\end{equation}
	
	Taking $\mathbb{E}[V^{\frac{1}{2}}(\cdot)]$ on both sides of \eqref{terror} and rearranging terms, we obtain
	\begin{equation*}
	\begin{split}
	\mathbb{E}[V^{\gamma+\frac{1}{2}}\nu^*] =\mathbb{E}[V^{\frac{1}{2}}({x^\delta }\left( t^* \right) - {x^\dag })]- \int_0^{t^*}{{e^{ - V\left( {t^* - s} \right)}}} V\left( {{y^\delta } - y} \right)\textmd{ds}.
	\end{split}
	\end{equation*}
	Consequently, from assumption \eqref{F2} and stopping rule \eqref{stopping}, we obtain
	\begin{equation}
	\label{c2e}
	\begin{split}
	\|\mathbb{E}[V^{\gamma+\frac{1}{2}}\nu^*]\|
	&\le\mathbb{E}[\|V^{\frac{1}{2}}({x^\delta }\left( t^* \right) - {x^\dag })\|]+\int_0^{t^*} \|{{e^{ - V\left( {t^* - s} \right)}}} V\left( {{y^\delta } - y} \right)\|\textmd{ds}\\
	&\le(1+\eta)\mathbb{E}[\|F({x^\delta }\left( t^* \right)) - F({x^\dag })\|]+\sup_{0\le\lambda \le 1}(1-e^{-\lambda t^*})\cdot \delta\\
	&\le(1+\eta)\left(\mathbb{E}[\|F({x^\delta }\left( t^* \right)) - y^\delta\|^2]^\frac{1}{2}+\delta\right)+\delta\\
	&\le(1+\eta)(\tau\delta + \delta) + \delta =: c_e^2 \delta.
	\end{split}
	\end{equation}
	From the interpolation inequality in a stochastic sense \eqref{Einterpolation}
	with $a=\gamma$, $b=\gamma + \frac{1}{2}$  and $\varsigma=\nu^*$, we additionally obtain
	\begin{equation}\label{rate2}
	\begin{split}
	\|\mathbb{E}[V^{\gamma}\nu^*]\| &\le \mathbb{E}[\|V^{\gamma+\frac{1}{2}} \nu^* \|]^\frac{2\gamma}{2\gamma+1}  \mathbb{E}[\| \nu^* \|]^{\frac{1}{2\gamma+1}}\\
	&\le (c_e^1)^{\frac{1}{2\gamma+1}}(c_e^2)^\frac{2\gamma}{2\gamma+1}E^{\frac{1}{2\gamma+1}} \delta^\frac{2\gamma}{2\gamma+1} =: c_e^3 E^{\frac{1}{2\gamma+1}} \delta^\frac{2\gamma}{2\gamma+1}.
	\end{split}
	\end{equation}
	Combining \eqref{Jnorm} and \eqref{rate2} then leads to
	\begin{equation}
	\label{ce4}
	\|\mathbb{E}{x^\delta }\left( t^* \right) - {x^\dag }\| = \|\mathbb{E}[ V^{\gamma} \nu^* ]+ J\| \le \|\mathbb{E}[ V^{\gamma} \nu^* ]\|+ \left\|J\right\| \le c_e^4 E^{\frac{1}{2\gamma+1}} \delta^{\frac{2\gamma}{2\gamma+1}},
	\end{equation}
	where $c_e^4 = \left( \frac{2(c^*)^2}{(1-\eta)^2(\tau^2-2)} \right)^{\frac{1}{4\gamma+2}} +c_e^3$.
	
	Finally, applying the above inequality and Proposition \ref{PropRvar}, we conclude that
	\begin{equation}
	\label{c_e}
	\begin{split}
	&\mathbb{E}[\|{x^\delta }\left( t^* \right) - {x^\dag }\|^2] = \|\mathbb{E}{x^\delta }\left( t^* \right) - {x^\dag }\|^2 + \mathbb{E}[\|{x^\delta }\left( t^* \right) - \mathbb{E}{x^\delta }\left( t^* \right)\|^2]\\
	&\le \left[c_e^4 E^{\frac{1}{2\gamma+1}} \delta^{\frac{2\gamma}{2\gamma+1}}\right]^2+\left[ c_e^0 E^{\frac{1}{2\gamma+1}} \delta^{\frac{2\gamma}{2\gamma+1}}\right]^2
	= \left( (c_e^0)^2 + (c_e^4)^2 \right) E^{\frac{2}{2\gamma+1}} \delta^{\frac{4\gamma}{2\gamma+1}},
	\end{split}
	\end{equation}
	which yields the estimate with $c_e=\sqrt{(c_e^0)^2 + (c_e^4)^2}$.
\end{proof}

\section{Numerical illustrations}
\label{simulation}

The main algorithms and some simulation results are shared by the first author at \url{https://github.com/Haie555/SAR-method}. All computations were performed using MATLAB R2021a on an HP workstation with an Intel Core i7-8700 CPU running at 3.20 GHz and 16 GB of RAM.

\subsection{Model problem 1: a parameter-identification problem in PDEs}
\label{Ex1}

This group of simulations present some numerical experiments on a benchmark parameter-identification problem, with one-dimensional (1D) and two-dimensional (2D) elliptic equations to study the performance of SAR.
More specifically, we consider the following elliptic equation:
\begin{equation}\label{elliptic equation}
\begin{split}
- \Delta {u} + c{u} &= w~~\textmd{in}~~\Omega,\\
\frac {{\partial{u}}} {\partial{n}} &= 0~~\textmd{on}~~\partial\Omega,
\end{split}
\end{equation}
where $\Omega\subset \mathbb{R}^{d}$ ($d=1, 2$) is an open bounded domain with a Lipschitz boundary and $w\in L^2{(\Omega)}$ is the source term. The considered inverse problem, associated with PDE \eqref{elliptic equation} can be described as
\begin{equation}
\label{model}
{F}\left( c \right) = {u}\left( c \right)
\end{equation}
by defining the parameter-to-solution maps as $F:c\longmapsto u$, where $u(c)$ is the solution of \eqref{elliptic equation}.
Let
\[D\left( F \right): = \left\{ {c \in {L^2}\left( \Omega  \right):{{\left\| {c - \hat c} \right\|}_{{L^2}\left( \Omega  \right)}} \le {\zeta _0} ~\textmd{for some} ~\hat c \ge 0,~a.e.} \right\}\] be the admissible set of $F$. Under the above settings, $F$ is well defined for some positive constant ${\zeta_0}$, and is also Fr\'{e}chet-differentiable. Furthermore, for any $q,\omega \in {L^2}\left( \Omega  \right)$, the Fr\'{e}chet derivative of $F$ and its adjoint can be calculated as
\begin{equation*}
F'\left( c \right)q =  - A{\left( c \right)^{ - 1}}\left( {qF\left( c \right)} \right),\quad
F'{\left( c \right)^*}\omega =  - u\left( c \right)A{\left( c \right)^{ - 1}}\omega,
\end{equation*}
where $A\left( c \right):{H^2} \cap D\left( F \right) \to {L^2}$ is defined by $A\left( c \right)u =  - \Delta u + cu$; see \cite{Hanke1995} for more detailed theoretical aspects of this inverse problem.

To use SAR in practice, we need to discretize the stochastic flow \eqref{initial value problem} with respect to the artificial time variable $t$. Numerous algorithms have been proposed (see, e.g., \cite{JentzenKloeden2009, LordPowell2014}) for accurate numerical approximation of stochastic differential equations. However, for simplicity, we focus only on the Euler method here, which coincides with the stochastic gradient descent algorithm mentioned, as seen in formula \eqref{LandweberS}, i.e. 
\begin{eqnarray}
\label{EM}
x^\delta_{k+1} = x^\delta_{k} + \Delta t F'{\left( x^\delta_{k} \right)^ *} ( y^\delta- F(x_k^\delta) ) + f_k \Delta B_k, 
\end{eqnarray}
where $f_k=f(t_k)$, $\Delta t$ is the size of the uniform time step, and $\Delta B_k = \sqrt{\Delta t} \xi_k$ with $\{\xi_k\}$ being a sequence of independent, standard, normally distributed random variables, i.e. its $i$-th component $[\xi_k]_i \sim N(0,1)$. Using the definitions of $f(t)$ and $g(t)$ in \eqref{ft} and in the conditions of theorems, we define $f_k= \delta r_k s_k $ with $r_k= \min (\|F(x^\delta_k)-y^\delta\|, \|F(x_0)-y^\delta\|)$ and
\begin{equation*}
s_k= {\frac{\theta}{\sqrt{1+t_k}}},
\end{equation*}
where $\theta$ is employed to describe the level of randomization, which ranges between $0$ and $\frac{\sqrt{\epsilon_0 \eta}}{\delta_0}$. The higher the value of $\theta$, the more randomization is assumed. For $\theta=0$, there will be no randomization, and this happens to be the conventional asymptotical regularization method as introduced in \cite{UT1994}.

The regularization properties of scheme \eqref{EM} can be performed as described in the theoretical part of this paper. It is beyond the scope of this paper, since as shown in \cite{Rieder-2005}, any appropriate numerical scheme of stochastic flow \eqref{initial value problem} can serve as an efficient regularization algorithm for some special nonlinear inverse problems.

\subsubsection{Settings for simulations}

We consider the following two examples:

\begin{itemize}
  \item[$\centerdot$] \textbf{1D example}: $\Omega=[-1,1]$, ${w}\left( c \right) =1$ and ${c^\dag }:=  2 + \cos(\pi c)$.
  \item[$\centerdot$] \textbf{2D example}: $\Omega  = {[ - 1,1]^2}$, ${w}\left( c_1, c_2 \right) =1$, and $
{c^\dag }\left( {c_1, c_2} \right):= 1 + \sin \left( \pi{ c_1} \right)+\cos \left( { c_2} \right)$. 
\end{itemize}

Unless otherwise stated, these examples are discretized with a dimensionality $n=256$ for the 1D case and $n^2=128^2=16384$ for 2D. The noisy data $u^\delta$ is generated from the exact data $u$ as follows:
\begin{equation}
u_j^\delta=u_j+\delta\max_i (|u_i|)\xi_j, ~~j=1,\cdots,n,
\end{equation}
where $\delta$ is the noise level, and the random variables $\xi_j$ follow the standard Gaussian distribution. To ensure a fair evaluation of our method, the initial guess 
$x_0$ is randomly selected in each trial of the numerical experiments.

It is crucial to note that the level of randomization, the parameter $\theta$, and the sample size significantly impact the behavior of SAR. A detailed numerical illustration of this effect can be found in Appendix E, along with Tables \ref{tab1} and \ref{tab2} below. With that in mind, let us investigate the advantages of SAR.

\begin{table}[h]
	\caption{Comparisons for different level of randomization $\theta$ in 1-D case, with $\delta=2\%$, and $\tau=1.3$.}\label{tab1}
	\begin{tabular*}{\textwidth}{@{\extracolsep\fill}lcccccc}
		\toprule%
		$\theta$ & $N$\footnotemark[1] &  MinNorm\footnotemark[2] & MeanNorm\footnotemark[3] & $\mathbb{E}(n_*)$\footnotemark[4] & {Min}$(n_*)$\footnotemark[5] & {Med}$(n_*)$\footnotemark[6] \\
		\midrule
		\multirow{4}{*}{0}   
		&100 & 18.2317 & 18.2317 & 134.00 & 134 & 134  \\ 
		& 500 & 18.6904  & 18.6904 & 94.00 & 94 & 94  \\
        & 1000 & 17.6631 & 17.6631 & 81.00 & 81 & 81  \\
		& 3000 & 17.2430 & 17.2430 & 87.00 & 87 & 87 \\
		\midrule
        \multirow{4}{*}{0.1}   
		&100 & 1.3990 & 2.3719 & 26.46 & 12 & 26  \\ 
		&500 & 1.1380 & 2.3624 & 27.12 & 12 & 26  \\ 
		& 1000 & 1.1028  & 2.5217 & 27.24 & 12 & 26  \\
		& 3000 & 1.0931 & 2.3967 & 26.92 & 12 & 26  \\
		\midrule
		\multirow{4}{*}{0.5}
		&100 & 1.3522 & 2.4792  & 16.11 & 12 & 15  \\  
		&500 & 1.2564  & 2.3608 & 15.49 & 12 & 15  \\ 
		& 1000 & 1.1861  & 2.3687 & 15.85 & 12 & 15 \\
		& 3000 & 1.1377 & 2.4049 & 15.68 & 12 & 15  \\
		\midrule
		\multirow{4}{*}{1.0}  
		&100 & 1.4628 & 2.3746 & 26.38 & 14 & 26  \\ 
		&500 & 1.2576 & 2.3639 & 26.27 & 13 & 26  \\ 
		& 1000 & 1.1087  & 2.4969 & 25.93 & 14 & 26  \\
		& 3000 & 1.0631 & 2.4808 & 26.29 & 13 & 26  \\
  	\midrule
		\multirow{4}{*}{3.0} 
		&100 & 1.5505 & 2.7528 & 26.90 & 14 & 25  \\ 
		&500 & 1.5065 & 2.6846 & 27.18 & 14 & 26  \\ 
		& 1000 & 1.4454  & 2.6017 & 27.47 & 14 & 26  \\
		& 3000 & 1.3902 & 2.7037 & 27.78 & 14 & 27  \\
		\botrule
	\end{tabular*}
    \footnotetext[1]{The number of sample sizes used in the experiment.}
	\footnotetext[2]{$\texttt{MinNorm}=\mathop {\min }\limits_N \|x_{n_*}^{\texttt{SAR}}-x^\dag\|$.}
	\footnotetext[3]{$\texttt{MeanNorm}= \mathbb{E} \|x_{n_*}^{\texttt{SAR}}-x^\dag\|$.}
	\footnotetext[4]{The expectation of the number of iterations among all $N$ samples when the stopping rule is met.}
	\footnotetext[5]{The minimum number of iterations among all $N$ samples when the stopping rule is met.}
	\footnotetext[6]{The median number of iterations across all $N$ samples when the stopping rule is met.}
 \end{table}
\begin{table}[h]
	\caption{Probability comparisons P($\lambda$) = $\mathbb{P}(\| x_{n_*}^{\texttt{SAR}}-x^\dag\|\leq \lambda \cdot \| x_{n_*}^{\texttt{Land}}-x^\dag\|)$ for different $\lambda$ in 1-D case, and $\tau=1.3$.}\label{tab2}
	\begin{tabular*}{\textwidth}{@{\extracolsep\fill}lccccc}
		\toprule%
		$\theta$ & $N$ &P(0.8)&P(0.5)&P(0.3)&P(0.1)\\
		\midrule
		\multirow{4}{*}{0.1}   
       & 100  & 100$\%$ &100$\%$ & 100$\%$ &  6$\%$  \\
       &500 & 100$\%$ & 100$\%$ & 98.40$\%$ & 19.20$\%$  \\
       &1000 & 100$\%$ & 100$\%$ & 99.20$\%$ & 16.50$\%$  \\
       & 3000 & 99.97$\% $ & 99.93$\%$ & 99.33$\%$ & 12.30$\%$ \\
       \midrule
		\multirow{4}{*}{0.5}
       &100 & 100$\%$ & 100$\%$ & 100$\%$ & 17$\%$  \\
       &500 & 99.80$\%$ & 99.80$\%$ &99.40$\%$ & 19.80$\%$  \\
       & 1000 & 100$\% $ & 99.90$\%$ & 99$\%$ & 25.30$\%$   \\
       & 3000 & 100$\% $ & 100$\%$ & 98.97$\%$ & 20.83$\%$   \\
		\midrule
		\multirow{4}{*}{1.0} 
       & 100  & 100$\%$ & 100$\%$ & 99$\%$ & 29$\%$ \\
       &500 & 100$\%$ & 99.80$\%$ & 98.60$\%$ & 15.40$\%$  \\
       &1000 & 100$\%$ & 100$\%$ & 99.60$\%$ & 18$\%$  \\
       & 3000 & 100$\% $ & 99.90$\%$ & 99.27$\%$ & 15.50$\%$  \\
       \midrule
		\multirow{4}{*}{3.0} 
       & 100  & 100$\%$ & 100$\%$ & 98$\%$ & 4$\%$ \\
       &500 & 100$\%$ & 100$\%$ & 99$\%$ & 2$\%$  \\
       &1000 & 100$\%$ & 99.80$\%$ & 99.20$\%$ & 6$\%$  \\
       & 3000 & 99.97$\% $ & 99.93$\%$ & 99.13$\%$ & 6.93$\%$   \\
		\botrule
	\end{tabular*}
\end{table}

\subsubsection{Advantage I: Escaping from local minima by selecting the optimal path}

In this subsection, we present numerical results to demonstrate the first advantage of SAR: its ability to escape from local minima, which are often encountered by its non-stochastic counterpart, the Landweber iteration. It is worth noting that a similar iterative scheme with a specifically designed random term, which also decreases to zero during the iteration, was recently proposed in the field of numerical optimization by \cite{Engquist2022}. The authors demonstrated the quantitative convergence of their algorithm in finding the global minimizer of a highly nonconvex function, such as the Rastrigin function. However, this designed random term performs poorly in our parameter-identification problem due to the unknown complex structure of the forward operator $F$ in \eqref{model}.

Now, let us examine our numerical results. Figure \ref{1D-bestpath} shows that both the relative error (Figure \ref{d1p100}) and the residual error (Figure \ref{d2p100}) decrease significantly when an appropriate level of randomization is introduced in the SAR method. In contrast, the Landweber iteration ($\theta=0$) exhibits slower convergence. Additionally, while the residual errors generated by SAR are nearly the same as those produced by its non-stochastic counterpart when the stopping criterion is met, SAR requires significantly fewer iterations. Furthermore, the flat trend observed in the relative and residual errors, along with the reconstructed result of the Landweber method in Figure \ref{d1p100Landweber}, indicates that the Landweber iteration becomes trapped in a local minimum. It is important to note that these results were selected from the best path sample out of $N=500$ for both the Landweber method and SAR.

\begin{figure}[t]
	\centering
	{
		\subfigure[]{
			\label{d1p100}
			\includegraphics[scale=0.06]{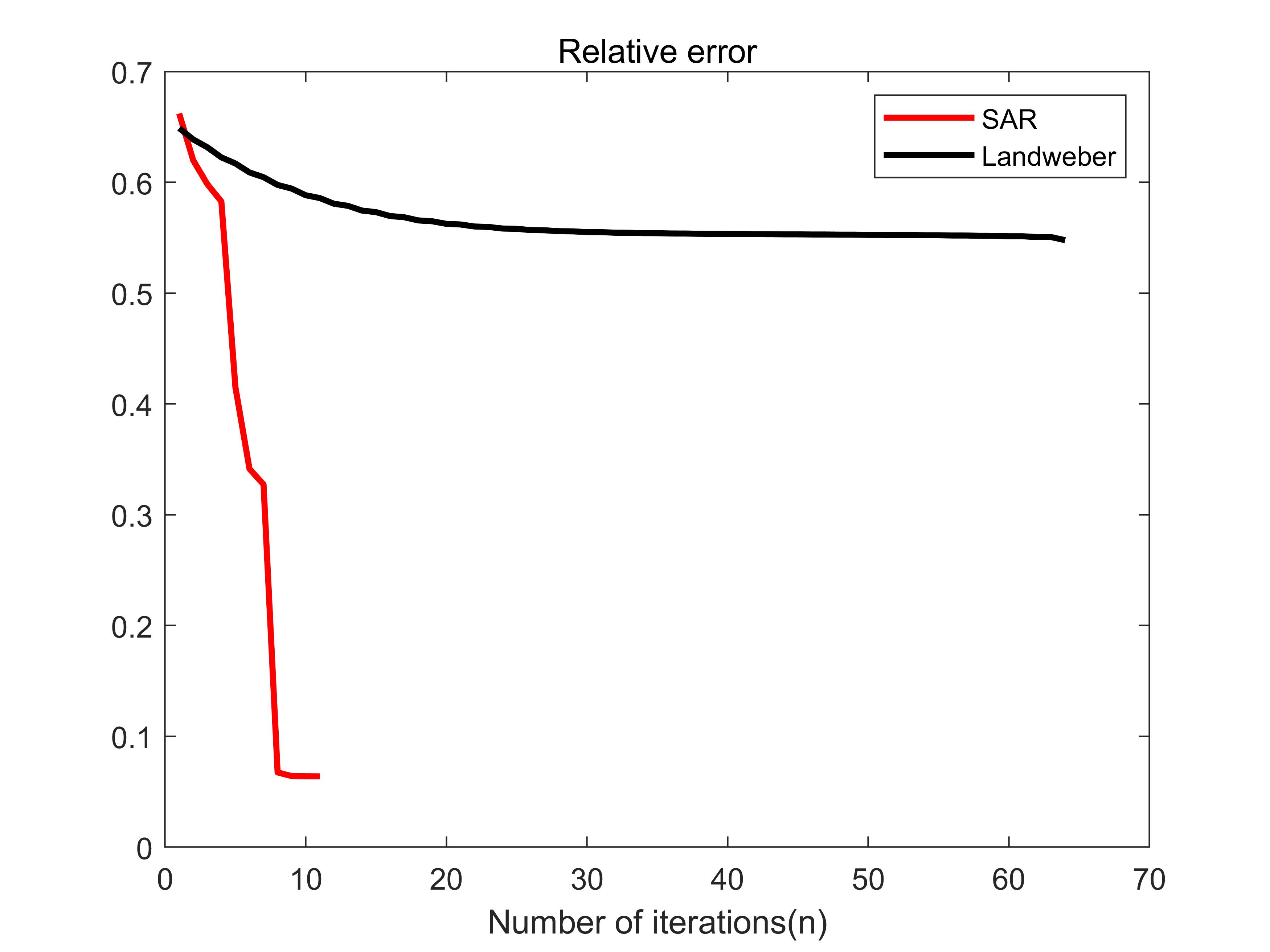} }
		\subfigure[]{
			\label{d2p100}
			\includegraphics[scale=0.06]{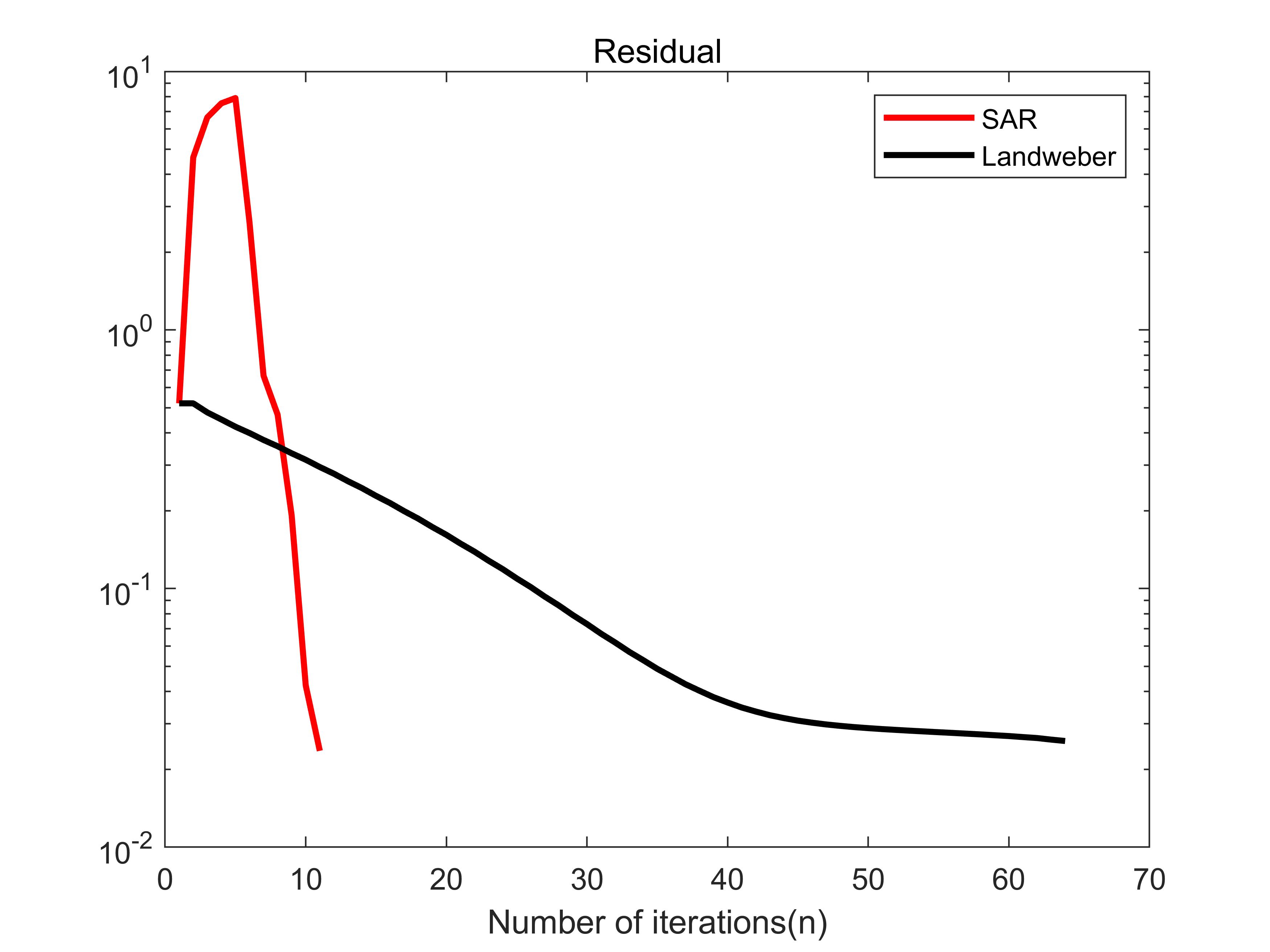} }
		\subfigure[]{
			\label{d1p100Landweber}
			\includegraphics[scale=0.06]{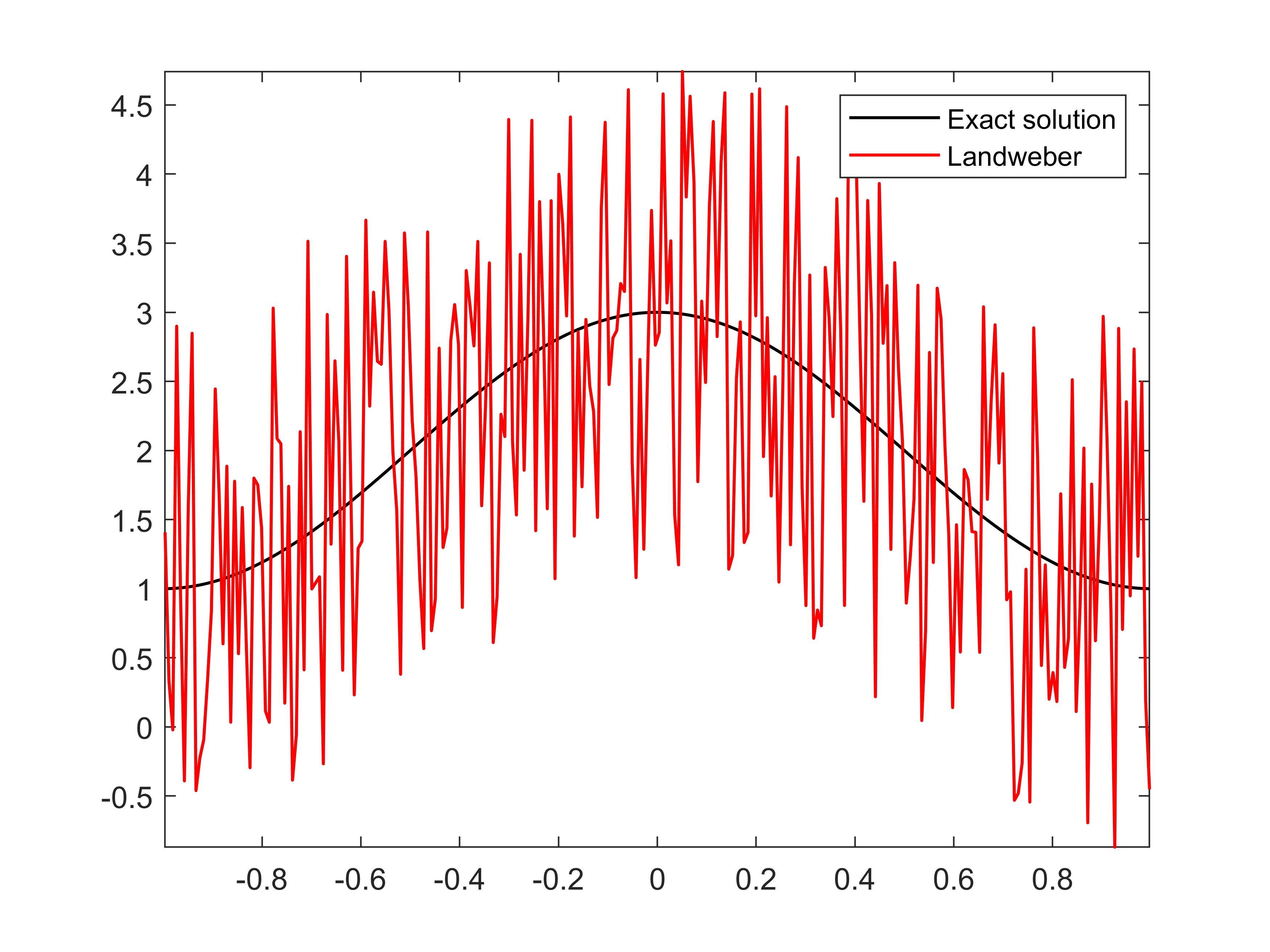} }
		\subfigure[]{
			\label{d2p100SAR}
			\includegraphics[scale=0.06]{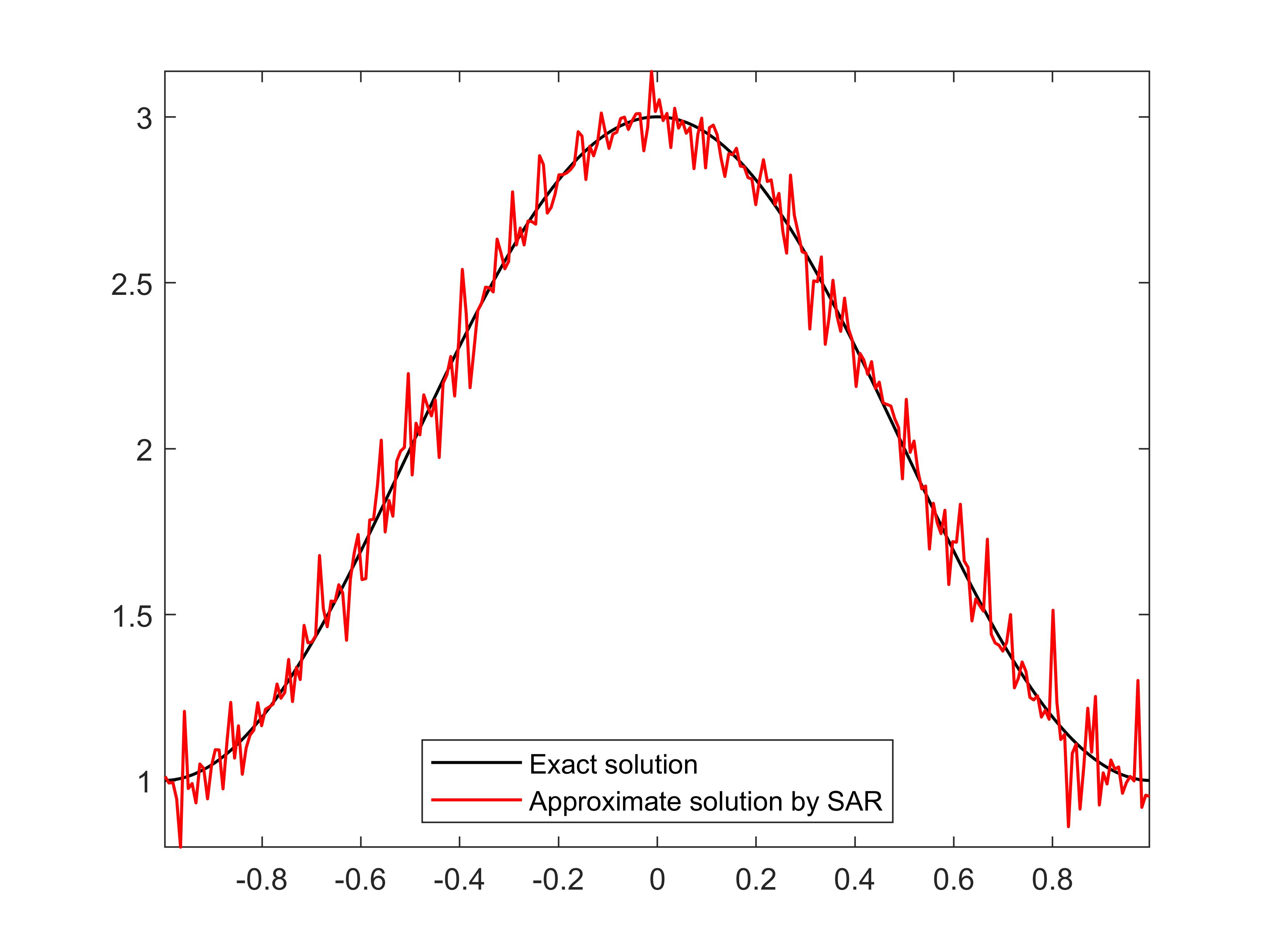} }
 }
	\caption{Results for the 1D case, with $\delta = 2\%$, $N=500$, $\theta=0.5$. (a) Relative errors; (b) Semi-log plot of residual errors; (c) Approximate solution by Landweber; (d) Approximate solution by SAR. 
	}
	\label{1D-bestpath}
\end{figure}


Additionally, Table \ref{tab1} shows that with an appropriate level of randomization and a suitable sample size, the SAR method significantly improves the inversion results compared to the case of $\theta = 0$ (Landweber method). All relevant quantities, such as MinNorm, MeanNorm, $\mathbb{E}(n_*)$,  {Min}$(n_*)$ and {Med}$(n_*)$, are noticeably reduced.

Furthermore, Table \ref{tab2} allows us to assess the performance of the SAR method across different levels of randomization $\theta$ and various sample sizes $N$. The table compares the probabilities $P(\lambda)$ for
different values of $\lambda$ ($\lambda= 0.8, 0.5, 0.3, 0.1$). The results indicate that, in general, the SAR method (with varying $\theta$) outperforms the Landweber method 70\% of the accuracy. With an appropriate choice of randomization (e.g. $\theta=0.5$), the accuracy of SAR is up to 10 times better than that of the Landweber method in 70\% of the cases.



Figure \ref{addfig1} summarizes the estimated probability of convergence with respect to the number of iterations, comparing SAR across different noise levels in the measurements. The figure shows that the iterate $x_n$ approaches the true solution $x^\dagger$ (the global minimum of the noise-free least squares problem for \eqref{model}, i.e., $x^\dagger=\arg\min \|F(c) - u\|^2$)  with high probability, even under varying noise levels. Quantitatively, using SAR, the probability $P(\|x_n - x^\dag\| \leq \delta)$ is approximately 99.8\% for $\delta\in [1\%, 5\%]$, while for the classical Landweber method, $P(\|x_n - x^\dag\|\leq \delta)$ is approximately 0, regardless of the number of iterations. These statistics are based on 500 independent runs, with the starting point uniformly sampled as $0.01 + 4 \cdot \text{rand}$.  

\begin{figure}[!htb]
	\centering
	{
		\subfigure{\label{2Derrd1s002N}
			\includegraphics[scale=0.08]{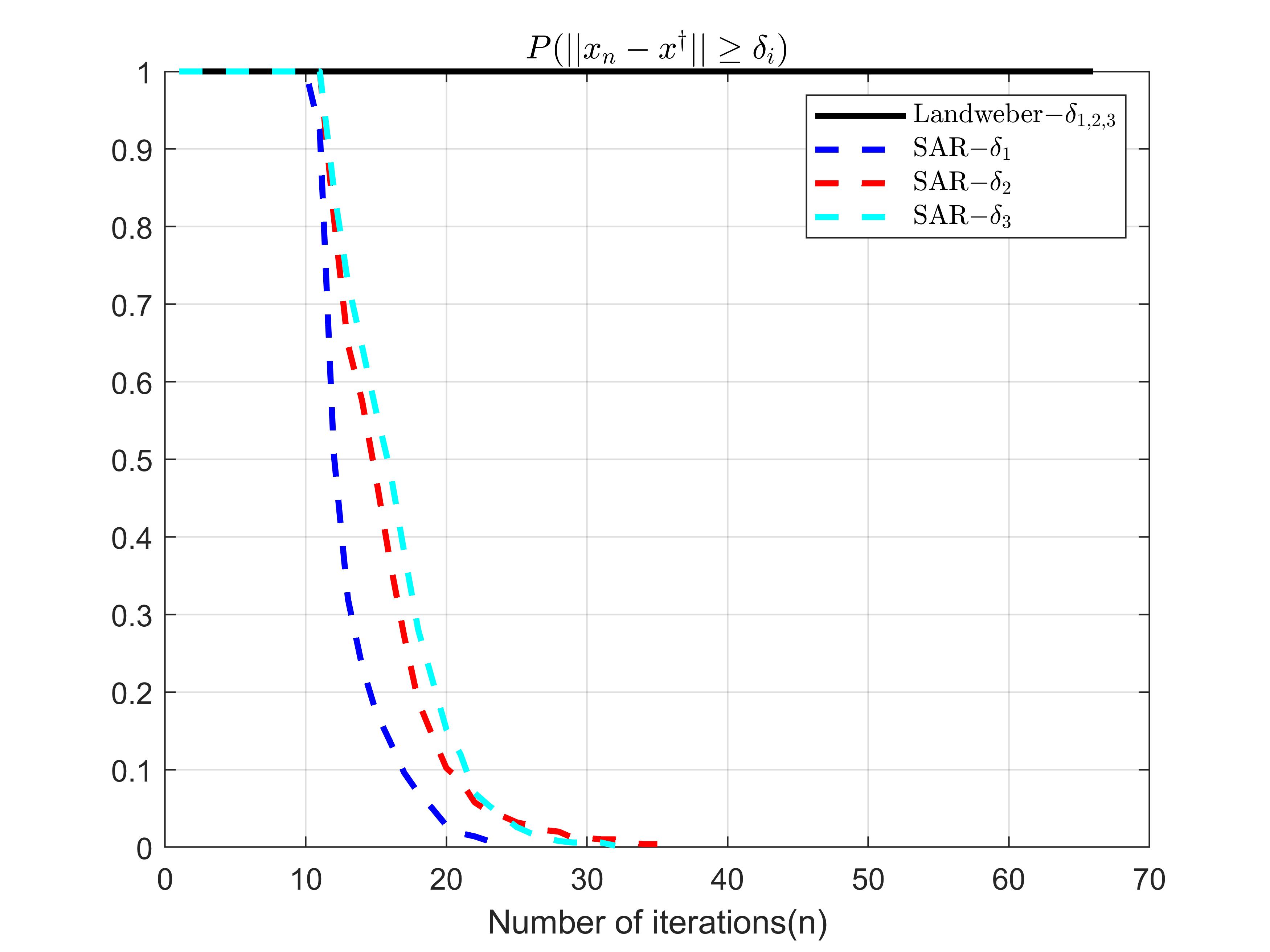} }
		\caption{Estimation of $P(\|x_n - x^\dag\|\geq \delta_i)$ using $N=500$ independent runs for 1D case with $\theta = 0.5 $ and different noise levels $\delta_1= 1 \%, \delta_2=3\%, \delta_3=5\%$. The numerical results are derived from 500 independent simulations, with the starting point uniformly sampled using the formula $0.01 + 4 \cdot \text{rand}$.}
		\label{addfig1}
	}
\end{figure}

\subsubsection{Advantage II: Uncertainty quantification of the solution}

Similar to \cite{ZhangChen23} for linear inverse problems, SAR \eqref{initial value problem} can also provide the uncertainty quantification of the quantity of interest, namely (i) it can quantify the uncertainty in error estimates for inverse problems, and (ii) it can be used to reveal and explicate the hidden information about real-world problems, usually obscured by the incomplete mathematical modeling and the ascendence of complex-structured noise. Due to limited space, we only present the first advantage of SAR since for clearly demonstrating the second advantage of SAR, some physical backgrounds must be described. In Figure \ref{1DCI75}, which show the 1D example, besides the deterministic approximate solution (i.e. the expectation of SAR), the $75\%$ and $99\%$ confidence intervals are provided with different noisy level, respectively. The figures show that with the corresponding high probability, the constructed confidence intervals contain the exact solution function. We mention that the theoretical explanations of the obtained results are the same as in \cite[Section 2]{ZhangChen23}, and we omit it. 


\begin{figure}[h]
	\centering
	{
		\subfigure[]{
			\includegraphics[scale=0.06]{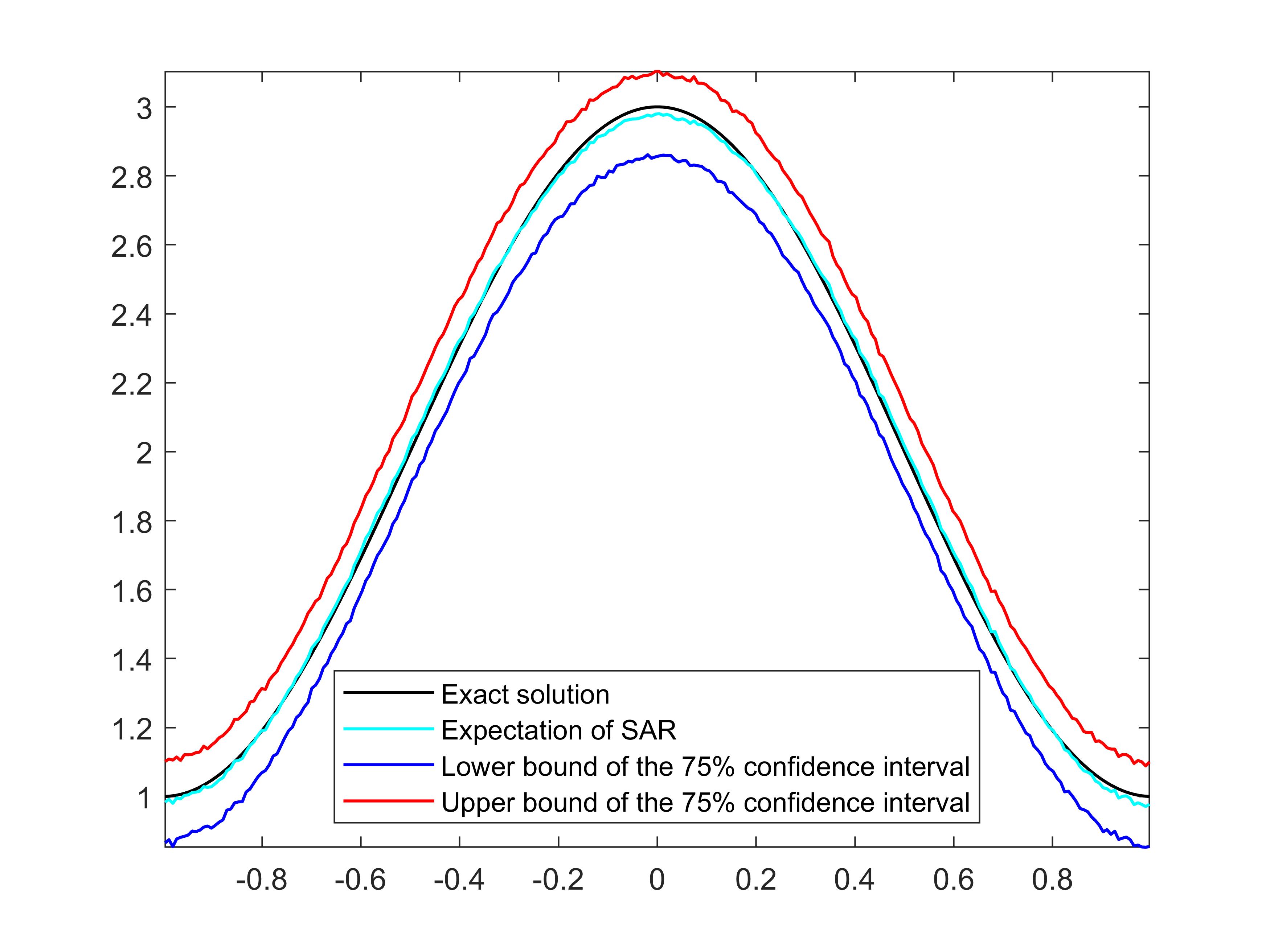} }
   	\subfigure[]{
			\includegraphics[scale=0.06]{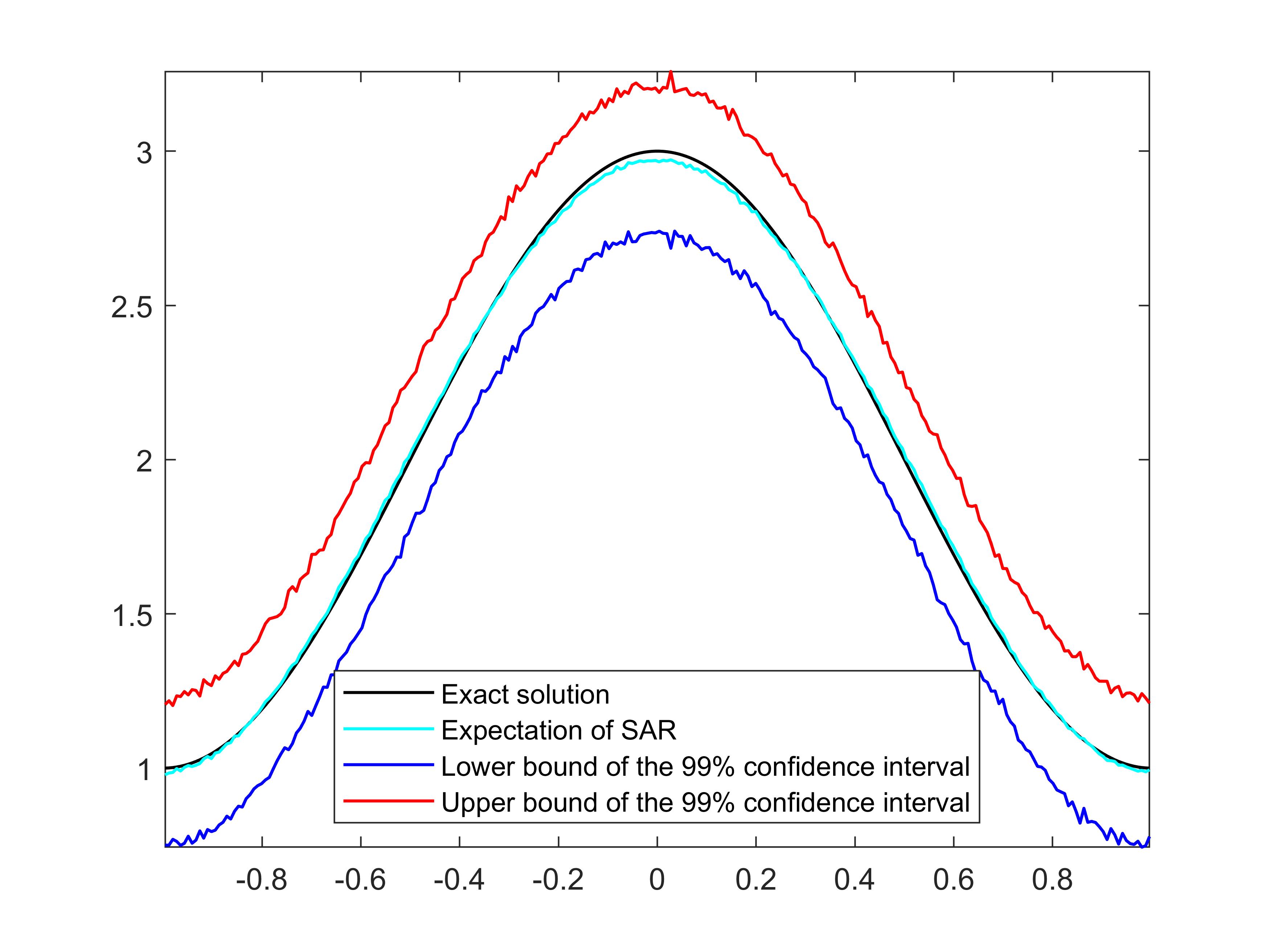} }
		\caption{(a) The expectation of SAR and the 75$\%$ confidence interval for problem \eqref{elliptic equation} with noise level $\delta = 1\%$; (b) The expectation of SAR and the 99$\%$ confidence interval for problem \eqref{elliptic equation} with noise level $\delta = 2\%$. Other parameters in both (a) and (b): sample size N = 800, $\tau$ = 1.1, $\theta=0.1$.}
		\label{1DCI75}
	}
\end{figure}

To further illustrate the impact on the confidence intervals of approximate solutions with respect to different sample sizes and levels of randomization, we present the corresponding expected reconstructions and the 90\% and 85\% confidence intervals for the 2D case in Figures \ref{2Dd1CIpathCI} and \ref{2Dd1CIpaththeta}, respectively. We also note that, in the numerical application, a preferable value for the confidence interval would be determined by both the noise level $\delta$ and the level of randomization $\theta$.
This is, effectively, also explained visually in Figure \ref{2D-CIs}, in which different confidence intervals for the 2D case are presented.


\begin{figure}[h]
	\centering
	{
		\subfigure[N=20]{
			\includegraphics[scale = 0.053]{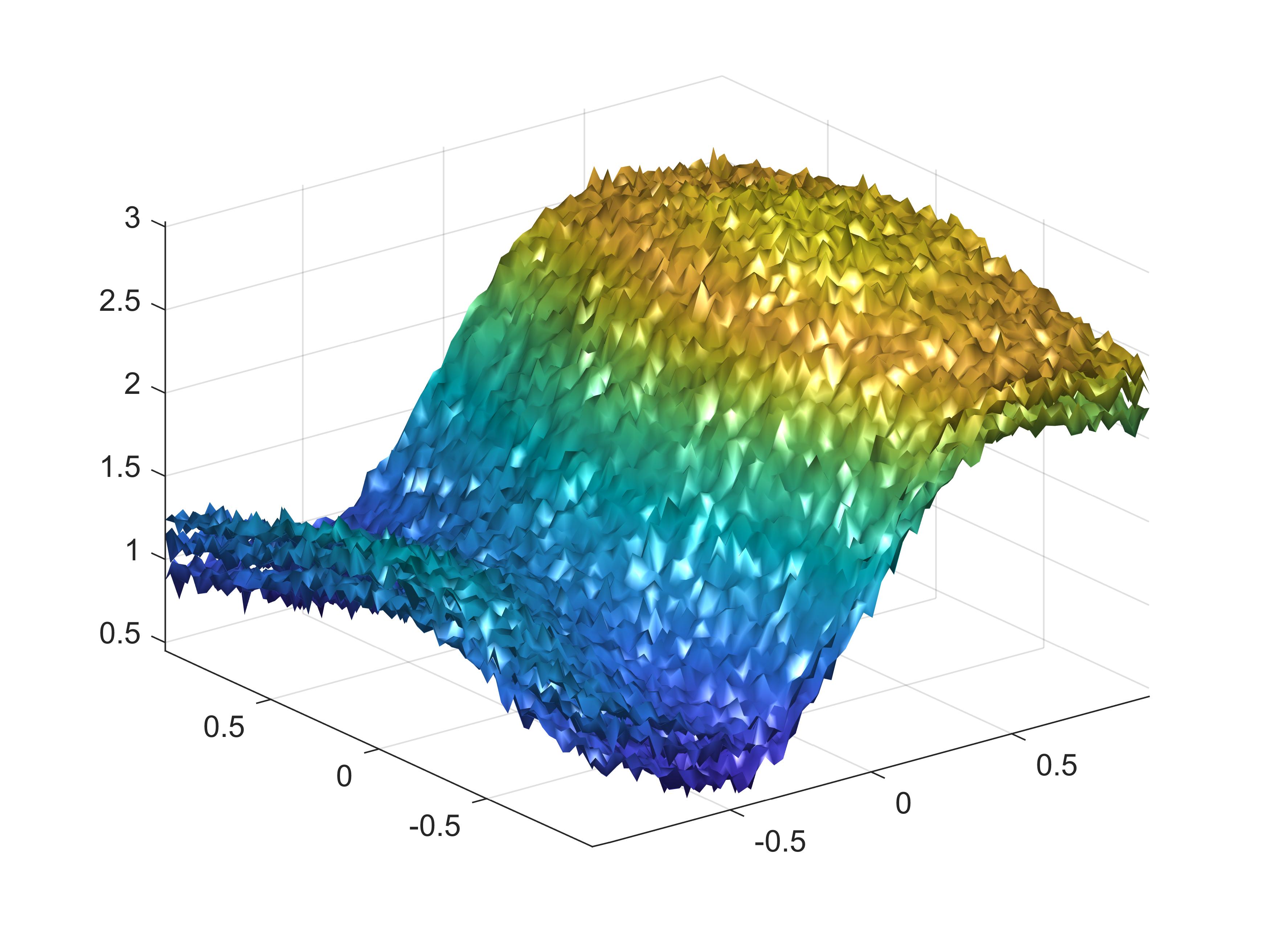} }
		\subfigure[N=50]{
			\includegraphics[scale = 0.053]{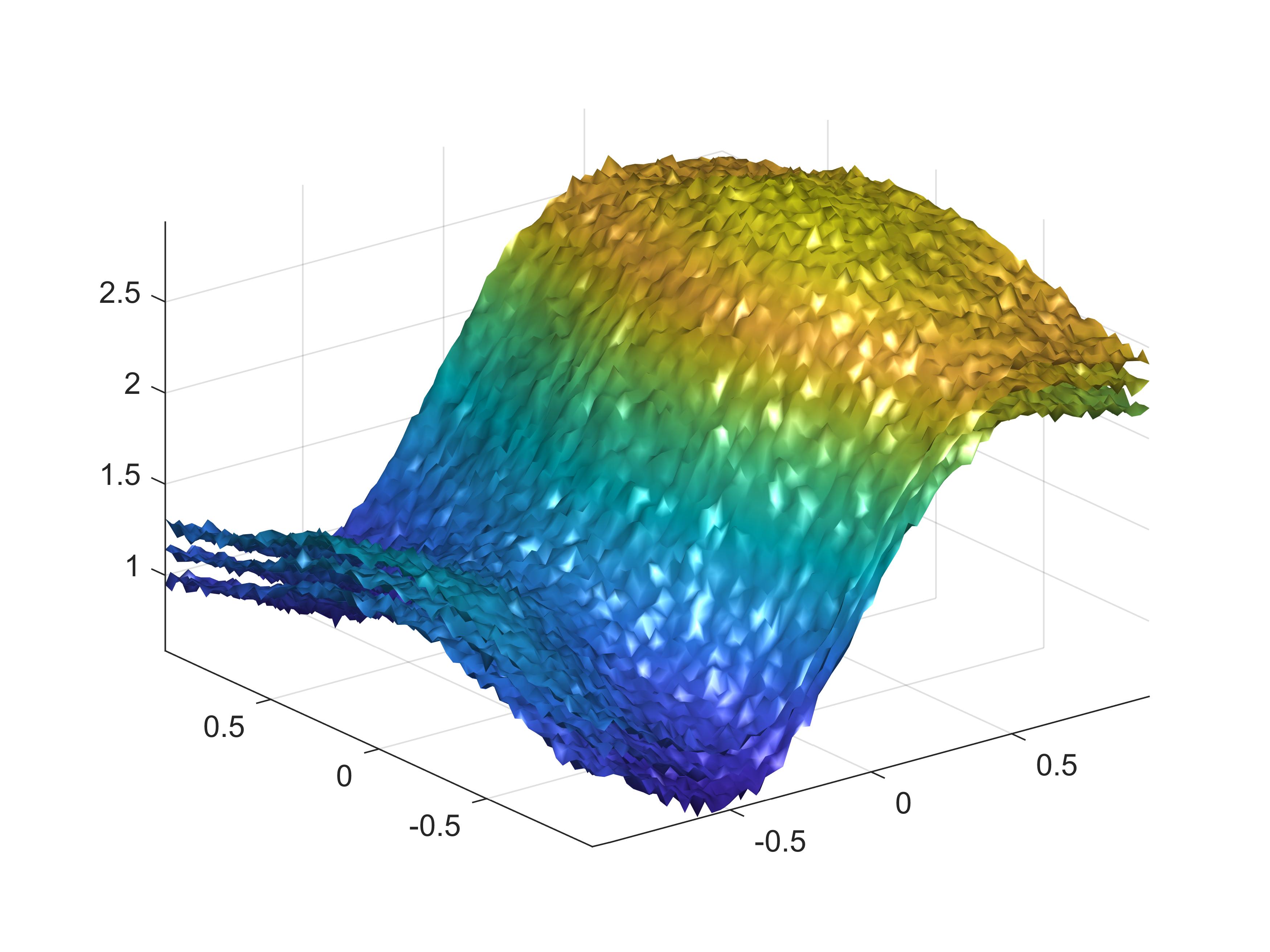} }
		\subfigure[N=100]{
			\includegraphics[scale = 0.053]{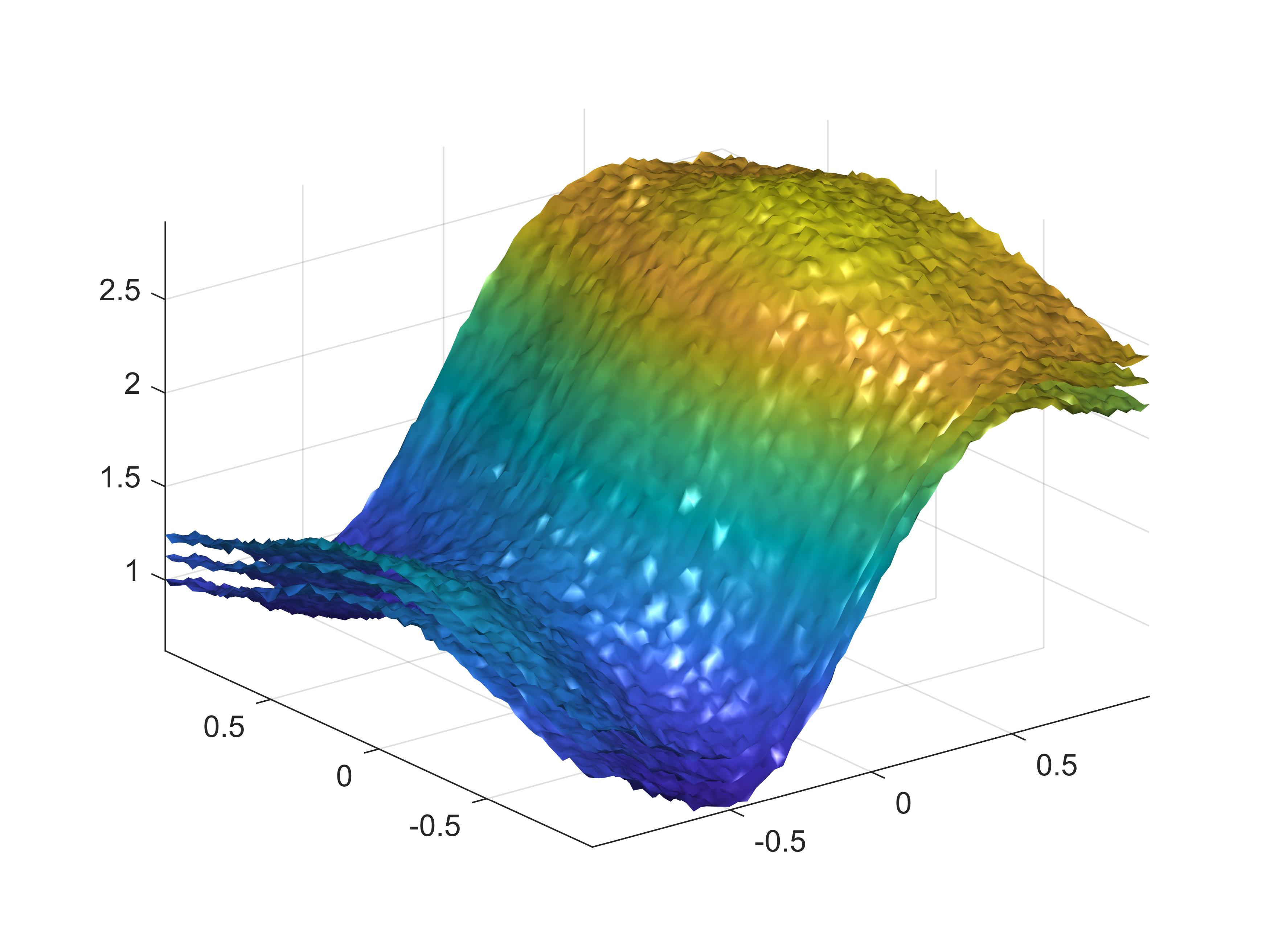} }
		\subfigure[N=400]{
			\includegraphics[scale = 0.053]{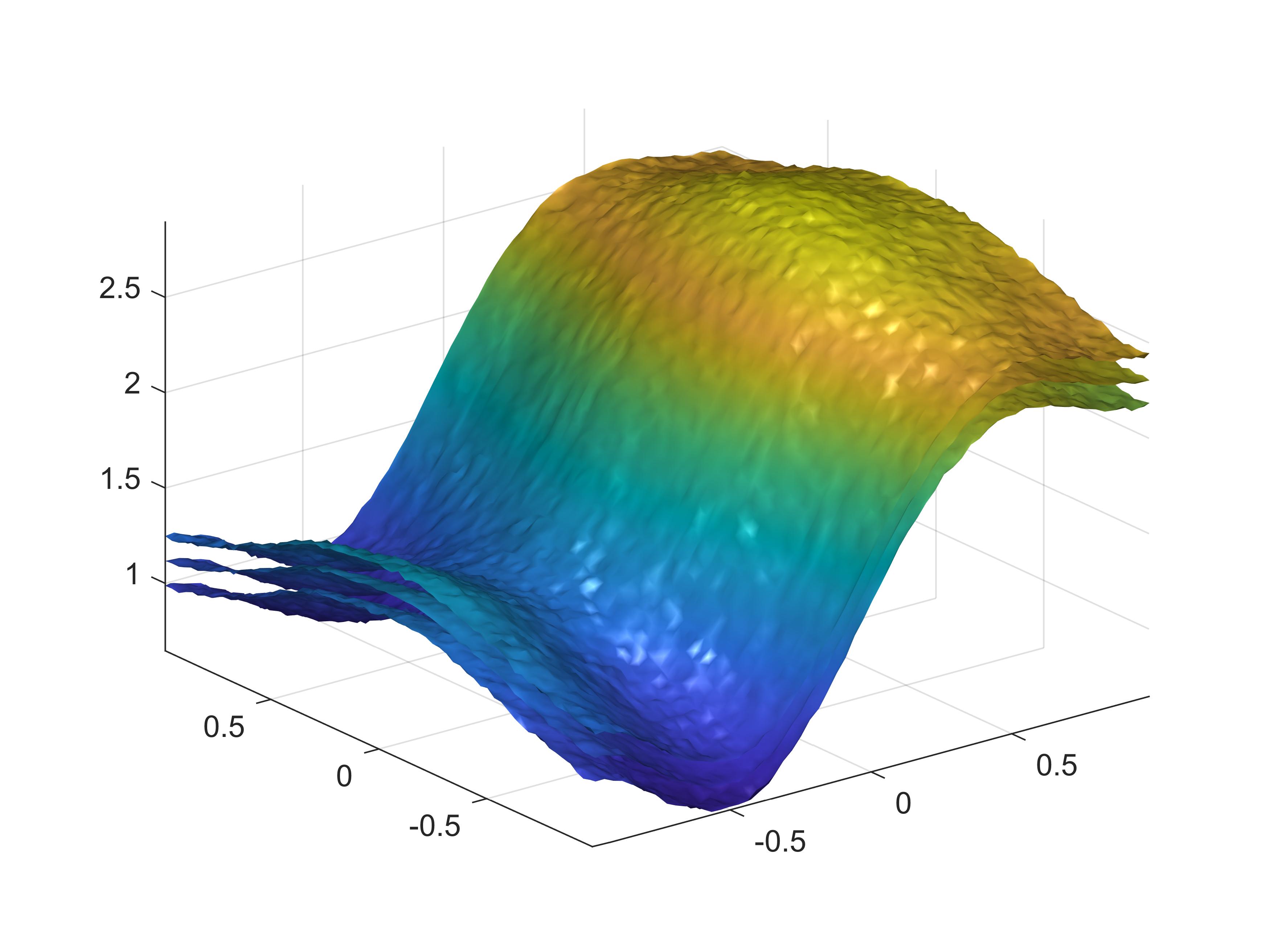} }
		\caption{Reconstructions for the 2D case, with $90\%$ confidence interval and $\delta = 1\%$, $\theta=0.02$.} 
		\label{2Dd1CIpathCI}
	}
\end{figure}

\begin{figure}[H]
	\centering
	{
		\subfigure[$\theta=0.01$]{
			\includegraphics[scale = 0.053]{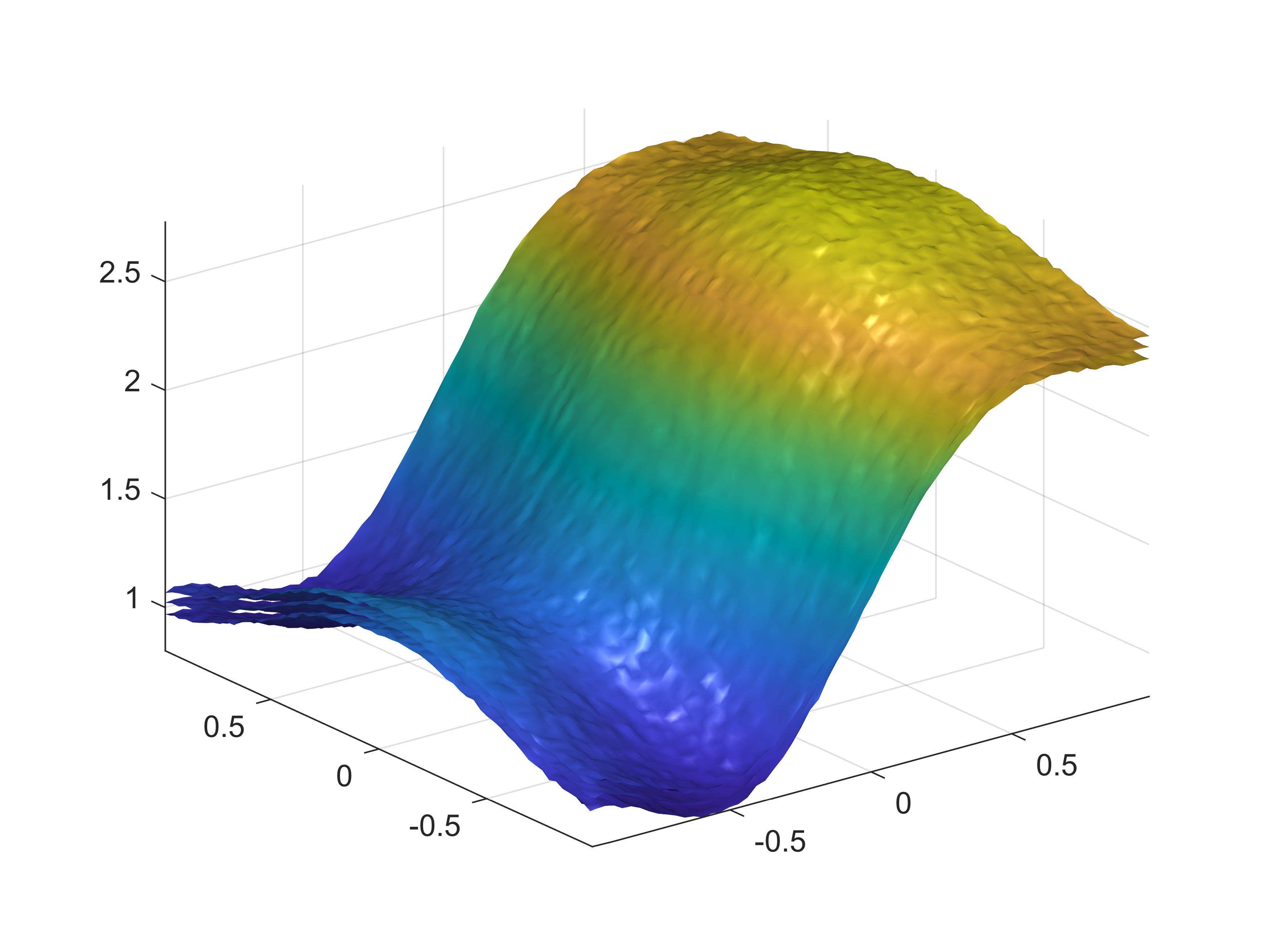} }
		\subfigure[$\theta=0.02$]{
			\includegraphics[scale = 0.053]{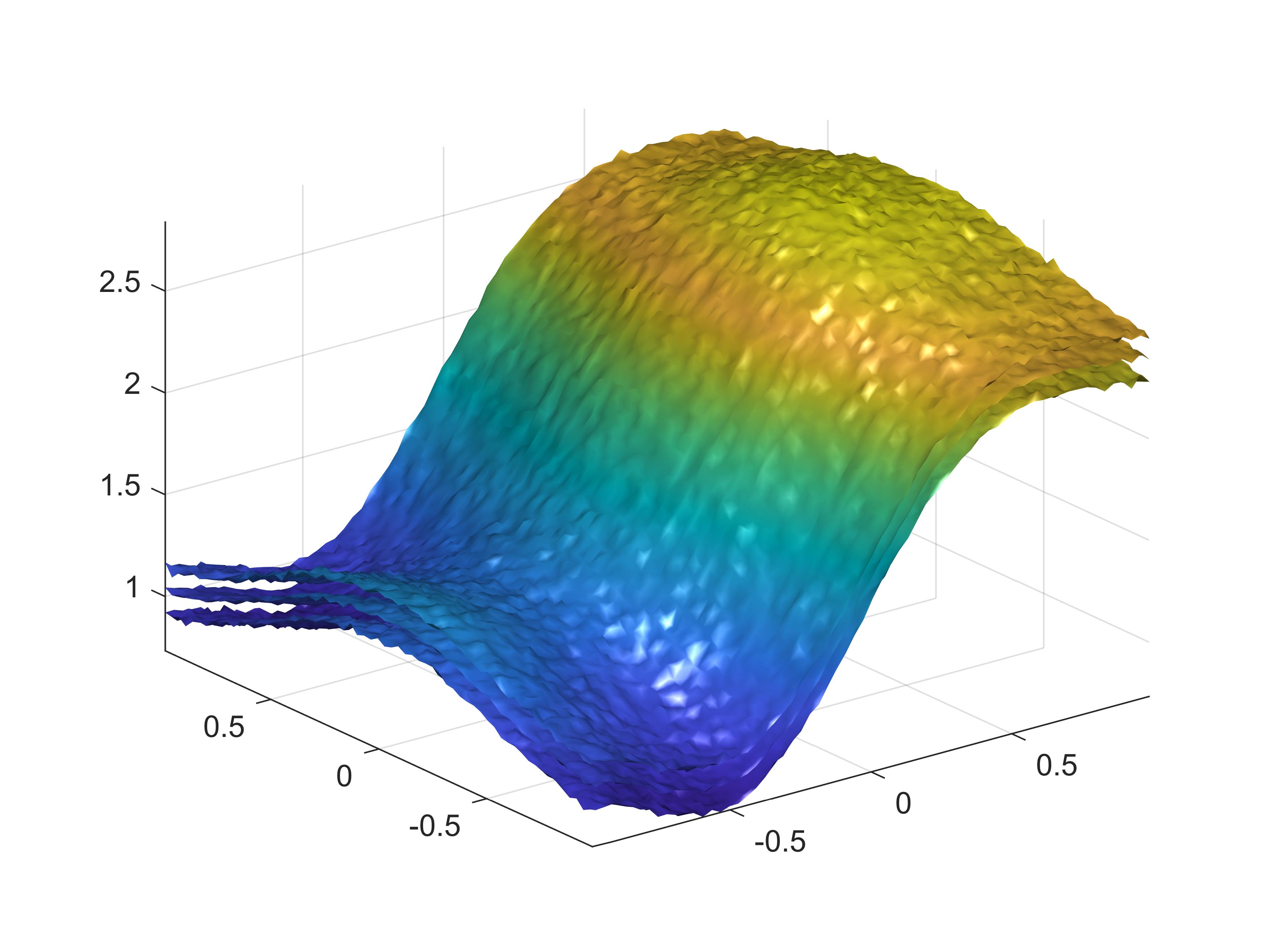} }
		\subfigure[$\theta=0.03$]{
			\includegraphics[scale = 0.053]{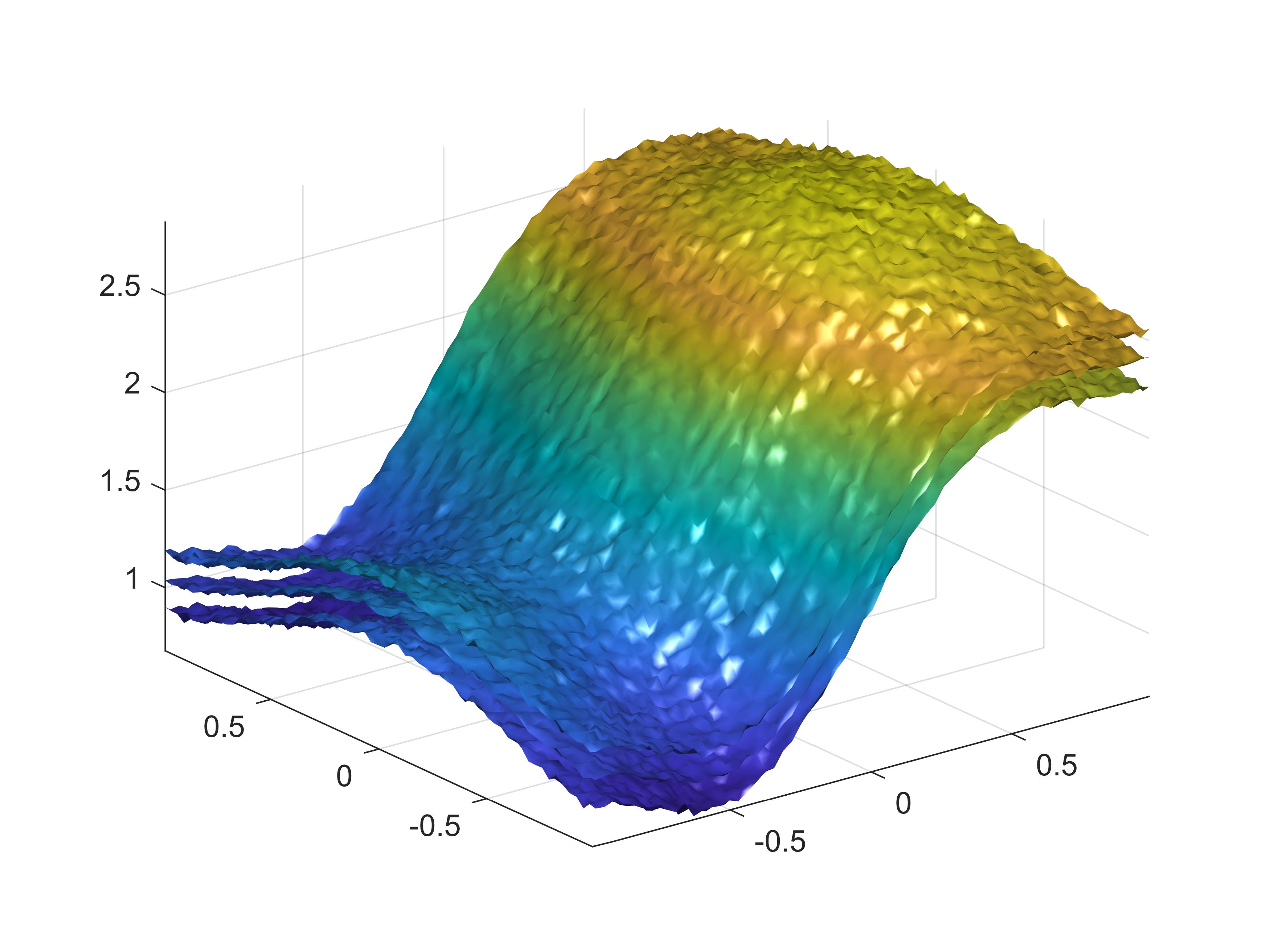} }
		\subfigure[$\theta=0.05$]{
			\includegraphics[scale = 0.053]{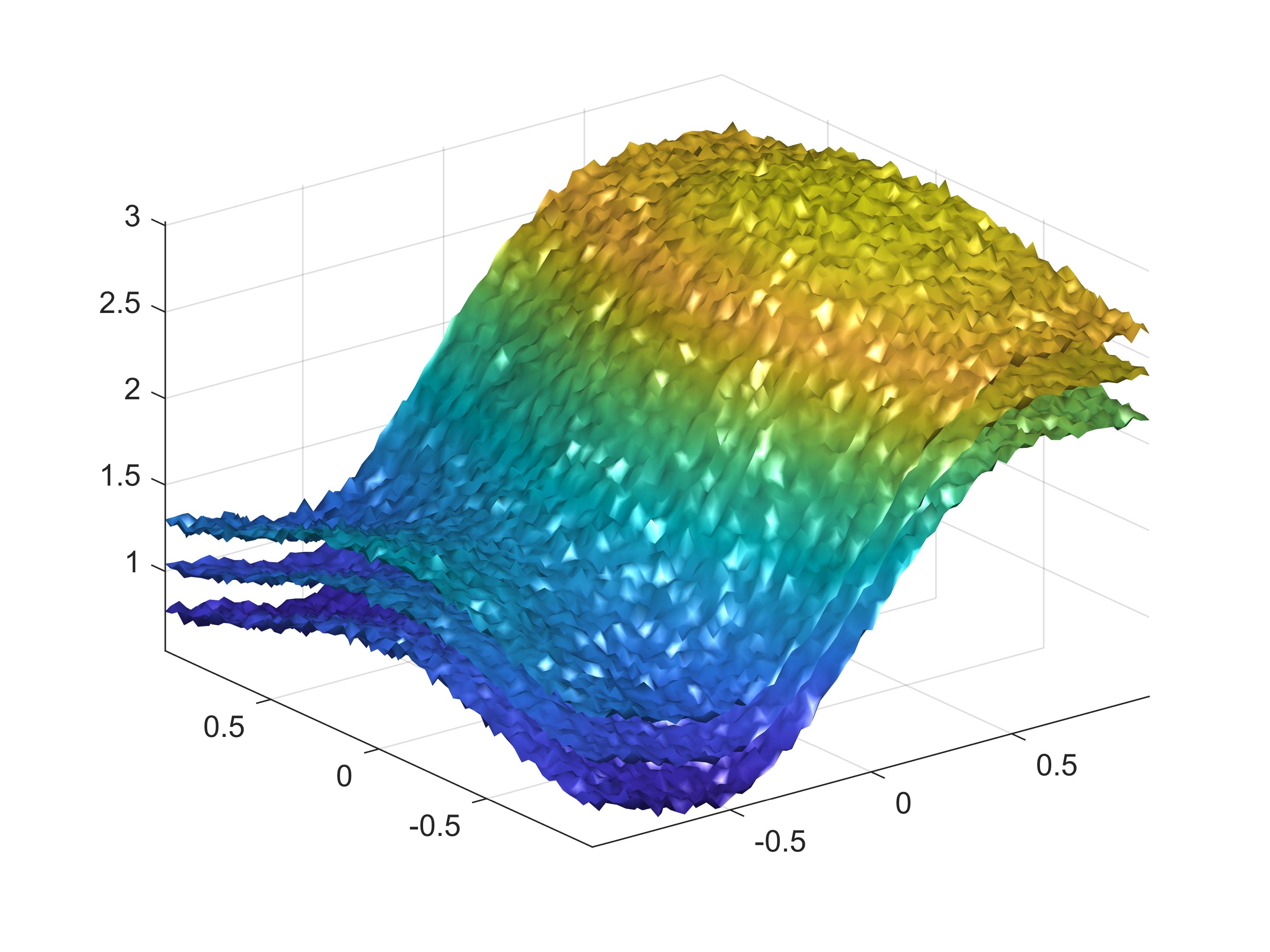} }
		\caption{Reconstructions for the 2D case, with $85\%$ confidence interval and $\delta = 1\%$, $N=200$.}
		\label{2Dd1CIpaththeta} 
	}
\end{figure}

\begin{figure}[h]
	\centering
	{
		\subfigure[$60\%$ confidence interval]{
			\includegraphics[scale = 0.053]{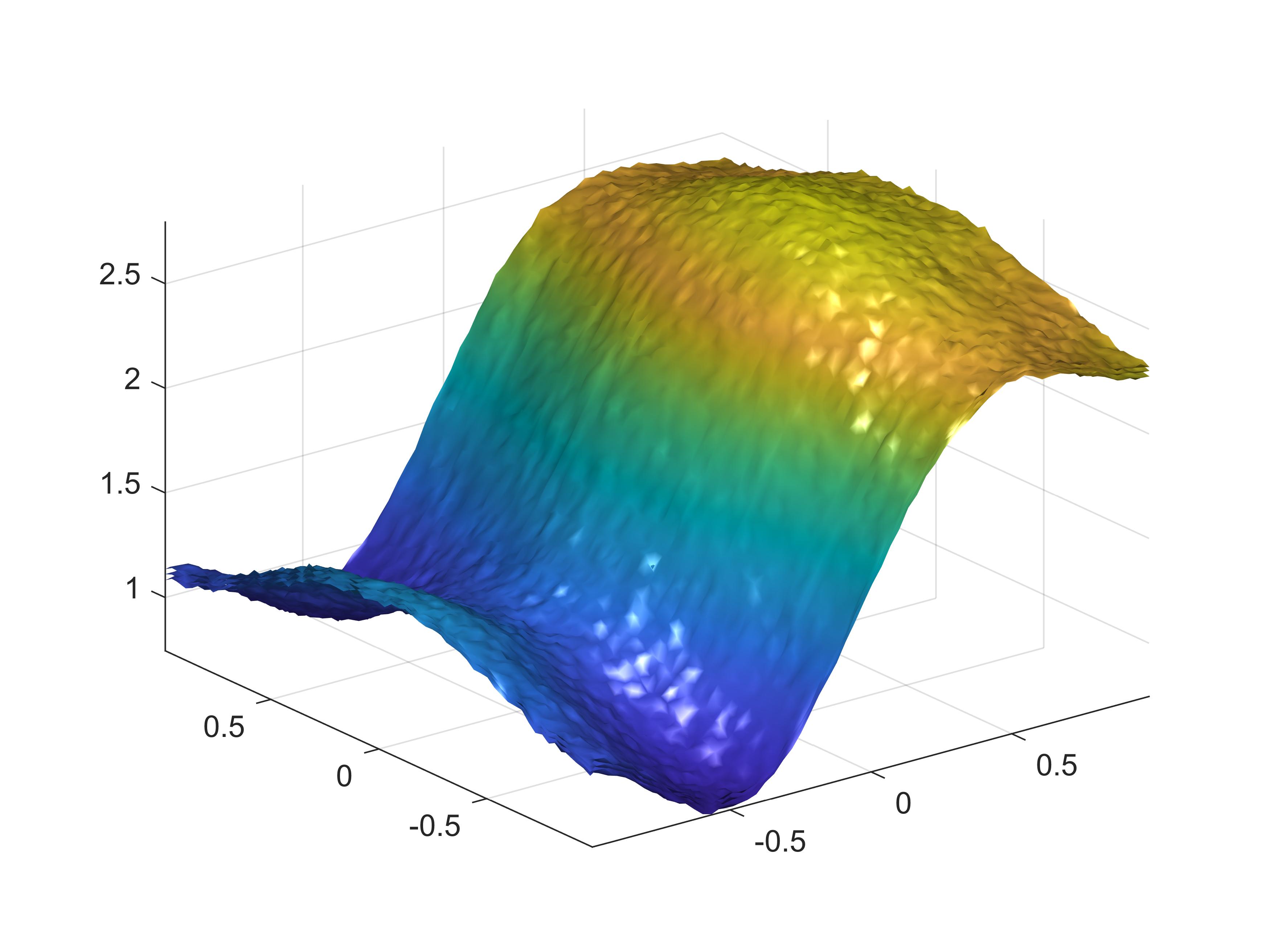} }
		\subfigure[$70\%$ confidence interval]{
			\includegraphics[scale = 0.053]{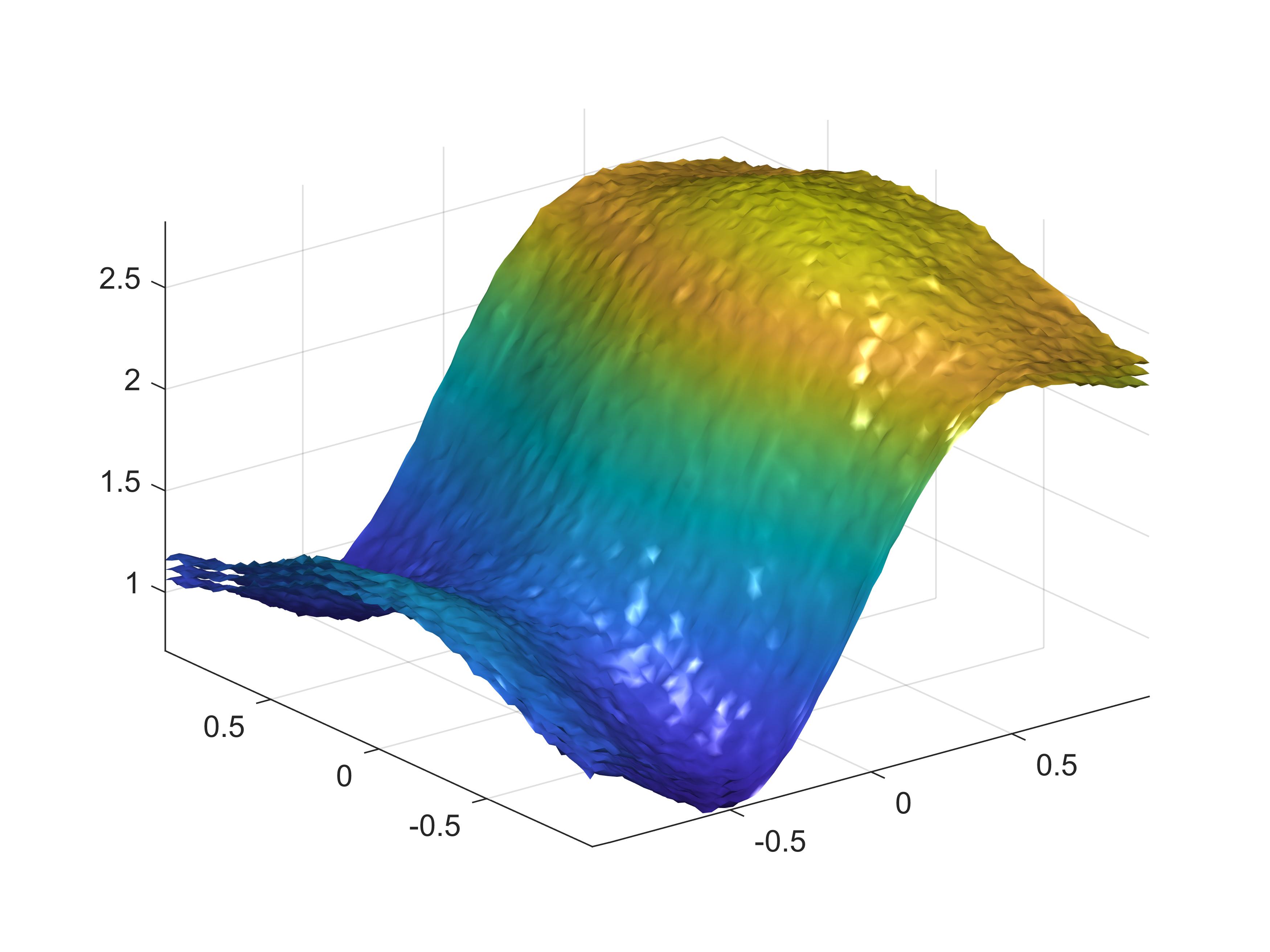} }
		\subfigure[$80\%$ confidence interval]{
			\includegraphics[scale = 0.053]{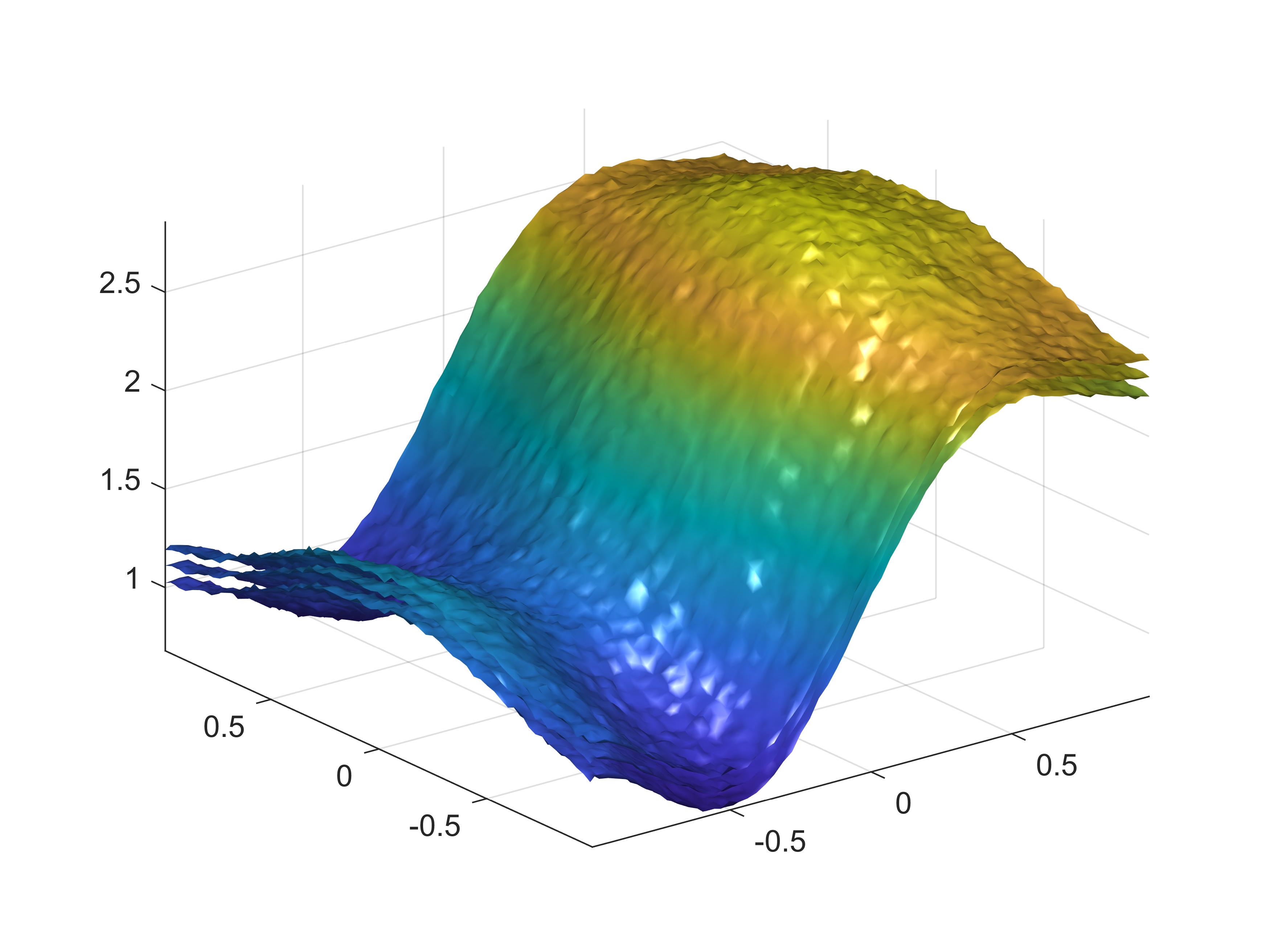} }
		\subfigure[$90\%$ confidence interval]{
			\includegraphics[scale = 0.053]{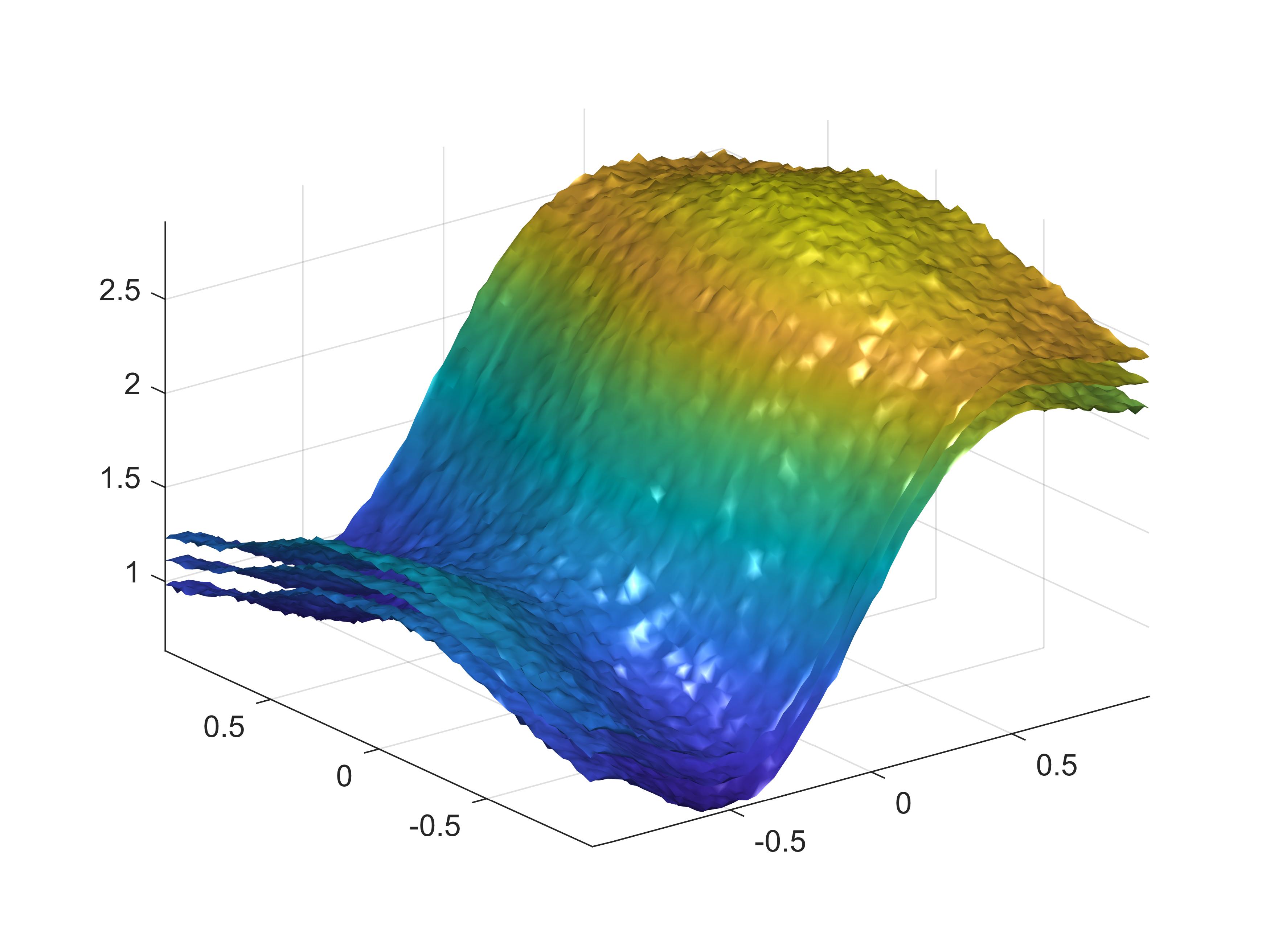} }
		\caption{Reconstructions for the 2D case, with $\delta = 1\%$, $N=200$, and $\theta=0.02$.} 
		\label{2D-CIs}
	}
\end{figure}

\subsection{Model problem 2:  the autoconvolution equation}
\label{Ex2}
In our second group of simulations, we demonstrate a new capability of SAR: the identification of multi-solutions by clustering samples of obtained approximate solutions. As a model problem, we consider the following 1D auto-convolution equation, which has many applications in spectroscopy (e.g. the modern methods of ultrashort laser pulse characterization) \cite{Baumeister1991, GerthHofmann2014}, the structure of solid surfaces \cite{Dai2013} and the nano-structures \cite{Fukuda2010}, etc.   
\begin{equation}
\label{AutoConv}
\int_{0}^{t} x(t-s) x(s) \,ds = y(t). 
\end{equation}

Before starting the numerical investigation, let us recall the weak uniqueness result for the non-linear integral equation (\ref{AutoConv}). 

\begin{prop}
	\label{ProUnique}
	(\cite{GerthHofmann2014}) For $y\in L^2[0,2]$, the integral equation has at most two different solutions $x_1(t),x_2(t) \in L^1[0,1]$. Moreover, it holds that  $x_1(t)+x_2(t)=0, t \in [0,1]$.
\end{prop}

We remark that to the best of our knowledge, the existence results for nonlinear integral equation (\ref{AutoConv}) are quite limited. Here, we omit the existence issue as for the model problem, the solution to (\ref{AutoConv}) is pre-defined, namely
\begin{equation}
\label{AutoConv-x}
x(t) = 30 t (1-t)^3+\cos (2\pi t). 
\end{equation}

The numerical experiments yielded the following results: our approach \eqref{EM} generated 4,983 satisfactory approximate solutions. It is important to note that we did not select all $N=5000$ obtained solutions, as we imposed a maximum number of iterations in our algorithm, and not all approximate source functions met the required accuracy in terms of residual error. We then classified these 4,983 candidates into two groups, $\mathfrak{C}_1$ and $\mathfrak{C}_2$, using classical $k$-means clustering. For this classification, we normalized the source functions and employed the following modified Kullback-Leibler divergence
\begin{equation*}
D^\varepsilon_{KL} (\mathbf{u} \| \mathbf{v}) := \sum^n_{i=1} |[\mathbf{u}]_i| \ln \left(  \frac{|[\mathbf{u}]_i|+\varepsilon}{|[\mathbf{v}]_i|+\varepsilon} \right) 
\end{equation*} 
for two vectors $\mathbf{u}, \mathbf{v}\in \mathbb{R}^n$. Here, $[\mathbf{u}]_i$ denotes the i-th component of vector $\mathbf{u}$.

\begin{figure}[b]
	\centering
	{
		\subfigure[]{
			\includegraphics[scale=0.06]{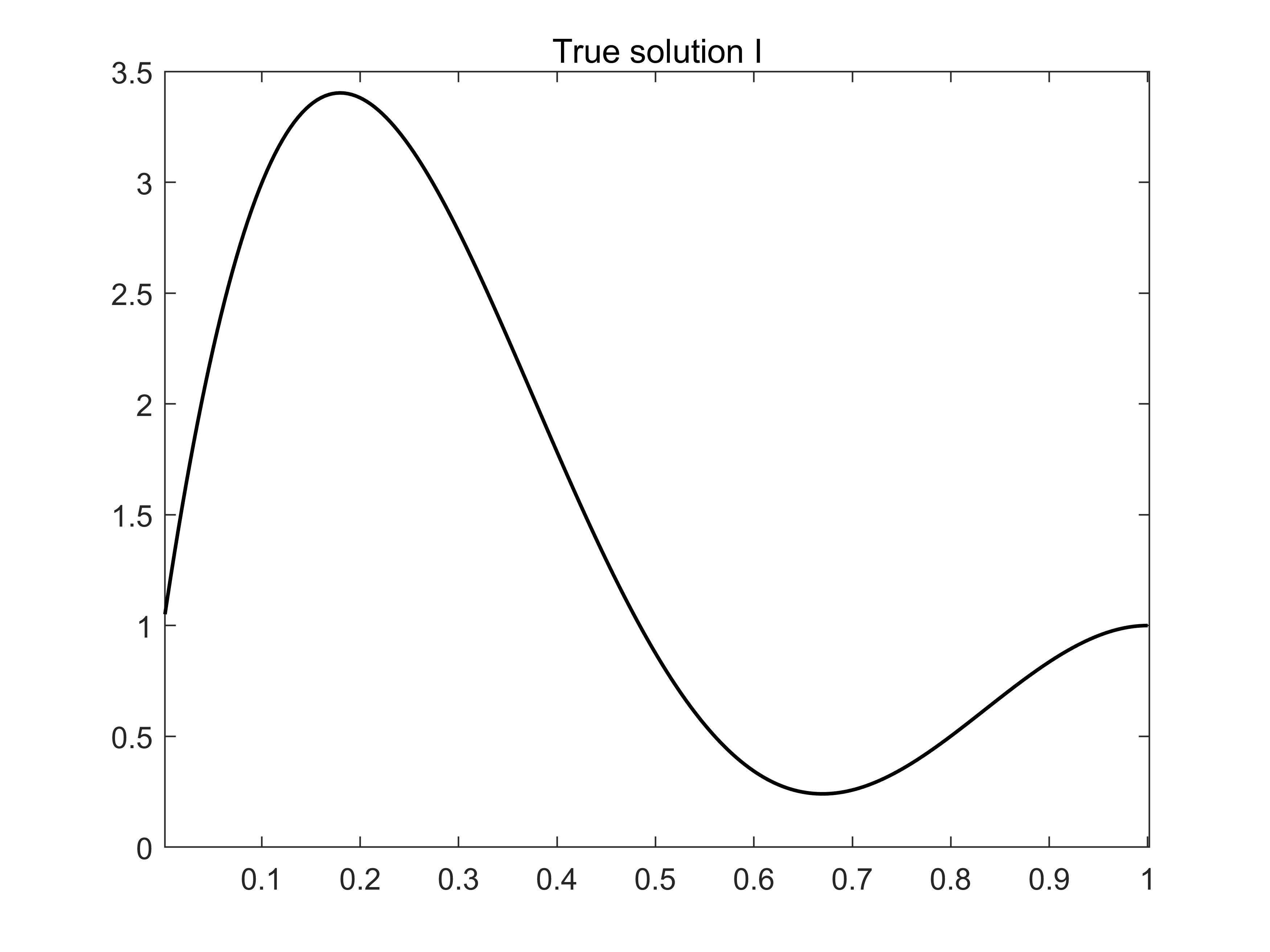} }
		\subfigure[]{
			\includegraphics[scale=0.06]{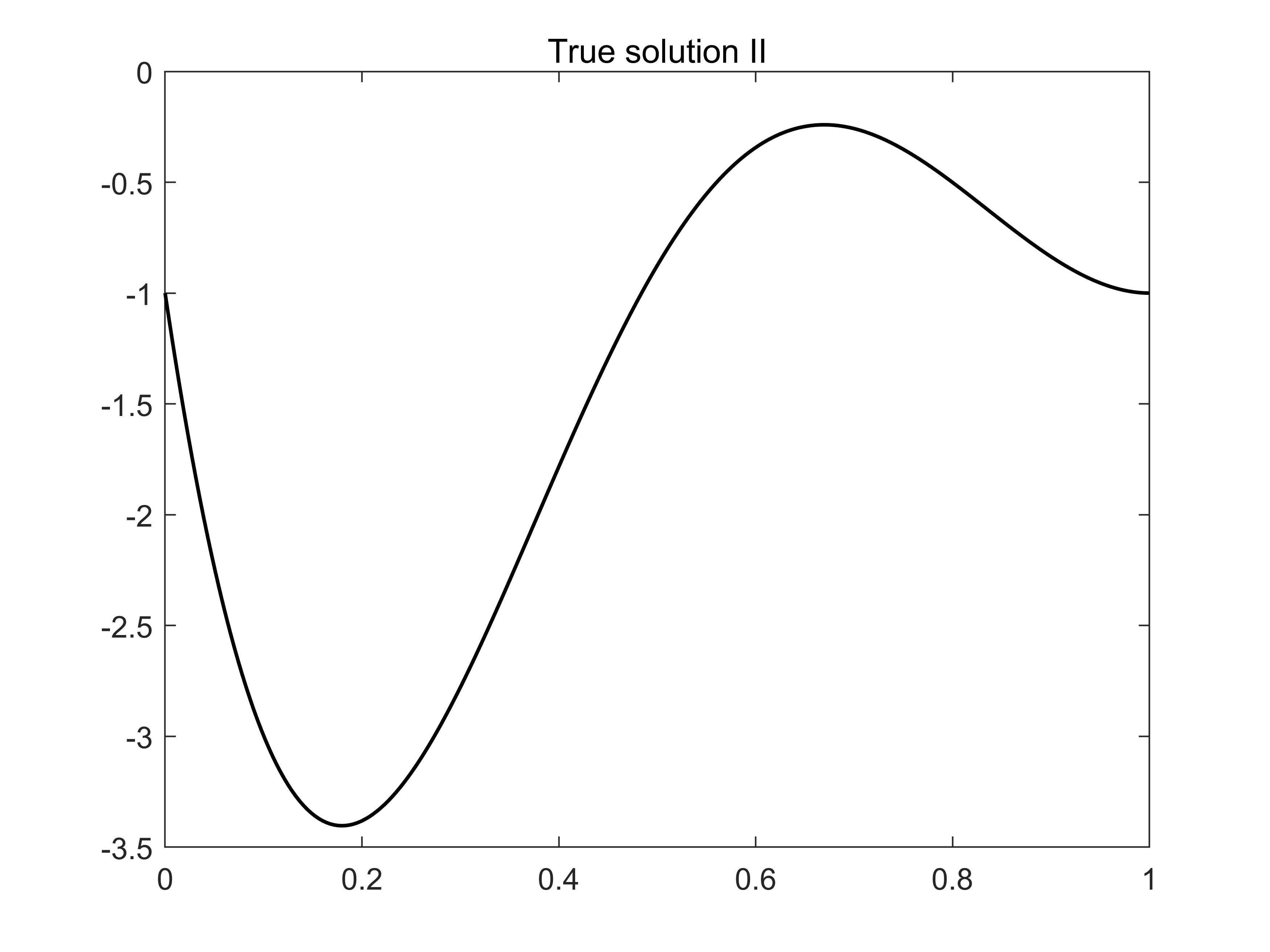} }
   		\subfigure[]{\label{class1}
			\includegraphics[scale=0.06]{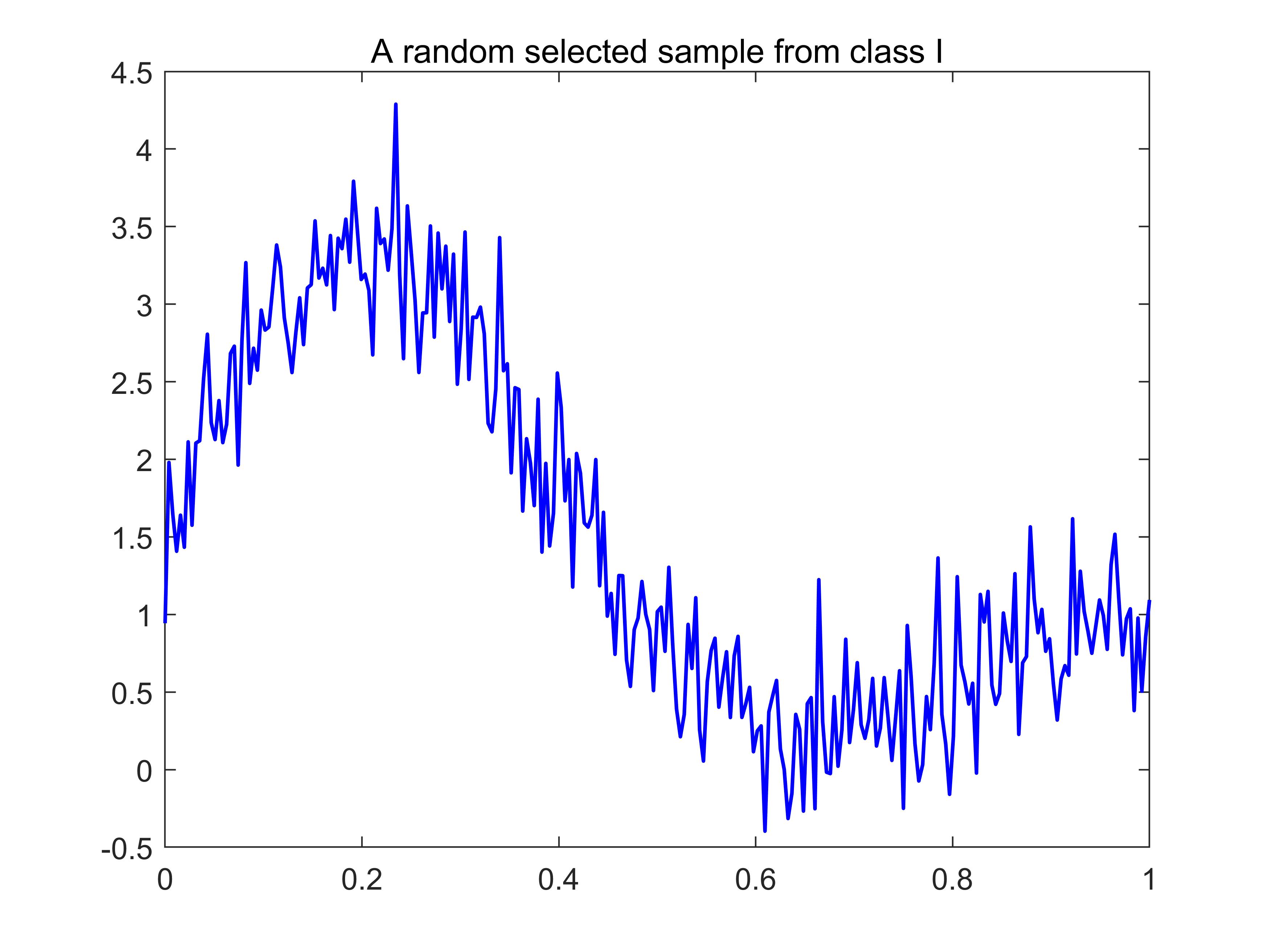} }
		\subfigure[]{\label{class2}
			\includegraphics[scale=0.06]{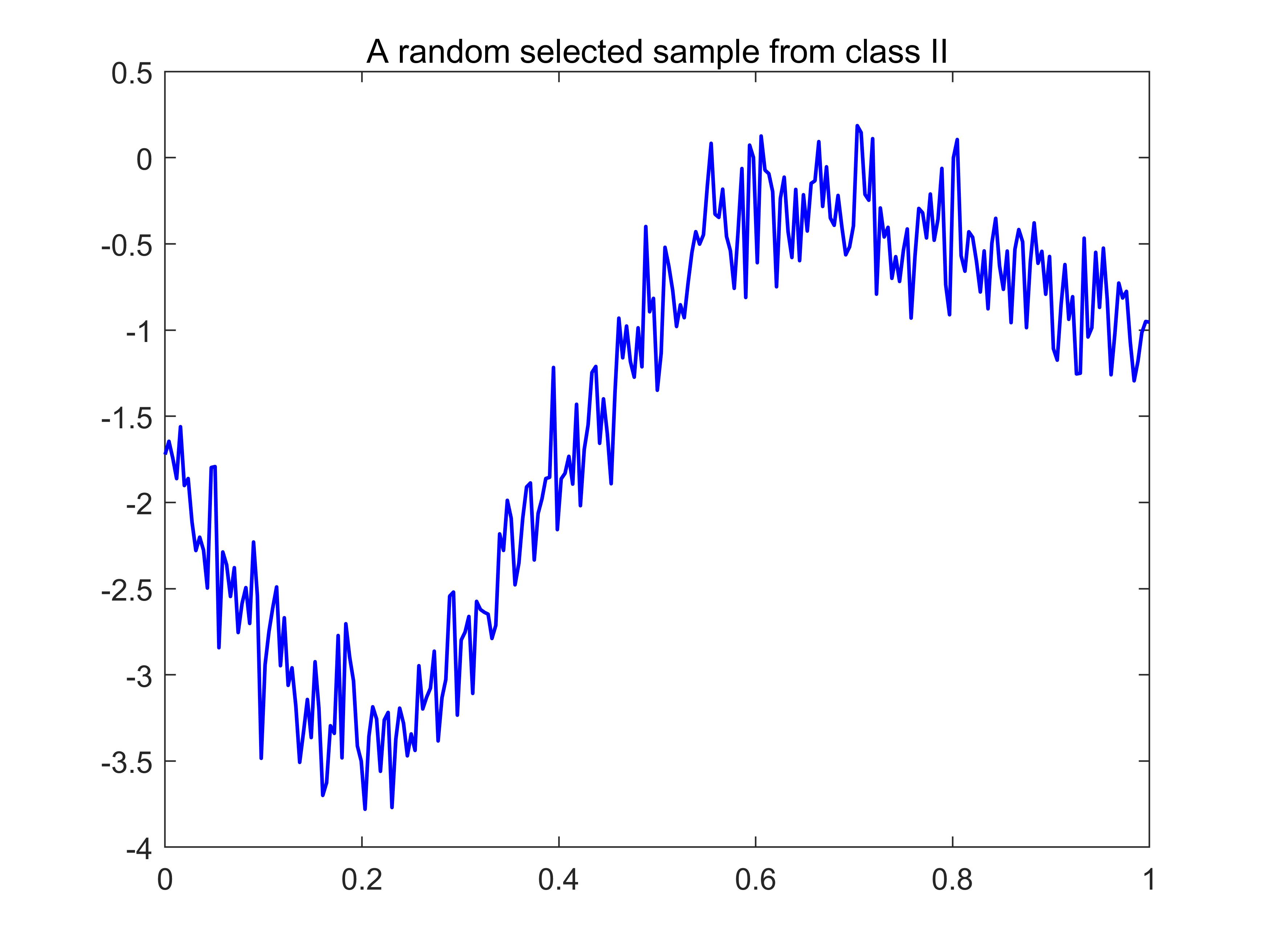} }
   		\subfigure[]{\label{estimate1}
			\includegraphics[scale=0.06]{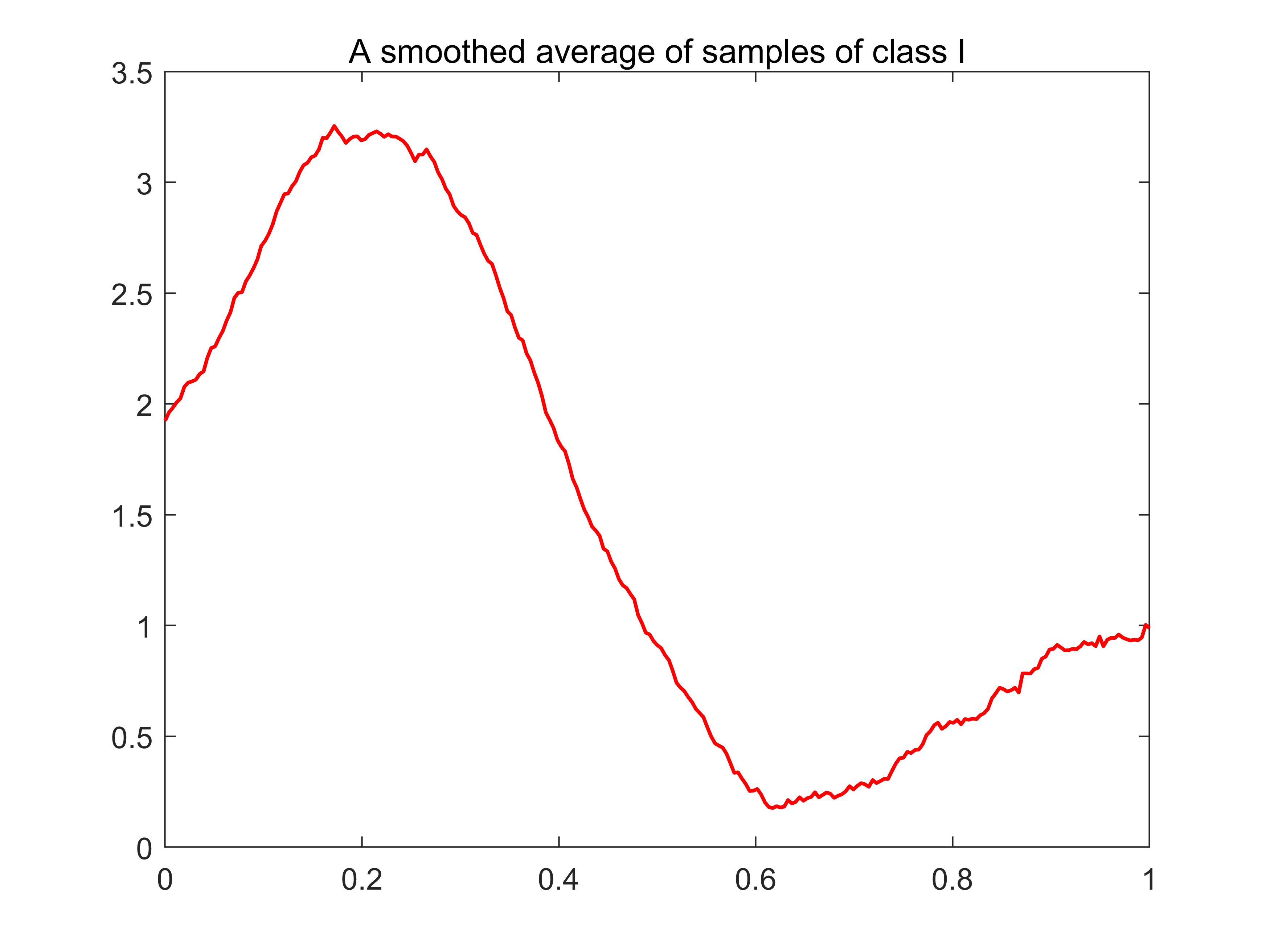} }
		\subfigure[]{\label{estimate2}
			\includegraphics[scale=0.06]{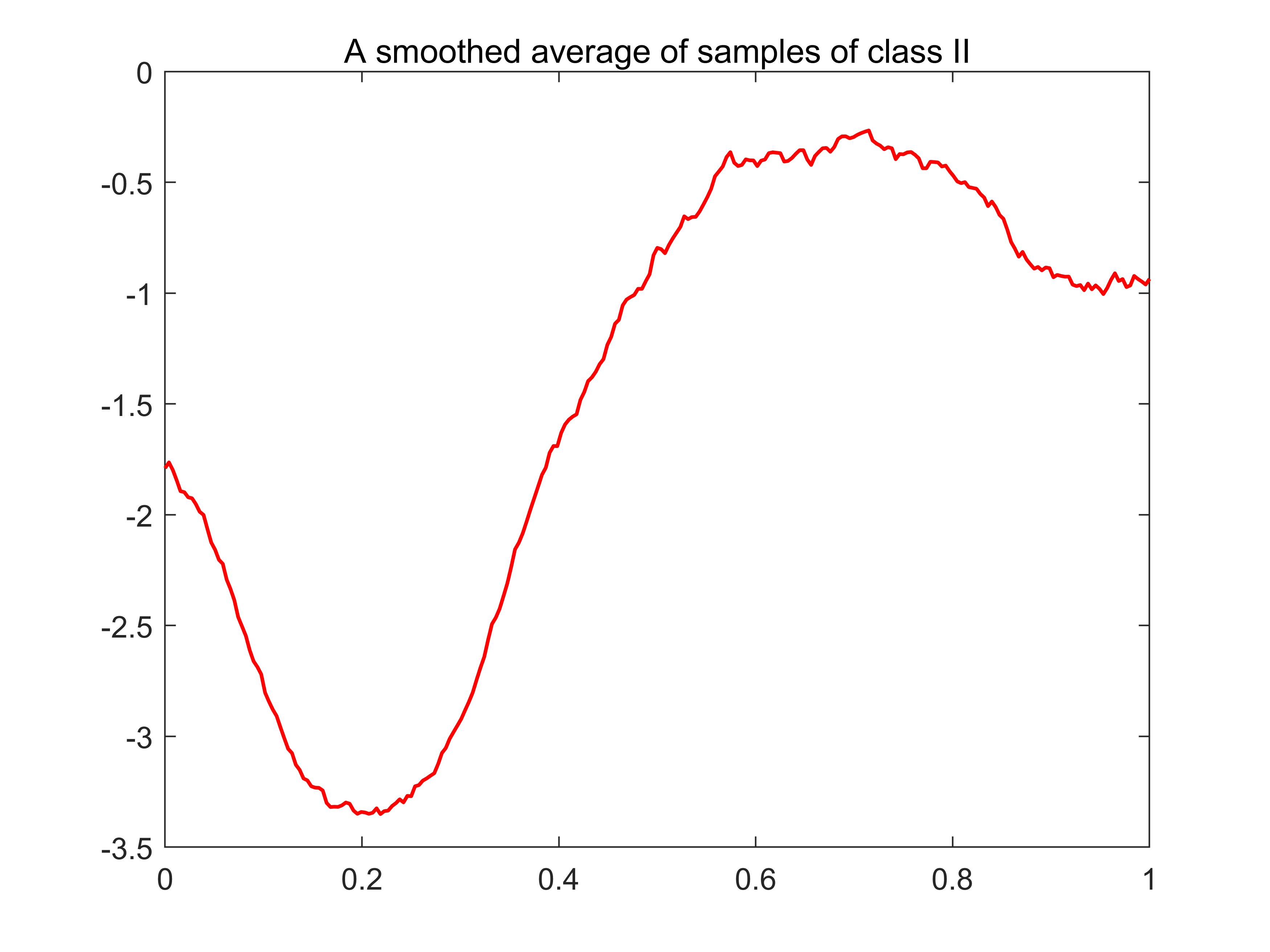} }
		\caption{Results for classification of samples by $k$-mean with with Wasserstein distance. $\delta=5\%$, $\theta=0.5$, $\tau=1.1$, $N=5000$.} \label{k-means-clustering}
	}
\end{figure}

We then construct two smoothed representatives that are closest (with respect to $D^\varepsilon_{KL}$) to the centers of the two classes as representatives ($\mathbf{x}_1$ and $\mathbf{x}_2$) of our two types of source functions, i.e. 
\begin{equation}
\label{representatives}
\mathbf{x}_j := \arg\min_{\mathbf{x}_j} \sum_{\mathbf{x}_p\in \mathfrak{C}_j} D^\varepsilon_{KL} (\mathbf{x}_j \| \mathbf{x}_p) + \alpha (\|\mathbf{x}_j\|^2 + \|D \mathbf{x}_j\|^2), \quad j=1,2, ~ \varepsilon=0.001,~  \alpha=0.01, 
\end{equation} 
where $\|D \mathbf{x}_j\|^2:= \sum^{n-1}_{i=1} ([\mathbf{x}_j]_{i+1}-[\mathbf{x}_j]_{i})^2$ for $\mathbf{x}_j\in \mathbb{R}^n$. These are plotted on Figures \ref{estimate1} (for $\mathbf{x}_1$) and \ref{estimate2} (for $\mathbf{x}_2$). Furthermore, for each group we use the bootstrapping method to estimate the 90\% confidence interval of each entry in our estimate. The results are plotted in Figure \ref{example1-2obs}, where the region bounded by the two dashed sky blue curves is the estimated $90\%$-confidence interval based on estimates in group 2. The region bounded by the two dashed red curves is for the sample means in group 1. Hence, in accordance with the definition of a confidence interval, each entry of the true solution lies in the shaded region with a probability of $90\%$. The $R^2$-statistic between $\mathbf{x}_1$ and $\bar{\mathbf{x}}^\dagger$ (the finite analog of exact pre-defined solution $x(t)$ in \eqref{AutoConv-x}) is $97.7\%$ and the $R^2$-statistic between $-\bar{\mathbf{x}}^\dagger$ and $\bar{\mathbf{x}}_2$ is $98.4\%$, both of which are high.

\begin{figure}[h]
	\centering
	{
		\subfigure[]{
			\includegraphics[scale=0.06]{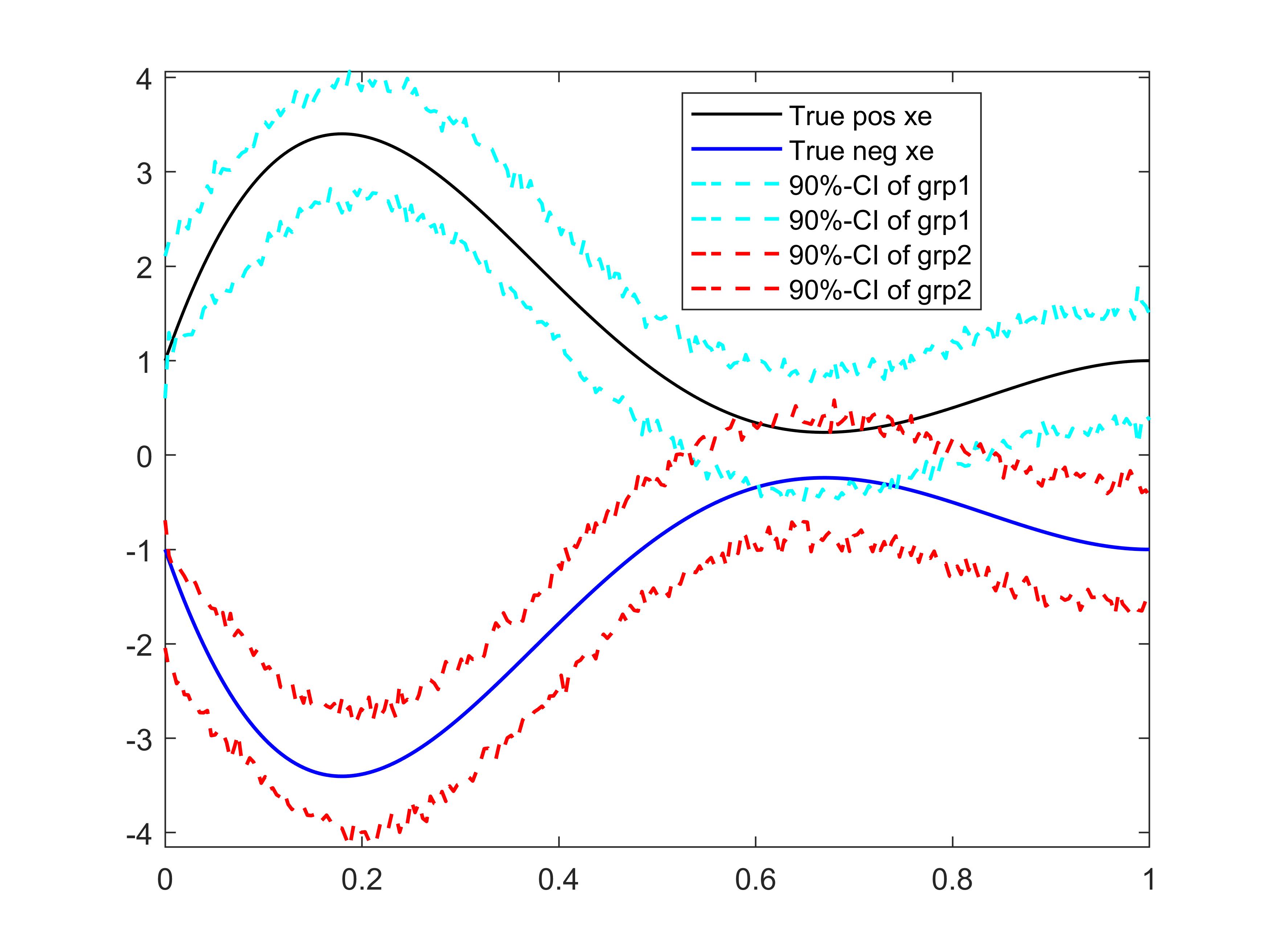} }
   	\subfigure[]{
			\includegraphics[scale=0.06]{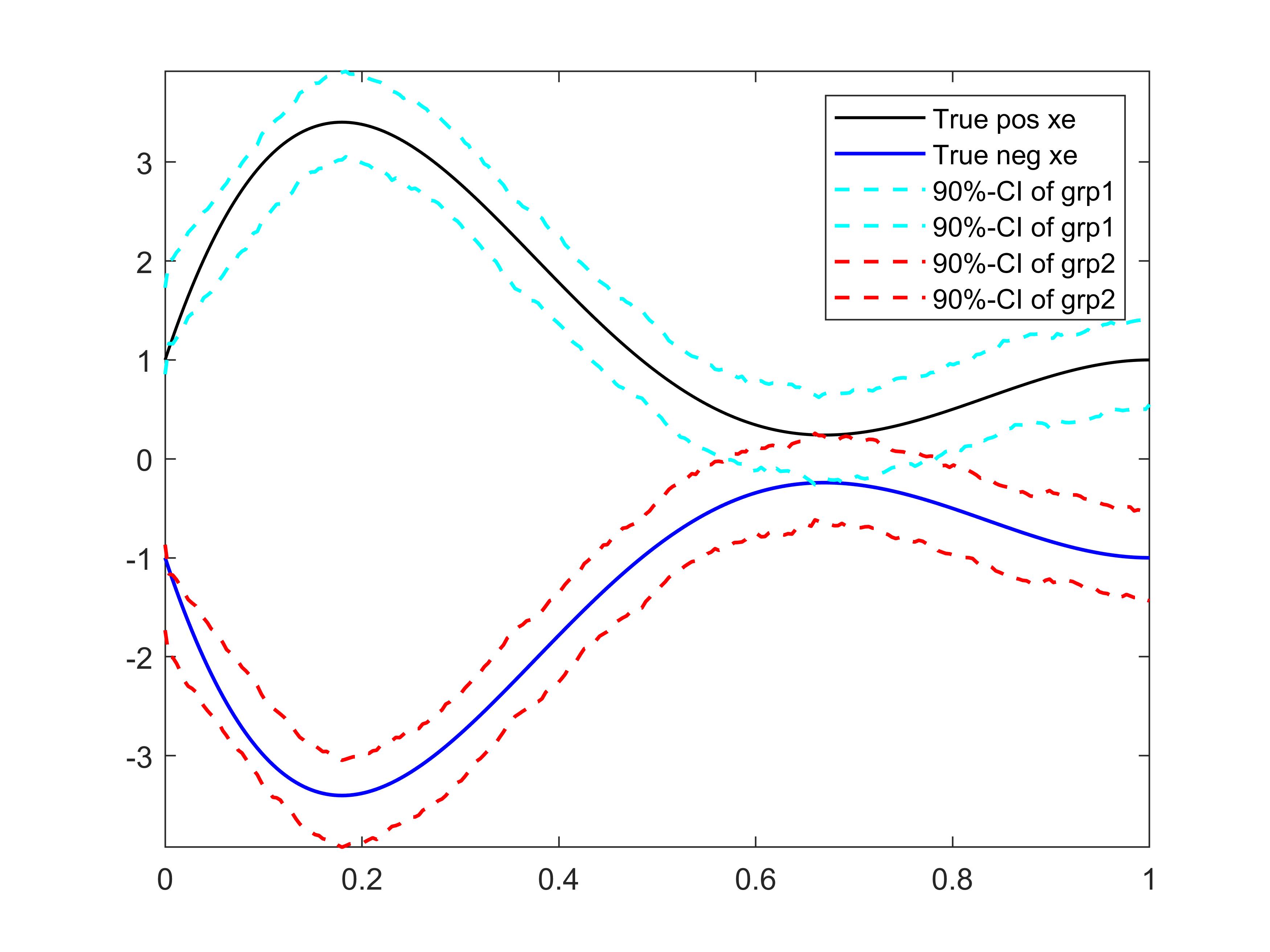} }
		\caption{The Estimations and True Values. $\delta=5\%$, $\theta=0.5$, $\tau=1.1$. (a) $N=100$; (b) $N=5000$.}
		\label{example1-2obs}
	}
\end{figure}

At the end of this section, we note that it is possible to identify another set of two exact functions corresponding to the approximate solutions shown in Figure \ref{example1-2obs}. However, any such sets of exact functions tend to exhibit similar characteristics. Furthermore, in most real-world problems, the number and form of exact solutions are unknown. In these scenarios, it suffices to identify approximate solutions with distinctly noticeable features. The primary interest lies in interpreting the physical phenomenon by examining the properties of these reconstructed solutions.

\section{Conclusion and outlook}
\label{conclusion}

We develop a statistical approach~-- SAR for deterministic nonlinear inverse problems. In this paper, we present and analyze a comprehensive regularization theory of SAR. Specifically, we demonstrate that SAR provides an optimal regularization method with respect to mean-square convergence. Compared to conventional deterministic regularization methods, SAR offers three key advantages: first, it enables uncertainty quantification of the estimated quantity, which is modeled by a deterministic forward model; second, SAR can numerically avoid local minima, a common challenge in most nonlinear inverse problems; and third, SAR can identify multiple solutions by clustering solution samples. Thus, SAR serves as a valuable tool for studying practical nonlinear inverse problems with deterministic ill-posed forward models.

Based on the convergence analysis in Sections \ref{sec-convergence}-\ref{sec-convergencerate}, proving the regularization properties of SAR for inverse problems with stochastic forward models is straightforward. However, a compelling question is how to design an efficient SAR with specific parameters that must be carefully selected according to the interplay of randomness among data noise, the forward model, and the method itself. This careful selection is essential to efficiently quantify uncertainties arising from the randomness of both the noise and the forward model.

\section*{Appendix}

\subsection*{Appendix A: Proofs of Lemmas \ref{lemma9} and \ref{lem1}}

Lemma \ref{lemma9} follows from \cite[Formula 4.8]{UT1994}. Now, we prove Lemma \ref{lem1}. We distinguish between two cases: (i) $\int_0^{\frac{t}{2}} \frac{ds}{(1+t-s)^k (1+s)^j}\leq \int_{\frac{t}{2}}^t \frac{ds}{(1+t-s)^k (1+s)^j}$ and (ii) $\int_0^{\frac{t}{2}} \frac{ds}{(1+t-s)^k (1+s)^j}> \int_{\frac{t}{2}}^t \frac{ds}{(1+t-s)^k (1+s)^j}$.

For the first case, we have $1+s\leq 1+t-s$, and hence
\begin{equation*}
\begin{split}
&\int_0^{t} \frac{ds}{(1+t-s)^k (1+s)^j}\leq 2 \int_{\frac{t}{2}}^t \frac{ds}{(1+t-s)^k (1+s)^j} \leq 2 \int_{\frac{t}{2}}^t \frac{ds}{(1+s)^{k+j}} \\
& \qquad  = \frac{2}{k+j-1} \left( \frac{1}{\left(1+\frac{t}{2}\right)^{k+j-1}} - \frac{1}{\left(1+t\right)^{k+j-1}} \right) < \frac{2^{k+j}-2}{k+j-1} \cdot \frac{1}{\left(1+t\right)^{k+j-1}}.
\end{split}
\end{equation*}

For the second case, by noting that $1+s\geq 1+t-s$, we can obtain the same inequality, i.e.
\begin{equation*}
\begin{split}
&\int_0^{t} \frac{ds}{(1+t-s)^k (1+s)^j}< 2 \int_0^{\frac{t}{2}} \frac{ds}{(1+t-s)^k (1+s)^j} \leq 2 \int_0^{\frac{t}{2}} \frac{ds}{(1+t-s)^{k+j}} \\
& \qquad  = \frac{2}{k+j-1} \left( \frac{1}{\left(1+\frac{t}{2}\right)^{k+j-1}} - \frac{1}{\left(1+t\right)^{k+j-1}} \right) < \frac{2^{k+j}-2}{k+j-1} \cdot \frac{1}{\left(1+t\right)^{k+j-1}}.
\end{split}
\end{equation*}

\subsection*{Appendix B: Proof of Proposition \ref{PropositionBiasErr}}

It follows from Lemma \ref{lemma9} and Assumption \ref{assumption-2} that
\begin{equation}\label{I1}
\left\| {{I_1}} \right\| = \left\| V^\sigma{{e^{ - Vt}}\left( {\bar x - {x^\dag }} \right)} \right\|
\le \mathop {\sup }\limits_{0 \le \lambda  \le 1} \left| {{\lambda ^{\gamma+\sigma} }{e^{ - \lambda t}}} \right| \cdot \left\| \nu  \right\|  \le \frac{c^{a}_\gamma E}{{{{\left( {1 + t} \right)}^{\gamma+\sigma} }}}.
\end{equation}

Applying Lemma \ref{lemma9} again, we obtain, for $\sigma\neq1/2$,
\begin{equation}\label{I2}
\begin{split}
\left\| {{I_2}} \right\|& = \left\| {\int_0^t {{e^{ - V\left( {t - s} \right)}}} V^{\sigma+\frac{1}{2}}\left( {{y^\delta } - y} \right)\textmd{ds}} \right\| \\
&\le  {\int_0^t \!\mathop {\sup }\limits_{\lambda  > 0} \left|{\lambda^{\sigma+\frac{1}{2}} {e^{ - {\lambda}\left( {t - s} \right)}}}\right|\textmd{ ds}}  \cdot \left\| {{y^\delta } - y} \right\| \\
&\le c^b_\gamma \delta {\int_0^t \frac{1}{(1+t-s)^{\sigma+\frac{1}{2}}} \textmd{ds}}  \le c^b_\gamma\delta (1+t)^{\frac{1}{2}-\sigma}.
\end{split}
\end{equation}
In the case where $\sigma=1/2$, \eqref{I2} remains valid, according to
\begin{equation*}
\left\| {{I_2}} \right\| = \left\| {\int_0^t {{e^{ - V\left( {t - s} \right)}}} V\left( {{y^\delta } - y} \right)\textmd{ds}} \right\| = \left\| (1-e^{-Vt})  ({y^\delta } - y) \right\| \le \delta \le c^b_\gamma \delta.
\end{equation*}

Now it remains to estimate $I_3$. For this purpose, we first have to bound the function $\psi(s)$ defined in \eqref{psi}. From \eqref{A32} and \eqref{rem1}, we derive
\begin{equation*}
\begin{split}
\left\| {\psi \left( s \right)} \right\| &= \left\| {\left( {I - R_{{x^\delta }\left( s \right)}^*} \right)\left[ {F\left( {{x^\delta }\left( s \right)} \right) - {y^\delta }} \right] - \left[ {F\left( {{x^\delta }\left( s \right)} \right) - y -V^{\frac{1}{2}}\left( {{x^\delta }\left( s \right) - {x^\dag }} \right)} \right]} \right\|\\ \nonumber
&\le \left\| {\left( {I \!-\! R_{{x^\delta }\left( s \right)}^*} \right)\left[ {F\left( {{x^\delta }\left( s \right)} \right) \!-\! {y^\delta }} \right]} \right\| \!+\! \left\| {F\left( {{x^\delta }\left( s \right)} \right) \!-\! F(x^\dag) \!-\! V^{\frac{1}{2}}\left( {{x^\delta }\left( s \right)\! -\! {x^\dag }} \right)} \right\|\\ \nonumber
&\le  {{{c_R}}\left\| {{{x^\delta }\left( s \right)}\! -\! {x^\dag }} \right\|\left\| {F\left( {{x^\delta }\left( s \right)} \right) - {y^\delta }}\right\|+ \frac{c_R}{2}} \left\| {{{x^\delta }\left( s \right)} - {x^\dag }} \right\|\left\| {V^{\frac{1}{2}}\left( {{{x^\delta }\left( s \right)} - {x^\dag }} \right)} \right\|.
\end{split}
\end{equation*}

Note that, according to the stopping rule in \eqref{stopping}, and before the terminating time, we have
\begin{equation}\label{stopingTime}
\mathbb{E} [{{{\| {F\left( {{x^\delta }\left( t \right)} \right) - {y^\delta }} \|}}}^2] > \tau^2\delta^2,
\end{equation}
which gives
\begin{equation*}
\begin{split}
& \mathbb{E}[\| {F\left( {{x^\delta }\left( s \right)} \right) - {y^\delta }}\|^2] \leq 2\mathbb{E}[\| {F\left( {{x^\delta }\left( s \right)} \right) - y}\|^2] + 2 \delta^2 \\ & \qquad < 2\mathbb{E}[\| {F\left( {{x^\delta }\left( s \right)} \right) - y}\|^2] + \frac{2}{\tau^2} \mathbb{E} [{{{\| {F\left( {{x^\delta }\left( t \right)} \right) - {y^\delta }} \|}}}^2],
\end{split}
\end{equation*}
which further implies, together with the choice of $\epsilon_0$ and Assumption \ref{assumption-2}, that
\begin{equation}\label{IneqDataSolu}
\begin{split}
& \left( 1 - \frac{2}{\tau^2} \right) \mathbb{E}[\| {F\left( {{x^\delta }\left( s \right)} \right) - {y^\delta }}\|^2] \leq 2\mathbb{E}[\| {F\left( {{x^\delta }\left( s \right)} \right) - y}\|^2] \\ & \qquad \leq \frac{2}{(1-\eta)^2} \mathbb{E}[\|{V^{\frac{1}{2}}\left( {{{x^\delta }\left( s \right)} - {x^\dag }} \right)} \|^2].
\end{split}
\end{equation}

Hence, we obtain
\begin{equation}\label{cc}
\begin{split}
& \mathbb{E}[\left\| {\psi \left( s \right)} \right\| ]
\le {c_R} \mathbb{E}[\| {{{x^\delta }\left( s \right)} - {x^\dag }} \|^2]^\frac{1}{2} \mathbb{E}[\| {F\left( {{x^\delta }\left( s \right)} \right) - {y^\delta }}\|^2]^\frac{1}{2}\\
& \qquad + \frac{c_R}{2} \mathbb{E}[\| {{{x^\delta }\left( s \right)} - {x^\dag }} \|^2]^\frac{1}{2} \mathbb{E}[\|{V^{\frac{1}{2}}\left( {{{x^\delta }\left( s \right)} - {x^\dag }} \right)} \|^2]^\frac{1}{2} \le c_0 p(s)q(s),
\end{split}
\end{equation}
with
\[{c_0}: = \left( {{\frac{\sqrt{2}\tau }{\sqrt{\left( {\tau^2  - 2} \right)} \left( {1 - \eta } \right)} }+\frac{1}{2}} \right) c_R.\]
Once again, from Lemma \ref{lemma9}, we obtain
\begin{equation}
\begin{split}
\left\| {{I_3}} \right\|& = \left\|\mathbb{E}{\int_0^t {{e^{ - V\left( {t- s} \right)}}} V^{\sigma+\frac{1}{2}}\psi \left( s \right)\textmd{ds}} \right\| \le \int_0^t \left\|{{e^{ - V\left( {t- s} \right)}}}  V^{\sigma+\frac{1}{2}}\right\|\mathbb{E}[\|{\psi \left( s \right)}\|]\textmd{ds} \\
&\le\int_0^t \!{\mathop {\sup }\limits_{0 \le \lambda  \le 1} \left| {{\lambda ^{\sigma+\frac{1}{2}}}{e^{ - \lambda \left( {t - s} \right)}}} \right| \cdot \mathbb{E}[\left\| {\psi \left( s \right)} \right\|]} \textmd{ds}\le {c_0}c^b_\gamma \int_0^t\! \! {\frac{{{p}\left( s \right){q}\left( s \right)}}{(1+t-s)^{\sigma+\frac{1}{2}}}} \textmd{ds}.
\end{split}
\end{equation}

On the other hand, combining \eqref{stopingTime} and \eqref{IneqDataSolu}, we derive
\begin{equation}\label{IneqDelta}
\delta^2< \frac{1}{\tau^2}\mathbb{E} [{{{\| {F\left( {{x^\delta }\left( t \right)} \right) - {y^\delta }} \|}}}^2] \leq \frac{2}{(1-\eta)^2(\tau^2-2)} \mathbb{E}[\|{V^{\frac{1}{2}}\left( {{{x^\delta }\left( t \right)} - {x^\dag }} \right)} \|^2].
\end{equation}

Summarizing the above results, and using formula \eqref{total error}, we conclude that
\begin{equation*}
\begin{split}
&\left\| {\mathbb{E}(V^\sigma({x^\delta }\left( t \right) - {x^\dag }))}\right\| \le \left\| {{I_1}} \right\| + \left\| {{I_2}} \right\| + \left\| {{I_3}} \right\|\\
&\le  \frac{c^a_\gamma E}{{{{\left( {1 + t} \right)}^{\gamma+\sigma} }}} + c^b_\gamma \delta (1+t)^{\frac{1}{2}-\sigma} + {c_0}c^b_\gamma \int_0^t {\frac{{p\left( s \right)q\left( s \right)}}{(1+t-s)^{\sigma+\frac{1}{2}}}} \textmd{ds}\\
&\le  \frac{c^a_\gamma E}{{{{\left( {1 + t} \right)}^{\gamma+\sigma} }}} + \frac{\sqrt{2}c^b_\gamma}{(1-\eta)\sqrt{\tau^2-2}}  q\left( t \right)(1+t)^{\frac{1}{2}-\sigma}  + {c_0}c^b_\gamma \int_0^t {\frac{{p\left( s \right)q\left( s \right)}}{(1+t-s)^{\sigma+\frac{1}{2}}}} \textmd{ds}.
\end{split}
\end{equation*}

\subsection*{Appendix C: Proof of Proposition \ref{ProfIneqpq}}

Let
\begin{equation*}
\begin{split}
G_1(t,p,q):=& \left(1+\sqrt{\frac{4^{\gamma} -1}{\gamma} } c_3 \right) c^b_\gamma \frac{E}{(1+t)^{\gamma}}+ {c_1}\sqrt{t+1} q(t) + {c_2}\int_0^t {\frac{p(s)q(s)}{{\sqrt {1 + t - s} }}} \textmd{ds},\\
G_2(t,p,q):=&\left(1+\sqrt{\frac{2^{2\gamma+2} -2}{2\gamma+1} } c_3\right) c^b_\gamma \frac{E}{(1+t)^{\gamma+\frac{1}{2}}}+c_1 q(t)+ {c_2}\int_0^t {\frac{p(s)q(s)}{ {1 + t - s} }} \textmd{ds}.
\end{split}
\end{equation*}

\begin{lemma}\label{thm2}
	The functions $p(t)$ and $q(t)$ satisfy the following system of integral inequalities of the second kind:
	\begin{equation}\label{G1}
	p(t)\le  G_1(t,p,q),
	\end{equation}
	\begin{equation}\label{G2}
	q(t)\le G_2(t,p,q).
	\end{equation}
\end{lemma}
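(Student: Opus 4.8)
The plan is to obtain each inequality from the bias--variance decomposition applied to the weighted error $V^\sigma(x^\delta(t)-x^\dag)$, combined with the two preceding bounds (Propositions \ref{PropositionBiasErr} and \ref{po2}) evaluated at the two smoothing levels $\sigma=0$ and $\sigma=\frac12$. Since $V^\sigma$ is a deterministic bounded linear operator and $x^\dag$ is non-random, the decomposition \eqref{bias-variance decomposition} generalises verbatim, because $\mathbb{E}[V^\sigma(x^\delta(t)-x^\dag)]=V^\sigma(\mathbb{E}x^\delta(t)-x^\dag)$ and the centred part is exactly $V^\sigma(x^\delta(t)-\mathbb{E}x^\delta(t))$:
\[
\mathbb{E}[\|V^\sigma(x^\delta(t)-x^\dag)\|^2]=\|\mathbb{E}(V^\sigma(x^\delta(t)-x^\dag))\|^2+\mathbb{E}[\|V^\sigma(x^\delta(t)-\mathbb{E}x^\delta(t))\|^2].
\]
Taking square roots and using the subadditivity $\sqrt{a+b}\le\sqrt a+\sqrt b$ yields, for every $\sigma\ge0$,
\[
\mathbb{E}[\|V^\sigma(x^\delta(t)-x^\dag)\|^2]^{\frac12}\le\|\mathbb{E}(V^\sigma(x^\delta(t)-x^\dag))\|+\mathbb{E}[\|V^\sigma(x^\delta(t)-\mathbb{E}x^\delta(t))\|^2]^{\frac12}.
\]

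For \eqref{G1} I would take $\sigma=0$, so that the left-hand side is precisely $p(t)$. The bias term is controlled by Proposition \ref{PropositionBiasErr} (with $\sigma=0$), giving $\frac{c^a_\gamma E}{(1+t)^\gamma}+c_1 q(t)\sqrt{1+t}+c_2\int_0^t\frac{p(s)q(s)}{\sqrt{1+t-s}}\,\textmd{d}s$, while the variance term is controlled by Proposition \ref{po2} (with $\sigma=0$) by $\sqrt{\frac{4^\gamma-1}{\gamma}}\,c_3 c^b_\gamma\frac{E}{(1+t)^\gamma}$. Adding the two leading $(1+t)^{-\gamma}$ contributions and using $c^a_\gamma\le c^b_\gamma$ (indeed both equal $1$ here, since $\xi^\xi\le1$ for $\xi\in(0,1]$ forces $\gamma^\gamma\le1$ and $(\gamma+\frac12)^{\gamma+\frac12}\le1$) consolidates the constant into $\left(1+\sqrt{(4^\gamma-1)/\gamma}\,c_3\right)c^b_\gamma$, which is exactly the leading coefficient of $G_1$; the remaining two summands already coincide with those of $G_1$.

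For \eqref{G2} the argument is identical with $\sigma=\frac12$, so that the left-hand side is $q(t)$. Here Proposition \ref{PropositionBiasErr} contributes $\frac{c^a_\gamma E}{(1+t)^{\gamma+1/2}}+c_1 q(t)+c_2\int_0^t\frac{p(s)q(s)}{1+t-s}\,\textmd{d}s$ (the exponents $\frac12-\sigma$ and $\sigma+\frac12$ specialise to $0$ and $1$), and Proposition \ref{po2} contributes the variance term $\sqrt{\frac{4^{\gamma+1/2}-1}{\gamma+1/2}}\,c_3 c^b_\gamma\frac{E}{(1+t)^{\gamma+1/2}}$. The one genuine computation is the algebraic identity $\frac{4^{\gamma+1/2}-1}{\gamma+1/2}=\frac{2^{2\gamma+1}-1}{\gamma+1/2}=\frac{2^{2\gamma+2}-2}{2\gamma+1}$, after which the leading coefficient matches that of $G_2$.

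The only place where care is genuinely required is the bookkeeping of constants: matching the $\sigma$-dependent factor $c^a_\gamma=\max\{(\gamma+\sigma)^{\gamma+\sigma},1\}$ against $c^b_\gamma$ at the two values of $\sigma$, and verifying the exponent arithmetic that converts the variance constants of Proposition \ref{po2} into the coefficients displayed in $G_1$ and $G_2$. All the analytic effort --- the It\^o isometry estimates, the stopping-rule inequality \eqref{stopingTime}, and the convolution bounds of Lemma \ref{lem1} --- has already been absorbed into Propositions \ref{PropositionBiasErr} and \ref{po2}, so no further integral estimates are needed here. I would also record that these inequalities are asserted for $t$ up to the terminating time $t^*$, the regime in which those two propositions and the monotonicity they invoke remain valid.
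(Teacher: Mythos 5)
Your proposal is correct and follows essentially the same route as the paper's proof: both arguments apply Proposition \ref{po2} at $\sigma=0$ and $\sigma=\frac{1}{2}$ for the variance part, combine it with Proposition \ref{PropositionBiasErr} at the same two values of $\sigma$ via the bias--variance split and $\sqrt{a+b}\le\sqrt{a}+\sqrt{b}$, and consolidate the leading $(1+t)^{-\gamma}$ and $(1+t)^{-\gamma-\frac{1}{2}}$ coefficients using $c^a_\gamma\le c^b_\gamma$ (both equal $1$ for $\gamma\in(0,\frac{1}{2}]$). Your explicit check of the identity $\frac{4^{\gamma+1/2}-1}{\gamma+1/2}=\frac{2^{2\gamma+2}-2}{2\gamma+1}$ and your remark that the inequalities are asserted for $t$ up to the stopping time $t^*$ are consistent with, and if anything slightly more careful than, the paper's own presentation.
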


\begin{proof}
	Taking $\sigma=0$ and $\sigma=1/2$ respectively in Proposition \ref{po2}, it follows that
	\begin{equation}
	\label{pqGIneq1}
	\mathbb{E}[\|{x^\delta }\left( t \right) - \mathbb{E}{x^\delta }\left( t \right)\|^2]^{\frac{1}{2}}\le \sqrt{\frac{4^{\gamma} -1}{\gamma} } c_3 c^b_\gamma \frac{E}{(1+t)^{\gamma}},
	\end{equation}
	and
	\begin{equation}
	\label{pqGIneq2}
	\mathbb{E}[\|V^\frac{1}{2}({x^\delta }\left( t \right) - \mathbb{E}{x^\delta }\left( t \right))\|^2]^{\frac{1}{2}}\le \sqrt{\frac{2^{2\gamma+2} -2}{2\gamma+1} } c_3 c^b_\gamma \frac{E}{(1+t)^{\gamma+\frac{1}{2}}}.
	\end{equation}
	
	Combine \eqref{pqGIneq1} and Proposition \ref{PropositionBiasErr} with $\sigma=0$, and, together with the fact that $\sqrt{a+b}\leq \sqrt{a}+\sqrt{b}$, we can obtain
	\begin{equation*}
	\begin{split}
	& \mathbb{E}[\|{x^\delta }\left( t \right) - {x^\dag }\|^2]^\frac{1}{2} \le\|\mathbb{E}{x^\delta }\left( t \right) - {x^\dag }\|+ \mathbb{E}[\|{x^\delta }\left( t \right) - \mathbb{E}{x^\delta }\left( t \right)\|^2]^\frac{1}{2}\\
	& \qquad \le \left(1+ \sqrt{\frac{4^{\gamma} -1}{\gamma} } c_3 \right)c^b_\gamma \frac{E}{(1+t)^{\gamma}}+ {c_1}\sqrt{t+1} q(t) + {c_2}\int_0^t {\frac{p(s)q(s)}{{\sqrt {1 + t - s} }}} \textmd{ds},
	\end{split}
	\end{equation*}
	which yields \eqref{G1}. In a similar way, combine \eqref{pqGIneq1} and Proposition \ref{PropositionBiasErr} with $\sigma=0$ to derive
	\begin{equation*}
	\begin{split}
	& \mathbb{E}[\|V^\frac{1}{2}({x^\delta }\left( t \right) - {x^\dag })\|^2]^\frac{1}{2}
	\le \|\mathbb{E}[V^\frac{1}{2}({x^\delta }\left( t \right) - {x^\dag })]\|+ \mathbb{E}[\|V^\frac{1}{2}({x^\delta }\left( t \right) - \mathbb{E}{x^\delta }\left( t \right))\|^2]^\frac{1}{2}\\
	& \quad \le \left(1+\sqrt{\frac{2^{2\gamma+2} -2}{2\gamma+1} } c_3 \right)c^b_\gamma  \frac{E}{(1+t)^{\gamma+\frac{1}{2}}} + c_1  q(t) + {c_2} \int_0^t {\frac{p(s)q(s)}{ {1 + t - s} }} \textmd{ds},
	\end{split}
	\end{equation*}
	which implies the desired assertion \eqref{G2}.
\end{proof}

Using the above lemma, we can prove Proposition \ref{ProfIneqpq}.
\begin{proof}
	We first show that
\begin{equation}
\label{Gest}
	G_1(t,z_1,z_2)\le z_1(t)~~\textmd{and}~~G_2(t,z_1,z_2)\le z_2(t).
	\end{equation}
	From \eqref{G1} and \eqref{G2}, we derive, with Lemma \ref{lem1} (by setting $(k,j)=(\frac{1}{2}, 2\gamma + \frac{1}{2})$ and $(k,j)=(1, 2\gamma + \frac{1}{2})$ in the inequality \eqref{A1}, respectively),
	\begin{equation*}
	\begin{split}
	& G_1(t,z_1,z_2) \\
	& \quad = \left(1+\sqrt{\frac{4^{\gamma} -1}{\gamma}} c_3 \right)c^b_\gamma \frac{E}{(1+t)^{\gamma}}+ {c_1} \sqrt{t+1} \cdot\frac{c^* E}{(1+t)^{\gamma+\frac{1}{2}}} + {c_2}\int_0^t {\frac{{\frac{c^* E}{(1+s)^\gamma} \cdot \frac{c^* E}{(1+s)^{\gamma+\frac{1}{2}}}}}{{\sqrt {1 + t - s} }}} \textmd{ds}\\
	& \quad  \le \left(1+\sqrt{\frac{4^{\gamma} -1}{\gamma}} c_3\right) c^b_\gamma \frac{E}{(1+t)^{\gamma}}+ {c_1}  c^*\frac{ E}{(1+t)^{\gamma}} +  c_2 (c^*)^2 E \frac{4^\gamma -1}{\gamma} \frac{E}{(1+t)^\gamma}\\
	& \quad = \left(c^b_\gamma+\sqrt{\frac{4^\gamma -1}{\gamma}} c_3 c^b_\gamma + {c_1}  c^*+ c_2 (c^*)^2 \frac{4^\gamma -1}{\gamma} E \right)\frac{E}{(1+t)^\gamma}
	\end{split}
	\end{equation*}
	and
	\begin{equation*}
	\begin{split}
	& G_2(t,z_1,z_2)\\
	& \quad = \left(1+\sqrt{\frac{2^{2\gamma+2} -2}{2\gamma+1} } c_3\right) c^b_\gamma \frac{E}{(1+t)^{\gamma+\frac{1}{2}}}+ {c_1}\frac{c^* E}{(1+t)^{\gamma+\frac{1}{2}}} + {c_2}\int_0^t {\frac{{\frac{c^* E}{(1+t)^\gamma} \cdot \frac{c^* E}{(1+t)^{\gamma+\frac{1}{2}}}}}{{ {1 + t - s} }}} \textmd{ds}\\
	&  \quad \le \left(1+\sqrt{\frac{2^{2\gamma+2} -2}{2\gamma+1} } c_3\right) c^b_\gamma  \frac{E}{(1+t)^{\gamma+\frac{1}{2}}}+ {c_1}  c^*\frac{ E}{(1+t)^{\gamma+\frac{1}{2}}} +  c_2 (c^*)^2 E \frac{2^{2\gamma+\frac{5}{2}} -4}{4\gamma+1} \frac{E}{(1+t)^{\gamma+\frac{1}{2}}}\\
	&  \quad = \left(c^b_\gamma+\sqrt{ \frac{2^{2\gamma+\frac{5}{2}} -4}{4\gamma+1}} c_3 c^b_\gamma + {c_1}  c^*+ c_2 (c^*)^2 E \frac{2^{2\gamma+\frac{5}{2}} -4}{4\gamma+1} \right)\frac{E}{(1+t)^{\gamma+\frac{1}{2}}}.
	\end{split}
	\end{equation*}
	From the definition of $c_1$ in Proposition \ref{PropositionBiasErr}, for $\tau>\sqrt{\frac{\sqrt{2}c^b_\gamma}{(1-\eta)} + 2}$, $c_1<1$. Thus, for a small-enough $E$ such that $E\leq\frac{1-c_1}{2c_2c_\Gamma c^*}$, we can choose $c^*\geq\frac{2c^b_\gamma(1 + \sqrt{ c_\Gamma} c_3)}{1-c_1}$ so that the following inequality holds:
	\begin{equation}
	\label{cGamma}
	c^b_\gamma + \sqrt{ c_\Gamma} c_3 c^b_\gamma + {c_1}  c^*+ c_2 (c^*)^2 E c_\Gamma \le c^*,
	\end{equation}
	where $c_\Gamma:=\max\{\frac{4^\gamma -1}{\gamma}, \frac{2^{2\gamma+\frac{5}{2}} -4}{4\gamma+1} \}$.
	This leads to the estimates \eqref{Gest}. Consequently, our assertions \eqref{pest} and \eqref{qest} follow immediately via \eqref{Gest}, Lemma \ref{thm2}, and the monotonicity of $G_1(t,p,q)$ and  $G_2(t,p,q)$ with respect to $p$ and $q$.
\end{proof}

\subsection*{Appendix D: An example of constants with given parameters}~

\begin{longtable}{|c|c|l|}
	\hline \multicolumn{1}{|c|}{\textbf{Constant}} & \multicolumn{1}{c|}{\textbf{Value }} & \multicolumn{1}{c|}{\textbf{Reference}} \\
	\hline
	\hspace{-3.5mm} \begin{tabular}{l}  $\epsilon_0$   \end{tabular} & \hspace{-3.5mm} \begin{tabular}{l}  1       \end{tabular}   & \eqref{gt} in the definition of $g(t)$ \\
	\hline
	\hspace{-3.5mm} \begin{tabular}{l}  $g(0)$   \end{tabular} & \hspace{-3.5mm} \begin{tabular}{l}  0.7071      \end{tabular}   & \eqref{gt} in the definition of $g(t)$ \\
	\hline
	\renewcommand{\arraystretch}{1.2} \hspace{-4.5mm}  \begin{tabular}{l} $\tau$ \end{tabular} & \hspace{-3.5mm} \begin{tabular}{l} 6 \end{tabular} & \eqref{hstopping} in the stopping rule \\
	\hline
	\hspace{-3.5mm} \begin{tabular}{l}    $c_\gamma$ \end{tabular} & \hspace{-3.5mm} \begin{tabular}{l}  1 \end{tabular}  & Ineq.\eqref{use1} in Lemma \ref{lemma9} \\
	\hline
	\hspace{-3.5mm} \begin{tabular}{l}  $c^a_\gamma$  \end{tabular} & \hspace{-3.5mm} \begin{tabular}{l}  1 \end{tabular}  &  Ineq.\eqref{I1} in Proposition \ref{PropositionBiasErr} \\
	\hline
	\hspace{-3.5mm} \begin{tabular}{l}  $c^b_\gamma$ \end{tabular} & \hspace{-3.5mm} \begin{tabular}{l} 1  \end{tabular} & Ineq.\eqref{I2} in Lemma \ref{lemma9} \\
	\hline
	\hspace{-3.5mm} \begin{tabular}{l}  $c_0$ \end{tabular} & \hspace{-3.5mm} \begin{tabular}{l} 3.4104 \end{tabular} & Ineq.\eqref{cc} in the proof of Proposition \ref{PropositionBiasErr} \\
	\hline
	\hspace{-3.5mm} \begin{tabular}{l}  $c_1$ \end{tabular} & \hspace{-3.5mm} \begin{tabular}{l} 0.4851 \end{tabular} & Proposition \ref{PropositionBiasErr} \\
	\hline
	\hspace{-3.5mm} \begin{tabular}{l}  $c_2$ \end{tabular} & \hspace{-3.5mm} \begin{tabular}{l} 3.4104  \end{tabular} & Proposition \ref{PropositionBiasErr} \\
	\hline
	\hspace{-3.5mm} \begin{tabular}{l}  $c_3$ \end{tabular} & \hspace{-3.5mm} \begin{tabular}{l} 2.0580  \end{tabular} & Proposition \ref{po2} \\
	\hline
	\hspace{-3.5mm} \begin{tabular}{l}  $c_\Gamma$ \end{tabular} & \hspace{-3.5mm} \begin{tabular}{l} 2.1342  \end{tabular} & Ineq.\eqref{cGamma} in the proof of Proposition \ref{ProfIneqpq} \\
	\hline
	\hspace{-3.5mm} \begin{tabular}{l}  $c^*$ \end{tabular} & \hspace{-3.5mm} \begin{tabular}{l} 15.5612  \end{tabular} & Ineq.\eqref{cGamma} in the proof of Proposition \ref{ProfIneqpq} \\
	\hline
	\hspace{-3.5mm} \begin{tabular}{l}  $E$ \end{tabular} & \hspace{-3.5mm} \begin{tabular}{l} 0.0023  \end{tabular} & Assumption \ref{assumption-2} and Proposition \ref{ProfIneqpq} \\
	\hline
	\hspace{-3.5mm} \begin{tabular}{l}  $c^0_e$ \end{tabular} & \hspace{-3.5mm} \begin{tabular}{l} 9.8960  \end{tabular} & \eqref{ce0} in the proof of Proposition \ref{PropRvar} \\
	\hline
	\hspace{-3.5mm} \begin{tabular}{l}  $c^1_e$ \end{tabular} & \hspace{-3.5mm} \begin{tabular}{l} 3.9277  \end{tabular} & \eqref{c1e} in the proof of Theorem \ref{thmrate} \\
	\hline
	\hspace{-3.5mm} \begin{tabular}{l}  $c^2_e$ \end{tabular} & \hspace{-3.5mm} \begin{tabular}{l} 11.5000  \end{tabular} & \eqref{c2e} in the proof of Theorem \ref{thmrate} \\
	\hline
	\hspace{-3.5mm} \begin{tabular}{l}  $c^3_e$ \end{tabular} & \hspace{-3.5mm} \begin{tabular}{l} 6.0362  \end{tabular} & \eqref{rate2} in the proof of Theorem \ref{thmrate} \\
	\hline
	\hspace{-3.5mm} \begin{tabular}{l}  $c^4_e$ \end{tabular} & \hspace{-3.5mm} \begin{tabular}{l} 9.3990  \end{tabular} & \eqref{ce4} in the proof of Theorem \ref{thmrate} \\
	\hline
	\hspace{-3.5mm} \begin{tabular}{l}  $c_e$ \end{tabular} & \hspace{-3.5mm} \begin{tabular}{l} 13.6482  \end{tabular} & \eqref{c_e} in the proof of Theorem \ref{thmrate} \\
	\hline
	\caption{Constants with given parameters $(\eta,\delta_0,\gamma,c_R)=(\frac{1}{2},1,\frac{1}{3},1)$ and references to their definitions.}
	\label{NotationTable}
\end{longtable}

\subsection*{Appendix E: Numerical experiments about the impacts of the level of randomization and the sample size on SAR}~

It is clear that when the sample size is limited, the approximation accuracy of SAR depends essentially on the level of randomization $\theta$. To visualize its impact, we display in Figures \ref{1Dd2N800theta} and \ref{1Dd1N200theta} the numerical results for the 1D and 2D examples with different $\theta$ values. For not large enough sample size, the larger the $\theta$ value is, the lower the accuracy and the smoothness of the approximation solution. However, a $\theta$ value that is too small may improve the accuracy only slightly, and hence a desirable $\theta$ value should be chosen carefully for convergence.

In practice, the fluctuations caused by the stochastic term may cause problems with a proper stopping rule, especially with only one single sample. We therefore try to determine the role of the sample size. Indeed, as shown in \cite[Section 2]{ZhangChen23}, for a fixed noisy data, the sample size is not necessary to be large. For linear inverse problems, an appropriate
order of magnitude of sample size is recommended by \cite[Proposition 2]{ZhangChen23} for the asymptotically optimal convergence rate. For nonlinear inverse problems \eqref{model}, we only numerically investigate the impact of the sample size in the discrete version of SAR, namely the iteration \eqref{EM}. First, we present in Figures \ref{1Dd2path} and \ref{2Dd1CIpath} the numerical results for the 1D and 2D cases respectively, using different $N$ values.
The smaller the $N$ value is, the lower the attainable accuracy of the algorithm.
Nevertheless, an $N$ that is too large could improve the accuracy very slightly, but with greater computational effort; see Tables \ref{tab1} and \ref{tab2} for more details. Hence, as in the linear scenario, a suitable sample size $N$ should be chosen carefully.

\begin{figure}[t]
	\centering
	{
		\subfigure[$\theta=0.002$]{
			\includegraphics[scale = 0.06]{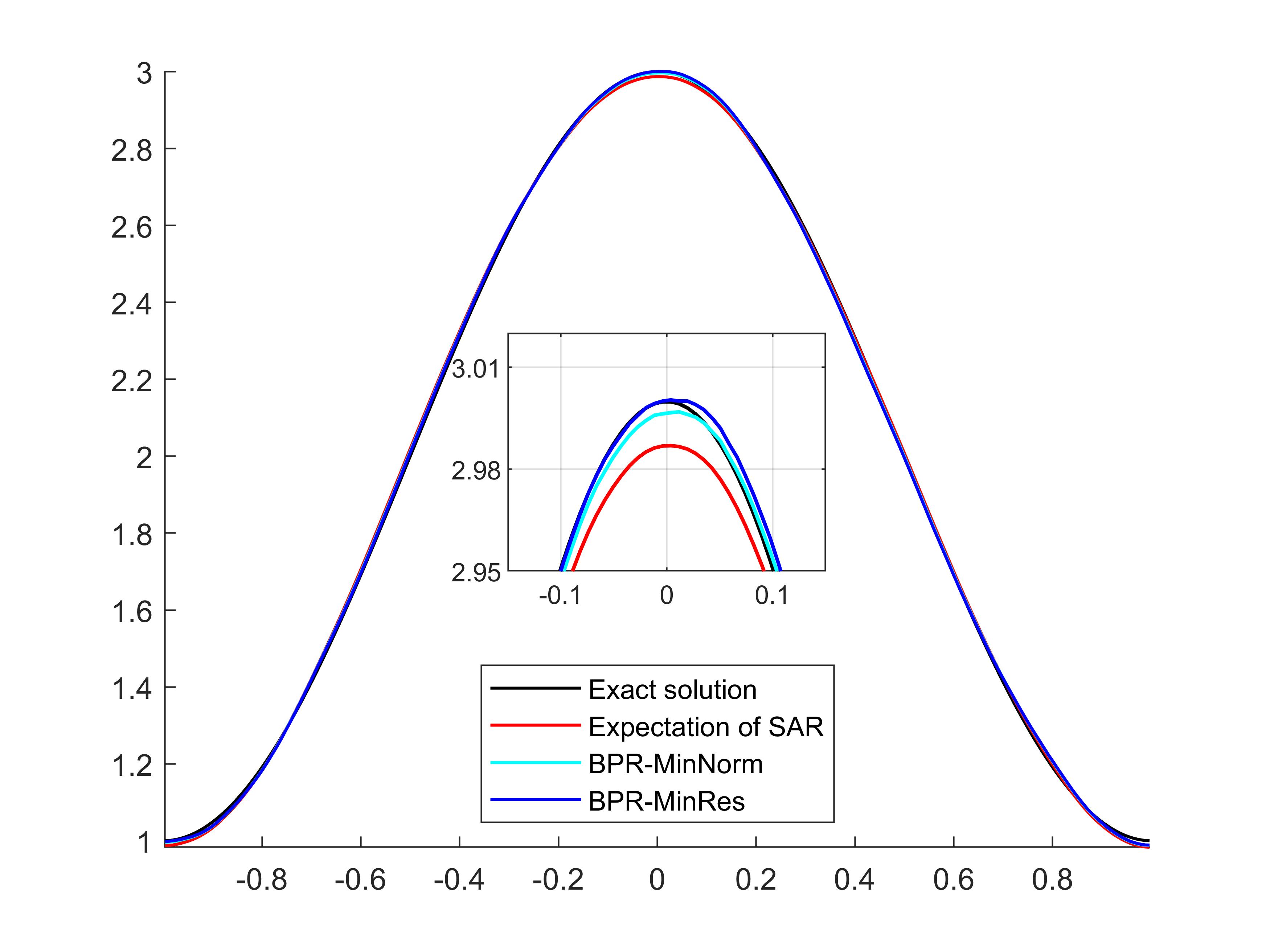} }
		\subfigure[$\theta=0.02$]{
			\includegraphics[scale = 0.06]{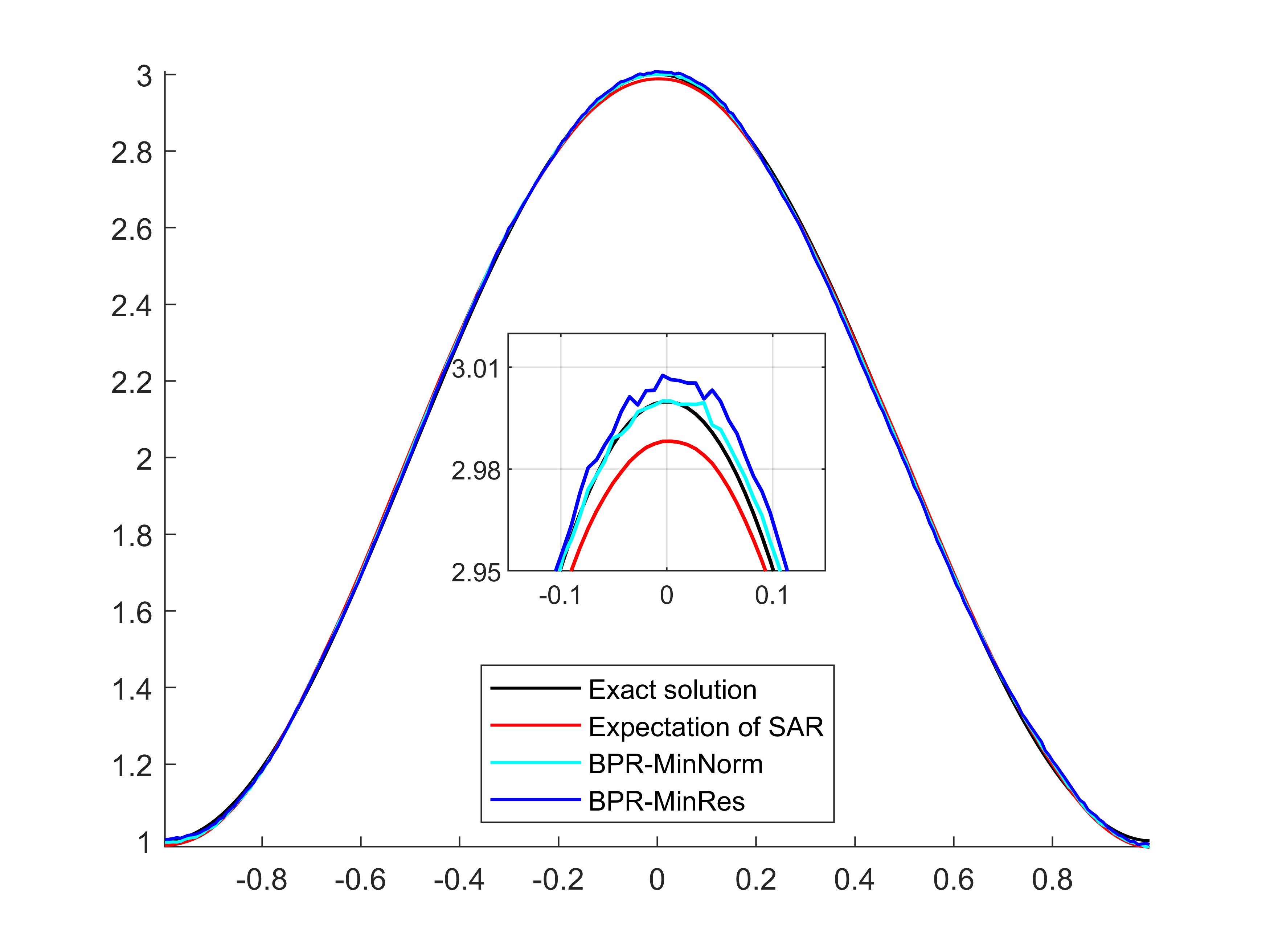} }
		\subfigure[$\theta=0.2$]{
			\includegraphics[scale = 0.06]{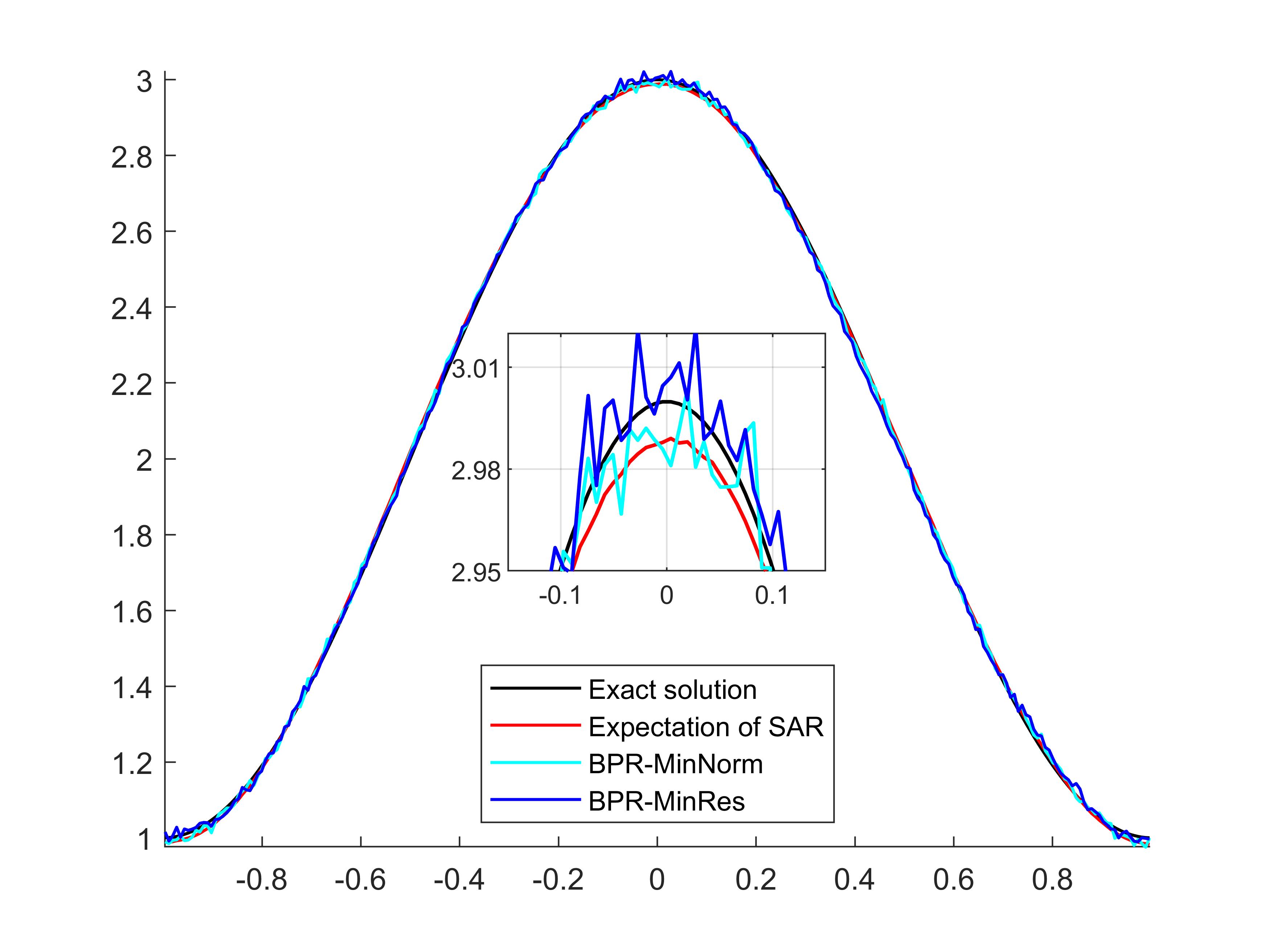} }
		\subfigure[$\theta=2.0$]{
			\includegraphics[scale = 0.06]{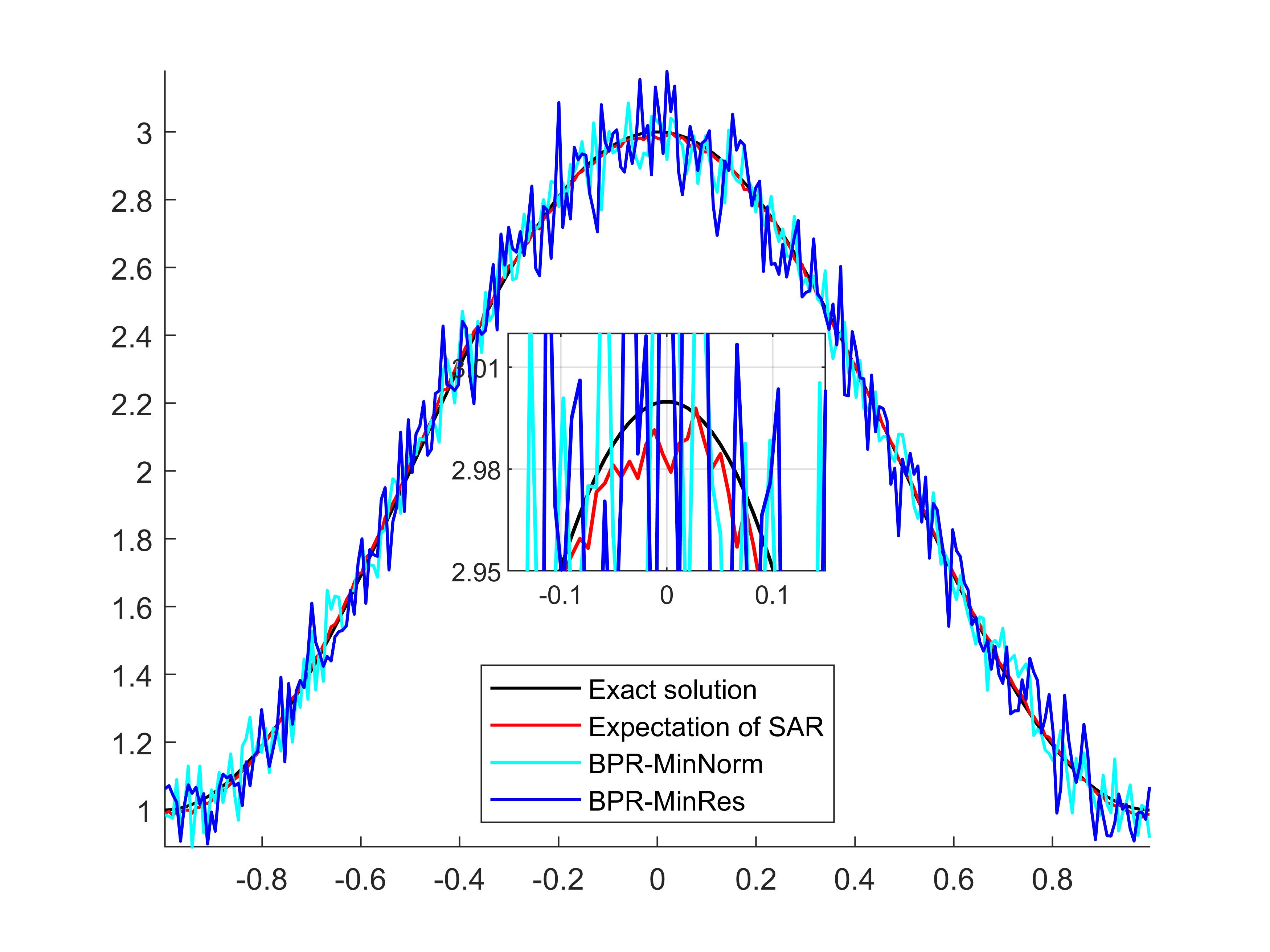} }
		\caption{Reconstruction results for the 1D case, with $\delta = 2\%$ and $N=800$.} 
		\label{1Dd2N800theta}
	}
\end{figure}

According to the numerical results, similar to the rule recommended in \cite{ZhangChen23}, for the considered model problem with given parameters, the sample size $n$ can be chosen  according to the following formula, which will be used in the thereafter numerical experiments. 
\begin{eqnarray}
	& & n = \max ( \delta^{-1}, n_0), \quad n_0:= \mathop{\arg\min}_{n\in \Theta_{k_1} \cap \mathbb{N}_{k_2}} \, n, \quad k_1=10, \quad k_2=50, \label{sampleN} \\
	& & \Theta_{k_1}: = \left\{ n_0: \frac{\| \bar{\mathbf{c}}^{n_0}_i - \bar{\mathbf{c}}^{n_0+k}_i \|}{\|\bar{\mathbf{c}}^{n_0}_i\|} \leq 0.01, \frac{\| s^{n_0}_i - s^{n_0+k}_i \|}{{\|\mathbf{s}}^{n_0}_i\|} \leq 0.05, \quad k=1, \cdots, k_1 \right\}, \nonumber
\end{eqnarray}
where $\bar{\mathbf{c}}^{n_0}_i$ and $s^{n_0}_i$ denote the sample mean and sample variance with a sample size of $n_0$, respectively. $\mathbb{N}_{k_2}$ denotes the set of integer multiples of $k_2$.

\begin{figure}[p]
	\centering
	{
		\subfigure[$\theta=0.005$]{
			\includegraphics[scale = 0.05]{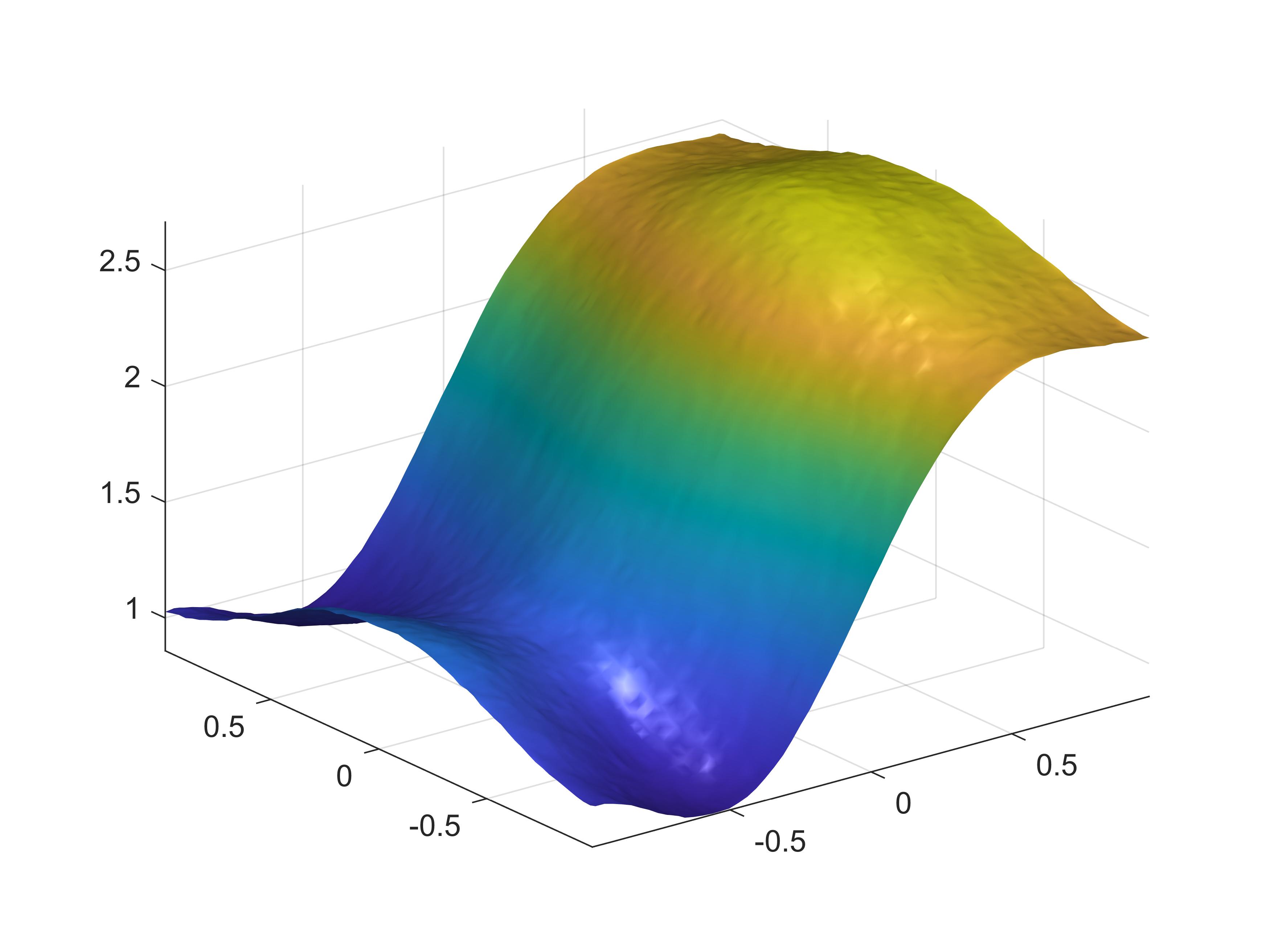} }
		\subfigure[$\theta=0.02$]{
			\includegraphics[scale = 0.05]{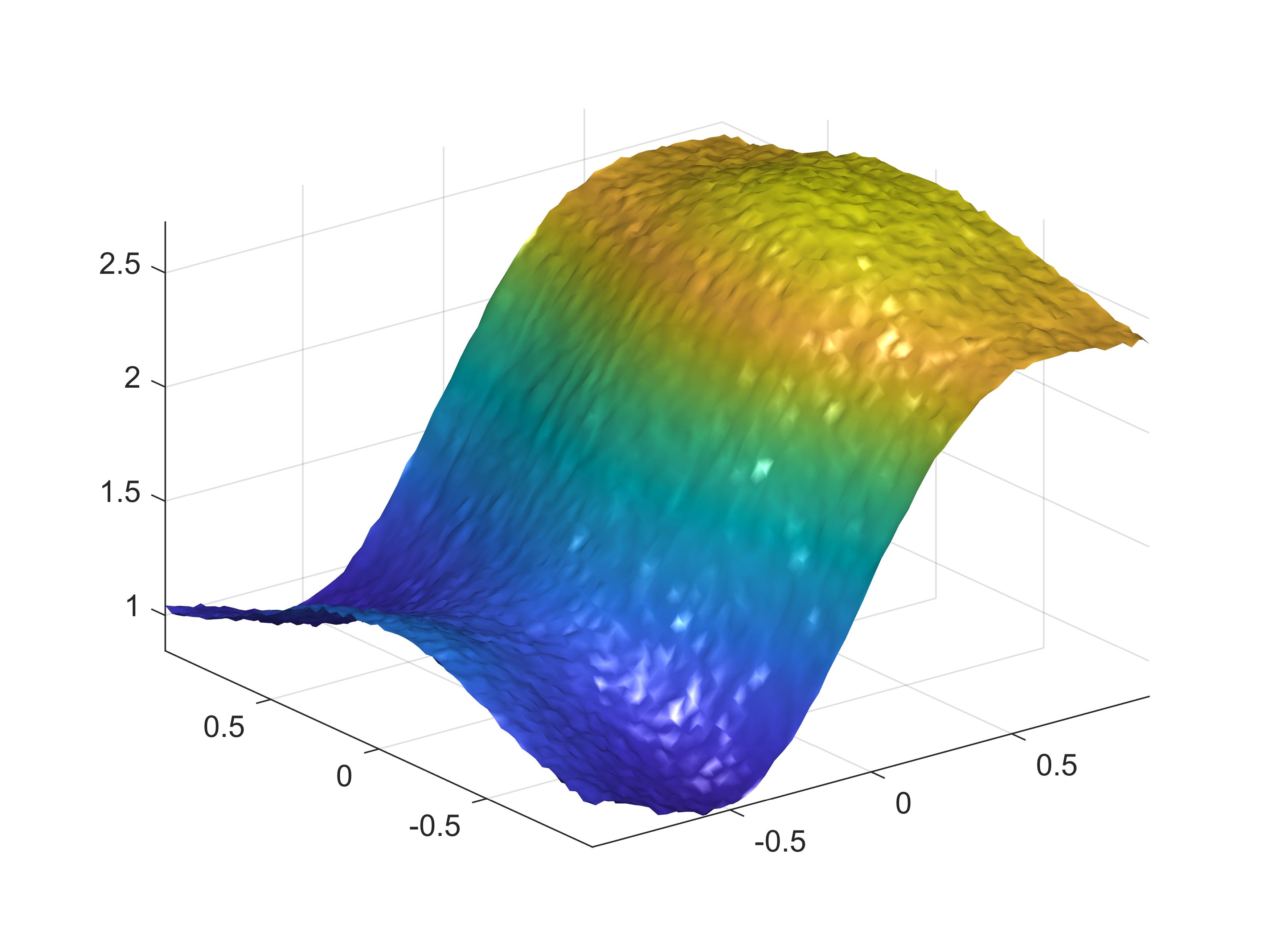} }
		\subfigure[$\theta=0.04$]{
			\includegraphics[scale = 0.05]{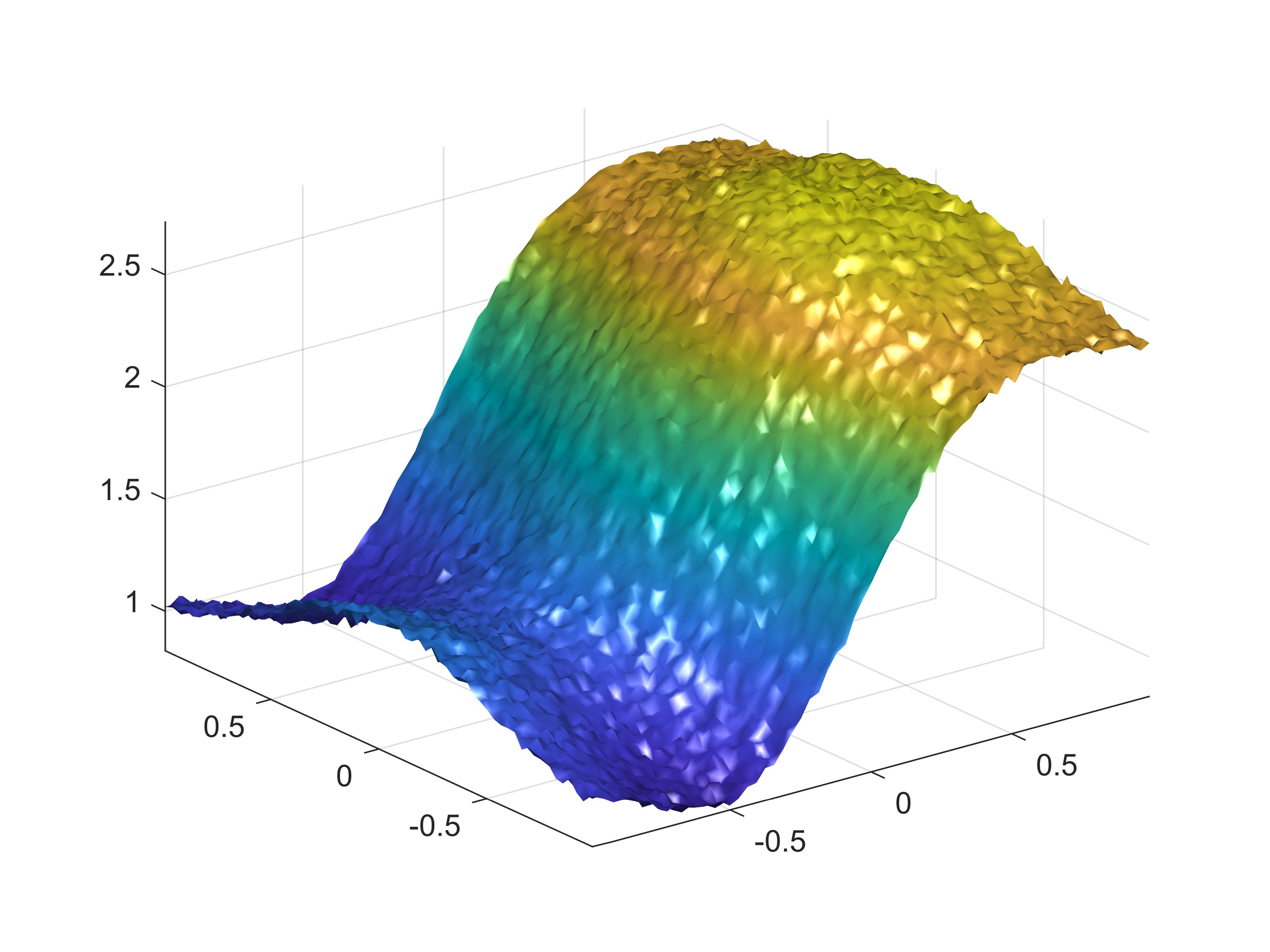} }
		\subfigure[$\theta=0.08$]{
			\includegraphics[scale = 0.05]{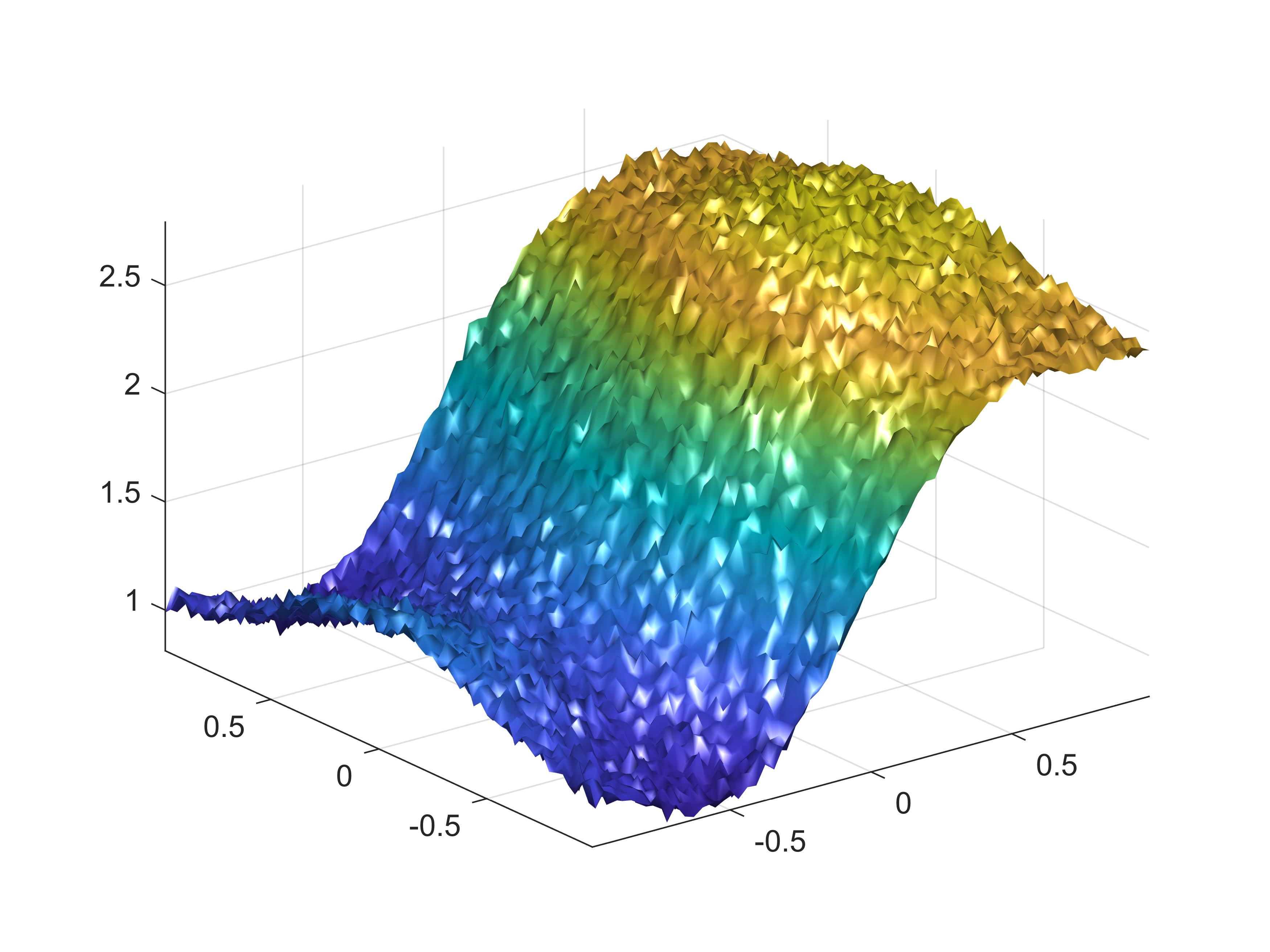} }
		\caption{Reconstruction results (Mean maps) for the 2D case, with $\delta = 1\%$ and $N=200$.} 
		\label{1Dd1N200theta}
	}
\end{figure}

\begin{figure}[htp]
	\centering
	{
		\subfigure[N=10]{
			\includegraphics[scale = 0.05]{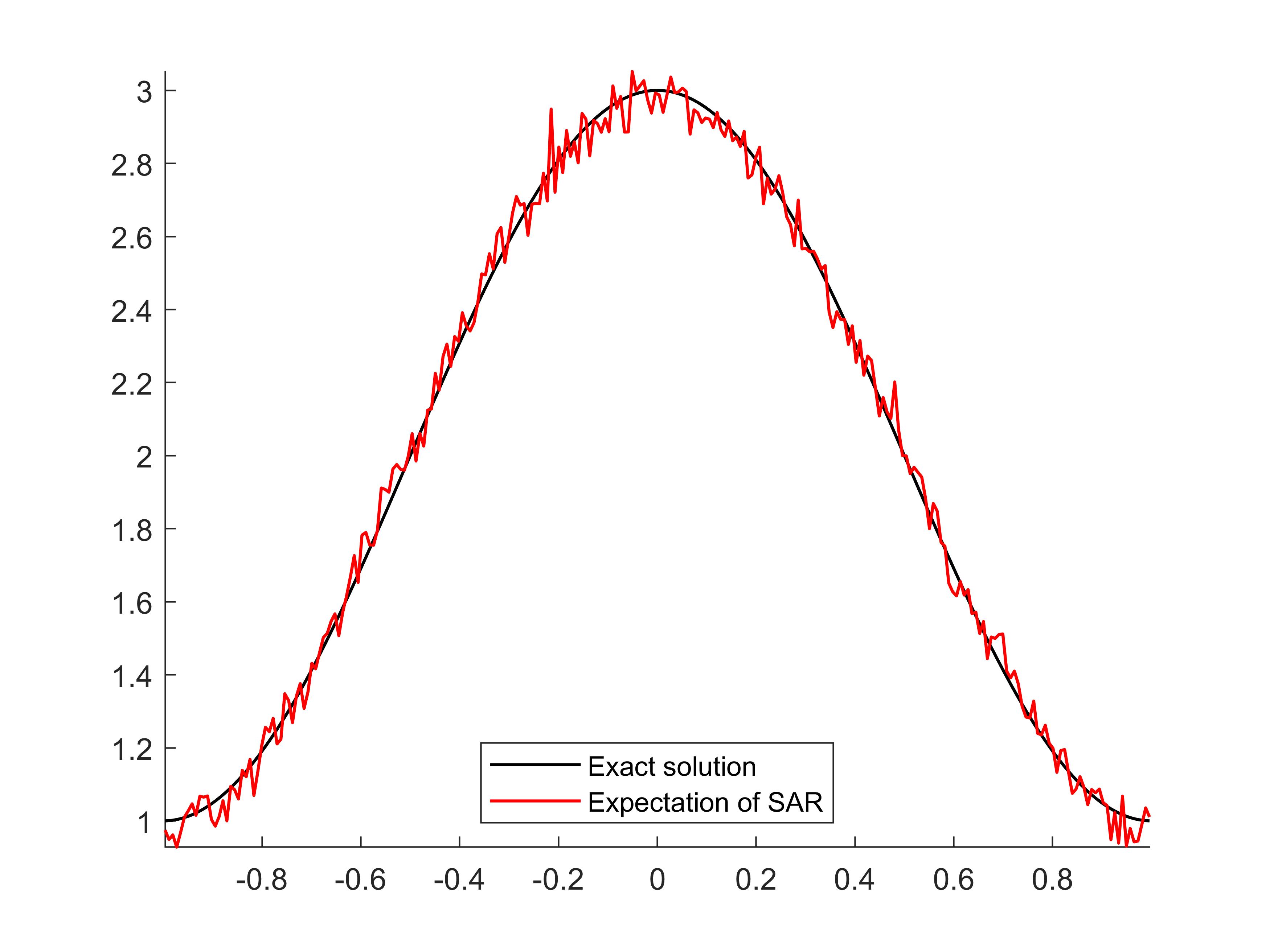} }
		\subfigure[N=50]{
			\includegraphics[scale = 0.05]{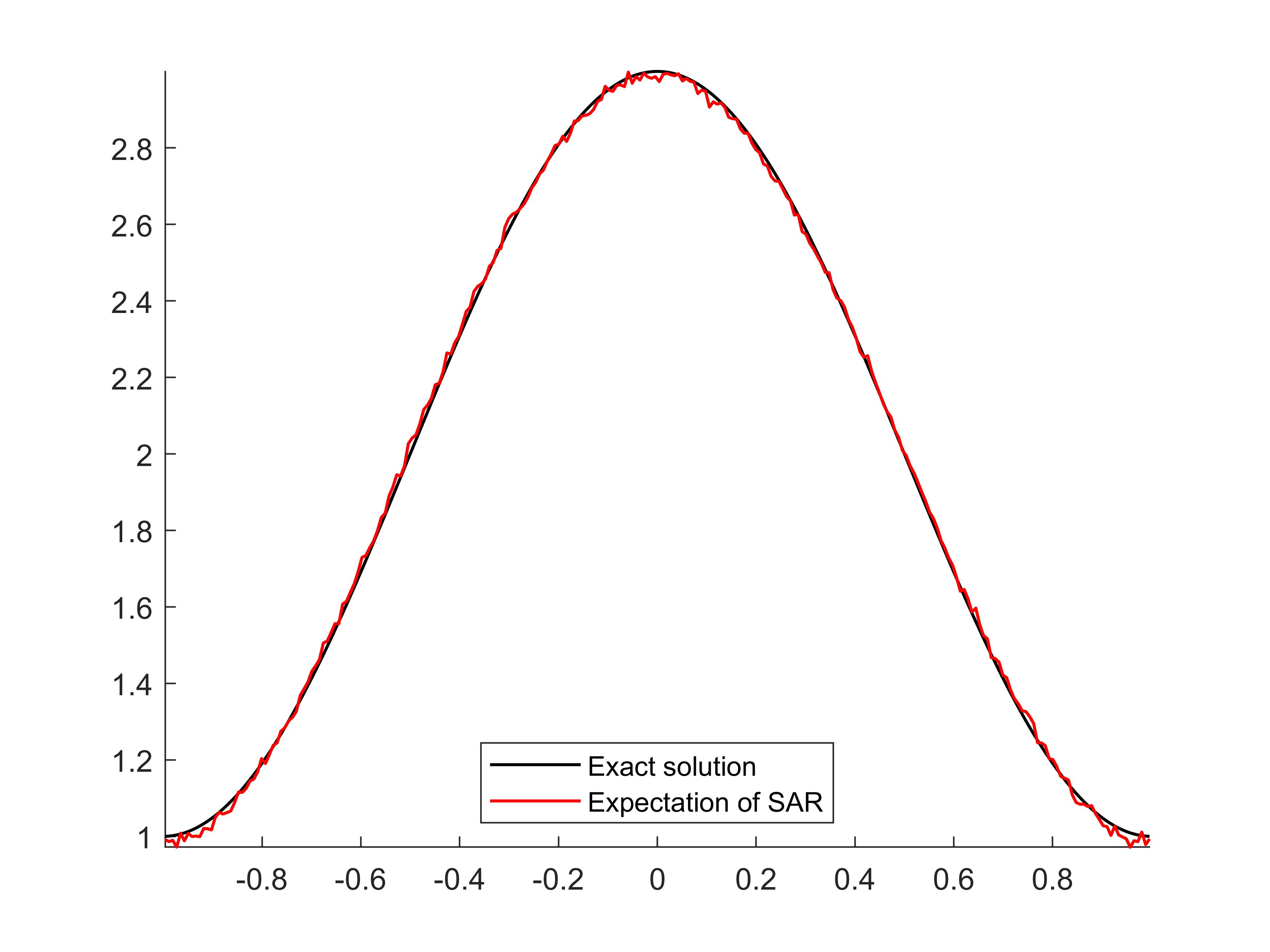} }
		\subfigure[N=100]{
			\includegraphics[scale = 0.05]{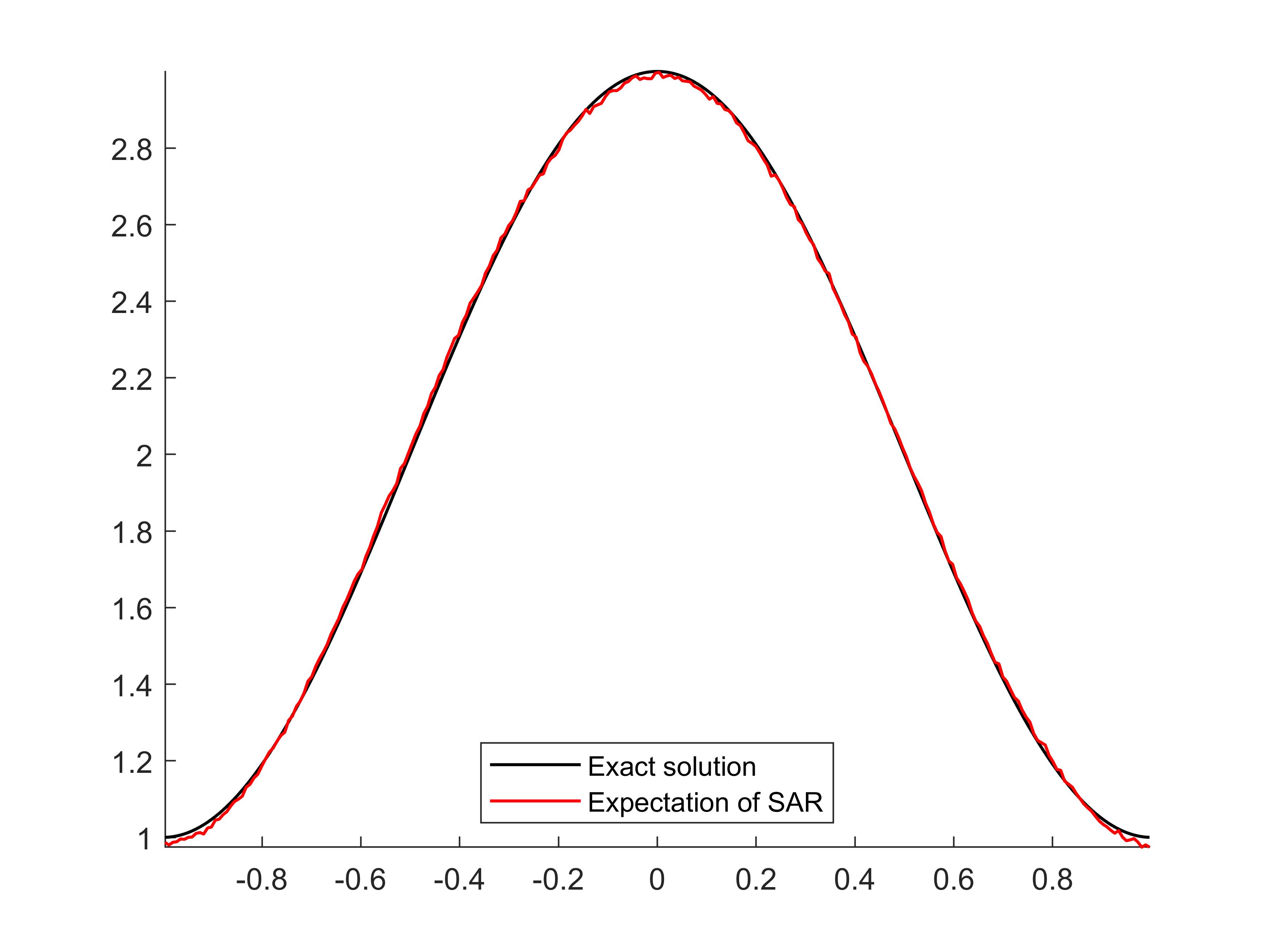} }
		\subfigure[N=500]{
			\includegraphics[scale = 0.05]{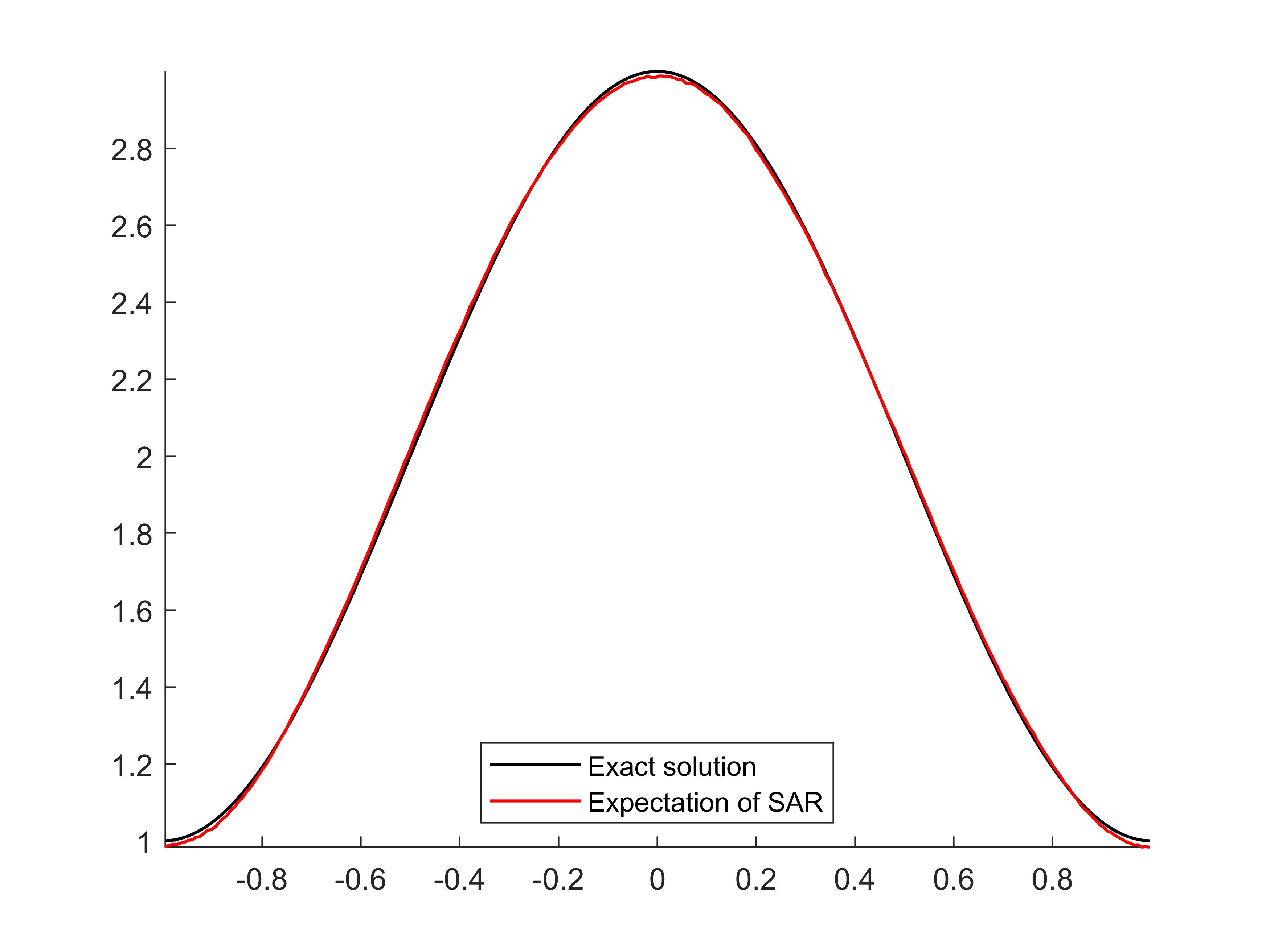} }
		\caption{The influence of expectation of approximate solutions on the sample size for the 1D case, with $\delta = 2\%$ and $\theta=0.6$.} 
		\label{1Dd2path}
	}
\end{figure}

\begin{figure}[h]
	\centering
	{
		\subfigure[N=10]{
			\includegraphics[scale = 0.05]{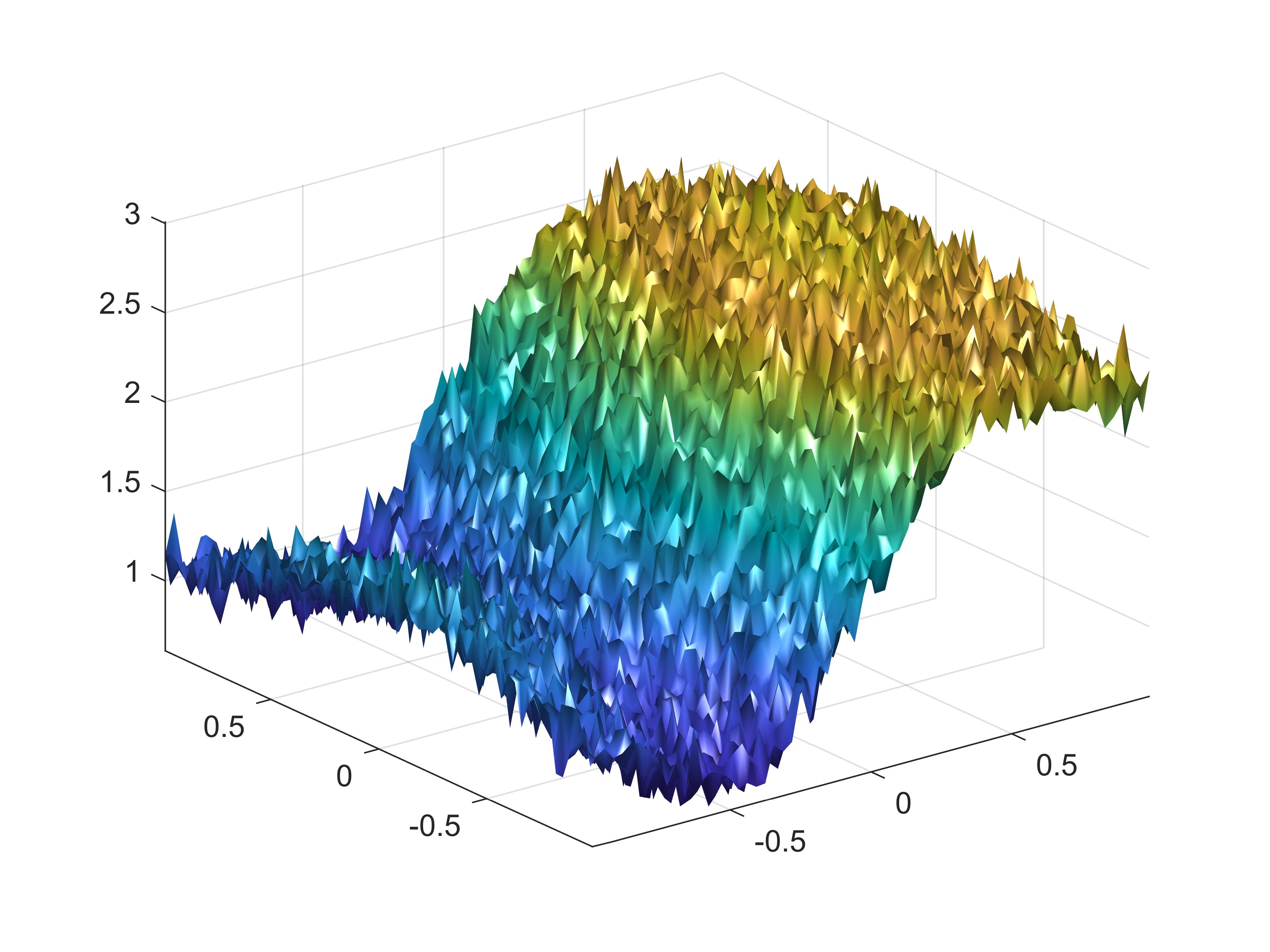} }
		\subfigure[N=50]{
			\includegraphics[scale = 0.05]{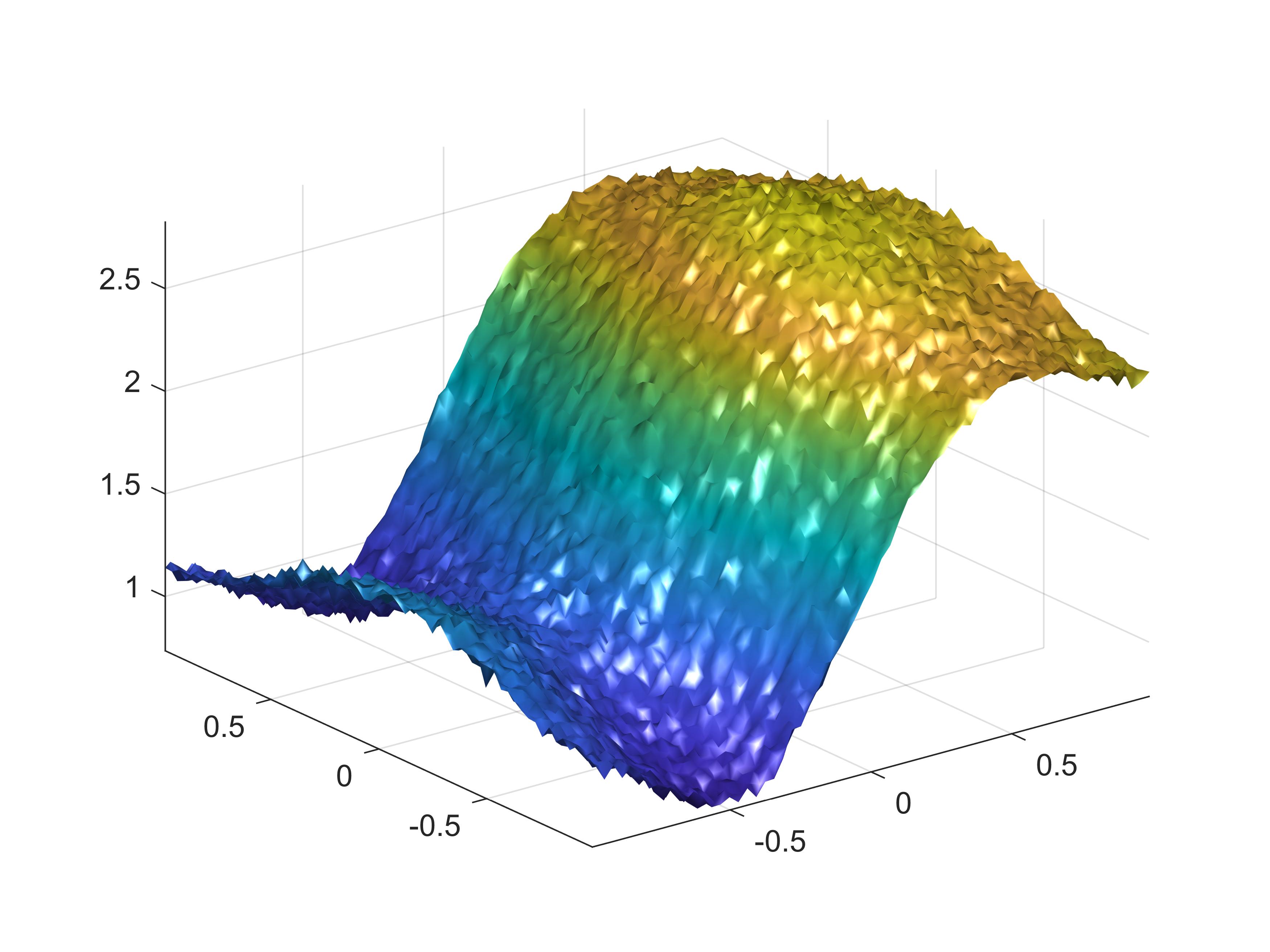} }
		\subfigure[N=150]{
			\includegraphics[scale = 0.05]{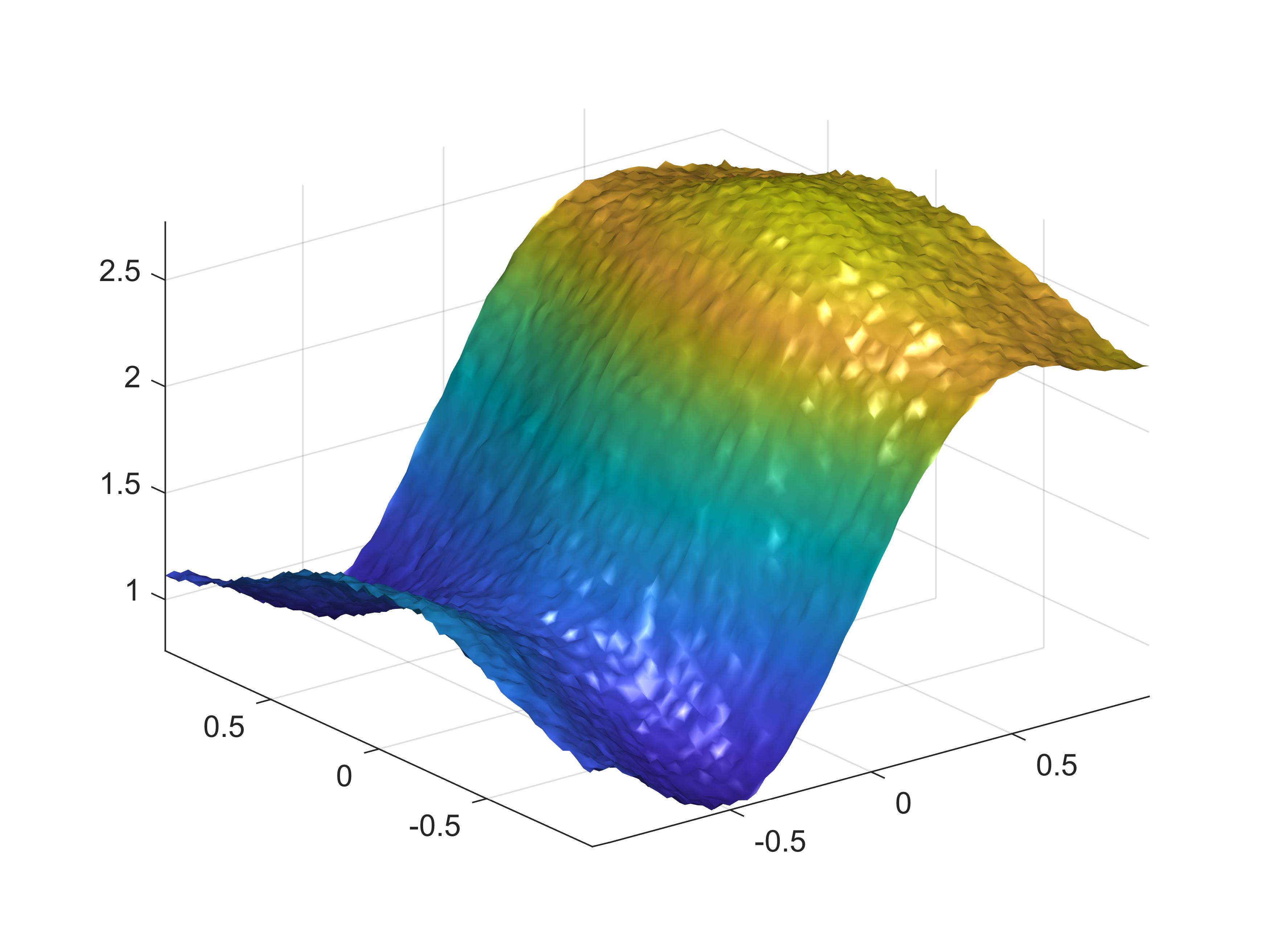} }
		\subfigure[N=400]{
			\includegraphics[scale = 0.05]{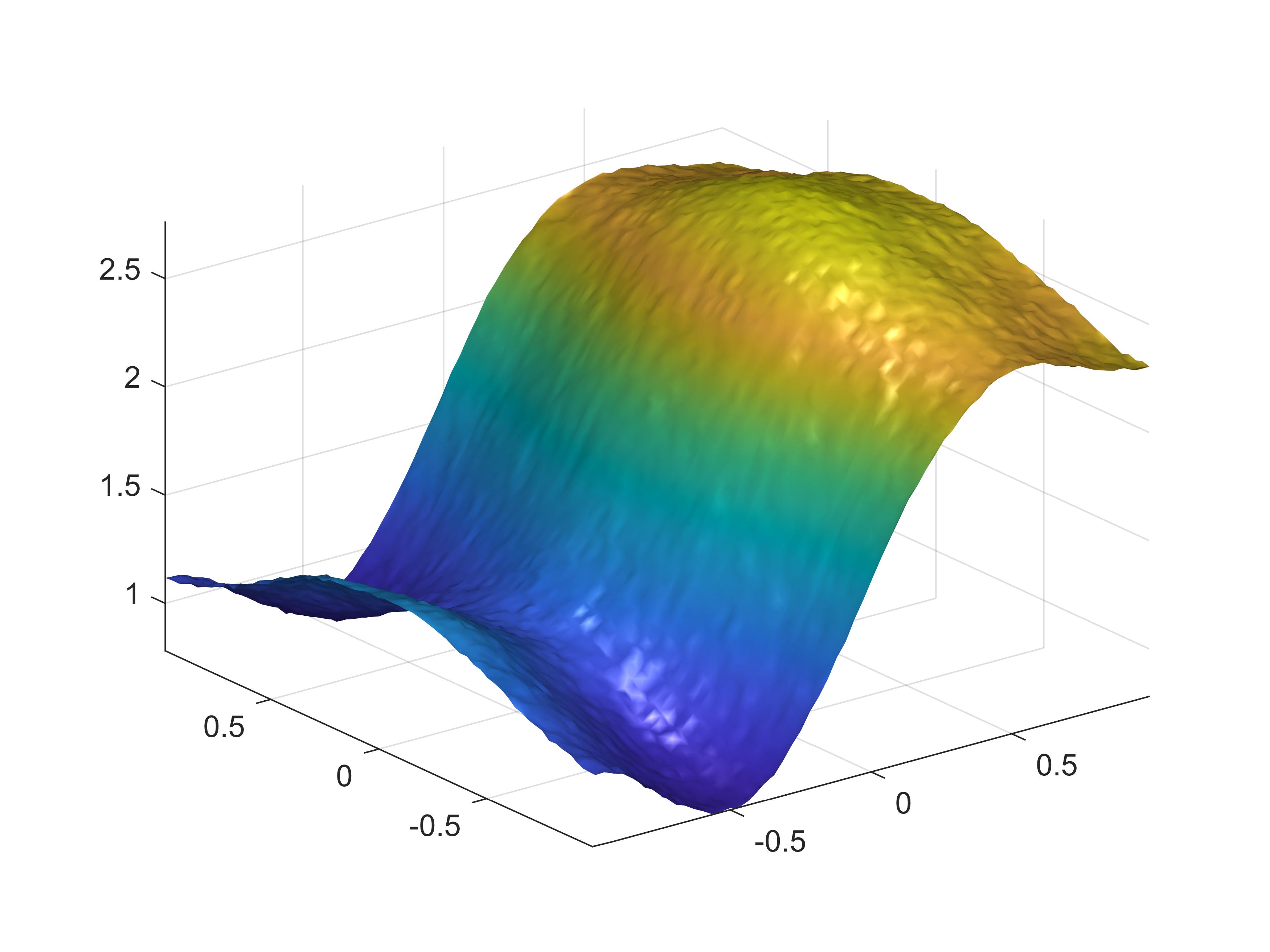} }
		\caption{The influence of expectation of approximate solutions on the sample size for the 2D case, with $\delta = 1\%$ and $\theta=0.02$.} 
		\label{2Dd1CIpath}
	}
\end{figure}


\begin{thebibliography}{10}

\bibitem{Hofmann2018}
Hofmann, B., Plato, R.: On ill-posedness concepts, stable solvability and saturation. Journal of Inverse and Ill-posed Problems, 26(2), 287-297 (2018)

\bibitem{TikhonovYagola1995}
Tikhonov, A., Goncharsky, A., Stepanov, V., Yagola, A.: Numerical Methods for the Solution of Ill-Posed Problems. Springer Science, Berlin (1995)

\bibitem{Hanke1996}
Engl, H. W., Hanke, M., Neubauer, A.: Regularization of Inverse Problems. Springer Dordrecht, (1996)


\bibitem{Cheng2011Regularization}
Cheng, J., Hofmann, B.: Regularization Methods for Ill-Posed Problems. Springer, New York (2011)

\bibitem{BK2008}
Kaltenbacher, B., Neubauer, A., Scherzer, O.: Iterative Regularization Methods for Nonlinear Ill-Posed Problems. Walter de Gruyter, Berlin (2008)

\bibitem{UT1994}
Tautenhahn, U.: On the asymptotical regularization of nonlinear ill-posed problems. Inverse Problems, 10, 1405-1418 (1994)

\bibitem{Rieder-2005}
Rieder, A.: Runge-Kutta integrators yield optimal regularization schemes. Inverse Problems, 21, 453-471 (2005)

\bibitem{ZhaoMatheLu2020}
Zhao, Y., Mathé, P., Lu, S.: Convergence analysis of asymptotical regularization and Runge-Kutta integrators for linear inverse problems under variational source conditions. CSIAM Transactions on Applied Mathematics, 1, 693-714 (2020)

\bibitem{Zhongwang2022}
Zhong, M., Wang, W., Tong, S.: An asymptotical regularization with convex constraints for inverse problems. Inverse Problems, 38(4), 045007 (2022)

\bibitem{LuNiuWerner2021}
Lu, S., Niu, P., Werner, F.: On the asymptotical regularization for linear inverse problems in presence of white noise. SIAM/ASA Journal on Uncertainty Quantification, 9, 1-28 (2021)

\bibitem{ZhangH2020}
Zhang, Y., Hofmann, B.: On the second-order asymptotical regularization of linear ill-posed inverse problems. Applicable Analysis, 99, 1000-1025 (2020)

\bibitem{ZhangHof2019}
Zhang, Y., Hofmann, B.: On fractional asymptotical regularization of linear ill-posed problems in Hilbert spaces. Fractional Calculus and Applied Analysis, 22, 699-721 (2019)

\bibitem{GonHofZhan2020}
Gong, G., Hofmann, B., Zhang, Y.: A new class of accelerated regularization methods, with application to bioluminescence tomography. Inverse Problems, 36, 055013 (2020)

\bibitem{Botetal18}
Boţ, R., Dong, G., Elbau, P., Scherzer, O.: Convergence rates of first- and higher-Order dynamics for solving linear ill-posed problems. Foundations of Computational Mathematics, 22, 1567-1629 (2022)

\bibitem{Chiang1987}
Chiang, T. S., Hwang, C. R., Sheu, S. J.: Diffusion for global optimization in $R^n$. SIAM Journal on Control and Optimization, 25, 737-753 (1987)

\bibitem{Kushner1987}
Kushner, H. J.: Asymptotic global behavior for stochastic approximation and diffusions with slowly decreasing noise effects: global minimization via Monte Carlo. SIAM Journal on Applied Mathematics, 47, 169-185 (1987)

\bibitem{Gelfand1991}
Gelfand, S. B., Mitter, S. K.: Recursive stochastic algorithms for global optimization in $R^d$. SIAM Journal on Control and Optimization, 29, 999-1018 (1991)

\bibitem{Engquist2022}
Engquist, B., Ren, K., Yang, Y.: An algebraically converging stochastic gradient descent algorithm for global optimization. arXiv:2204.05923 (2022)

\bibitem{LiTaiE2019}
Li, Q., Tai, C., E, W.: Stochastic modified equations and dynamics of stochastic gradient algorithms I: Mathematical foundations. Journal of Machine Learning Research, 20, 1-47 (2019)

\bibitem{Francis2023}
Céline, M., Adrien, T., Francis, B.: A systematic approach to Lyapunov analyses of continuous-time models in convex optimization. SIAM Journal on Optimization, 33, 1558-1586 (2023)

\bibitem{ZhangChen23}
Zhang, Y., Chen, C.: Stochastic asymptotical regularization for linear inverse problems. Inverse Problems, 39, 015007 (2023)

\bibitem{ZhangChen24}
Zhang, Y., Chen, C.: Stochastic linear regularization methods: random discrepancy principle and applications. Inverse Problems, 40, 025007 (2024)

\bibitem{Hanke1995}
Hanke, M., Neubauer, A., Scherzer, O.: A convergence analysis of the Landweber iteration for nonlinear ill-posed problems. Numerische Mathematik, 72, 21-37 (1995)

\bibitem{Rob1951}
Robbins, H., Monro, S.: A stochastic approximation method. Annals of Mathematical Statistics, 22, 400-407 (1951)

\bibitem{Kus2003}
Kushner, H. J., Yin, G. G.: Stochastic Approximation and Recursive Algorithms and Applications. 2nd ed., Springer, New York (2003)

\bibitem{Land1951}
Landweber, L.: An iteration formula for Fredholm integral equations of the first kind. American Journal of Mathematics, 73, 615-624 (1951)

\bibitem{BJin2019}
Jin, B., Lu, X.: On the regularizing property of stochastic gradient descent. Inverse Problems, 35, 015004 (2019)

\bibitem{BJin2020IP}
Jahn, T., Jin, B.: On the discrepancy principle for stochastic gradient descent. Inverse Problems, 36, 095009 (2020)

\bibitem{LuMathe2021}
Lu, S., Mathé, P.: Stochastic gradient descent for linear inverse problems in Hilbert spaces. Mathematics of Computation, 91(336), 1763--1788, (2022)

\bibitem{BJin2020SIAM}
Jin, B., Zhou, Z., Zou, J.: On the convergence of stochastic gradient descent for nonlinear ill-posed problems. SIAM Journal on Optimization, 30, 1421-1450 (2020)

\bibitem{Jin2015}
Ito, K., Jin, B.: Inverse Problems: Tikhonov Theory and Algorithms. World Scientific (2014)

\bibitem{Gaw2011}
Gawarecki, L., Mandrekar, V.: Stochastic Differential Equations in Infinite Dimensions with Applications to Stochastic Partial Differential Equations. Springer, New York (2011)

\bibitem{JentzenKloeden2009}
Jentzen, A., Kloeden, P.: Overcoming the order barrier in the numerical approximation of stochastic partial differential equations with additive space-time noise. Proceedings of the Royal Society A: Mathematical, Physical and Engineering Sciences, 465, 649-667 (2009)

\bibitem{LordPowell2014}
Lord, G., Powell, C., Shardlow, T.: An Introduction to Computational Stochastic PDEs. Cambridge Texts in Applied Mathematics. Cambridge University Press, New York (2014)

\bibitem{Baumeister1991}
Baumeister, J.: Deconvolution of appearance potential spectra. Methoden Verfahren Math. Phys., Peter Lang, Frankfurt am Main, 35, 1-13 (1991)

\bibitem{Dai2013}
Dai, Z.: Local regularization methods for inverse Volterra equations applicable to the structure of solid surfaces. Journal of Integral Equations and Applications, 25(2), 223-252 (2013)

\bibitem{Fukuda2010}
Fukuda, Y.: Appearance potential spectroscopy (APS): old method, but applicable to study of nano-structures. Analytical Sciences, 26(2), 187-197 (2010)

\bibitem{GerthHofmann2014}
Gerth, D., Hofmann, B., Birkholz, S., Koke, S., Steinmeyer, G.: Regularization of an autoconvolution problem in ultrashort laser pulse characterization. Inverse Problems in Science and Engineering, 22(2), 245-266 (2014)


\end{thebibliography}
\bibliographystyle{amsplain}

\end{document}